\renewcommand*\backref[1]{\ifx#1\relax \else (Cited on p. #1) \fi}
\title{Variational Analysis in the Wasserstein Hierarchy}
\author{Christophe Vauthier\footnote{Université Paris-Saclay, Laboratoire de Mathématiques d'Orsay}}
\date{September 2025}
\newtheorem{definition}{Definition}[section]
\newtheorem{theorem}[definition]{Theorem}
\newtheorem{proposition}[definition]{Proposition}
\newtheorem{example}[definition]{Example}
\newtheorem{remark}[definition]{Remark}
\newtheorem{lemma}[definition]{Lemma}
\newtheorem{corollary}[definition]{Corollary}
\newtheorem{assumption}[definition]{Assumption}
\crefname{assumption}{assumption}{assumptions}
\numberwithin{equation}{subsection}
\newcommand{\cB}{\mathcal{B}}
\newcommand{\cC}{\mathcal{C}}
\newcommand{\cD}{\mathcal{D}}
\newcommand{\cE}{\mathcal{E}}
\newcommand{\cF}{\mathcal{F}}
\newcommand{\cG}{\mathcal{G}}
\newcommand{\cH}{\mathcal{H}}
\newcommand{\cK}{\mathcal{K}}
\newcommand{\cL}{\mathcal{L}}
\newcommand{\cM}{\mathcal{M}}
\newcommand{\cP}{\mathcal{P}}
\newcommand{\cU}{\mathcal{U}}
\newcommand{\cV}{\mathcal{V}}
\newcommand{\bA}{\mathbb{A}}
\newcommand{\bB}{\mathbb{B}}
\newcommand{\bE}{\mathbb{E}}
\newcommand{\N}{\mathbb{N}}
\newcommand{\bP}{\mathbb{P}}
\newcommand{\bQ}{\mathbb{Q}}
\newcommand{\R}{\mathbb{R}}
\newcommand{\bS}{\mathbb{S}}
\newcommand{\bGamma}{\mathbb{\Gamma}}
\newcommand{\bOne}{\mathbb{1}}
\newcommand{\W}{\mathrm{W}}
\newcommand{\Id}{\mathrm{Id}}
\newcommand{\id}{\mathrm{id}}
\newcommand{\spt}{\mathrm{spt}}
\newcommand{\cPP}[1]{\mathcal{P}_2\big(\cP_2(#1)\big)}
\newcommand{\cPzn}[2]{\cP^{(#1)}(#2)}
\newcommand{\cPzndet}[2]{\cP_{\det}^{(#1)}(#2)}
\newcommand{\cPn}[2]{\cP_2^{(#1)}(#2)}
\newcommand{\cPndet}[2]{\cP_{2,\det}^{(#1)}(#2)}
\newcommand{\Pn}[2]{P^{(#1)}(#2)}
\newcommand{\Pndet}[2]{P_{\det}^{(#1)}(#2)}
\renewcommand{\dd}{\;\mathrm{d}}
\newcommand{\setcond}{\,|\,}
\newcommand{\PT}{\mathrm{PT}}
\newcommand{\Pol}{\mathrm{Pol}}
\newcommand{\Pold}{\mathrm{Pol}_{\mathrm{d}}}
\newcommand{\gW}{\nabla_{\W_2}}
\DeclareMathOperator{\Hess}{Hess}
\newcommand{\sca}[2]{\left\langle#1\middle|#2\right\rangle}
\newcommand{\veps}{\varepsilon}
\let\oldqedhere\qedhere
\renewcommand{\qedhere}{\pushQED{\qed}\oldqedhere}
\DeclareMathOperator{\ob}{\mathrm{ob}}
\DeclareMathOperator{\Hom}{\mathrm{Hom}}
\begin{document}

\maketitle

\begin{abstract}
    Let $\cM$ be a complete connected Riemannian manifold. For $n \geq 0$, we endow the Wasserstein space $\cPn{n}{\cM} = \cP_2(\ldots\cP_2(\cM)\ldots)$, equipped with the Wasserstein distance $\W_2$, with a variational structure that generalizes the standard variational structure on $\cP_2(\cM)$ provided by optimal transport theory. Our approach makes use of tools from category theory to lift the geometric structure of the manifold $\cM$ to the spaces $\cPn{n}{\cM}$, in order to establish in a principled way a rigorous theoretical framework for variational analysis on the space $\cPn{n}{\cM}$. In particular, we obtain a precise characterization of the constant speed geodesics of the space $\cPn{n}{\cM}$ in terms of optimal velocity plans. Moreover, we introduce a notion of gradient for functionals defined on $\cPn{n}{\cM}$, which allows us to study the differentiability and the convexity of various types of such functionals.
\end{abstract}

\section{Introduction} \label{sec:1_introduction}

Let $(X,d)$ be a separable and complete metric space. Denote by $\cP_2(X)$ the space of Borel probability measures on $X$ with finite second order moment, then, for every pair $\mu,\nu \in \cP_2(X)$, their \emph{(second order) Wasserstein distance} $\W_2(\mu,\nu)$ is defined by 
\begin{equation} \label{eq:l_70}
    \W_2^2(\mu,\nu) := \inf_{\sigma \in \Pi(\mu,\nu)} \int d^2(x,y) \dd\sigma(x,y),
\end{equation} 
where $\Pi(\mu,\nu)$ is the set of probability measures $\sigma$ on $X \times X$ with respective first and second marginals $\mu$ and $\nu$. These $\sigma$ are called \emph{transport plans} between $\mu$ and $\nu$. It is a general result of optimal transportation theory (see \citep[Theorem 4.1]{villani2009optimal}) that there exists $\sigma \in \Pi(\mu,\nu)$ which are minimizers of \eqref{eq:l_70}. These $\sigma$ are called \emph{optimal transport plans}, and we denote their subset by $\Pi_o(\mu,\nu)$. Moreover, $\W_2$ is a distance on the set $\cP_2(X)$, and the metric space $(\cP_2(X),\W_2)$ is itself separable and complete \citep[Theorem 6.18]{villani2009optimal}.
\medbreak
The space $(\cP_2(X),\W_2)$ has come to play an important role in many areas of mathematics and computer science, including partial differential equations \citep{santambrogio2015optimal}, optimization and sampling \citep{santambrogio2017euclidean, wibisono2018sampling, salim2020wasserstein}, image processing \citep{bonneel2023survey}, and machine learning \citep{peyre2019computational, torres2021survey, montesuma2024multi}. Indeed, not only does the Wasserstein distance $\W_2$ offer a method to compare probability measures which takes into account the geometrical properties of their supports better than other methods (such as the Kullback-Leibler divergence, which is infinite on pairs of measures with disjoint support), but when $X = \R^d$, the space $\cP_2(\R^d)$ can be equipped with a rich variational structure which offers a natural framework to tackle problems consisting of minimizing a functional $\cF : \cP_2(\R^d) \to \R$ defined on the space of probability measures. We refer for further details to textbooks such as \citep{ambrosio2005gradient, villani2009optimal}.
\medbreak
In this article, we are interested in the case where $X$ is itself a probability space ; more precisely, we study the case where $X = \cPn{n}{\cM} := \cP_2(\ldots\cP_2(\cM)\ldots)$, with $n \geq 0$ applications of $\cP_2$ to $\cM$, with $\cM$ some fixed complete connected Riemannian manifold (without boundary), equipped with its geodesic distance $d$. Indeed, we can define a sequence of Wasserstein spaces $\cM$, $\cP_2(\cM)$, $\cP_2(\cP_2(\cM))$, $\cP_2(\cP_2(\cP_2(\cM)))$... where, for every $n \geq 1$, the Wasserstein distance $\W_2$ on $\cPn{n+1}{\cM}$ is defined inductively using the Wasserstein distance $\W_2$ on $\cPn{n}{\cM}$: that is, for every $\bP, \bQ \in \cPn{n+1}{\cM}$, we define
\begin{equation}
    \W_2^2(\bP,\bQ) := \inf_{\bGamma \in \Pi(\bP,\bQ)} \int \W_2^2(\mu,\nu) \dd\bGamma(\mu,\nu).
\end{equation}
We will use the same notation $\W_2$ for the Wasserstein distances of all the spaces $\cPn{n}{\cM}$, as there will always be no ambiguity on the ``level" $n$ of the probability measures we are working on. 
\medbreak
Our study of this sequence of spaces, which we dub the \emph{Wasserstein hierarchy}, is motivated by the recent finding in the machine learning literature that the space $\cP_2(\cP_2(\cM))$ of probability measures over probability measures can be used to represent datasets of many types of data, including words \citep{vilnis2015word}, graphs \citep{bojchevski2017deep}, point clouds \citep{geuter2025ddeqs}, images \citep{sodini2025approximation, geuter2025universal}, and neuroscience data \citep{bonet2023sliced}. Moreover, $\cP_2(\cP_2(\cM))$ provides a natural way to represent a labeled dataset of any data living on $\cM$ \citep{alvarez2020geometric}. This motivated the development and analysis of different distances on this space \citep{alvarez2020geometric, catalano2024hierarchical, nguyen2025lightspeed, piening2025slicing, bonet2025busemann} ; with applications to various tasks, such as generative modeling \citep{dukler2019wasserstein, haviv2024wasserstein}, flowing datasets \citep{alvarez2021dataset, hua2023dynamic, bonet2025flowing}, or domain adaptation \citep{el2022hierarchical}. This in turn resulted in a growing interest for the spaces $\cPn{2}{X}$ in the optimal transport literature, which have been the object of recent theoretical works such as \citep{pinzi2025nested, pinzi2025totally}.

The case $n>2$ has to the best of our knowledge only been considered in \citep{beiglböck2025brenier}, who study the spaces $\cPn{n}{\cH}$ when $\cH$ is a separable Hilbert space. \citet{beiglböck2025brenier} also linked this problem to adapted optimal transport \citep{backhoff2017causal}, which allows to compare stochastic processes with $n$ snapshots. 
\medbreak
It is a priori difficult to consider optimization problems on the Wasserstein spaces $\cP_2(\cPn{n}{\cM})$. Indeed, much of the existing work on the geometric properties and variational structure of the space $\cP_2(X)$ makes use of the structure of the underlying space $X$, which is often taken to be a Hilbert space or a Riemannian manifold \citep{ambrosio2005gradient, villani2009optimal}, a structure which $\cPn{n}{\cM}$ lacks whenever $n \geq 1$. Nevertheless, we show that it is still possible to equip $\cP_2(\cPn{n}{\cM})$ with a variational structure which yields a precise characterization of its geodesics and a rigorous framework for studying the differentiation and the convexity of functionals, which in turn may allow the application of the framework of gradient flows on metric spaces \citep{ambrosio2005gradient} to this space.
\medbreak
In order to explain our results in more detail, it will first be useful to recall some basic properties of the Wasserstein space $\cP_2(\cM)$ (and $\cP_2(\R^d)$), which has been extensively studied in the optimal transport literature. A major result is Brenier-McCann's theorem \citep[Theorem 10.41]{villani2009optimal}, which states that for every $\mu, \nu \in \cP_2(\cM)$ which are compactly supported, and such that $\mu$ is absolutely continuous (with respect to the volume measure of $\cM$), there exists a unique optimal plan $\sigma$ between them, which is of the form $\sigma = (\id,S)_\#\mu$ (where $\#$ denotes the pushforward operation - see the Notations section below), where $S : \cM \mapsto \cM$ is a map of the form $S = \exp(-\nabla \phi)$, with $\phi : \cM \to \R$ such that
\begin{equation}
    \phi(x) = \inf_{y \in \cM} \frac 12 d^2(x,y) - \psi(y), \quad \forall x \in \cM
\end{equation}
for some $\psi : \cM \mapsto \R \cup \{+\infty\}$ (we say that $\phi$ is $d^2/2$-concave). Moreover, it is often more convenient to work with \emph{velocity plans} instead of transport plans: given $\mu, \nu \in \cP_2(\cM)$, a velocity plan from $\mu$ to $\nu$ is a probability measure $\gamma \in \cP_2(T\cM)$  such that $\pi_\#\gamma = \mu$ and $\exp_\#\gamma = \nu$ (where $T\cM$ is the tangent bundle on $\cM$, endowed with the Sasaki metric (see \Cref{sec:appendix:sasaki_metric}), $\pi : T\cM \mapsto \cM$ is the projection on $\cM$, and $\exp : T\cM \mapsto \cM$ the exponential map). Denoting by $\Gamma(\mu,\nu)$ the set of velocity plans from $\mu$ to $\nu$, it is then clear that to every velocity plan $\gamma \in \Gamma(\mu,\nu)$ corresponds a transport plan $(\pi,\exp)_\#\gamma \in \Pi(\mu,\nu)$. Velocity plans carry more information than transport plans: indeed, transport plans encode the information about which points of the supports of $\mu$ and $\nu$ are coupled, while velocity plans also encode the geodesics along which the mass of $\mu$ is sent. For instance, when $\cM = \bS^2$ is the 2-dimensional sphere, and $\mu = \delta_N$ and $\nu = \delta_S$ are respectively the North and South pole, then $\delta_{(N,S)}$ is the only optimal transport plan between $\mu$ and $\nu$, while for every tangent vector $v \in T_N\bS^2$ of norm $\|v\|_x = r \pi$ with $r$ odd integer, $\delta_{(N,v)}$ is a velocity plan from $\mu$ to $\nu$. Given $\mu,\nu$, one may define a subset of privileged velocity plans between them: the set of \emph{optimal velocity plans} defined by
\begin{equation}
    \Gamma_o(\mu,\nu) := \left\{\gamma \in \Gamma(\mu,\nu) \setcond \int \|v\|^2_x \dd\gamma(x,v) = \W_2^2(\mu,\nu) \right\}.
\end{equation}
Optimal velocity plans are the velocity plans that couple optimally the measures $\mu$ and $\nu$, and which do so by sending the mass of $\mu$ through length-minimizing geodesics in $\cM$ (see \citep[Definition 1.4]{gigli2012second}). The optimal velocity plans are exactly the elements $\gamma \in \Gamma(\mu,\nu)$ such that $(\pi,\exp)_\#\gamma$ is an optimal transport plan between $\mu$ and $\nu$ and such that $d(x,\exp_x(v)) = \|v\|_x$ for $\gamma$-a.e. $(x,v)$ ; furthermore, the map $\gamma \mapsto (\pi,\exp)_\#\gamma$ is a surjection from $\Gamma_o(\mu,\nu)$ to $\Pi_o(\mu,\nu)$ \citep[Propostion 2.1]{bonet2025flowing}. For instance, in the example before, the plans $\delta_{(N,v)}$ with $\|v\|_x = \pi$ are all optimal velocity plans from $\delta_N$ to $\delta_S$ (on the other hand, $\delta_{(N,v)}$ with $\|v\|_x = 3\pi$ is a velocity plan from $\delta_N$ to $\delta_S$ which is not optimal). There is a tight relation between constant speed geodesics in $\cP_2(\cM)$ and optimal velocity plans: indeed, by \citep[Theorem 1.11]{gigli2011inverse}, a curve $(\mu_t)_{t \in [0,1]}$ in $\cP_2(\cM)$ is a constant speed geodesic from $\mu = \mu_0$ to $\nu = \mu_1$ if and only if there exists $\gamma \in \Gamma_o(\mu,\nu)$ such that $\mu_t = ((x,v) \mapsto \exp_x(tv))_\#\gamma$ for every $t \in [0,1]$. Moreover, the plan $\gamma$ is uniquely identified by the geodesic, and two distinct geodesics from $\mu$ to $\nu$ cannot intersect at intermediate times.
\medbreak
A strong motivation for working with velocity plans is that they offer a natural way to define a notion of tangent vectors on the space $\cP_2(\cM)$, and, from this, to recover a variational structure \citep{gigli2011inverse, lanzetti2024variational, aussedat2025structure}. More precisely, fixing $\mu \in \cP_2(\cM)$, and defining $\cP_2(T\cM)_\mu$ to be the set of velocity plans $\gamma$ with $\pi_\#\gamma = \mu$, the elements of $\cP_2(T\cM)_\mu$ can be considered to be tangent vectors with respect to the base point $\mu$\footnote{In fact, it is common to define a ``tangent space of $\mu$" $T_\mu \cP_2(\cM)$ as a subset of $\cP_2(T\cM)_\mu$, and to only call ``tangent" the velocity plans belonging to this set. The precise definition of the tangent space varies with the authors. For instance, \citep{gigli2011inverse} defines the ``geometric tangent space" to be the closure with respect to the distance $\W_\mu$ (defined in \Cref{sec:3_wasserstein_hierarchy}) of the set of plans $\gamma \in \cP_2(T\cM)_\mu$ such that the curve $(\mu_t)_t$ defined by $\mu_t = ((x,v) \mapsto \exp_x(tv))_\#\gamma$ is a geodesic of $\cP_2(\cM)$ for $t > 0$ small enough, while \citep{lanzetti2024variational} (whose analysis is restricted to the case $\cM = \R^d$) has the slightly more stringent requirement that $(\mu_t)_{t \in [-\veps,\veps]}$ be a geodesic for some $\veps > 0$.}. While $\cP_2(T\cM)_\mu$ is of course not a vector space, it is still possible to equip it with a structure resembling that of a vector space. Without going into the details, it is in particular possible to associate to every velocity plan $\gamma \in \cP_2(T\cM)_\mu$ a ``norm" $\|\gamma\|$ and to every pair of velocity plans $\gamma_1,\gamma_2 \in \cP_2(T\cM)_\mu$ a ``scalar product" $\sca{\gamma_1}{\gamma_2}_\mu$ defined by
\begin{equation}
    \|\gamma\|^2 := \int \|v\|^2_x \dd\gamma(x,v), \quad\quad \sca{\gamma_1}{\gamma_2}_\mu := \sup_{\alpha \in \Gamma_\mu(\gamma_1,\gamma_2)} \int \sca{v_1}{v_2}_x \dd\alpha(x,v_1,v_2)
\end{equation}
where $\Gamma_\mu(\gamma_1,\gamma_2)$ is the set of \emph{couplings} between $\gamma_1$ and $\gamma_2$, that is, of probability measures $\alpha \in \cP_2(T^2\cM)$, where $T^2\cM$ is the manifold $\{(x,v_1,v_2), x \in \cM, v_1,v_2 \in T_x\cM\}$, such that $\pi_{1\#}\alpha = \gamma_1$ and $\pi_{2\#}\alpha = \gamma_2$ (where $\pi_i : T^2\cM \mapsto T\cM$ is the projection $(x,v_1,v_2) \to (x,v_i)$ for $i=1,2$). This ``vector space"-like structure of $\cP_2(T\cM)_\mu$, in turn, can be used to study differentiation on the space $\cP_2(\cM)$. Letting $\cF : \cP_2(\cM) \mapsto \R$ be some functional on the space of probability measures and fixing $\mu \in \cP_2(\cM)$, we say that the velocity plan $\gamma \in \cP_2(T\cM)_\mu$ is a \emph{subgradient}\footnote{Again, the precise definition varies with the authors, and, while we have not done so as we only aim to provide a sketch of the theory, it is common to impose restrictions to the velocity plans $\gamma$ and $\xi$ in the definition of subgradients. For instance, in \citep{lanzetti2024variational}, $\gamma$ must be taken in the tangent space, and $\xi$ must be an optimal velocity plan. On the other hand, \citep{erbar2010heat} requires that $\gamma = (\Id,w)_\#\mu$ and $\xi = (\Id,\Psi)_\#\mu$, with $w,\Psi \in L^2(\mu,T\cM)$ such that $\exp(\Psi)$ is an optimal transport map from $\mu$ to $\exp(\Psi)_\#\mu$. In this case, $\sca{\xi}{\gamma}_\mu$ is simply the scalar product $\sca{\Psi}{w}$ in $L^2(\mu,T\cM)$.} of $\cF$ at $\mu$ if for every measure $\nu \in \cP_2(\cM)$ and velocity plan $\xi \in \Gamma(\mu,\nu)$, it holds
\begin{equation}
    \cF(\nu) \geq \cF(\mu) + \sca{\xi}{\gamma}_\mu + o(\|\xi\|),
\end{equation}
and we can similarly define supergradients and gradients. This approach has been notably used by \citep{lanzetti2024variational} in the special case $\cM = \R^d$ to derive first-order optimality conditions for constrained optimization problems on $\cP_2(\R^d)$.
\medbreak
Now, how can we obtain a similar framework for the hierarchical spaces $\cPn{n}{\cM}$ with $n \geq 2$? The way forward is indicated by \citep{bonet2025flowing}, which initiated a study on the space $\cP_2(\cP_2(\cM))$. The concept of velocity plan is generalized the following way: given measures $\bP, \bQ \in \cP_2(\cP_2(\cM))$, a velocity plan from $\bP$ to $\bQ$ is a measure $\bGamma \in \cP_2(\cP_2(T\cM))$ such that $[\pi]_\#\bGamma = \bP$ and $[\exp]_\#\bGamma = \bQ$, where we have denoted by $[f]$ the pushforward map $\mu \mapsto f_\#\mu$. The set of velocity plans from $\bP$ to $\bQ$ is similarly denoted $\Gamma(\bP,\bQ)$. Given a velocity plan $\bGamma \in \Gamma(\bP,\bQ)$, the measure $([\pi],[\exp])_\#\bGamma$ is then a transport plan between $\bP$ and $\bQ$. The set of optimal velocity plans can then be defined by
\begin{equation}
    \Gamma_o(\bP,\bQ) = \left\{\bGamma \in \Gamma(\bP,\bQ) \setcond \iint \|v\|^2_x \dd\gamma(x,v) \dd\bGamma(\gamma) = \W_2^2(\bP,\bQ) \right\}.
\end{equation}
It can then be shown \citep[Proposition 3.1]{bonet2025flowing} that optimal velocity plans are precisely the velocity plans $\bGamma$ such that $([\pi],[\exp])_\#\bGamma$ is an optimal transport plan (for the $\W_2$ distance on $\cP_2(\cP_2(\cM))$ with $\W_2^2$ ground cost), and such that $\bGamma$ is supported on optimal velocity plans $\gamma \in \cP_2(T\cM)$ - in other words, \emph{optimal velocity plans in $\cP_2(\cP_2(T\cM))$ couple optimally their marginals and do so through geodesics in $\cP_2(\cM)$}.
\medbreak
This generalization of the notion of velocity plans suggests a natural way to generalize the variational structure of $\cP_2(\cM)$ defined above to $\cP_2(\cP_2(\cM))$\footnote{The paper \citep{bonet2025flowing} does define a notion of gradient for functionals on $\cP_2(\cP_2(\cM))$. However, they define their gradients to be $L^2$ vector fields, and while this choice dispenses them from having to extend the vector space-like structure to $\cP_2(\cP_2(T\cM))$, it is much more restrictive than velocity plans in the perturbations they allow (in particular, discrete measures perturbed by $L^2$ fields always yield discrete measures). As we strive for full generality, we will always work with velocity plans, although an equivalent of $L^2$ vector fields (fully deterministic velocity plans) will appear in our framework. Another limitation of \citep{bonet2025flowing} is that they work on $\cM$ compact, while our framework will hold for $\cM$ non compact.}. Letting $\bP \in \cP_2(\cP_2(\cM))$ be fixed, we denote $\cP_2(\cP_2(T\cM))_\bP$ the set of $\bGamma \in \cP_2(\cP_2(T\cM))$ such that $[\pi]_\#\bGamma = \bP$, and, given $\bGamma_1, \bGamma_2 \in \cP_2(\cP_2(T\cM))_\bP$, we may define the set of couplings $\Gamma_\bP(\bGamma_1,\bGamma_2)$ between them to be the set of measures $\bA \in \cP_2(\cP_2(T^2\cM))$ such that $[\pi_1]_\#\bA = \bGamma_1$ and $[\pi_2]_\#\bA = \bGamma_2$. Then, just as in the case of $\cP_2(\cM)$, we may use the couplings to define a ``norm" and a ``scalar product" on velocity plans:
\begin{equation}
    \|\bGamma\|^2 := \iint \|v\|^2_x \dd\gamma(x,v) \dd\bGamma(\gamma), \quad\quad \sca{\bGamma_1}{\bGamma_2}_\bP := \sup_{\bA \in \Gamma_\bP(\bGamma_1,\bGamma_2)} \iint \sca{v_1}{v_2}_x \dd\alpha(x,v_1,v_2) \dd\bA(\alpha)
\end{equation}
for every $\bGamma,\bGamma_1,\bGamma_2 \in \cP_2(\cP_2(T\cM))_\bP$. Then, given a functional $\cF : \cP_2(\cP_2(\cM)) \mapsto \R$ and $\bP \in \cP_2(\cP_2(\cM))$, we may define a subgradient of $\cF$ at $\bP$ to be a velocity plan $\bGamma \in \cP_2(\cP_2(T\cM))_\bP$ such that for every $\bQ \in \cP_2(\cP_2(\cM))$ and $\bar{\bGamma} \in \Gamma(\bP,\bQ)$, it holds
\begin{equation}
    \cF(\bQ) \geq \cF(\bP) + \sca{\bar{\bGamma}}{\bGamma}_\bP + o(\|\bar{\bGamma}\|).
\end{equation}
Now, comparing these tentative definitions to the ones corresponding to $\cP_2(\cM)$, we observe that they suggest a principled method to extend the variational structure of $\cP_2(\cM)$ to the space $\cPn{n}{\cM}$ for arbitrary $n \geq 0$. This method can be summarized as follows: to extend a given notion to the space $\cPn{n}{\cM}$, take its definition on $\cP_2(\cM)$, and apply the following recipe:
\begin{itemize}
    \item Replace the spaces $\cP_2(\cM)$, $\cP_2(T\cM)$,... by the spaces $\cPn{n}{\cM}$, $\cPn{n}{T\cM}$,... equipped with their associated $\W_2$ distances. 
    \item Replace all the pushforwards $f_\#\mu$, $g_\#\gamma$,... by $[f]^{(n)}(\mu)$, $[g]^{(n)}(\gamma)$..., where $[\cdot]^{(n)}$ denotes the pushforward operator $[\cdot]$ applied $n$ times.
    \item Replace the integrations $\int f(x) \dd\mu(x)$ by ``multi-level" integrations 
    \begin{equation}
        \iint \ldots \iint f(x) \dd\mu^1(x) \dd\mu^2(\mu^1) \ldots \dd\mu^{n-1}(\mu^{n-2}) \dd\mu(\mu^{n-1}).
    \end{equation}
\end{itemize}

In order to formalize this recipe into a framework, we will need a set of mathematical tools to reason rigorously with probability measures defined on spaces of probability measures, and in particular to tackle the inevitable measurability issues that come with the study of such spaces\footnote{An example of such an issue is the existence of coupling between velocity plans. Given $\mu \in \cP_2(\cM)$ and $\gamma_1,\gamma_2 \in \cP_2(T\cM)_\mu$, it is straightforward, using a result known as the ``gluing lemma" in optimal transport theory (\citep[Lemma 5.3.2]{ambrosio2005gradient}), to construct couplings $\alpha \in \cP_2(T^2\cM)$ between $\gamma_1$ and $\gamma_2$. On the other hand, given $\bP \in \cP_2(\cP_2(\cM))$ and $\bGamma_1,\bGamma_2 \in \cP_2(\cP_2(T\cM))_\bP$, the existence of couplings $\bA \in \cP_2(\cP_2(T^2\cM))$ between $\bGamma_1$ and $\bGamma_2$ is a priori less clear, and is something that we will have to prove.}. It turns out that \emph{category theory} \citep{maclane1978categories} gives such a set of tools, and can be used to derive elegantly the theory that we outlined above. To illustrate why, it is again useful to compare the variational structures of the first spaces of the Wasserstein hierarchy, $\cM$, $\cP_2(\cM)$ and $\cP_2(\cP_2(\cM))$.
\medbreak
Given a point $x \in \cM$, a tangent vector at $x$ is an element of $T_x\cM = \pi^{-1}(x)$. Letting $v \in T_x\cM$, we may consider its exponential $y = \exp_x(v)$. If we have two tangent vectors $v_1, v_2 \in T_x \cM$, we may define the set of couplings between $v_1$ and $v_2$ to be the set of $a \in T^2\cM$ such that $\pi_1(a) = (x,v_1)$ and $\pi_2(a) = (x,v_2)$, that is $\pi_1^{-1}(x,v_1) \cap \pi_2^{-1}(x,v_2)$ (this will not be a very interesting notion as the only coupling between $v_1$ and $v_2$ is the element $(x,v_1,v_2)$). Likewise, given a measure $\mu \in \cP_2(\cM)$, a tangent element at $\mu$ is an element of $\cP_2(T\cM)_\mu = [\pi]^{-1}(\mu)$. Letting $\gamma \in \cP_2(T\cM)_\mu$, we may consider its exponential $\nu = [\exp](\gamma)$ and, if we have $\gamma_1, \gamma_2 \in \cP_2(T\cM)_\mu$, the set of couplings between them is $\Gamma_\mu(\gamma_1,\gamma_2) = [\pi_1]^{-1}(\gamma_1) \cap [\pi_2]^{-1}(\gamma_2)$. In other words, the variational structure of $\cP_2(\cM)$ is obtained by considering the diagram of functions consisting of the maps $\pi_1, \pi_2 : T^2 \cM \mapsto T\cM$ and $\pi, \exp : T\cM \mapsto \cM$, and applying to it the ``operation" $\cP_2$ consising of replacing every space $X$ with its space of probability measures with finite second moment $\cP_2(X)$, and every function $f$ with its pushforward map $[f]$, as shown in the figure \eqref{diag:struct_of_M_to_struct_of_PM} below.
\begin{equation} \label{diag:struct_of_M_to_struct_of_PM}
    \begin{array}{ccc}
        \begin{tikzcd}
        & T^2\cM \arrow[dd, bend right, "\pi_1"'] \arrow[dd, bend left, "\pi_2"] & \\ 
        & & \\
        & T \cM \arrow[dl, "\pi"'] \arrow[dr, "\exp"] & \\
        \cM & & \cM   
        \end{tikzcd}
        & 
        \begin{array}{c}
            \cP_2 \\ \Longrightarrow
        \end{array}
        &
        \begin{tikzcd}
             & \cP_2(T^2 \cM) \arrow[dd, bend right, "{[\pi_1]}"'] \arrow[dd, bend left, "{[\pi_2]}"] & \\
             & & \\
             & \cP_2(T \cM) \arrow[dl, "{[\pi]}"'] \arrow[dr, "{[\exp]}"] & \\
             \cP_2(\cM) & & \cP_2(\cM)
        \end{tikzcd}
    \end{array}  
\end{equation}
Similarly, given $\bP \in \cP_2(\cP_2(\cM))$, a tangent element at $\bP$ is an element of $\cP_2(\cP_2(T\cM))_\bP = [[\pi]]^{-1}(\bP)$. Given $\bGamma \in \cP_2(\cP_2(T\cM))_\bP$, we may consider its exponential $\bQ = [[\exp]](\bGamma)$ and, given $\bGamma_1, \bGamma_2 \in \cP_2(\cP_2(T\cM))_\bP$, the set of couplings between them is $\Gamma_\bP(\bGamma_1,\bGamma_2) = [[\pi_1]]^{-1}(\bGamma_1) \cap [[\pi_2]]^{-1}(\bGamma_2)$. That is, the variational structure of $\cP_2(\cP_2(\cM))$ is obtained by applying one more time the ``operation" $\cP_2$ to the diagram on the right of \eqref{diag:struct_of_M_to_struct_of_PM}, as shown in \eqref{diag:struct_of_PM_to_struct_of_PPM}.
\begin{equation} \label{diag:struct_of_PM_to_struct_of_PPM}
    \begin{array}{ccc}
        \begin{tikzcd}
             & \cP_2(T^2 \cM) \arrow[dd, bend right, "{[\pi_1]}"'] \arrow[dd, bend left, "{[\pi_2]}"] & \\
             & & \\
             & \cP_2(T \cM) \arrow[dl, "{[\pi]}"'] \arrow[dr, "{[\exp]}"] & \\
             \cP_2(\cM) & & \cP_2(\cM)
        \end{tikzcd}
        & 
        \begin{array}{c}
            \cP_2 \\ \Longrightarrow
        \end{array}
        &
        \begin{tikzcd}
             & \cPP{T^2 \cM} \arrow[dd, bend right, "{[[\pi_1]]}"'] \arrow[dd, bend left, "{[[\pi_2]]}"] & \\
             & & \\
             & \cPP{T \cM} \arrow[dl, "{[[\pi]]}"'] \arrow[dr, "{[[\exp]]}"] & \\
             \cPP{\cM} & & \cPP{\cM}
        \end{tikzcd}
    \end{array}  
\end{equation}
In other words, \emph{the variational structure of $\cPn{n}{\cM}$ can be obtained by applying $n$ times the operation $\cP_2$ to the diagram consisting of the maps $\pi_1,\pi_2,\pi,\exp$}. But this operation $\cP_2$, which associates to a space $X$ its probability space $\cP_2(X)$ and to a map its pushforward $[f]$, is precisely what is called a \emph{functor} in category theory \citep{maclane1978categories}. The theory of variational analysis in the Wasserstein hierarchy can therefore be understood in category theoretic terms as the theory obtained by repeated applications of the $\cP_2$ functor on the variational structure of $\cM$. This will justify the introduction and use of concepts of category theory in our study, which will prove to be immensely useful to tackle many of the technical difficulties that come with the study of the hierarchical spaces.

\paragraph{Outline of the paper. } First, in \Cref{sec:2_preliminary}, we introduce a number of technical tools and preliminary results that will be used in the rest of the paper. These include the precise definition of several functors, including $\cP_2$, and some results on the properties of fiber products of Polish spaces and the surjectivity of pushforward maps. Then, in \Cref{sec:3_wasserstein_hierarchy}, we formally derive the main elements of the variational structure of the hierarchical Wasserstein spaces, including velocity plans, couplings, and the vector space-like structure on the velocity plans, and we prove their main properties. Next, in \Cref{sec:4_fully_det}, we identify a special class of velocity plans, which we call ``fully deterministic velocity plans", and we show that they enjoy remarkable properties, such as the fact that they form a Hilbert space. Following this, we establish in \Cref{sec:5_geodesics} a precise characterization of the geodesics of the hierarchical Wasserstein spaces, showing a one-to-one correspondence between them and the optimal velocity plans. Finally, in \Cref{sec:6_functionals}, we define a notion of subgradients and gradients in the hierarchical spaces, and we study the differentiability and convexity of various types of functionals defined on these spaces.

\paragraph{Related work. } The application of the spaces $\cP_2(\cP_2(X))$ of probabilities on probability measures to problems in machine learning has been studied in particular in \citep{alvarez2021dataset, hua2023dynamic, bonet2025flowing}. The problem of finding equivalents of the Brenier theorem to hierarchical spaces, that is of finding conditions under which optimal transport maps between a given pair of measures exist, has been the subject of works such as \citep{emami2024monge} for the case of $\cP_2(\cP_2(\cM))$ where $\cM$ is a compact connected manifold, \citep{pinzi2025totally} for $\cP_2(\cP_2(\cH))$ and \citep{beiglböck2025brenier} for the spaces $\cPn{n}{\cH}$, $n \geq 0$ where $\cH$ is a separable Hilbert space. A related problem consists in finding measures on the hierarchical spaces $\cPn{n}{X}$ that play a comparable role to the volume or Lebesgue measures, which is studied in publications such as \citep{schiavo2020rademacher} or \citep{von2009entropic, sturm2024wasserstein}. The article \citep{pinzi2025nested} addresses the characterization of the absolutely continuous curves and the geodesics of the space $\cP_2(\cP_2(X))$ when $X$ is a complete separable metric space.

\paragraph{Notations.} Given a measurable map $f : X \mapsto Y$ and a probability measure $\mu \in \cP(X)$, we denote by $f_\#\mu$ the pushforward of $\mu$ by $f$, which is the probability measure $\nu \in \cP(Y)$ defined by $\nu(\cdot) := \mu(f^{-1}(\cdot))$. We also denote $[f] : \cP(X) \mapsto \cP(Y)$ the pushforward map $[f](\mu) := f_\#\mu$, $\mu \in \cP(X)$. The support of a Borel probability measure $\mu$, denoted $\spt(\mu)$, is the complement of the largest open set with null $\mu$-measure. We will typically use the symbols $X$, $Y$, $Z$... to denote Polish spaces (i.e. separable completely metrizable spaces), and $\cM$ will always denote a complete connected Riemannian manifold without boundary. Given $\cM$ or a Polish space $X$, the measures in $\cPn{n}{X}$ or $\cPn{n}{\cM}$ will typically be denoted by $\mu$, $\nu$ ; we will often use the symbols $\gamma$, $\eta$, $\xi$ to denote velocity plans and transport plans and the symbols $\alpha$, $\beta$ to denote couplings between velocity plans. Often, when working simultaneously with two ``levels" $\cPn{n}{X}$ and $\cPn{n+1}{X}$, we will use the notations $\bP$, $\bQ$, $\bGamma$, $\bA$ to denote measures, velocity and transport plans, and couplings corresponding to the $n+1$ level. We will also sometimes denote the ``level" in which a measure lives at with a superscript: $\mu^n$, $\gamma^n$, $\alpha^n$, etc (but at other times, the superscript simply enumerates the elements of a sequence $(\mu^k)_k$). 

\paragraph{Acknowledgements.} The author wishes to thank Clément Bonet, Anna Korba and Quentin Mérigot for their insightful comments and discussions. The author acknowledges the support of Région Île-de-France through the DIM AI4IDF project, and the support of the Agence nationale
de la recherche, through the PEPR PDE-AI project (ANR-23-PEIA-0004).

\section{Preliminary results} \label{sec:2_preliminary}

\subsection{The \texorpdfstring{$\cP$}{P} and \texorpdfstring{$\cP_2$}{P2} functors}

In this subsection, we first define two functors that will play an important role in our analysis (see \Cref{sec:appendix:category_theory} for a brief review of the main concepts in category theory).
\medbreak
First, we define $\Pol$ to be the category whose objects are Polish spaces $X$, and whose morphisms are continuous map $f : X \mapsto Y$ between Polish spaces. We may define a functor $\cP : \Pol \mapsto \Pol$, which to every Polish space $X$ associates the space $\cP(X)$ of Borel probability measures on $X$ equipped with the topology of weak convergence, and which associates to every continuous map $f : X \mapsto Y$ the pushforward map $\cP(f) := [f] : \cP(X) \mapsto \cP(Y)$. This functor is well-defined, in that $\cP(X)$ is indeed a Polish space, $[f]$ is continuous (for the topology of weak convergence), and $\cP$ defined this way satisfies the conditions of a functor (we refer to \Cref{sec:appendix:weak_topology} for a review of the properties of the topology of weak convergence). To this functor $\cP$, we associate two natural transformations $\delta : \Id_{\Pol} \mapsto \cP$ and $\cE : \cP \circ \cP \mapsto \cP$, which associate to every Polish space $X$ two continuous maps $\delta_X : X \mapsto \cP(X)$ and $\cE_X : \cP(\cP(X)) \mapsto \cP(X)$, where $\delta_X$ maps every $x \in X$ to its corresponding Dirac mass $\delta_x$:
\begin{equation}
    \delta_X : \left\{\begin{array}{ccc} 
         X & \rightarrow & \cP(X)  \\
         x & \rightarrow & \delta_x 
    \end{array}\right.
\end{equation}
and where $\cE_X$ maps every $\bP \in \cP(\cP(X))$ to the unique measure $\cE_X(\bP) \in \cP(X)$ such that, for every nonnegative Borel measurable function $f : X \mapsto \R_+$,
\begin{equation}
    \int f(x) \dd\cE_X(\bP)(x) = \iint f(x) \dd\mu(x) \dd\bP(\mu).
\end{equation}
It may be verified that $\delta$ and $\cE$ are well defined as natural transformations, that is, for every continuous map $f : X \mapsto Y$ between Polish spaces, it holds $\delta_Y \circ f = [f] \circ \delta_X$ and $\cE_Y \circ [[f]] = [f] \circ \cE_X$. 
\begin{remark}
    It can furthermore be verified that the triple $(\cP,\delta,\cE)$ forms a \emph{monad} called the \emph{Giry monad} on $\Pol$ (see \citep{giry1982categorical}). We only state this for the sake of completeness ; the rest of this article will not use the notion of monads and a fortiori will not use the Giry monad.
\end{remark}
Recall that for every $n \geq 0$, we denote by $\cP^{(n)}$ the functor $\cP$ composed $n$ times with itself, with $\cP^{(0)} = \Id_\Pol$ (see \Cref{sec:appendix:category_theory}). Similarly, we denote $[\cdot]^{(n)}$ the operation consisting of taking $n$ times the pushforward map of a map $f : X \mapsto Y$. We also define the continuous map $\delta_X^{(n)} : X \mapsto \cPzn{n}{X}$ by $\delta_X^{(0)} = \id_X$ and $\delta_X^{(n)} = \delta_{\cPzn{n-1}{X}} \circ \ldots \delta_{\cP(X)} \circ \delta_X$ for $n \geq 1$. For every $x_0 \in X$, we then note $\delta^{(n)}_{x_0} := \delta_X^{(n)}(x_0) \in \cPzn{n}{X}$ : $\delta^{(n)}_{x_0}$ is the measure defined inductively by $\delta^{(0)}_{x_0} := x_0$ and $\delta^{(n)}_{x_0} := \delta_{\delta^{(n-1)}_{x_0}}$. By naturality of $\delta$, we have for every continuous map $f : X \mapsto Y$ that $[f]^{(n)} \circ \delta_X^{(n)} = \delta_Y^{(n)} \circ f$, so that $[f]^{(n)}(\delta_x^{(n)}) = \delta_{f(x)}^{(n)}$ for every $x \in X$.
\medbreak
We now consider another category: we let $\Pold$ be the category whose objects are pairs $(X,d)$ where $X$ is a Polish space and $d$ a distance metrizing it, and whose morphisms are continuous maps $f : (X,d_X) \mapsto (Y,d_Y)$ \emph{with linear growth}, defined as:

\begin{definition}
    A function $f : (X,d_X) \mapsto (Y,d_Y)$ between metric spaces is said to have \emph{linear growth} if there exists $(x_0,y_0) \in X \times Y$ and $C > 0$ such that for every $x \in X$, $d_Y(y_0,f(x)) \leq C(1 + d_X(x_0,x))$. One can check that if it holds for one $(x_0,y_0) \in X \times Y$, then it holds for any $(x_0,y_0) \in X \times Y$. 
\end{definition}

Note that if a function is Lipschitz continuous, then it has linear growth. The category $\Pol_d$ is well-defined as one can check that the identity maps of a metric space has linear growth, and that the composition of maps with linear growth also has linear growth. Note that since the morphisms are restricted to continuous maps with linear growth, being isomorphic in $\Pold$ is stronger than being isomorphic in $\Pol$. Notably, $(X,d)$ may fail to be isomorphic to $(X,d')$ even if $d$ and $d'$ metrize the same topology. On the other hand, if $d$ and $d'$ are bi-Lipschitz equivalent (that is, $\alpha d \leq d' \leq \beta d$ for some $\alpha, \beta > 0$), then the identity map of $X$ will be an isomorphism between $(X,d)$ and $(X,d')$ in the category $\Pold$. We will often designate an object of $\Pold$ simply by its underlying space $X$ instead of by the whole pair $(X,d)$ when the distance $d$ associated to $X$ is unambiguous. 
\medbreak
We can then define the functor $\cP_2 : \Pold \mapsto \Pold$ which associates to every Polish space $(X,d)$ the space $\cP_2(X,d) := (\cP_2(X),\W_2)$, where $\W_2$ is the corresponding Wasserstein distance of order $2$ on $\cP_2(X)$ (see \Cref{sec:appendix:w2_space}), and which associates to every morphism $f : X \mapsto Y$ its pushforward map $\cP_2(f) := [f]$. This functor will play a crucial role in our analysis.

\begin{proposition}
    The functor $\cP_2$ is well-defined. Moreover if $f : (X,d_X) \mapsto (Y,d_Y)$ is a Lipschitz function between Polish spaces, then $[f]$ is also a Lipschitz function with the same Lipschitz constant.
\end{proposition}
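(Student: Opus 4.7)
The plan is to verify the four requirements for $\cP_2$ to be a functor from $\Pold$ to itself, namely that $\cP_2(X,d) = (\cP_2(X),\W_2)$ is an object of $\Pold$, that $[f]$ is a morphism of $\Pold$ whenever $f$ is, and that the functor respects identities and composition. Then I would establish the Lipschitz claim as a direct consequence of a coupling argument on an optimal plan.

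First I would recall (citing the appendix on the $\W_2$ space) that for every Polish $(X,d)$, the space $\cP_2(X)$ is Polish and $\W_2$ is a distance that metrizes a topology stronger than weak convergence; this takes care of $\cP_2(X,d)$ being an object of $\Pold$. Next, given a continuous $f:(X,d_X)\to(Y,d_Y)$ with linear growth, say $d_Y(y_0,f(x))\le C(1+d_X(x_0,x))$, I would show that $[f]$ sends $\cP_2(X)$ into $\cP_2(Y)$ by estimating
\begin{equation*}
    \int d_Y^2(y_0,y)\dd [f](\mu)(y) = \int d_Y^2(y_0,f(x))\dd\mu(x) \leq 2C^2\left(1+\int d_X^2(x_0,x)\dd\mu(x)\right),
\end{equation*}
which is finite since $\mu\in\cP_2(X)$. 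The same estimate, applied with $\delta_{x_0}\in\cP_2(X)$ and $\delta_{y_0}\in\cP_2(Y)$, together with $\W_2^2(\delta_{z},\rho)=\int d^2(z,\cdot)\dd\rho$, shows $\W_2(\delta_{y_0},[f](\mu))\le \sqrt{2}C\bigl(1+\W_2(\delta_{x_0},\mu)\bigr)$, so $[f]$ has linear growth.

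For continuity of $[f]$ with respect to $\W_2$, I would use the characterization of $\W_2$-convergence (Villani, Theorem 6.9): $\mu_n\to\mu$ in $\W_2$ iff $\mu_n\to\mu$ weakly and $\int g\dd\mu_n\to\int g\dd\mu$ for every continuous $g$ with $|g(x)|\le C'(1+d_X^2(x_0,x))$. Weak convergence of $[f](\mu_n)$ to $[f](\mu)$ follows from continuity of $f$ and the continuous mapping theorem; convergence of suitably bounded test integrals against $[f](\mu_n)$ reduces via $\int h\dd [f](\mu_n)=\int h\circ f\dd\mu_n$ to convergence against $\mu_n$ of a continuous function whose growth is controlled by $2C^2(1+d_X^2(x_0,\cdot))$ thanks to the linear growth of $f$. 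Functoriality, i.e.\ $[\id_X]=\id_{\cP_2(X)}$ and $[g\circ f]=[g]\circ[f]$, is immediate from the definition of pushforward.

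Finally, for the Lipschitz statement, assume $d_Y(f(x),f(x'))\le L\,d_X(x,x')$ for all $x,x'$. Given $\mu,\nu\in\cP_2(X)$, pick an optimal plan $\sigma\in\Pi_o(\mu,\nu)$; then $(f,f)_\#\sigma\in\Pi([f](\mu),[f](\nu))$, and using it as a competitor yields
\begin{equation*}
    \W_2^2([f](\mu),[f](\nu)) \leq \int d_Y^2(f(x),f(x'))\dd\sigma(x,x') \leq L^2\int d_X^2(x,x')\dd\sigma(x,x') = L^2\W_2^2(\mu,\nu).
\end{equation*}
I expect the main (mild) subtlety to be the $\W_2$-continuity of $[f]$, since naive continuity arguments only give weak continuity, and one must exploit the linear growth hypothesis to upgrade weak convergence plus convergence of second moments into convergence of the transported second moments; all other parts are essentially bookkeeping once the right coupling or the right test function has been chosen.
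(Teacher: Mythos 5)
Your proposal is correct and follows essentially the same route as the paper: the moment estimate from linear growth for well-definedness and linear growth of $[f]$, the quadratic-growth test-function characterization of $\W_2$-convergence (reducing $\int h \dd [f](\mu_n)$ to $\int h\circ f \dd\mu_n$) for continuity, and the pushforward $(f,f)_\#\sigma$ of an optimal plan for the Lipschitz bound. The only omission is the reverse inequality showing the Lipschitz constant of $[f]$ is not smaller than $L$, which the paper gets in one line by evaluating on Dirac masses ($\W_2(\delta_{f(x)},\delta_{f(y)})=d_Y(f(x),f(y))$).
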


\begin{proof}
    We know from \Cref{th:prob_space_is_polish} that if $(X,d)$ is a Polish space, then the space $(\cP_2(X),\W_2)$ is also Polish. Thus, to prove that $\cP_2$ is well-defined as a functor $\Pold \mapsto \Pold$, we only need to prove that for every $f : (X,d_X) \mapsto (Y,d_Y)$ which is continuous with linear growth between Polish spaces, the pushforward map $[f] : \cP(X) \mapsto \cP(Y)$ is, when restricted to $\cP_2(X)$, valued in $\cP_2(Y)$, and continuous with linear growth with respect to the Wasserstein distances on both $\cP_2(X)$ and $\cP_2(Y)$. \newline
    Let $f : (X,d_X) \mapsto (Y,d_Y)$ be a continuous function between Polish spaces with linear growth. We let $C > 0$ and $x_0 \in X$ such that $d_Y(f(x_0),f(x)) \leq C(1 + d_X(x_0,x))$ for every $x \in X$. Let $\mu \in \cP_2(X)$, then
    \begin{align}
        \W_2^2([f](\delta_{x_0}), [f](\mu)) &= \W_2^2(\delta_{f(x_0)}, f_\#\mu) = \int d_Y^2(f(x_0), y) \dd f_\#\mu(y) \\
        &= \int d_Y^2(f(x_0), f(x)) \dd\mu(x) \\
        &\leq \int C^2(1 + d_X(x_0,x))^2 \dd\mu(x) \\
        &\leq \int C_1(1 + d^2_X(x_0,x)) \dd\mu(x) = C_1(1 + \W_2^2(\delta_{x_0},\mu)) < +\infty,
    \end{align}
    and from this we see that:
    \begin{itemize}
        \item $[f](\mu)$ has finite second order moment $\W_2^2(\delta_{f(x_0)},[f](\mu))$, so that $[f]$ restricts as a map $\cP_2(X) \mapsto \cP_2(Y)$,
        \item $[f]$ has linear growth (as a map $\cP_2(X) \mapsto \cP_2(Y)$).
    \end{itemize}
    We now show the continuity of $[f]$ for the $\W_2$ topology. Let $(\mu_n)_n$ be a sequence in $\cP_2(X)$ converging to $\mu \in \cP_2(X)$, and let $\nu_n := [f](\mu_n)$, $\nu := [f](\mu)$. Let $\varphi : Y \mapsto \R$ be a continuous function such that $\forall y \in Y, |\varphi(y)| \leq C'(1 + d_Y^2(y,y_0))$ for some $C' > 0$ and $y_0 \in Y$. We may take $y_0$ to be arbitrary, so we can assume that $y_0 = f(x_0)$ for some $x_0 \in X$. Then $\varphi \circ f$ is a continuous function, such that for every $x \in X$,
    \begin{equation}
        |\varphi \circ f|(x) \leq C'(1 + d^2_Y(f(x_0),f(x))) \leq C'(1 + C^2(1 + d_X(x_0,x))^2) \leq C_2(1 + d^2_X(x_0,x))
    \end{equation}
    so that, by point \ref{enum:w2_convergence:4_quadratic_growth} of \Cref{th:convergence_in_w2_space}, 
    \begin{equation}
        \lim_{n \to +\infty} \int \varphi \dd\nu_n = \lim_{n \to +\infty} \int \varphi \circ f \dd\mu_n = \int \varphi \circ f \dd\mu = \int \varphi \dd\nu
    \end{equation}
    and this implies, again by point \ref{enum:w2_convergence:4_quadratic_growth} of \Cref{th:convergence_in_w2_space}, that $\nu_n \to \nu$ in the $\W_2$ topology. Thus $[f]$ is continuous for the $\W_2$ topology, and we have shown that the functor $\cP_2$ is well-defined. \newline
    Finally, assume that $f$ is Lipschitz, with Lipschitz constant $L$. If $\mu, \nu \in \cP_2(X)$, and $\gamma \in \Pi_o(\mu,\nu)$ is an optimal transport plan between these two measures, then $(f,f)_\#\gamma$ is a (not necessarily optimal) transport plan between $[f](\mu) = f_\#\mu$ and $[f](\nu) = f_\#\nu$. We then have
    \begin{align}
        \W^2_2([f](\mu),[f](\nu)) &\leq \int d^2_Y(x,y) \dd(f,f)_\#\gamma(x,y) = \int d^2_Y(f(x),f(y)) \dd\gamma(x,y) \\
        &\leq L^2 \int d^2_X(x,y)\dd\gamma(x,y) = L^2 \W_2^2(\mu,\nu)
    \end{align}
    so $\W_2([f](\mu),[f](\nu)) \leq L \W_2(\mu,\nu)$, and $[f]$ is a Lipschitz continuous map from $\cP_2(X)$ to $\cP_2(Y)$ with the same Lipschitz constant as $f$ (its Lipschitz constant is no smaller than $L$ since $W_2([f](\delta_x),[f](\delta_y)) = \W_2(\delta_{f(x)},\delta_{f(y)}) = d(f(x),f(y))$ and $W_2(\delta_x,\delta_y) = d(x,y)$ for any $x,y \in X$).
\end{proof}

Let $(X,d)$ be a Polish space, then it is clear that the map $\delta_X : X \mapsto \cP(X)$ is valued in $\cP_2(X)$, and is continuous with linear growth with respect to the $\W_2$ topology - in fact, it is even an isometry, as $\W_2(\delta_x,\delta_y) = d(x,y)$ for every $x,y \in X$. Moreover, since the $\W_2$ topology is stronger than the topology of weak convergence, the inclusion $\cP_2(X) \subseteq \cP(X)$ is continuous. 
\medbreak
More generally, for every $n \geq 0$, $\cPn{n}{X}$ identifies as subset of $\cPzn{n}{X}$. To show this, we first note that we can define a ``Wasserstein distance" $\W_2$ on $\cPzn{n}{X}$ inductively by defining it to be the square root of the optimal transport cost associated to the cost function $\W_2^2$ on $\cPzn{n-1}{X}$. By \Cref{sec:appendix:ot_solutions}, it is then a lower semicontinous map $\cPzn{n}{X} \times \cPzn{n}{X} \mapsto [0,+\infty]$ with respect to the topology of weak convergence (on the other hand, unlike in the case of $\cPn{n}{X}$, it does not define a distance on $\cPzn{n}{X}$ as it may be infinite). We then construct a continuous map $i_n : \cPn{n}{X} \mapsto \cPzn{n}{X}$ in the following way: we let $i_0 = \id_X$, and for every $n > 0$, we set $i_n$ to be the composition of the inclusion $\cP_2(\cPn{n-1}{X}) \subseteq \cP(\cPn{n-1}{X})$ (which is continuous as the $\W_2$-topology is stronger than the topology of weak convergence) and of the pushforward map $[i_{n-1}] : \cP(\cPn{n-1}{X}) \mapsto \cPzn{n}{X}$ (which is continuous as $i_{n-1}$ is continuous). In other words, a measure $\mu \in \cPn{n}{X}$ is seen through $i_n$ as a measure in $\cPzn{n}{X}$ which is concentrated on $\cPn{n-1}{X}$ and which has finite second order moment. We now prove that $i_n$ is injective, and identifies $\cPn{n}{X}$ to a measurable subset of $\cPzn{n}{X}$: 

\begin{proposition} \label{prop:p2n_embeds_into_pn}
    For every $n \geq 0$, the map $i_n : \cPn{n}{X} \mapsto \cPzn{n}{X}$ is injective. Moreover, its image is a measurable subset of $\cPzn{n}{X}$, and it admits a measurable left inverse $s_n : \cPzn{n}{X} \mapsto \cPn{n}{X}$ (that is, $s_n \circ i_n = \id_{\cPn{n}{X}}$). Moreover, it satisfies $\W_2(i_n(\mu),i_n(\nu)) = \W_2(\mu,\nu)$ for every $\mu, \nu \in \cP(\cPn{n}{X})$, and its image is given by
    \begin{equation} \label{eq:image_of_p2n_into_pn_embedding}
        i_n(\cPn{n}{X}) = \{\mu \in \cPzn{n}{X} \setcond \W_2(\mu,\delta^{(n)}_{x_0}) < +\infty \}
    \end{equation}
    where $x_0 \in X$ is arbitrary. 
\end{proposition}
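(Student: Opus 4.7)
The plan is to proceed by induction on $n$. The base case $n = 0$ is trivial since $i_0 = \id_X$: injectivity, the identity left inverse $s_0 = \id_X$, isometry, and the image description all hold tautologically (as $d(x, x_0) < +\infty$ for every $x \in X$). For the inductive step, I would assume the statement at rank $n-1$, so that $i_{n-1}$ is an injective isometry from $\cPn{n-1}{X}$ onto the Borel subset
\begin{equation*}
    E := \{\nu \in \cPzn{n-1}{X} \setcond \W_2(\nu, \delta^{(n-1)}_{x_0}) < +\infty\} \subseteq \cPzn{n-1}{X},
\end{equation*}
admitting a Borel measurable left inverse $s_{n-1}$.

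The first substep is to establish the isometry $\W_2(i_n(\mu), i_n(\nu)) = \W_2(\mu, \nu)$ for every $\mu, \nu \in \cPn{n}{X}$. Writing $i_n = [i_{n-1}] \circ \iota$ with $\iota$ the set-theoretic inclusion, the measure $i_n(\mu)$ is concentrated on $E$, and similarly for $i_n(\nu)$; hence every transport plan between $i_n(\mu)$ and $i_n(\nu)$ is concentrated on $E \times E$. The pushforward operations $(i_{n-1}, i_{n-1})_\#$ and $(s_{n-1}, s_{n-1})_\#$ then give mutually inverse bijections between the couplings of $(\mu, \nu)$ and those of $(i_n(\mu), i_n(\nu))$, and the inductive isometry ensures that the transport cost $\W_2^2$ is preserved by both correspondences. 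Taking infima yields the equality, and injectivity of $i_n$ follows as an immediate corollary.

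I would then turn to the image characterization. The inclusion $\subseteq$ follows at once from the isometry just shown, together with the identity $i_n(\delta^{(n)}_{x_0}) = \delta^{(n)}_{x_0}$ (a consequence of the naturality of $\delta$). For the reverse inclusion, I consider $\mu \in \cPzn{n}{X}$ with $\W_2(\mu, \delta^{(n)}_{x_0}) < +\infty$. Since the unique coupling with a Dirac is the product, the inductive definition of the Wasserstein distance gives
\begin{equation*}
    \W_2^2(\mu, \delta^{(n)}_{x_0}) = \int \W_2^2(\nu, \delta^{(n-1)}_{x_0}) \dd\mu(\nu),
\end{equation*}
so $\mu$ is concentrated on $E$ by the induction hypothesis. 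Setting $\tilde{\mu} := [s_{n-1}](\mu)$, the isometry of $s_{n-1}$ on $E$ shows $\tilde{\mu} \in \cP_2(\cPn{n-1}{X}) = \cPn{n}{X}$, and $i_n(\tilde{\mu}) = [i_{n-1} \circ s_{n-1}](\mu) = \mu$ because $i_{n-1} \circ s_{n-1}$ coincides with the identity on the concentration set of $\mu$. Borel measurability of the image is then automatic, since $\mu \mapsto \W_2(\mu, \delta^{(n)}_{x_0})$ is lower semicontinuous on $\cPzn{n}{X}$.

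Finally, I would define $s_n : \cPzn{n}{X} \to \cPn{n}{X}$ piecewise by $s_n(\mu) := [s_{n-1}](\mu)$ when $\W_2(\mu, \delta^{(n)}_{x_0}) < +\infty$ and $s_n(\mu) := \delta^{(n)}_{x_0}$ otherwise. The image analysis of the previous step ensures that $s_n$ lands in $\cPn{n}{X}$ and that $s_n \circ i_n = \id$. The main obstacle is verifying Borel measurability of $s_n$: this combines the lower semicontinuity of $\W_2(\cdot, \delta^{(n)}_{x_0})$ on $\cPzn{n}{X}$ with the standard fact that pushforward by a Borel map is Borel measurable between spaces of probability measures, but also requires an argument that the Borel $\sigma$-algebra on $\cPn{n-1}{X}$ induced by the $\W_2$-topology agrees with the trace of the weak Borel $\sigma$-algebra on $E \subseteq \cPzn{n-1}{X}$, so that $s_n$ may legitimately be viewed as Borel into the target $\cPn{n}{X}$ rather than merely into $\cPzn{n}{X}$.
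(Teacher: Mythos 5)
Your proof is correct and follows the same induction as the paper: base case $i_0 = \id_X$, isometry via transporting couplings through $(i_{n-1},i_{n-1})_\#$ and $(s_{n-1},s_{n-1})_\#$, the same characterization of the image as the finiteness set of $\W_2(\cdot,\delta^{(n)}_{x_0})$, and measurability of that set from lower semicontinuity. Two sub-steps diverge mildly from the paper. First, you deduce injectivity from the isometry (using that $\W_2$ is a genuine distance on $\cPn{n}{X}$), whereas the paper gets it directly from the inductive left inverse via $\bP = [s_{n-1}]([i_{n-1}](\bP))$; both are fine. Second, for the measurable left inverse, you give the explicit formula $s_n = [s_{n-1}]$ on the image and a constant outside, and establish measurability through the Borel-ness of pushforward by a Borel map, whereas the paper shows instead that images of closed $\W_2$-balls under $i_n$ are measurable (again by lower semicontinuity of $\W_2$) and concludes that $i_n^{-1}$ is measurable on the image. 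The $\sigma$-algebra compatibility you flag but do not prove is indeed available at that stage: by the induction hypothesis $i_{n-1}$ is continuous with a Borel left inverse, hence a Borel isomorphism onto $E$, which is exactly the statement that the $\W_2$-Borel $\sigma$-algebra of $\cPn{n-1}{X}$ is carried to the trace of the weak Borel $\sigma$-algebra on $E$; combined with the fact (recalled in the paper's appendix) that $\cB(\cP_2(Y)) = \cB(\cP(Y))_{|\cP_2(Y)}$, your argument closes. Your version has the advantage of an explicit formula for $s_n$; the paper's avoids invoking measurability of pushforward by merely Borel (non-continuous) maps.
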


\begin{proof}
    We show this by induction. For $n = 0$, this is trivial since $i_0 = \id_X$. Fix $n > 0$ and assume the proposition holds for $n-1$. We first prove that $i_n$ is injective. For this, it suffices to show that $[i_{n-1}]$ is injective. Let $s_{n-1} : \cPzn{n}{X} \mapsto \cPn{n}{X}$ be the measurable left inverse of $i_{n-1}$. Then, for every $\bP,\bQ \in \cP(\cPn{n-1}{X})$ such that $[i_{n-1}](\bP) = [i_{n-1}](\bQ)$, since $s_{n-1} \circ i_{n-1}$ is the identity, we have $\bP = [s_{n-1}]([i_{n-1}](\bP)) = [s_{n-1}]([i_{n-1}](\bQ)) = \bQ$. Therefore $[i_{n-1}]$ is injective, and so is $i_n$. \newline
    We now want to prove that $\W_2(i_n(\bP),i_n(\bP)) = \W_2(\bP,\bQ)$ for every $\bP,\bQ \in \cPn{n}{X}$. In fact, we will prove something stronger: that $\W_2([i_{n-1}](\bP),[i_{n-1}](\bP)) = \W_2(\bP,\bQ)$ for every $\bP, \bQ \in \cP(\cPn{n-1}{X})$. Fix $\bP, \bQ \in \cP(\cPn{n-1}{X})$. Let $\bGamma \in \Pi_o(\bP,\bQ)$ be an optimal transport plan between them. Then $(i_{n-1},i_{n-1})_\#\bGamma$ is a (not necessarily optimal) transport plan between $[i_{n-1}](\bP)$ and $[i_{n-1}](\bQ)$ so that, using the induction hypothesis on $i_{n-1}$,
    \begin{equation}
        \W_2^2([i_{n-1}](\bP),[i_{n-1}](\bQ)) \leq \int \W_2^2(i_{n-1}(\mu),i_{n-1}(\nu)) \dd\bGamma(\mu,\nu) = \int \W_2^2(\mu,\nu) \dd\bGamma(\mu,\nu) = \W_2^2(\bP, \bQ). 
    \end{equation}
    Conversely, let $\tilde{\bGamma} \in \Pi([i_{n-1}](\bP), [i_{n-1}](\bQ))$ be an optimal transport plan (for the cost $\W_2^2$ on $\cPzn{n-1}{X}$) between $[i_{n-1}](\bP)$ and $[i_{n-1}](\bQ)$, and let $\bGamma := (s_{n-1},s_{n-1})_\#\tilde{\bGamma}$. Then $\bGamma$ is a (not necessarily optimal) transport plan between $\bP$ and $\bQ$. Indeed, we have $\pi_{1\#}\bGamma = s_{n-1\#}\pi_{1\#}\tilde{\bGamma} = s_{n-1\#}i_{n-1\#}\bP = \bP$ (as $s_{n-1} \circ i_{n-1}$ is the identity) and similarly $\pi_{2\#}\bGamma = \bQ$. Therefore, we have
    \begin{align}
        \W_2^2(\bP,\bQ) &\leq \int \W_2^2(\mu,\nu) \dd\bGamma(\mu,\nu) = \int \W_2^2(s_{n-1}(\mu),s_{n-1}(\nu)) \dd\tilde{\bGamma}(\mu,\nu) \\
        &= \int \W_2^2(\mu,\nu) \dd\tilde{\bGamma}(\mu,\nu) = \W_2^2([i_{n-1}](\bP), [i_{n-1}](\bQ)). 
    \end{align}
    where the second line comes from the fact that $\tilde{\bGamma}$ is concentrated on $i_{n-1}(\cPn{n-1}{X}) \times i_{n-1}(\cPn{n-1}{X})$, so that for $\tilde{\bGamma}$-a.e. $(\mu,\nu)$, it holds $\W_2(s_{n-1}(\mu),s_{n-1}(\nu)) = \W_2(i^{-1}_{n-1}(\mu),i^{-1}_{n-1}(\nu)) = \W_2(\mu,\nu)$. We have thus shown $\W_2(\bP,\bQ) = \W_2([i_{n-1}](\bP), [i_{n-1}](\bQ))$. \newline
    Now fix $x_0 \in X$, and set $F$ to be the set of $\bP \in \cPzn{n}{X}$ such that $\W_2(\bP,\delta_{x_0}^{(n)}) < +\infty$. Since $i_n(\delta^{(n)}_{x_0}) = \delta^{(n)}_{x_0}$ (this is straightforward to check by induction\footnote{Note that the notation $\delta_{x_0}^{(n)}$ refers here to a priori two different inductively defined Dirac measures, one in the space $\cPzn{n}{X}$ and the other in the space $\cPn{n}{X}$.}), we have for every $\bP \in \cPn{n}{X}$ that $\W_2(i_n(\bP),\delta_{x_0}^{(n)}) = \W_2(\bP,\delta_{x_0}^{(n)}) < +\infty$, so that $i_n(\bP) \in F$. Conversely, if $\bP \in F$, then
    \begin{equation}
        \W_2^2(\bP, \delta_{x_0}^{(n)}) = \int \W_2^2(\mu, \delta^{(n-1)}_{x_0}) \dd\bP(\mu) < +\infty,
    \end{equation}
    so that $\W_2(\mu,\delta^{(n-1)}_{x_0}) < +\infty$ for $\bP$-a.e. $\mu$. In particular, by the induction hypothesis, $\bP$ is concentrated on the image of $i_{n-1}$, so that $\tilde{\bP} := s_{n-1\#}\bP$ is an element of $\cP(\cPn{n-1}{X})$ such that $[i_{n-1}](\bP) = (i_{n-1} \circ s_{n-1})_\#\tilde{\bP} = \tilde{\bP}$ (as $i_{n-1} \circ s_{n-1}$ is the identity on the image of $i_{n-1}$, on which $\tilde{\bP}$ is concentrated). Moreover, we have $\W_2(\tilde{\bP},\delta^{(n)}_{x_0}) = \W_2([i_{n-1}](\tilde{\bP}), \delta^{(n)}_{x_0}) < +\infty$ as $[i_{n-1}]$ preserves the $\W_2$ ``distance", so that $\tilde{\bP}$ has finite second order moment and is thus an element of $\cPn{n}{X}$. Thus $\bP = i_n(\tilde{\bP})$, and $\bP$ is in the image of $i_n$. We have thus shown that $F$ is the image of $i_n$. Note finally that the set $F$ is measurable as it is the set where the lower semicontinuous function $\W_2(\cdot,\delta^{(n)}_{x_0})$ on $\cPzn{n}{X}$ is finite. \newline
    Now, notice that since $\W_2(i_n(\bP),i_n(\bP)) = \W_2(\bP,\bQ)$ for every $\bP,\bQ \in \cPn{n}{X}$, the image by $i_n$ of any closed ball $B(\bP,R)$ in $\cPn{n}{X}$ for the $\W_2$ distance is
    \begin{equation}
        i_n(B(\bP,R)) = \{\bQ \in \cPzn{n}{X} \setcond \W_2(\bQ,i_n(\bP)) \leq R, \W_2(\bQ, \delta_{x_0}^{(n)}) < +\infty \}
    \end{equation}
    which is measurable as $\W_2$ is lower semicontinuous on $\cPzn{n}{X}$. In particular, the inverse map $i_n^{-1} : i_n(\cPn{n}{X}) \mapsto \cPn{n}{X}$ is measurable, and we can extend it to a measurable left inverse $s_n : \cPzn{n}{X} \mapsto \cPn{n}{X}$ of $i_n$ by setting $s_n$ to be an arbitrary constant outside $i_n(\cPn{n}{X})$. This finishes the proof.
\end{proof}

Therefore, in the following, we will always identify $\cPn{n}{X}$ to a subset of $\cPzn{n}{X}$ through $i_n$. Note that \Cref{prop:p2n_embeds_into_pn} implies that the Borel $\sigma$-algebra on $\cPn{n}{X}$ induced by the $\W_2$ topology coincides with the restriction of the Borel $\sigma$-algebra on $\cPzn{n}{X}$ induced by the topology of weak convergence. Therefore, there will be no ambiguity when referring to ``the" Borel $\sigma$-algebra of $\cPn{n}{X}$.

\begin{remark}
    Another consequence of \Cref{prop:p2n_embeds_into_pn} is that $\cPn{n}{X}$ is a $F_\sigma$-set of $\cPzn{n}{X}$, that is, a countable union of closed sets: indeed, by \eqref{eq:image_of_p2n_into_pn_embedding}, we have for every $x_0 \in X$
    \begin{equation}
        \cPn{n}{X} = \bigcup_{M = 1}^{+\infty} \{\bP \in \cPzn{n}{X} \setcond \W_2(\bP,\delta_{x_0}^{(n)}) \leq M\},
    \end{equation}
    so that $\cPn{n}{X}$ is a countable union of sublevel sets of the lower semicontinuous function $\W_2(\cdot,\delta^{(n)}_{x_0})$ on $\cPzn{n}{X}$.
\end{remark}

\begin{proposition} \label{prop:collapse_restricts_to_p2}
    Let $(X,d)$ be a topological space. Then the map $\cE_X : \cPzn{2}{X} \mapsto \cP(X)$ restricts to a map $\cE_X : \cPn{2}{X} \mapsto \cP_2(X)$ which is continuous with linear growth with respect to the $\W_2$ topology. In particular, for every $\bP \in \cPn{2}{X}$ and $x_0 \in X$, it holds $\W_2(\delta_{x_0}, \cE_X(\bP)) = \W_2(\delta^{(2)}_{x_0}, \bP)$.
\end{proposition}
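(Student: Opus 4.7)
The plan is to first establish the key identity $\W_2^2(\delta_{x_0}, \cE_X(\bP)) = \W_2^2(\delta_{x_0}^{(2)}, \bP)$ by direct computation, then use it to extract both the linear growth and the $\W_2$-continuity of the restricted map.

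First I would apply the defining property of $\cE_X$ to the nonnegative Borel function $x \mapsto d^2(x_0,x)$. This gives
\begin{equation}
    \int d^2(x_0,x) \dd\cE_X(\bP)(x) = \iint d^2(x_0,x) \dd\mu(x) \dd\bP(\mu) = \int \W_2^2(\delta_{x_0},\mu) \dd\bP(\mu) = \W_2^2(\delta_{x_0}^{(2)}, \bP),
\end{equation}
where the middle equality uses that $\int d^2(x_0,x)\dd\mu(x) = \W_2^2(\delta_{x_0},\mu)$ and the last equality uses the definition of the $\W_2$ distance on $\cPn{2}{X}$ combined with the fact that $\delta_{x_0}^{(2)} = \delta_{\delta_{x_0}}$. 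The left-hand side equals $\W_2^2(\delta_{x_0}, \cE_X(\bP))$, so the identity follows. In particular, if $\bP \in \cPn{2}{X}$ then $\W_2(\delta_{x_0}^{(2)}, \bP) < +\infty$, so $\cE_X(\bP) \in \cP_2(X)$, and the restriction is well-defined. The linear growth is an immediate consequence: for any $\bP_0, \bP \in \cPn{2}{X}$, picking $x_0 \in X$ arbitrarily and applying the triangle inequality yields $\W_2(\delta_{x_0}, \cE_X(\bP)) \leq \W_2(\delta_{x_0}, \cE_X(\bP_0)) + \W_2(\delta_{x_0}^{(2)}, \bP_0) + \W_2(\bP_0, \bP)$.

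For the continuity, let $\bP_n \to \bP$ in $\cPn{2}{X}$ for the $\W_2$ topology. Since the $\W_2$ topology on $\cPn{2}{X}$ is stronger than the topology of weak convergence and since $\cE_X : \cPzn{2}{X} \mapsto \cP(X)$ is continuous for the weak topologies (as recalled in the paragraph defining $\cE$ as a natural transformation), we deduce that $\cE_X(\bP_n) \to \cE_X(\bP)$ weakly in $\cP(X)$. Moreover, the identity proved above together with the fact that $\W_2$ convergence of $\bP_n$ to $\bP$ entails convergence of the second-order moments $\W_2^2(\delta_{x_0}^{(2)}, \bP_n) \to \W_2^2(\delta_{x_0}^{(2)}, \bP)$ yields
\begin{equation}
    \int d^2(x_0,x) \dd\cE_X(\bP_n)(x) \xrightarrow[n \to \infty]{} \int d^2(x_0,x) \dd\cE_X(\bP)(x).
\end{equation}
By the standard characterization of $\W_2$ convergence (the point of \Cref{th:convergence_in_w2_space} on convergence of quadratic moments combined with weak convergence), this is enough to conclude that $\cE_X(\bP_n) \to \cE_X(\bP)$ in $\cP_2(X)$ for the $\W_2$ topology.

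The main obstacle I anticipate is essentially bookkeeping: making sure that the definition of $\W_2$ on $\cPn{2}{X}$ (which integrates $\W_2^2$ on $\cP_2(X)$ against an optimal plan, but since one marginal is a Dirac collapses to $\int \W_2^2(\delta_{x_0}, \mu) \dd\bP(\mu)$) aligns properly with $\W_2^2(\delta_{x_0}^{(2)}, \bP)$, and that the weak continuity of $\cE_X$ on $\cPzn{2}{X}$ really is at our disposal at this point in the paper. Both are already taken care of in the preliminaries, so the argument is essentially a two-line computation plus an invocation of the characterization of $\W_2$-convergence.
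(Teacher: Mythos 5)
Your proposal is correct and follows essentially the same route as the paper: establish the identity $\W_2^2(\delta_{x_0},\cE_X(\bP)) = \W_2^2(\delta^{(2)}_{x_0},\bP)$ by applying the defining property of $\cE_X$ to $x \mapsto d^2(x_0,x)$, read off finiteness of the second moment and linear growth, then combine weak continuity of $\cE_X$ with convergence of second moments and the characterization of $\W_2$-convergence. No gaps.
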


\begin{proof}
    First, if $\bP \in \cPn{2}{X}$, then for every $x_0 \in X$, we indeed have $\W_2(\delta_{x_0}, \cE_X(\bP)) = \W_2(\delta^{(2)}_{x_0}, \bP)$ since
    \begin{align}
        \W_2^2(\delta_{x_0}, \cE_X(\bP)) &= \int d^2(x_0,x) \dd(\cE_X(\bP))(x) = \iint d^2(x_0,x) \dd\mu(x) \dd\bP(\mu) \\
        &= \int \W_2^2(\delta_{x_0},\mu) \dd\bP(\mu) \\
        &= \W_2^2(\delta^{(2)}_{x_0},\bP) < +\infty. \label{eq:l_149}
    \end{align}
    From this we immediately deduce that $\cE_X(\bP)$ has finite second order moment, and that $\cE_X$ restricts to a map $\cP_2^{(2)}(X) \mapsto \cP_2(X)$ which has linear growth. All we have left to prove is that it is continuous with respect to the $\W_2$ topology. Let $(\bP_n)_n$ be a sequence in $\cPn{2}{X}$ converging to $\bP \in \cPn{2}{X}$ in the $\W_2$ topology. Then $\bP_n \rightharpoonup \bP$, so that, since $\cE_X$ is known to be continuous for the topology of weak convergence, $\cE_X(\bP_n) \rightharpoonup \cE_X(\bP)$. Moreover, we have using \eqref{eq:l_149} that for every $n$,
    \begin{equation}
        \W_2^2(\delta_{x_0}, \cE_X(\bP_n)) = \W_2^2(\delta^{(2)}_{x_0},\bP_n) \xrightarrow[n \to +\infty]{} \W_2^2(\delta^{(2)}_{x_0},\bP) = \W_2^2(\delta_{x_0}, \cE_X(\bP)),
    \end{equation}
    so using point \ref{enum:w2_convergence:1_2nd_moment_cvg} of \Cref{th:convergence_in_w2_space}, we conclude that $\cE_X(\bP_n) \mapsto \cE_X(\bP)$ in the $\W_2$ topology. This finishes the proof.
\end{proof}

These considerations allow us to also see $\delta$ and $\cE$ as natural transformations $\delta : \Id_{\Pold} \mapsto \cP_2$ and $\cE : \cP_2 \circ \cP_2 \mapsto \cP_2$, where for every Polish space $(X,d)$, $\delta_{(X,d)} := (\delta_X)_{|\cP_2(X)}$ and $\cE_{(X,d)} := (\cE_X)_{|\cP_2(X)}$ (when $d$ is not ambiguous, we will therefore note $\cE_{(X,d)} = \cE_X$ and $\delta_{(X,d)} = \delta_X$). 

\begin{remark}
    The triple $(\cP_2,\delta,\cE)$ can also easily be verified to be a monad. It may be called the \emph{Wasserstein monad} on $\Pold$.
\end{remark}

\subsection{Integration of hierarchical measures}

Let again $(X,d)$ be a Polish space. If $n \geq 1$, we define the continuous map $\cE_X^{(n)} : \cPzn{n}{X} \mapsto \cP(X)$ by $\cE_X^{(1)} = \id_{\cP(X)}$ and $\cE_X^{(n)} := \cE_X \circ \cE_{\cP(X)} \circ \ldots \circ \cE_{\cPzn{n-2}{X}}$ for $n \geq 2$. For every $\bP \in \cPzn{n}{X}$, $\tilde{\bP} := \cE_X^{(n)}(\bP)$ is then the unique probability measure such that
\begin{equation} \label{eq:total_collapse_expression}
    \int f(x) \dd\tilde{\bP}(x) = \iint \ldots \iint f(x) \dd\mu(x) \dd\mu^{2}(\mu) \ldots \dd\mu^{n-1}(\mu^{n-2}) \dd\bP(\mu^{n-1}).
\end{equation}
for every $f : X \mapsto \R_+$ nonnegative Borel measurable. We also define $\cE_X^{(0)} = \delta_X$. Applying inductively \Cref{prop:collapse_restricts_to_p2}, it is clear that for every $n \geq 0$, the map $\cE_X^{(n)}$ restricts to a map $\cPn{n}{X} \mapsto \cP_2(X)$ which is continuous in the $\W_2$ topology, and such that
\begin{equation} \label{eq:total_collapse_preserve_2nd_moment}
    \W_2(\delta^{(n)}_{x_0}, \mu) = \W_2^2(\delta_{x_0}, \cE_X^{(n)}(\mu)), \quad \forall n \geq 0, \forall \mu \in \cPn{n}{X}.
\end{equation}

\begin{definition}
    Let $n \geq 0$ and $\mu \in \cPzn{n}{X}$. Let $\tilde{\mu} := \cE^{(n)}_X(\mu)$. If $f : X \mapsto \R$ is Borel measurable and either nonnegative or $\tilde{\mu}$-integrable, we define its \emph{$n$-expectancy with respect to $\mu$} as
    \begin{equation}
        \bE^{(n)}_\mu[f] := \int f(x) \dd\tilde{\mu}(x).
    \end{equation}
\end{definition}

In particular, $\bE^{(1)}_\mu[f]$ is the usual expectancy $\bE_\mu[f]$ of $f$ over $\mu$, and $\bE^{(0)}_x[f] = f(x)$ is just the evaluation map at $x \in X$. Moreover, by naturality of $\cE$ (and $\delta$), for every morphism $g : X \mapsto Y$ in $\Pol$, we have $[g] \circ \cE_X^{(n)} = \cE_Y^{(n)} \circ [g]^{(n)}$, so that for every $\mu \in \cP(X)$ and $\nu := [g]^{(n)}(\mu) \in \cP(Y)$, we have $\bE^{(n)}_\nu[f] = \bE^{(n)}_\mu[f \circ g]$ whenever $f : Y \mapsto \R$ is measurable and either nonnegative or $\cE^{(n)}_Y(\nu)$-integrable. Indeed
\begin{equation} \label{eq:n_expectancy_of_pushforward}
    \bE^{(n)}_\nu[f] = \int f \dd(\cE_Y^{(n)}(\nu)) = \int f \dd([g](\cE_X^{(n)}(\mu))) = \int f \circ g \dd\cE_X^{(n)}(\mu) = \bE^{(n)}_\mu[f \circ g].
\end{equation}
Moreover, the $n$-expectancy satisfies a recurrence relation on $n$. Indeed, if $n \geq 1$ and $\bP \in \cPzn{n}{X}$, it is clear from the expression of $\tilde{\bP} = \cE_X^{(n)}(\bP)$ given by \eqref{eq:total_collapse_expression} that if $f : X \mapsto \R$ is a measurable function which is either nonnegative or $\tilde{\bP}$-integrable, then the function $\mu \in \cPzn{n-1}{X} \mapsto \bE^{(n-1)}_\mu[f]$ is well-defined, measurable, respectively nonnegative or $\bP$-integrable, and such that
\begin{equation}
    \bE^{(n)}_\bP[f] = \int \bE^{(n-1)}_\mu[f] \dd\bP(\mu).
\end{equation}
We have the following results on the regularity of the $n$-expectancy:
\begin{lemma} \label{lemma:n_expectancy_regularity}
    The following hold:
    \begin{enumerate}
        \item \label{enum:n_expect_reg:1} If $f \in C_b(X)$ is continuous and bounded, then for every $n \geq 0$, the map $\mu \in \cPzn{n}{X} \mapsto \bE^{(n)}_\mu[f]$ is continuous and bounded, with the same bound as $f$.
        \item \label{enum:n_expect_reg:2} If $f : X \mapsto \R$ is nonnegative and lower semicontinuous, then for every $n \geq 0$, the function $\mu \in \cPzn{n}{X} \mapsto \bE^{(n)}_\mu[f]$ is nonnegative and lower semicontinuous.
        \item \label{enum:n_expect_reg:3} If $f \in C(X)$ is a continuous function, such that $|f(x)| \leq C_1 + C_2 d^2(x,x_0)$ for every $x \in X$ for some $C_1, C_2 \geq 0$ and $x_0 \in X$, then for every $n \geq 0$, the function $\mu \in \cPn{n}{X} \mapsto \bE^{(n)}_\mu[f]$ is continuous (for the $\W_2$ topology), and such that $|\bE^{(n)}_\mu[f]| \leq C_1 + C_2 \W_2^2(\mu,\delta^{(n)}_{x_0})$ for every $\mu \in \cPn{n}{X}$.
    \end{enumerate}
\end{lemma}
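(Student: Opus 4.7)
All three claims will be proved simultaneously by induction on $n$, using the recurrence
\begin{equation*}
    \bE^{(n)}_\bP[f] = \int \bE^{(n-1)}_\mu[f]\dd\bP(\mu), \qquad \bP\in\cPzn{n}{X},
\end{equation*}
together with well-known results about weak convergence and the characterization of $\W_2$-convergence used in the preceding proof (point \ref{enum:w2_convergence:4_quadratic_growth} of \Cref{th:convergence_in_w2_space} and the Portmanteau theorem for lsc functions).

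\emph{Base case $n=0$.} All three statements are immediate: $\bE^{(0)}_x[f] = f(x)$, and $\cPzn{0}{X} = \cPn{0}{X} = X$ with its original topology (which coincides with what the $\W_2$-distance $d$ would induce, with $\W_2(\delta_{x_0},x) = d(x_0,x)$).

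\emph{Induction for (\ref{enum:n_expect_reg:1}).} Assume the statement at level $n-1$. Let $f\in C_b(X)$ with $|f|\leq M$, and set $g(\mu) := \bE^{(n-1)}_\mu[f]$ for $\mu\in\cPzn{n-1}{X}$. By the induction hypothesis, $g$ is continuous and satisfies $|g|\leq M$. Since $\bE^{(n)}_\bP[f] = \int g\dd\bP$, the map $\bP\mapsto \bE^{(n)}_\bP[f]$ is by definition of the topology of weak convergence on $\cPzn{n}{X}$ continuous and bounded by $M$.

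\emph{Induction for (\ref{enum:n_expect_reg:2}).} Assume the statement at level $n-1$. If $f$ is nonnegative lsc, then $g(\mu) := \bE^{(n-1)}_\mu[f]$ is nonnegative lsc on $\cPzn{n-1}{X}$ by the induction hypothesis. By the Portmanteau theorem (valid on any Polish space), integration of a nonnegative lsc function against a sequence of probability measures $\bP_k\rightharpoonup\bP$ satisfies
\begin{equation*}
    \liminf_{k\to+\infty}\int g\dd\bP_k \geq \int g\dd\bP,
\end{equation*}
so $\bP\mapsto\bE^{(n)}_\bP[f] = \int g\dd\bP$ is nonnegative lsc on $\cPzn{n}{X}$.

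\emph{Induction for (\ref{enum:n_expect_reg:3}).} This is the main point and will make essential use of point \ref{enum:w2_convergence:4_quadratic_growth} of \Cref{th:convergence_in_w2_space} (the characterization of $\W_2$-convergence by convergence of integrals of continuous functions with quadratic growth). Assume the statement at level $n-1$. Let $f\in C(X)$ with $|f(x)|\leq C_1 + C_2 d^2(x,x_0)$. By the inductive hypothesis, $g(\mu) := \bE^{(n-1)}_\mu[f]$ is continuous on $\cPn{n-1}{X}$ and
\begin{equation*}
    |g(\mu)| \leq C_1 + C_2 \W_2^2(\mu,\delta^{(n-1)}_{x_0}), \qquad \forall\mu\in\cPn{n-1}{X}.
\end{equation*}
The growth bound at level $n$ is immediate: integrating this inequality against $\bP\in\cPn{n}{X}$ and using the definition of $\W_2$ on $\cPn{n}{X}$ yields $|\bE^{(n)}_\bP[f]|\leq C_1 + C_2\W_2^2(\bP,\delta^{(n)}_{x_0})$. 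For the continuity, let $(\bP_k)_k$ converge to $\bP$ in $\cPn{n}{X}$ for the $\W_2$ topology. The function $g$ is continuous with quadratic growth with respect to the base point $\delta^{(n-1)}_{x_0}$ for the $\W_2$ distance on $\cPn{n-1}{X}$, so point \ref{enum:w2_convergence:4_quadratic_growth} of \Cref{th:convergence_in_w2_space} applied in the space $\cPn{n-1}{X}$ gives $\int g\dd\bP_k \to \int g\dd\bP$, which is precisely $\bE^{(n)}_{\bP_k}[f]\to\bE^{(n)}_\bP[f]$.

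\emph{Main obstacle.} The routine parts are (\ref{enum:n_expect_reg:1}) and (\ref{enum:n_expect_reg:2}), which reduce directly to standard properties of weak convergence. The delicate point is (\ref{enum:n_expect_reg:3}): the induction hypothesis gives continuity of $g$ only on $\cPn{n-1}{X}$ with the $\W_2$ topology (not on all of $\cPzn{n-1}{X}$), and $\W_2$-convergence is strictly stronger than weak convergence. The key is that point \ref{enum:w2_convergence:4_quadratic_growth} of \Cref{th:convergence_in_w2_space} is tailored exactly to this situation: on the Polish space $\cPn{n-1}{X}$, $\W_2$-convergence of a sequence $\bP_k\to\bP$ is equivalent to weak convergence plus convergence of second moments, and this in turn is equivalent to convergence of integrals of any continuous function with quadratic growth with respect to the ambient distance $\W_2$. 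Since $g$ is such a function by the inductive hypothesis, this closes the induction.
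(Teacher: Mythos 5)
Your proof is correct. For points \ref{enum:n_expect_reg:1} and \ref{enum:n_expect_reg:2} your explicit induction matches what the paper leaves implicit (it simply invokes the standard properties of weak convergence recalled in \Cref{sec:appendix:weak_topology}). For point \ref{enum:n_expect_reg:3}, however, your route differs from the paper's in a genuine way: you run the induction level by level, using the induction hypothesis to certify that $g(\mu) = \bE^{(n-1)}_\mu[f]$ is continuous with quadratic growth on $(\cPn{n-1}{X},\W_2)$ and then applying point \ref{enum:w2_convergence:4_quadratic_growth} of \Cref{th:convergence_in_w2_space} in that space to pass to level $n$. The paper instead avoids any induction on point \ref{enum:n_expect_reg:3} by factoring through the total collapse map: it writes $\bE^{(n)}_\mu[f] = \int f \dd\cE_X^{(n)}(\mu)$ and composes the single-level continuity of $\nu \in \cP_2(X) \mapsto \int f\dd\nu$ with the $\W_2$-continuity of $\cE_X^{(n)} : \cPn{n}{X} \mapsto \cP_2(X)$, which was established beforehand (\Cref{prop:collapse_restricts_to_p2} applied inductively), and then gets the growth bound from \eqref{eq:total_collapse_preserve_2nd_moment}. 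Your argument is more self-contained, needing only \Cref{th:convergence_in_w2_space}; the paper's is shorter because it reuses the collapse-map machinery it has already built. Both ultimately rest on the same characterization of $\W_2$-convergence, and your identity $\W_2^2(\bP,\delta^{(n)}_{x_0}) = \int \W_2^2(\mu,\delta^{(n-1)}_{x_0})\dd\bP(\mu)$ used for the growth bound is the correct elementary substitute for \eqref{eq:total_collapse_preserve_2nd_moment}.
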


\begin{proof}
    The case $n = 0$ is trivial since $\cPzn{0}{X} = X$ and $\bE_x^{(0)}[\cdot]$ is simply the evaluation at $x$. Moreover, the points \ref{enum:n_expect_reg:1} and \ref{enum:n_expect_reg:2} follow immediately from the discussion of the properties of weak convergence in \Cref{sec:appendix:weak_topology}. Now, let $f \in C(X)$ be such that there exists $x_0 \in X$ and $C_1, C_2 \geq 0$ for which $|f(x)| \leq C_1 + C_2 d^2(x,x_0)$ for every $x \in X$. Therefore, by point \ref{enum:w2_convergence:4_quadratic_growth} of \Cref{th:convergence_in_w2_space}, the map $\mu \in \cP_2(X) \mapsto \int f \dd\mu$ is continuous in the $\W_2$ topology. But since for every $\mu \in \cPn{n}{X}$, $\bE^{(n)}_\mu[f] = \int f \dd\cE_X^{(n)}(\mu)$, where the map $\cE_X^{(n)} : \cPn{n}{X} \mapsto \cP_2(X)$ is continuous in the $\W_2$ topology, the map $\mu \in \cPn{n}{X} \mapsto \bE^{(n)}_\mu[f]$ is also continuous in the $\W_2$ topology. Furthermore, for every $\mu \in \cPn{n}{X}$, we have
    \begin{align}
        |\bE^{(n)}_\mu[f]| &= \left|\int f \dd\cE_X^{(n)}(\mu) \right| \leq \int |f| \dd\cE_X^{(n)}(\mu) \leq C_1 + C_2 \int d^2(x,x_0) \dd\cE_X^{(n)}(\mu) \\
        &\leq C_1 + C_2 \W_2^2(\delta_{x_0}, \cE_X^{(n)}(\mu)) = C_1 + C_2 \W_2^2(\delta^{(n)}_{x_0}, \mu)
    \end{align}
    where we used \eqref{eq:total_collapse_preserve_2nd_moment} to obtain the last equality. This finishes proving point \ref{enum:n_expect_reg:3}.
\end{proof}

\begin{lemma} \label{lemma:n_holder_ineq} (Cauchy-Schwarz and Hölder's inequality) 
    Let $f, g : X \mapsto \R$ be two measurable functions and $1 < p,q < +\infty$ such that $1/p + 1/q = 1$, then for any $n \geq 0$ and $\mu \in \cPn{n}{X}$,
    \begin{equation}
        \bE^{(n)}_\mu[|fg|] \leq \bE^{(n)}_\mu[|f|^p]^{1/p} \bE^{(n)}_\mu[|g|^q]^{1/q}.
    \end{equation}
\end{lemma}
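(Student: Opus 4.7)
The plan is to reduce the statement to the classical Hölder inequality on a single probability space, exploiting the definition of the $n$-expectancy as an integral against the measure $\tilde{\mu} := \cE_X^{(n)}(\mu) \in \cP(X)$.

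First, I would recall that by definition of $\bE^{(n)}_\mu$, for any nonnegative Borel measurable function $h : X \to \R_+$ we have
\begin{equation}
    \bE^{(n)}_\mu[h] = \int h(x) \dd\tilde{\mu}(x),
\end{equation}
where $\tilde{\mu} = \cE^{(n)}_X(\mu)$ is a genuine probability measure on $X$. In particular, applying this to $h = |fg|$, $h = |f|^p$, and $h = |g|^q$ turns the quantities appearing in the statement into the ordinary integrals $\int |fg| \dd\tilde{\mu}$, $\int |f|^p \dd\tilde{\mu}$, and $\int |g|^q \dd\tilde{\mu}$ against the single probability measure $\tilde{\mu}$.

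Then I would invoke the classical Hölder inequality on the probability space $(X, \tilde{\mu})$: for $1 < p,q < +\infty$ with $1/p + 1/q = 1$ and any measurable $f,g : X \to \R$,
\begin{equation}
    \int |fg| \dd\tilde{\mu} \leq \left(\int |f|^p \dd\tilde{\mu}\right)^{1/p} \left(\int |g|^q \dd\tilde{\mu}\right)^{1/q}.
\end{equation}
Rewriting each side via the $n$-expectancy gives exactly the claimed inequality. The Cauchy--Schwarz case is just the specialization $p = q = 2$.

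There is essentially no obstacle here: the only thing to check is that the reduction to a single integral against $\tilde{\mu}$ is legitimate, which is guaranteed by the definition of $\bE^{(n)}_\mu$ via $\cE^{(n)}_X$ (formula \eqref{eq:total_collapse_expression}) together with the fact that $\tilde{\mu} \in \cP(X)$ is a bona fide probability measure, so that the classical $L^p$--$L^q$ duality applies verbatim.
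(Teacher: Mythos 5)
Your proof is correct and follows exactly the paper's argument: the paper also reduces the $n$-expectancy to an ordinary integral against $\cE_X^{(n)}(\mu)$ and applies the classical H\"older inequality there. Nothing further is needed.
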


\begin{proof}
    This is simply a consequence of the definition of $\bE_\mu^{(n)}[\cdot]$ and of Hölder's inequality applied to $\cE_X^{(n)}(\mu)$.
\end{proof}

\begin{lemma} \label{lemma:n_minkowsky_ineq} (Minkowski inequality) 
    Let $f, g : X \mapsto \R$ be two measurable functions and $1 \leq p < +\infty$, then for any $n \geq 0$ and $\mu \in \cPn{n}{X}$,
    \begin{equation}
        \bE^{(n)}_\mu[|f+g|^p]^{1/p} \leq \bE^{(n)}_\mu[|f|]^{1/p} + \bE^{(n)}_\mu[|g|]^{1/p}.
    \end{equation}
\end{lemma}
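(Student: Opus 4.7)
The plan is essentially identical to the one for the preceding Hölder/Cauchy–Schwarz lemma: reduce the hierarchical inequality to the classical one via the total-collapse operator $\cE_X^{(n)}$. Recall that, by definition, for every Borel measurable $h : X \to \R$ which is either nonnegative or $\tilde{\mu}$-integrable (where $\tilde{\mu} := \cE_X^{(n)}(\mu) \in \cP(X)$), one has $\bE^{(n)}_\mu[h] = \int h \dd\tilde{\mu}$. Thus the $n$-expectancy is nothing but the ordinary integral with respect to the single probability measure $\tilde{\mu}$.

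First I would apply this observation to the three functions $|f+g|^p$, $|f|^p$ and $|g|^p$, obtaining
\begin{equation}
    \bE^{(n)}_\mu[|f+g|^p] = \int |f+g|^p \dd\tilde{\mu}, \qquad \bE^{(n)}_\mu[|f|^p] = \int |f|^p \dd\tilde{\mu}, \qquad \bE^{(n)}_\mu[|g|^p] = \int |g|^p \dd\tilde{\mu}.
\end{equation}
Then I would invoke the classical Minkowski inequality applied to the probability measure $\tilde{\mu}$, namely
\begin{equation}
    \left(\int |f+g|^p \dd\tilde{\mu}\right)^{1/p} \leq \left(\int |f|^p \dd\tilde{\mu}\right)^{1/p} + \left(\int |g|^p \dd\tilde{\mu}\right)^{1/p},
\end{equation}
which is exactly the desired bound once the two sides are re-expressed in terms of $\bE^{(n)}_\mu[\cdot]$.

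There is no genuine obstacle here: the measurability and integrability issues needed to write $\bE^{(n)}_\mu[h] = \int h \dd\tilde{\mu}$ were already handled when defining the $n$-expectancy (via equation \eqref{eq:total_collapse_expression} and the ensuing discussion), so the only nontrivial ingredient is the standard Minkowski inequality on $(X,\tilde{\mu})$. The only small wrinkle is that the right-hand side in the statement as written uses $\bE^{(n)}_\mu[|f|]^{1/p}$ rather than $\bE^{(n)}_\mu[|f|^p]^{1/p}$, which appears to be a typo; the proof above delivers the correct $L^p$-form, and I would flag this point in the writeup.
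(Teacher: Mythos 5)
Your proof is correct and follows exactly the paper's own argument: reduce the $n$-expectancy to the ordinary integral against $\cE_X^{(n)}(\mu)$ and invoke the classical Minkowski inequality for that probability measure. Your observation that the right-hand side of the statement should read $\bE^{(n)}_\mu[|f|^p]^{1/p} + \bE^{(n)}_\mu[|g|^p]^{1/p}$ is also correct; this is a typo in the statement as printed.
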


\begin{proof}
    This is simply a consequence of the definition of $\bE_\mu^{(n)}[\cdot]$ and of the Minkowski inequality applied to $\cE_X^{(n)}(\mu)$.
\end{proof}

\begin{definition}
    Let $X$ be a Polish space, $n \geq 0$, and $\mu \in \cPzn{n}{X}$. The \emph{base support} of $\mu$, denoted $\spt_X(\mu)$, is the closed subset of $X$ defined to be the support of the measure $\cE_X^{(n)}(\mu)$: that is $\spt_X(\mu) := \spt(\cE_X^{(n)}(\mu))$.
\end{definition}

In particular, the base support of $\mu \in \cP(X)$ is its support, and for every $x \in X$, $\spt_X(x) = \spt(\delta_x) = \{x\}$. The base support satisfies the following recurrence relation:

\begin{lemma} \label{lemma:base_support_recurrence}
    For every $n > 0$ and $\bP \in \cPzn{n}{X}$, it holds
    \begin{equation} \label{eq:base_support_recurrence}
        \spt_X(\bP) = \overline{\bigcup_{\mu \in \spt(\bP)} \spt_X(\mu)}.
    \end{equation}
    If $\bP \in \cPn{n}{X}$, then the support $\spt(\bP)$ in \eqref{eq:base_support_recurrence} can also be taken with respect to the $\W_2$ topology.
\end{lemma}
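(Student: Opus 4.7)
The plan is to reduce the claim to a direct comparison of supports of probability measures on $X$, using the recurrence relation for the $n$-expectancy that was established earlier in the paper. Applied to $f = \mathbf{1}_U$ for a Borel set $U \subseteq X$, it gives
\[
    \cE_X^{(n)}(\bP)(U) = \int \cE_X^{(n-1)}(\mu)(U) \dd\bP(\mu).
\]
So, to decide whether a point $x \in X$ lies in $\spt_X(\bP) = \spt(\cE_X^{(n)}(\bP))$, it suffices to test, for every open neighborhood $U$ of $x$, whether this integral is strictly positive.

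For the inclusion $\spt_X(\bP) \subseteq \overline{\bigcup_{\mu \in \spt(\bP)} \spt_X(\mu)}$, I would take $x$ outside the right-hand side, pick an open neighborhood $U$ of $x$ disjoint from $\bigcup_{\mu \in \spt(\bP)} \spt_X(\mu)$, and observe that $\cE_X^{(n-1)}(\mu)(U) = 0$ for every $\mu \in \spt(\bP)$, since $\cE_X^{(n-1)}(\mu)$ is concentrated on $\spt_X(\mu)$. Because $\bP$ is concentrated on $\spt(\bP)$, integrating the above identity against $\bP$ yields $\cE_X^{(n)}(\bP)(U) = 0$, hence $x \notin \spt_X(\bP)$.

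For the reverse inclusion, I would take $\mu_0 \in \spt(\bP)$ and $x \in \spt_X(\mu_0)$, and observe for an arbitrary open neighborhood $U$ of $x$ that $\cE_X^{(n-1)}(\mu_0)(U) > 0$. By \Cref{lemma:n_expectancy_regularity} applied to the nonnegative lower semicontinuous function $\mathbf{1}_U$, the map $\mu \mapsto \cE_X^{(n-1)}(\mu)(U) = \bE^{(n-1)}_\mu[\mathbf{1}_U]$ is lower semicontinuous on $\cPzn{n-1}{X}$ for the weak topology. Hence $V := \{\mu \setcond \cE_X^{(n-1)}(\mu)(U) > 0\}$ is open and contains $\mu_0 \in \spt(\bP)$, so $\bP(V) > 0$ and
\[
    \cE_X^{(n)}(\bP)(U) \geq \int_V \cE_X^{(n-1)}(\mu)(U) \dd\bP(\mu) > 0,
\]
giving $x \in \spt_X(\bP)$. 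Taking closures completes the inclusion.

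For the final assertion, I would rerun both arguments with the $\W_2$-topology in place of the weak topology on $\cPn{n-1}{X}$. The forward inclusion is unchanged, since $\bP$ is also concentrated on its $\W_2$-support. The reverse one relies only on the lower semicontinuity of $\mu \mapsto \cE_X^{(n-1)}(\mu)(U)$ with respect to $\W_2$; since the $\W_2$-topology on $\cPn{n-1}{X}$ is finer than the topology of weak convergence, this follows for free from the weak lower semicontinuity already used. The proof is not conceptually difficult — the only point of care is keeping track of which topology is in play when invoking lower semicontinuity and the positivity of $\bP(V)$.
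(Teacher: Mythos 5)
Your proof is correct and follows essentially the same route as the paper's: both reduce the claim via the identity $\cE_X^{(n)}(\bP)(U) = \int \cE_X^{(n-1)}(\mu)(U)\dd\bP(\mu)$ and both invoke the lower semicontinuity of $\mu \mapsto \bE^{(n-1)}_\mu[\bOne_U]$ from \Cref{lemma:n_expectancy_regularity} for the nontrivial inclusion (you phrase it as openness of the set where this function is positive; the paper phrases it as vanishing on the closure of a full-measure set — these are contrapositives of one another). The handling of the $\W_2$ case is likewise the same observation in both versions.
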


\begin{proof}
    Recalling that the base support of a measure $\mu \in \cPzn{m}{X}$ is the support of $\cE_X^{(m)}(\mu)$, all we need to prove \eqref{eq:base_support_recurrence} is to show that for every open set $U \subseteq X$, $\cE_X^{(n)}(\bP)(U) = 0$ if and only if $\cE_X^{(n-1)}(\mu)(U) = 0$ for every $\mu \in \spt(\bP)$. Let $U \subseteq X$ be a fixed open set. Then we have
    \begin{equation}
        \cE_X^{(n)}(\bP)(U) = \bE^{(n)}_\bP[\bOne_U] = \int \bE^{(n-1)}_\mu[\bOne_U] \dd\bP(\mu) = \int \cE_X^{(n-1)}(\mu)(U) \dd\bP(\mu).
    \end{equation}
    and thus $\cE_X^{(n)}(\bP)(U) = 0$ if and only if $\cE_X^{(n-1)}(\mu)(U) = 0$ for $\bP$-a.e. $\mu$. In particular, if $\cE_X^{(n-1)}(\mu)(U) = 0$ for every $\mu \in \spt(\bP)$, we have $\cE_X^{(n)}(\bP)(U) = 0$. Conversely, if $\cE_X^{(n)}(\bP)(U) = 0$, then there exists a Borel set $B \subseteq \cPzn{n-1}{X}$ such that $\bP(B) = 1$ and $\cE_X^{(n-1)}(\mu)(U) = 0$ for every $\mu \in B$. Since the indicator map $\bOne_U$ is lower semicontinuous, the map $\mu \in \cPzn{n-1}{X} \mapsto \cE_X^{(n-1)}(\mu)(U) = \bE^{(n-1)}_\mu[\bOne_U]$ is also lower semicontinuous (by \Cref{lemma:n_expectancy_regularity}) and nonnegative, and it follows that $\cE_X^{(n-1)}(\mu)(U) = 0$ for every $\mu$ in the closure $\bar{B}$ of $B$. However, since $\bP(\bar{B}) = \bP(B) = 1$, we have $\spt(\bP) \subseteq \bar{\bP}$, and $\cE_X^{(n-1)}(\mu)(U) = 0$ for every $\mu \in \spt(\bP)$. This finishes proving \eqref{eq:base_support_recurrence}. Finally, note that if $\bP \in \cPn{n}{X}$, then the entire reasoning above also works with the $\W_2$ topology, as it is stronger than the topology of weak convergence.
\end{proof}

It is not difficult to see that if $F$ is a closed subset of $X$, then for every $n > 0$, $\cPzn{n}{F}$ identifies as the subset of measures in $\cPzn{n}{X}$ with base support contained in $F$, and likewise for $\cPn{n}{F}$ and $\cPn{n}{X}$. We will say that $\mu \in \cPzn{n}{X}$ has \emph{compact base support} if $\spt_X(\mu) \subseteq X$ is compact.

\begin{lemma} \label{lemma:when_maps_agree_on_base_support}
    Let $n \geq 0$ and $\mu \in \cPzn{n}{X}$ be fixed, then the following hold:
    \begin{enumerate}
        \item If $f : X \mapsto \R$ is a measurable function which vanishes on $\spt_X(\mu)$, then $\bE^{(n)}_\mu[f] = 0$.
        \item If $f, g : X \mapsto Y$ are two continuous maps between Polish spaces such that $f = g$ on $\spt_X(\mu)$, then $[f]^{(n)}(\mu) = [g]^{(n)}(\mu)$.
        \item If $f : X \mapsto Y$ is a continuous map between Polish spaces, and $\nu := [f]^{(n)}(\mu)$, then
        \begin{equation}
            \spt_Y(\nu) \subseteq \overline{f(\spt_X(\mu))}.
        \end{equation}
    \end{enumerate}
\end{lemma}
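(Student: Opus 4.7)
Part (1) is essentially a restatement of the defining property of the support. By definition $\bE^{(n)}_\mu[f] = \int f \dd \cE^{(n)}_X(\mu)$, and $\spt_X(\mu) = \spt(\cE^{(n)}_X(\mu))$, so the complement of $\spt_X(\mu)$ is $\cE^{(n)}_X(\mu)$-null. If $f$ vanishes on $\spt_X(\mu)$, then $f = 0$ holds $\cE^{(n)}_X(\mu)$-almost everywhere, so $f$ is $\cE^{(n)}_X(\mu)$-integrable with integral zero, whence $\bE^{(n)}_\mu[f] = 0$.

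Part (3) follows quickly from the naturality of $\cE$. The iterated identity $[f] \circ \cE_X^{(n)} = \cE_Y^{(n)} \circ [f]^{(n)}$ recalled in equation \eqref{eq:n_expectancy_of_pushforward} gives $\cE_Y^{(n)}(\nu) = [f](\cE_X^{(n)}(\mu))$, so that $\spt_Y(\nu) = \spt\bigl([f](\cE_X^{(n)}(\mu))\bigr)$. It remains to invoke the standard fact that for a continuous map $f : X \to Y$ and a measure $\sigma \in \cP(X)$, one has $\spt(f_\#\sigma) \subseteq \overline{f(\spt(\sigma))}$: indeed the closed set $\overline{f(\spt(\sigma))}$ has full $f_\#\sigma$-measure since its preimage contains $\spt(\sigma)$, and the support is by definition the smallest such closed set. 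Applying this with $\sigma = \cE^{(n)}_X(\mu)$ yields the claim.

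Part (2) is the most delicate, and I would prove it by induction on $n$. The base case $n = 0$ is trivial: $\spt_X(x) = \{x\}$, $[f]^{(0)} = f$, $[g]^{(0)} = g$, and the hypothesis $f(x) = g(x)$ is exactly the conclusion. For the inductive step, let $\bP \in \cPzn{n}{X}$ and suppose $f = g$ on $\spt_X(\bP)$. By \Cref{lemma:base_support_recurrence}, every $\mu \in \spt(\bP)$ satisfies $\spt_X(\mu) \subseteq \spt_X(\bP)$, so $f$ and $g$ coincide on $\spt_X(\mu)$; the inductive hypothesis then yields $[f]^{(n-1)}(\mu) = [g]^{(n-1)}(\mu)$ for every $\mu \in \spt(\bP)$. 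Since $\spt(\bP)$ has full $\bP$-measure, the continuous maps $[f]^{(n-1)}$ and $[g]^{(n-1)}$ agree $\bP$-almost everywhere, and their pushforwards by $\bP$ must therefore coincide, giving $[f]^{(n)}(\bP) = [[f]^{(n-1)}]_\# \bP = [[g]^{(n-1)}]_\# \bP = [g]^{(n)}(\bP)$.

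The main obstacle, in my view, lies in obtaining the correct propagation of the hypothesis from level $n$ down to level $n-1$ in (2): one needs to pass from the statement \emph{$f = g$ on $\spt_X(\bP)$} at the base level to the statement \emph{$f = g$ on $\spt_X(\mu)$ for every $\mu \in \spt(\bP)$}, which is precisely the content of the inclusion $\spt_X(\mu) \subseteq \spt_X(\bP)$ extracted from \Cref{lemma:base_support_recurrence}. Once this is in hand, the induction closes cleanly, and both (1) and (3) are essentially bookkeeping exercises exploiting the definition of $\spt_X$ and the naturality of $\cE$.
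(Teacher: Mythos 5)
Your proof is correct and follows essentially the same route as the paper's: part (1) is the same null-set observation, part (2) is the same induction using the inclusion $\spt_X(\mu) \subseteq \spt_X(\bP)$ for $\mu \in \spt(\bP)$ extracted from \Cref{lemma:base_support_recurrence}, and part (3) is the same naturality-of-$\cE$ argument combined with the fact that $\overline{f(\spt(\sigma))}$ has full $f_\#\sigma$-measure.
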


\begin{proof}
    The first point is immediate: since $\spt_X(\mu) = \spt(\tilde{\mu})$ with $\tilde{\mu} = \cE_X^{(n)}(\mu)$, then $f = 0$ $\tilde{\mu}$-a.e., so that $\bE^{(n)}_\mu[f]$ is well defined and equal to $\int f \dd\tilde{\mu} = 0$. \newline
    We prove the second point by induction. The case $n = 0$ is trivial: if $f$ and $g$ coincide on $\spt_X(x) = \{x\}$, then clearly $[f]^{(0)}(x) = f(x)$ and $[g]^{(0)}(x) = g(x)$ are equal. Now, let $n > 0$ and assume the result holds for $n-1$. Let $\bP \in \cPzn{n}{X}$ such that $f$ and $g$ coincide on $\spt_X(\bP)$. By \Cref{lemma:base_support_recurrence}, we have $\spt_X(\mu) \subseteq \spt_X(\bP)$ for every $\mu \in \spt(\bP)$, so that by the induction hypothesis, the continuous maps $[f]^{(n-1)}, [g]^{(n-1)}$ coincide on $\spt(\bP)$. Therefore,
    \begin{equation}
        [f]^{(n)}(\bP) = [f]^{(n-1)}_\#\bP = [g]^{(n-1)}_\#\bP = [g]^{(n)}(\bP)
    \end{equation}
    and this finishes proving the second point. \newline
    We now prove the third point. We have $\cE_Y^{(n)}(\nu) = [f](\cE_X^{(n)}(\mu))$ by naturality of $\cE$, so that
    \begin{align}
        \cE_Y^{(n)}(\nu)(\overline{f(\spt_X(\mu))}) &= [f](\cE_X^{(n)}(\mu))(\overline{f(\spt_X(\mu))}) \\
        &= \cE_X^{(n)}(\mu)(f^{-1}(\overline{f(\spt_X(\mu))})) \\
        &\geq \cE_X^{(n)}(\mu)(\spt_X(\mu)) = 1
    \end{align}
    so that the closed set $\overline{f(\spt_X(\mu))}$ has full $\cE_Y^{(n)}(\nu)$-measure, and thus 
    \begin{equation}
        \spt_X(\nu) = \spt(\cE_Y^{(n)}(\nu)) \subseteq \overline{f(\spt_X(\mu))}.
    \end{equation}
    This finishes the proof.
\end{proof}

\subsection{Fiber products of Polish spaces} 

The goal of this subsection is to investigate the properties of fiber products in the categories $\Pol$ and $\Pold$. Recall that, in a category $\cC$, the \emph{fiber product} (or \emph{pullback}) of a pair of morphisms $f_1 : X \mapsto Z$ and $f_2 : Y \mapsto Z$ with same codomain $Z$ is the given of an objet $X \times_Z Y$ of $\cC$ and of two morphisms $\pi_1 : X \times_Z Y \mapsto X$ and $\pi_2 : X \times_Z Y \mapsto Y$ such that $f_1 \circ \pi_1 = f_2 \circ \pi_2$, and satisfying the following universal property: for every object $W$ of $\cC$ and pair of morphisms $g_1 : W \mapsto X$ and $g_2 : W \mapsto Y$ such that $f_1 \circ g_1 = f_2 \circ g_2$, there exists a unique morphism $h : W \mapsto X \times_Z Y$ such that the following diagram commutes:
\begin{equation}
    \begin{tikzcd}
        & W \arrow[ldd, "g_1"', bend right] \arrow[rdd, "g_2", bend left] \arrow[d, dashed, "\exists !h"] & \\
        & X \times_Z Y \arrow[ld, "\pi_1"'] \arrow[rd, "\pi_2"] & \\
        X \arrow[rd, "f_1"'] & & Y \arrow[ld, "f_2"] \\
        & Z &
    \end{tikzcd}
\end{equation}
The fiber product $X \times_Z Y$, if it exists, is unique up to isomorphism (this is a consequence of the universal property). We sometimes denote it $X \times_{f_1,Z,f_2} Y$ when there is ambiguity on the morphisms $f_1$ and $f_2$.
\medbreak
\begin{proposition}
    Let $X$, $Y$ and $Z$ be three Polish spaces and $f_1 : X \mapsto Z$, $f_2 : Y \mapsto Z$ be continuous. Then they admit a fiber product in the category $\Pol$, which is the subset of $X \times Y$ given by
    \begin{equation}
        X \times_Z Y := \{(x,y) \in X \times Y \setcond f_1(x) = f_2(y) \} \subseteq X \times Y
    \end{equation}
    and where the maps $\pi_1 : X \times_Z Y \mapsto X$ and $\pi_2 : X \times_Z Y \mapsto Y$ are respectively the projections on the first and the second coordinate. Moreover, if $d_X$, $d_Y$ and $d_Z$ are distances metrizing the topologies of $X$, $Y$ and $Z$, and if $f_1$ and $f_2$ are assumed to have linear growth for these distances, then $f_1$ and $f_2$ admit a fiber product in the category $\Pold$, which is the fiber product $X \times_Z Y$ in $\Pol$ equipped with the distance $d := \sqrt{d_X^2+d_Y^2}$ defined by
    \begin{equation} \label{eq:dist_on_fiber_product}
        d^2((x,y),(x',y')) := d_X^2(x,x') + d_Y^2(y,y'), \quad (x,y),(x',y') \in X \times_Z Y.
    \end{equation}
\end{proposition}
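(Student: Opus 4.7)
The plan is to first verify the $\Pol$ case by a direct construction, then upgrade the argument to $\Pold$ by checking that every morphism involved has linear growth with respect to the distance $d$ defined in \eqref{eq:dist_on_fiber_product}.

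For the $\Pol$ case, I would first show that the set $X\times_Z Y$ is a Polish space. Since $X \times Y$ is a Polish space (as the product of two Polish spaces) and the map $(x,y) \mapsto (f_1(x), f_2(y))$ from $X\times Y$ to $Z\times Z$ is continuous, the fiber product $X\times_Z Y$ is the preimage of the diagonal of $Z\times Z$, which is closed in $Z\times Z$ since $Z$ is Hausdorff. Thus $X\times_Z Y$ is a closed subset of a Polish space, and hence itself Polish. The projections $\pi_1, \pi_2$ are continuous as restrictions of the canonical projections on $X\times Y$, and $f_1 \circ \pi_1 = f_2 \circ \pi_2$ holds by definition of $X\times_Z Y$. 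To verify the universal property, given $W$ a Polish space and continuous $g_1 : W \mapsto X$, $g_2 : W \mapsto Y$ with $f_1\circ g_1 = f_2 \circ g_2$, the map $h : w \mapsto (g_1(w),g_2(w))$ is continuous, takes values in $X \times_Z Y$ by the compatibility condition, and satisfies $\pi_1\circ h = g_1$, $\pi_2\circ h = g_2$. Uniqueness of $h$ follows from the fact that these equations determine it pointwise.

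For the $\Pold$ case, the distance $d$ defined in \eqref{eq:dist_on_fiber_product} metrizes the subspace topology of $X \times_Z Y$ (as it is the restriction of a distance on $X\times Y$ that metrizes the product topology), so $(X\times_Z Y, d)$ is indeed an object of $\Pold$. The projections $\pi_1, \pi_2$ are $1$-Lipschitz since $d_X(\pi_1(x,y), \pi_1(x',y')) \leq d((x,y),(x',y'))$, and likewise for $\pi_2$, hence they have linear growth. The identity $f_1\circ \pi_1 = f_2\circ \pi_2$ is preserved.

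The only remaining point is that, in the universal property, the factoring morphism $h$ must itself have linear growth when $g_1$ and $g_2$ do. Given basepoints $w_0\in W$, $x_0 = g_1(w_0)$, $y_0 = g_2(w_0)$, and constants $C_1, C_2 > 0$ such that $d_X(x_0, g_1(w)) \leq C_1(1+d_W(w_0,w))$ and $d_Y(y_0, g_2(w)) \leq C_2(1+d_W(w_0,w))$ for all $w\in W$, we directly compute
\begin{equation}
    d(h(w_0), h(w))^2 = d_X(g_1(w_0), g_1(w))^2 + d_Y(g_2(w_0), g_2(w))^2 \leq (C_1^2 + C_2^2)(1+d_W(w_0,w))^2,
\end{equation}
so $h$ has linear growth with constant $\sqrt{C_1^2+C_2^2}$. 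The uniqueness of $h$ in $\Pold$ follows from its uniqueness in $\Pol$, since a $\Pold$-morphism is in particular a $\Pol$-morphism. No part of the argument is difficult; the only mildly subtle point is checking closedness of $X\times_Z Y$ in $X\times Y$, which is where the Polish structure and the Hausdorff property of $Z$ are used.
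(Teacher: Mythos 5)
Your proposal is correct and follows essentially the same route as the paper: closedness of $X \times_Z Y$ in $X \times Y$ to get Polishness, the pointwise construction $h(w) = (g_1(w), g_2(w))$ for the universal property, and the $1$-Lipschitz bound on the projections plus a direct linear-growth estimate on $h$ for the $\Pold$ case. The only (harmless) difference is that you spell out closedness via the preimage of the diagonal and keep two separate growth constants $C_1, C_2$ where the paper takes a common one.
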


\begin{proof}
    First, $X \times_Z Y$ with the induced topology from $X \times Y$ is indeed a Polish space as it is a closed subset of the product space $X \times Y$, which is Polish. Moreover, if $W$ is another Polish space and $g_1 : W \mapsto X$ and $g_2 : W \mapsto Y$ are two continuous functions such that $f_1 \circ g_1 = f_2 \circ g_2$, then it is easily verified that the map $h : W \mapsto X \times_Z Y$ defined by $h(w) = (g_1(w),g_2(w))$ is continuous, valued in $X \times_Z Y$ (as $f_1 \circ g_1 = f_2 \circ g_2$), and satisfies $\pi_1 \circ h = g_1$ and $\pi_2 \circ h = g_2$ (and it is the only map $W \mapsto X \times_Z Y$ that can satisfy these equalities as a point in $X \times_Z Y$ is determined by its two coordinates). This shows that $X \times_Z Y$ is indeed the fiber product of $f_1$ and $f_2$. \newline
    Now, assume that $d_X$, $d_Y$ and $d_Z$ are distances metrizing $X$, $Y$ and $Z$, and that $f_1$ and $f_2$ have linear growth for these distances. Clearly the distance $d$ on $X \times_Z Y$ as defined in \eqref{eq:dist_on_fiber_product} metrizes the topology of $X \times_Z Y$. Moreover, $\pi_1$ and $\pi_2$ have linear growth, and are even 1-Lipschitz, for this distance. Indeed, for every $(x,y),(x',y') \in X \times_Z Y$,
    \begin{equation}
        d_X(\pi_1(x,y),\pi_1(x',y')) = d_X(x,x') \leq \sqrt{d_X^2(x,x')+d_Y^2(y,y')} = d((x,y),(x',y'))
    \end{equation}
    and likewise for $\pi_2$. Thus $\pi_1$ and $\pi_2$ are morphisms $(X \times_Z Y,d) \mapsto (X,d_X)$ and $(X \times_Z Y,d) \mapsto (Y,d_Y)$ in $\Pold$. Now let $g_1 : (W,d_Y) \mapsto (X,d_X)$ and $g_2 : (W,d_W) \mapsto (Y,d_Y)$ be two continuous maps with linear growth between Polish spaces satisfying $f_1 \circ g_1 = f_2 \circ g_2$. Then the map $h : (W,d_W) \mapsto (X \times_Z Y, d)$ defined by $h(w) = (g_1(w),g_2(w))$ is again continuous and valued in $X \times_Z Y$. Moreover, it has linear growth: letting $w_0 \in W$ and $C > 0$ be such that for every $w \in W$, $d_X(g_1(w_0),g_1(w)) \leq C(1+d_W(w_0,w))$ and $d_Y(g_2(w_0),g_2(w)) \leq C(1+d_W(w_0,w))$, we have for every $w \in W$ that
    \begin{equation}
        d(h(w_0),h(w)) = \sqrt{d_X^2(g_1(w_0),g_1(w)) + d_Y^2(g_2(w_0),g_2(w))} \leq \sqrt{2}C(1+d_W(w_0,w))
    \end{equation}
    so that $h$ indeed has linear growth. Thus $h$ is a morphism $(W,d_W) \mapsto (X \times_Z Y, d)$ in $\Pold$, and it again is the only morphism satisfying the equalities $\pi_1 \circ h = g_1$ and $\pi_2 \circ h = g_2$. Therefore, $(X \times_Z Y, d)$ is the fiber product of $f_1$ and $f_2$ in the category $\Pold$. 
\end{proof}

\begin{remark}
    As observed in the proof, the map $h : W \mapsto X \times_Z Y$ induced by two morphisms $g_1 : W \mapsto X$ and $g_2 : W \mapsto Y$ (whether in $\Pol$ or in $\Pold$) is given by $h(w) = (g_1(w),g_2(w))$ for every $w \in W$.
\end{remark}

\begin{remark}
    If $Z$ is a singleton, then the fiber product $X \times_Z Y$ is simply the product space $X \times Y$.
\end{remark}

\begin{remark}
    For the fiber product in $\Pold$, it would have seemed more natural to equip $X \times_Z Y$ with the sum distance $d' := d_X + d_Y$ instead of the distance $d$ defined in \eqref{eq:dist_on_fiber_product}. Since these distances are bi-Lipschitz equivalent, the metric spaces $(X \times_Z Y, d)$ and $(X \times_Z Y, d')$ are isomorphic in $\Pold$, so they both define a fiber product of $X$, $Y$ and $Z$. However, we will always work with the distance $d$ and assume $X \times_Z Y$ to be equipped with it, as it interacts better with the second order Wasserstein distance than the distance $d'$, and makes the proofs of the subsequent results easier.
\end{remark}

Consider now the diagram in $\Pol$ formed by the morphisms $f_1 : X \mapsto Z$ and $f_1 : Y \mapsto Z$, and apply the functor $\cP$ to the commutative diagram formed by $f_1$, $f_2$ and the projections $\pi_1 : X \times_Z Y \mapsto X$ and $\pi_2 : X \times_Z Y \mapsto Y$. We then obtain another commutative diagram:
\begin{equation}
    \begin{array}{ccc}
        \begin{tikzcd}
        & X \times_Z Y \arrow[dl, "\pi_1"'] \arrow[dr, "\pi_2"] & \\
        X \arrow[dr, "f_1"'] & & Y \arrow[dl, "f_2"] \\
        & Z & 
        \end{tikzcd}
        & 
        \begin{array}{c}
            \cP \\ \Longrightarrow
        \end{array}
        &
        \begin{tikzcd}
        & \cP(X \times_Z Y) \arrow[dl, "{[\pi_1]}"'] \arrow[dr, "{[\pi_2]}"] & \\
        \cP(X) \arrow[dr, "{[f_1]}"'] & & \cP(Y) \arrow[dl, "{[f_2]}"] \\
        & \cP(Z) & 
        \end{tikzcd}
    \end{array}  
\end{equation}
Applying the universal property of the fiber product $\cP(X) \times_{\cP(Z)} \cP(Y)$, one obtains a morphism $h : \cP(X \times_Z Y) \mapsto \cP(X) \times_{\cP(Z)} \cP(Y)$ which makes the following diagram commute:
\begin{equation}
    \begin{tikzcd}
        & \cP(X \times_Z Y) \arrow[ddl, "{[\pi_1]}"', bend right] \arrow[ddr, "{[\pi_2]}", bend left] \arrow[d, "h", dashed] & \\
        & \cP(X) \times_{\cP(Z)} \cP(Y) \arrow[dl] \arrow[dr] & \\
        \cP(X) \arrow[dr, "{[f_1]}"'] & & \cP(Y) \arrow[dl, "{[f_2]}"] \\
        & \cP(Z) & 
    \end{tikzcd}
\end{equation}

We then have the following result, which acts as a sort of generalized ``gluing lemma" between probability measures:
\begin{proposition} \label{prop:p0_of_fiber_product_onto_fiber_product_of_p0}
    The morphism $h : \cP(X \times_Z Y) \mapsto \cP(X) \times_{\cP(Z)} \cP(Y)$ is surjective.
\end{proposition}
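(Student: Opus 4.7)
My plan is to prove surjectivity of $h$ by an explicit gluing construction, which is essentially the ``gluing lemma'' of optimal transport carried out relative to the maps $f_1$ and $f_2$ instead of relative to the two projections of a product space.

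Unfolding the definition, $h$ sends $\sigma \in \cP(X \times_Z Y)$ to the pair $([\pi_1]\sigma,[\pi_2]\sigma)$. So, given an arbitrary pair $(\mu,\nu) \in \cP(X) \times_{\cP(Z)} \cP(Y)$ (that is, $[f_1]\mu = [f_2]\nu$), we must construct $\sigma \in \cP(X \times_Z Y)$ whose images under $[\pi_1]$ and $[\pi_2]$ are $\mu$ and $\nu$ respectively. The plan is: (i) set $\lambda := [f_1]\mu = [f_2]\nu \in \cP(Z)$ and use the disintegration theorem on Polish spaces to write $\mu = \int_Z \mu_z \, d\lambda(z)$ and $\nu = \int_Z \nu_z \, d\lambda(z)$, with $z$-measurable kernels $(\mu_z)_z$ and $(\nu_z)_z$ such that, for $\lambda$-a.e. $z$, $\mu_z$ is concentrated on $f_1^{-1}(z)$ and $\nu_z$ on $f_2^{-1}(z)$; (ii) define the candidate measure
\begin{equation}
    \sigma := \int_Z \mu_z \otimes \nu_z \, d\lambda(z) \in \cP(X \times Y).
\end{equation}

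Once $\sigma$ is defined, three things need to be checked. First, $\sigma$ is concentrated on $X \times_Z Y$: for $\lambda$-a.e. $z$, $\mu_z \otimes \nu_z$ is supported on $f_1^{-1}(z) \times f_2^{-1}(z) \subseteq X \times_Z Y$, and integrating in $z$ against $\lambda$ preserves this, so $\sigma$ restricts to a Borel probability measure on the closed subset $X \times_Z Y \subseteq X \times Y$. Second, the marginals of $\sigma$ are correct: for any Borel $A \subseteq X$,
\begin{equation}
    [\pi_1]\sigma(A) = \sigma(A \times Y) = \int_Z \mu_z(A) \nu_z(Y) \, d\lambda(z) = \int_Z \mu_z(A) \, d\lambda(z) = \mu(A),
\end{equation}
and symmetrically $[\pi_2]\sigma = \nu$. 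This gives $h(\sigma) = (\mu,\nu)$, proving surjectivity. Third, there is a minor measurability check (that $z \mapsto \mu_z \otimes \nu_z$ is a well-defined measurable kernel so that the integral defining $\sigma$ makes sense as a Borel probability), but this is standard once one has the two disintegration kernels.

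The main (and essentially only) nontrivial ingredient is the disintegration theorem in a form applicable here, namely for Borel probability measures on Polish spaces pushed forward by Borel maps into Polish spaces; this is exactly the hypothesis guaranteed by $X$, $Y$, $Z$ being Polish and $f_1$, $f_2$ continuous. All other steps are bookkeeping.
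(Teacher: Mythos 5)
Your proof is correct and follows essentially the same route as the paper: disintegrate $\mu$ and $\nu$ over the common pushforward $\lambda$, glue fiberwise via the product measures $\mu_z \otimes \nu_z$, and integrate against $\lambda$, checking marginals and that the result is concentrated on $X \times_Z Y$. The measurability point you flag as ``minor'' is handled in the paper by noting that $z \mapsto \mu_z(B)\nu_z(B')$ is Borel on product sets, which generate the product $\sigma$-algebra, so the family $(\mu_z \otimes \nu_z)_z$ is Borel.
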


\begin{proof}
    What we want to prove is that for every pair $(\mu,\nu) \in \cP(X) \times \cP(Y)$ such that $f_{1\#}\mu = f_{2\#}\nu =: \eta \in \cP(Z)$, there exists some $\gamma \in \cP_2(X \times_Z Y)$ such that $\pi_{1\#}\gamma = \mu$ and $\pi_{2\#}\gamma = \nu$. We may apply the disintegration theorem (\Cref{th:disintegration}) to $\mu$ and $\nu$ to rewrite them as $\dd\mu(x) = \dd\mu_z(x) \dd\eta(z)$ and $\dd\nu(y) = \dd\nu_z(y) \dd\eta(z)$, where $\mu_z$ and $\nu_z$ are probability measures on respectively $X$ and $Y$ which are supported for $\eta$-a.e. $z$ on respectively $f_1^{-1}(z)$ and $f_2^{-1}(z)$. The family $(\mu_z \otimes \nu_z)_{z \in Z}$ is then a Borel family of measures\footnote{For a pair of Borel sets $B \subseteq X$ and $B' \subseteq Y$, the map $z \mapsto \mu_z(B)\nu_z(B')$ is Borel measurable as a product of Borel functions. Since the products of Borel sets generate the product $\sigma$-algebra on $X \times Y$, the family $(\mu_z \otimes \nu_z)_z$ is Borel by the discussion in \Cref{sec:appendix:disintegration}.}, so there exists a probability measure $\gamma \in \cP(X \times Y)$ such that, for every $f : X \times Y \mapsto \R$ nonnegative and measurable, we have
    \begin{equation}
        \int f(x,y) \dd\gamma(x,y) = \iiint f(x,y) \dd\mu_z(x) \dd\nu_z(y) \dd\eta(z).
    \end{equation}
    Then, one can easily check that $\pi_{1\#}\gamma = \mu$ and $\pi_{2\#}\gamma = \nu$. Moreover, for $\eta$-a.e. $z \in Z$, $\mu_z$ is supported in $f_1^{-1}(z)$ and $\mu_z$ is supported in $f_2^{-1}(z)$, so that $\mu_z \otimes \nu_z$ is supported in $f_1^{-1}(z) \times f_2^{-1}(z) \subseteq X \times_Z Y$. Therefore, $\gamma$ itself is supported in $X \times_Z Y$, so that we have found $\gamma \in \cP(X \times_Z Y)$ such that $h(\gamma) = ([\pi_1](\gamma), [\pi_2](\gamma)) = (\mu,\nu)$. Thus, we have proved that $h$ is surjective.
\end{proof}

Now, if $d_X$, $d_Y$ and $d_Z$ are distances metrizing $X$, $Y$ and $Z$, and if we assume that $f_1$ and $f_2$ have linear growth for these distances, then, by a similar procedure but this time using the functor $\cP_2$, we can construct a morphism $h_2 : \cP_2(X \times_Z Y) \mapsto \cP_2(X) \times_{\cP_2(Z)} \cP_2(Y)$ in the category $\Pold$.

\begin{proposition} \label{prop:p_of_fiber_product_onto_fiber_product_of_p}
    The morphism $h_2$ is surjective. In fact, for every $\gamma \in \cP(X \times_Z Y)$ and $(x_0,y_0) \in X \times_Z Y$, it holds
    \begin{equation} \label{eq:2nd_moment_of_transport_plan}
        \W_2^2(\delta_{x_0,y_0}, \gamma) = \W_2^2(\delta_{x_0},[\pi_1](\gamma)) + \W_2^2(\delta_{y_0}, [\pi_2](\gamma)).
    \end{equation}
\end{proposition}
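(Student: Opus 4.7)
The plan is to first establish the identity \eqref{eq:2nd_moment_of_transport_plan}, and then use it to lift the surjectivity from \Cref{prop:p0_of_fiber_product_onto_fiber_product_of_p0} (in the category $\Pol$) to the category $\Pold$.

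First I would prove the identity. This is essentially by definition of the distance $d = \sqrt{d_X^2 + d_Y^2}$ on $X \times_Z Y$ from \eqref{eq:dist_on_fiber_product}. For any $\gamma \in \cP(X \times_Z Y)$ and $(x_0,y_0) \in X \times_Z Y$, by Tonelli's theorem and the pushforward change of variables,
\begin{align}
    \W_2^2(\delta_{(x_0,y_0)}, \gamma) &= \int d^2((x_0,y_0),(x,y)) \dd\gamma(x,y) \\
    &= \int d_X^2(x_0,x) \dd\gamma(x,y) + \int d_Y^2(y_0,y) \dd\gamma(x,y) \\
    &= \int d_X^2(x_0,x) \dd[\pi_1](\gamma)(x) + \int d_Y^2(y_0,y) \dd[\pi_2](\gamma)(y) \\
    &= \W_2^2(\delta_{x_0},[\pi_1](\gamma)) + \W_2^2(\delta_{y_0}, [\pi_2](\gamma)),
\end{align}
with the understanding that the equality holds in $[0,+\infty]$ (both sides may be infinite at once).

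Next I would deduce surjectivity of $h_2$. Let $(\mu,\nu) \in \cP_2(X) \times_{\cP_2(Z)} \cP_2(Y)$, so $[f_1](\mu) = [f_2](\nu)$ and $\mu$, $\nu$ both have finite second moment. Viewing $(\mu,\nu)$ as an element of $\cP(X) \times_{\cP(Z)} \cP(Y)$ via the inclusions $\cP_2 \subseteq \cP$, \Cref{prop:p0_of_fiber_product_onto_fiber_product_of_p0} gives $\gamma \in \cP(X \times_Z Y)$ with $[\pi_1](\gamma) = \mu$ and $[\pi_2](\gamma) = \nu$. The fiber $X \times_Z Y$ is nonempty since $[f_1](\mu) = [f_2](\nu)$ implies that $f_1(X) \cap f_2(Y) \neq \emptyset$, so we may pick any $(x_0,y_0) \in X \times_Z Y$. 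Applying the identity just proven,
\begin{equation}
    \W_2^2(\delta_{(x_0,y_0)}, \gamma) = \W_2^2(\delta_{x_0},\mu) + \W_2^2(\delta_{y_0}, \nu) < +\infty,
\end{equation}
which shows that $\gamma \in \cP_2(X \times_Z Y)$. Since $h_2(\gamma) = ([\pi_1](\gamma),[\pi_2](\gamma)) = (\mu,\nu)$, this proves that $h_2$ is surjective.

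I do not expect any real obstacle: the identity is essentially a direct computation enabled by the specific choice of the Euclidean-like combination $d = \sqrt{d_X^2 + d_Y^2}$ on the fiber product (this is the reason mentioned in the earlier remark for preferring this distance over $d_X + d_Y$), and once it is in hand, the surjectivity of $h_2$ reduces immediately to that of $h$ from the previous proposition.
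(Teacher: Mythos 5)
Your proof is correct and follows essentially the same route as the paper: first establish \eqref{eq:2nd_moment_of_transport_plan} by the direct Pythagorean computation enabled by the choice $d = \sqrt{d_X^2+d_Y^2}$, then obtain $\gamma$ from \Cref{prop:p0_of_fiber_product_onto_fiber_product_of_p0} and use the identity to conclude it lies in $\cP_2(X \times_Z Y)$. (The aside about nonemptiness of $X \times_Z Y$ is unnecessary — the very existence of the probability measure $\gamma$ on $X \times_Z Y$ already guarantees it.)
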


\begin{proof}
    Let $\gamma \in \cP(X \times_Z Y)$ and $(x_0,y_0) \in X \times_Z Y$. Then, we have
    \begin{align}
        \W_2^2(\delta_{x_0,y_0}, \gamma) &= \int d^2((x_0,y_0),(x,y)) \dd\gamma(x,y) = \int d_X^2(x_0,x) + d^2_Y(y_0,y) \dd\gamma(x,y) \\
        &= \int d^2_X(x_0,x) \dd\mu(x) + \int d^2_Y(y_0,y) \dd\nu(y) = \W_2^2(\delta_{x_0}, \mu) + \W_2^2(\delta_{y_0}, \nu),
    \end{align}
    and this proves \eqref{eq:2nd_moment_of_transport_plan}. Now, let $\mu \in \cP_2(X)$ and $\nu \in \cP_2(Y)$ be such that $[f_1](\mu) = [f_2](\nu)$. By \Cref{prop:p0_of_fiber_product_onto_fiber_product_of_p0}, there exists $\gamma \in \cP(X \times_Z Y)$ such that $[\pi_1](\gamma) = \mu$ and $[\pi_2](\gamma) = \nu$. However, since $\mu$ and $\nu$ are assumed to have finite second order moment, by \eqref{eq:2nd_moment_of_transport_plan}, $\gamma$ also has finite second order moment. Therefore, we have found $\gamma \in \cP_2(X \times_Z Y)$ such that $h(\gamma) = ([\pi_1](\gamma),[\pi_2](\gamma)) = (\mu,\nu)$, and $h_2$ is surjective.
\end{proof}

We can also transpose the theory of optimal transport to fiber products. Let again $X$, $Y$ and $Z$ be three Polish spaces with a pair of continuous functions $f_1 : X \mapsto Z$ and $f_2 : Y \mapsto Z$, so that we can consider the corresponding fiber product $X \times_Z Y$ in $\Pol$. Let $c : X \times_Z Y \mapsto \R_+$ be a nonnegative lower semicontinuous cost function, and fix $\mu \in \cP(X)$ and $\nu \in \cP(Y)$ such that $f_{1\#}\mu = f_{2\#}\nu =: \eta \in \cP(Z)$. We can consider the following Kantorovich optimal transport problem on fiber products:
\begin{equation} \label{eq:ot_problem_in_fiber_product}
    \inf_{\gamma \in \Pi(\mu,\nu,Z)} \int c(x,y) \dd\gamma(x,y) \tag{KP-Fib}
\end{equation}
where
\begin{equation}
    \Pi(\mu,\nu,Z) := \{\gamma \in \cP(X \times_Z Y) \setcond \pi_{1\#}\gamma = \mu, \pi_{2\#}\gamma = \nu \},
\end{equation}
which is not empty by \Cref{prop:p0_of_fiber_product_onto_fiber_product_of_p0}. This is essentially an optimal transport problem between $\mu$ and $\nu$ where we are only allowed to move mass within the fibers $f_1^{-1}(z) \times f_2^{-1}(z)$, $z \in Z$ of $X \times_Z Y$. We may apply the disintegration theorem (\Cref{th:disintegration}) to $\mu$ and $\nu$ to rewrite them as
\begin{equation}
    \dd\mu(x) = \dd\mu_z(x) \dd\eta(z), \quad \dd\nu(y) = \dd\nu_z(y) \dd\eta(z)
\end{equation}
where $\mu_z$ and $\nu_z$ are probability measures on respectively $X$ and $Y$ which are supported for $\eta$-a.e. $z$ on respectively $f_1^{-1}(z)$ and $f_2^{-1}(z)$. Similarly, given $\gamma \in \Pi(\mu, \nu, Z)$, we may use the disintegration theorem to write it as
\begin{equation}
    \dd\gamma(x,y) = \dd\gamma_z(x,y) \dd\eta(z)
\end{equation}
where for $\eta$-a.e. $z$, $\gamma_z$ is a probability measure on $X \times_Z Y$ supported on the fiber $f_1^{-1}(z) \times f_2^{-1}(z)$. If $f : X \mapsto \R_+$ is a nonnegative measurable test function, we have
\begin{align}
    \int f(x) \dd\mu(x) &= \int f(\pi_1(x)) \dd\gamma(x,y) = \iint f(\pi_1(x)) \dd\gamma_z(x,y) \dd\eta(z) \\
    &= \iint f(x) \dd (\pi_{1\#}\gamma_z)(x) \dd\eta(z)
\end{align}
so by unicity of the disintegration, we have $\pi_{1\#}\gamma_z = \mu_z$ for $\eta$-a.e. $z$. Similarly, $\pi_{2\#}\gamma_z = \nu_z$ for $\eta$-a.e. $z$, so that $\gamma_z$ is a transport plan between $\mu_z$ and $\nu_z$ for $\eta$-a.e. $z$. In fact, up to replacing $\gamma_z$ by $\mu_z \otimes \nu_z$ for a $\eta$-negligible set of $z$, we may assume that $\gamma_z \in \Pi(\mu_z,\nu_z)$ for every $z \in Z$.

\begin{proposition} \label{prop:ot_problem_in_fiber_product_has_solutions}
    Assume that \eqref{eq:ot_problem_in_fiber_product} is finite. Then the set $\Pi_o(\mu,\nu,Z)$ of measures $\gamma \in \Pi(\mu,\nu,Z)$ that are minimizers of the problem \eqref{eq:ot_problem_in_fiber_product} is not empty. Moreover, a measure $\gamma \in \Pi(\mu, \nu, Z)$ is in $\Pi_o(\mu,\nu, Z)$ if and only for $\eta$-a.e. $z$, its disintegration $\gamma_z$ is an optimal transport plan between $\mu_z$ and $\nu_z$ for the cost $c$.
\end{proposition}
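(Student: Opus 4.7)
My plan is to handle existence via the direct method and to obtain the characterization by combining the disintegration already set up in the discussion preceding the statement with a measurable selection argument.

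For existence, I would first observe that $\Pi(\mu,\nu,Z)$ is a weakly compact subset of $\cP(X\times Y)$: since $X\times_Z Y$ is closed in $X\times Y$, a weak limit of measures supported in $X\times_Z Y$ is again supported there, so $\Pi(\mu,\nu,Z)$ is weakly closed inside the set of transport plans between $\mu$ and $\nu$ on $X\times Y$, which is tight and weakly closed by Prokhorov (the marginals being fixed probability measures on Polish spaces). As $c$ is nonnegative and lower semicontinuous, $\gamma\mapsto\int c\,d\gamma$ is lower semicontinuous on $\cP(X\times_Z Y)$, and since the infimum in \eqref{eq:ot_problem_in_fiber_product} is assumed finite, the direct method produces a minimizer $\gamma^{*}\in\Pi_o(\mu,\nu,Z)$.

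For the characterization, the key computation is the identity already derived before the statement: every $\gamma\in\Pi(\mu,\nu,Z)$ writes as $d\gamma(x,y)=d\gamma_z(x,y)\,d\eta(z)$ with $\gamma_z\in\Pi(\mu_z,\nu_z)$ for $\eta$-a.e.\ $z$, so Fubini yields $\int c\,d\gamma=\int\!\bigl(\int c\,d\gamma_z\bigr)\,d\eta(z)$. Define the fiberwise minimum $W(z):=\inf_{\rho\in\Pi(\mu_z,\nu_z)}\int c\,d\rho$, which is finite for $\eta$-a.e.\ $z$ (by finiteness of the global infimum applied to $\gamma^{*}$) and attained on each such fiber by the classical existence theorem, applied to the closed subset $f_1^{-1}(z)\times f_2^{-1}(z)\subseteq X\times Y$ with the lower semicontinuous cost $c$. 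The pointwise inequality $\int c\,d\gamma_z\geq W(z)$ integrated against $\eta$ gives $\int c\,d\gamma\geq\int W(z)\,d\eta(z)$, with equality if and only if $\gamma_z\in\Pi_o(\mu_z,\nu_z)$ for $\eta$-a.e.\ $z$; hence once I exhibit one plan saturating this bound, it follows both that the common minimum value equals $\int W(z)\,d\eta(z)$ and that the "if and only if" characterization holds.

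The main obstacle is therefore producing a measurable family $(\rho_z)_{z\in Z}$ of fiberwise optimal plans, i.e.\ $\rho_z\in\Pi_o(\mu_z,\nu_z)$ for $\eta$-a.e.\ $z$ with $z\mapsto\rho_z$ Borel: given such a selection, the gluing construction used in the proof of \Cref{prop:p0_of_fiber_product_onto_fiber_product_of_p0} (integrating $\rho_z$ against $\eta$) produces an element of $\Pi(\mu,\nu,Z)$ with cost exactly $\int W(z)\,d\eta(z)$. For this I would invoke a Kuratowski--Ryll-Nardzewski measurable selection theorem applied to the multifunction $z\rightrightarrows\Pi_o(\mu_z,\nu_z)\subseteq\cP(X\times Y)$, whose values are nonempty and weakly closed by the classical existence theorem; its measurability in turn follows from the Borel measurability of $z\mapsto(\mu_z,\nu_z)$ provided by the disintegration theorem, together with the lower semicontinuity of $\rho\mapsto\int c\,d\rho$ and of the map $(\mu',\nu')\mapsto\inf_{\rho\in\Pi(\mu',\nu')}\int c\,d\rho$ on $\cP(X)\times\cP(Y)$.
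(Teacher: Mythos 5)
Your proposal is correct and follows essentially the same route as the paper: disintegrate along $\eta$, compare fiberwise costs with the fiberwise optima, produce a Borel family of fiberwise optimal plans, glue it into a competitor, and read off both existence and the a.e.-optimality characterization from the equality case. The measurable-selection step you sketch via Kuratowski--Ryll-Nardzewski is exactly the content of \Cref{th:fibered_opt_plan_selection}, which the paper invokes directly (with the cost extended by $+\infty$ off the fibers), and your separate direct-method existence argument is sound but, as you note, redundant once the saturating plan is built.
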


\begin{proof}
    Since \eqref{eq:ot_problem_in_fiber_product} is finite, this implies that there exists $\gamma \in \Pi(\mu,\nu,Z)$ such that $\int c \dd\gamma < +\infty$, that is, 
    \begin{equation}
        \iint c(x,y) \dd\gamma_z(x,y) \dd\eta(z) = \int c(x,y) \dd\gamma(x,y) < +\infty.
    \end{equation}
    Thus, there exists a set $Z' \subseteq Z$ of full measure for $\eta$ such that for every $z \in Z'$, $\int c(x,y) \dd\gamma_z(x,y) < +\infty$, that is, the optimal transportation cost between $\mu_z$ and $\nu_z$ for $c$ is finite. Let $h : Z' \times X \times Y \mapsto \R$ be defined by
    \begin{equation}
        h(z,x,y) := \begin{cases}
            c(x,y) & \hbox{ if } z = f_1(x) = f_2(y) \\
            +\infty & \hbox{ else,}
        \end{cases}
    \end{equation}
    then clearly $h_z = h(z,\cdot,\cdot)$ is nonnegative lower semicontinuous for every $z \in Z'$. Applying \Cref{th:fibered_opt_plan_selection} to $h$, $(\mu_z)_{z \in Z'}$ and $(\nu_z)_{z \in Z'}$, we deduce that there exists a Borel family $(\gamma_{0,z})_{z \in Z'}$ such that for every $z \in Z'$, $\gamma_{0,z}$ is an optimal transport plan between $\mu_z$ and $\nu_z$ for the cost $c$ (as $c = h_z$ on $\spt(\mu_z) \times \spt(\nu_z)$). We define then $\gamma_0 \in \Pi(\mu,\nu,Z)$ to be the unique measure such that
    \begin{equation}
        \int f(x,y) \dd\gamma_0(x,y) = \int_{Z'} \int f(x,y) \dd\gamma_{0,z}(x,y) \dd\eta(z)
    \end{equation}
    for every nonnegative measurable test function $f : X \times_Z Y \mapsto \R_+$. We want to prove that $\gamma_0 \in \Pi_o(\mu,\nu,Z)$: for this, fix $\gamma \in \Pi(\mu,\nu,Z)$, we then have
    \begin{align}
        \int c(x,y) \dd\gamma(x,y) &= \iint c(x,y) \dd\gamma_z(x,y) \dd\eta(z) \\
        &= \int_{Z'} \int c(x,y) \dd\gamma_z(x,y) \dd\eta(z) \\
        &\geq \int_{Z'} \int c(x,y) \dd\gamma_{0,z}(x,y) \dd\eta(z) \label{eq:l_337} \\
        &= \int c(x,y) \dd\gamma_0(x,y)
    \end{align}
    where the third line is obtained using the optimality of $\gamma_{0,z}$ for every $z \in Z'$. Therefore we have $\int c\dd\gamma \geq \int c \dd\gamma_0$, and $\gamma_0$ is indeed optimal. Moreover, the inequality \eqref{eq:l_337} is an equality if and only if $\int c \dd\gamma_z = \int c \dd\gamma_{0,z}$ for $\eta$-a.e. $z \in Z'$, and thus $\gamma$ is itself optimal if and only if $\gamma_z$ is optimal for $\eta$-a.e. $z$. This finishes the proof.
\end{proof}

\subsection{Surjectivity of pushforwards}

We close this subsection with a result on the surjectivity of pushforward maps, which will be useful later for proving the existence of certain measures:

\begin{definition}
    A function $f : (X,d_X) \mapsto (Y,d_Y)$ between metric spaces is said to have \emph{linearly growing antecedents in $B$}, where $B$ is a subset of $X$, if it is surjective and there exists $x_0 \in X$ and $C > 0$ such that for every $y \in Y$, there exists $x \in B$ such that $f(x) = y$ and $d_X(x_0,x) \leq C(1 + d_Y(f(x_0),y))$. In the case where $B = X$, we simply say that $f$ has \emph{linearly growing antecedents}.
\end{definition}

One can check that if a function $f$ satisfies this definition for some $x_0 \in X$, then it satisfies it for any $x_0 \in X$. Furthermore, the composition of two functions with linearly growing antecedents has linearly growing antecedents.

\begin{remark} \label{rk:linear_antecedents_p_of_fiber_product_onto_fiber_product_of_p}
    \Cref{prop:p_of_fiber_product_onto_fiber_product_of_p} directly implies that the natural morphism $h : \cP_2(X \times_Z Y) \mapsto \cP_2(X) \times_{\cP_2(Z)} \cP_2(Y)$ between the $\cP_2$ space of the fiber product and the fiber product of the $\cP_2$ spaces has linearly growing antecedents.
\end{remark}

\begin{lemma} \label{lemma:pushforward_surjective}
    Let $X$, $Y$ be Polish spaces, $B \subseteq X$ a Borel subset, and $f : B \mapsto Y$ be a continuous map. If $\mu \in \cP(Y)$ is concentrated on $f(B)$, then there exists $\nu \in \cP(X)$ concentrated on $B$ such that $f_\#\nu = \mu$.
\end{lemma}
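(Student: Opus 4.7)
The natural approach is to construct a right inverse of $f$ by measurable selection and then push $\mu$ forward along it. Since $X$ is Polish and $B \subseteq X$ is Borel, $B$ is a standard Borel space ; since $f : B \to Y$ is continuous it is Borel measurable, and the graph
\begin{equation}
    G := \{(y,x) \in Y \times B \setcond f(x) = y\}
\end{equation}
is a Borel subset of $Y \times X$ whose projection on $Y$ is precisely $f(B)$. In particular, $f(B)$ is an analytic subset of $Y$, hence universally measurable, so that the hypothesis that $\mu$ is concentrated on $f(B)$ just means $\mu(f(B)) = 1$.

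I would then invoke the Jankov--von Neumann uniformization theorem applied to $G$: there exists a universally measurable map $s : f(B) \to B$ such that $f \circ s = \id_{f(B)}$. Fix an arbitrary $b_0 \in B$ and extend $s$ to a map $\tilde{s} : Y \to X$ by setting $\tilde{s}(y) := b_0$ whenever $y \notin f(B)$. Then $\tilde{s}$ is universally measurable, hence $\mu$-measurable, and I define $\nu \in \cP(X)$ by
\begin{equation}
    \nu(A) := \mu\big(\tilde{s}^{-1}(A)\big), \quad A \in \mathcal{B}(X),
\end{equation}
where the preimage is universally measurable, so measurable with respect to the $\mu$-completion, and $\nu$ extends uniquely to a Borel probability measure on $X$.

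It then remains to verify the two conclusions. Since $\tilde{s}(y) \in B$ for every $y \in f(B)$ and $\mu(f(B)) = 1$, we have $\nu(B) \geq \mu(\tilde{s}^{-1}(B)) \geq \mu(f(B)) = 1$, so $\nu$ is concentrated on $B$. For any Borel $A \subseteq Y$, observe that
\begin{equation}
    \tilde{s}^{-1}\big(f^{-1}(A) \cap B\big) \cap f(B) = \{y \in f(B) \setcond f(\tilde{s}(y)) \in A\} = A \cap f(B),
\end{equation}
since $f \circ \tilde{s} = \id$ on $f(B)$. Combined with $\mu(Y \setminus f(B)) = 0$ and with the fact that $\nu$ charges only $B$, this yields
\begin{equation}
    (f_\#\nu)(A) = \nu\big(f^{-1}(A) \cap B\big) = \mu\big(\tilde{s}^{-1}(f^{-1}(A) \cap B)\big) = \mu(A \cap f(B)) = \mu(A),
\end{equation}
so $f_\#\nu = \mu$, as desired.

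\textbf{Main obstacle.} The only nontrivial point is measurability: no Borel section of $f$ need exist in general (one would require stronger hypotheses on the fibers, such as compactness), so we must content ourselves with a universally measurable section and carefully check that its pushforward is still a well-defined Borel probability measure. Everything else is routine set-chasing once the section is in hand.
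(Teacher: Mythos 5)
Your proof is correct and follows essentially the same route as the paper: both arguments observe that the graph of $f$ over $B$ is analytic, apply the Jankov--von Neumann uniformization theorem to obtain a universally measurable section $s$ of $f$ on $f(B)$, and define $\nu$ as the pushforward of $\mu$ by that section, checking that universal measurability suffices for the pushforward to be a well-defined Borel probability measure. The only cosmetic difference is that you note the graph is actually Borel (and extend the section by a constant outside $f(B)$), whereas the paper only uses that it is analytic as a continuous image of a Borel set; both suffice for the uniformization theorem.
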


\begin{proof}
    Note that this proof makes heavy use of various concepts and results from descriptive set theory, such as analytic sets, universal measurability, the completion of a measure, or the Jankov-von Neumann uniformization theorem. We refer to \citep{kechris_classical_1995} for their detailed definitions, statements, and proofs. \newline
    Consider the set $R \subseteq X \times Y$ defined by $R := \{(x,f(x)), x \in B\}$. This set is analytic, as it is the continuous image by $(\id,f)$ of $B$ which is a Borel set of a Polish space. Therefore, by the Jankov-von Neumann uniformization theorem, there exists a function $g : f(B) \mapsto X$ which is universally measurable and such that $(g(y),y) \in R$ for every $y \in f(B)$, that is $g(y) \in B$ and $f \circ g(y) = y$ for every $y \in f(B)$. Let $\mu \in \cP(Y)$ be concentrated on $f(B)$. Then, the probability measure $\nu := g_\#\mu \in \cP(X)$ is well-defined as, for every Borel set $B' \subseteq X$, $g^{-1}(B')$ is universally measurable and thus measurable with respect to $\mu$, so that $\nu(B') := \mu(g^{-1}(B'))$ is well-defined, and $\nu(X) = \mu(g^{-1}(X)) = \mu(f(B)) = 1$ so $\nu$ is indeed an element of $\cP(X)$. Moreover, since $g$ is valued in $B$, we have $g^{-1}(B) = f(B)$, so that $\nu(B) = \mu(g^{-1}(B)) = \mu(f(B)) = 1$, and $\nu$ is concentrated on $B$. Finally, we have $f_\#\nu = (f \circ g)_\#\mu = \mu$, and this finishes the proof.
\end{proof}

\begin{proposition} \label{prop:pushforward_by_surjective_is_surjective_in_P0}
    Let $f : X \mapsto Y$ be a continuous map between Polish spaces. If it is surjective, then $[f] : \cP(X) \mapsto \cP(Y)$ is surjective.
\end{proposition}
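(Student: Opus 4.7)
The plan is simply to invoke \Cref{lemma:pushforward_surjective} with the Borel subset $B$ taken to be $X$ itself. Since $X$ is trivially a Borel subset of $X$, and since $f$ is assumed to be continuous on all of $X$, the hypotheses of the lemma are satisfied.

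Given any $\mu \in \cP(Y)$, I would observe that surjectivity of $f$ means $f(B) = f(X) = Y$, so $\mu$ is automatically concentrated on $f(B) = Y$. Applying \Cref{lemma:pushforward_surjective} then yields a measure $\nu \in \cP(X)$ (concentrated on $B = X$, which imposes no constraint) such that $f_\#\nu = \mu$, i.e.\ $[f](\nu) = \mu$. Since $\mu$ was arbitrary, this shows $[f]$ is surjective.

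The main obstacle in the underlying surjectivity statement was already handled in \Cref{lemma:pushforward_surjective} via the Jankov--von Neumann uniformization theorem, which provides a universally measurable section $g : f(B) \to X$ of $f$ on its image. Once that lemma is available, the proposition reduces to noting that surjectivity of $f$ makes the concentration hypothesis on $\mu$ vacuous. So there is essentially no further work beyond unpacking definitions.
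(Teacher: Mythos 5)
Your proposal is correct and follows exactly the paper's argument: the paper also proves this proposition as a direct consequence of \Cref{lemma:pushforward_surjective} applied to $f$ with $B = X$. Nothing further is needed.
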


\begin{proof}
    This is a direct consequence of \Cref{lemma:pushforward_surjective} applied to $f$ and $B = X$.
\end{proof}

\begin{proposition} \label{prop:pushforward_by_surjective_is_surjective_in_P2}
    Let $f : (X,d_X) \mapsto (Y,d_Y)$ be a continuous map between Polish spaces, with linear growth and linearly growing antecedents in $B_0$, where $B_0$ is a Borel subset of $X$. Then the pushforward operator $[f] : \cP_2(X) \mapsto \cP_2(Y)$ is surjective, and with linearly growing antecedents in $\cP_2(B_0)$.
\end{proposition}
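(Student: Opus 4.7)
The plan is to apply \Cref{lemma:pushforward_surjective} to a carefully chosen Borel restriction of $B_0$ that guarantees the required second-moment bound on the lifted measure. The key observation is that while \Cref{lemma:pushforward_surjective} produces, for any $\mu \in \cP(Y)$ concentrated on $f(B)$, some $\nu \in \cP(X)$ concentrated on $B$ with $f_\#\nu = \mu$, it offers no a priori control on the second moment of $\nu$. To obtain such control, one should restrict to antecedents whose distance to a fixed reference point is already linearly controlled by the distance in $Y$; this is exactly what the hypothesis of linearly growing antecedents in $B_0$ provides.

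Concretely, let $x_0 \in X$ and $C > 0$ witness that $f$ has linearly growing antecedents in $B_0$, and define
\begin{equation}
    B'_0 := \{x \in B_0 \setcond d_X(x_0,x) \leq C(1 + d_Y(f(x_0),f(x)))\}.
\end{equation}
This set is Borel, since it is the intersection of $B_0$ with a closed subset of $X$ (the continuous function $x \mapsto d_X(x_0,x) - C(1 + d_Y(f(x_0),f(x)))$ being $\leq 0$), and the defining property of linearly growing antecedents guarantees $f(B'_0) = Y$. Given any $\mu \in \cP_2(Y)$, I would apply \Cref{lemma:pushforward_surjective} to the restriction $f_{|B'_0} : B'_0 \mapsto Y$ to obtain $\nu \in \cP(X)$ concentrated on $B'_0$ with $f_\#\nu = \mu$.

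To conclude, I would control the second moment of $\nu$ directly from the defining inequality of $B'_0$. Using $(1+a)^2 \leq 2(1+a^2)$ and the fact that $\nu$ is concentrated on $B'_0$,
\begin{align}
    \W_2^2(\delta_{x_0},\nu) &= \int d_X^2(x_0,x) \dd\nu(x) \leq \int C^2(1 + d_Y(f(x_0),f(x)))^2 \dd\nu(x) \\
    &\leq 2C^2 \left(1 + \int d_Y^2(f(x_0),y) \dd\mu(y)\right) = 2C^2\big(1 + \W_2^2(\delta_{f(x_0)},\mu)\big).
\end{align}
In particular $\nu \in \cP_2(X)$, so $[f] : \cP_2(X) \mapsto \cP_2(Y)$ is surjective. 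Setting the base point $\nu_0 := \delta_{x_0} \in \cP_2(X)$, so that $[f](\nu_0) = \delta_{f(x_0)}$, and using $\sqrt{1+a^2} \leq 1+a$ for $a \geq 0$, I obtain
\begin{equation}
    \W_2(\nu_0,\nu) \leq \sqrt{2}\,C\,\big(1 + \W_2([f](\nu_0),\mu)\big),
\end{equation}
with $\nu$ concentrated on $B'_0 \subseteq B_0$, i.e.\ $\nu \in \cP_2(B_0)$. This yields the linearly growing antecedents property of $[f]$ in $\cP_2(B_0)$, with constant $\sqrt{2}C$.

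The only nontrivial input is the measurable selection provided by \Cref{lemma:pushforward_surjective}; the main step of the argument is the choice of the restricted set $B'_0$, which simultaneously preserves surjectivity of $f$ onto $Y$ and enforces the pointwise linear growth needed to transfer control from $\mu$ to $\nu$. After these two ingredients are in place, the second-moment estimate is a direct computation, so I do not anticipate any further obstacle.
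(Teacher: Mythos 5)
Your proof is correct and follows essentially the same route as the paper: the paper likewise replaces $B_0$ by its intersection with the closed set where $d_X(x_0,x) \leq C(1+d_Y(f(x_0),f(x)))$, invokes \Cref{lemma:pushforward_surjective} on this restriction, and concludes with the same second-moment estimate. Making the restricted set $B'_0$ and the verification $f(B'_0)=Y$ explicit is a slight improvement in presentation, but the argument is identical in substance.
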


\begin{proof}
    Let $x_0 \in X$ and $C > 0$ be such that for every $y \in Y$, there exists $x \in B_0$ such that $f(x) = y$ and $d_X(x_0,x) \leq C(1 + d_Y(f(x_0),y))$. Up to intersecting $B_0$ with the closed set $\{(x,f(x)), x \in X, d_X(x,x_0) \leq C(1+d_Y(f(x_0),f(x)))\}$, we may further assume that $d_X(x,x_0) \leq C(1+d_Y(f(x_0),f(x)))$ for every $x \in B_0$. Fix $\mu \in \cP_2(Y)$. Applying \Cref{lemma:pushforward_surjective} to $f$ and $B_0$, there exists $\nu \in \cP(X)$ concentrated on $B_0$ such that $f_\#\nu = \mu$. Moreover, $\mu$ is in $\cP_2(X)$ as we have
    \begin{align}
        \W_2^2(\delta_{x_0}, \mu) &= \int d^2_X(x_0,x) \dd\nu(x) = \int_{B_0} d^2_X(x_0,x) \dd\nu(x) \\
        &\leq \int_{B_0} C^2(1 + d_Y(f(x_0),f(x)))^2 \dd\nu(x) \\
        &\leq C_2\left(1 + \int d_Y^2(f(x_0),f(x)) \dd\nu(x) \right) \\
        &\leq C_2\left(1 + \int d_Y^2(f(x_0),y) \dd\mu(x) \right) \\
        &\leq C_2(1 + \W_2^2(\delta_{f(x_0)}, \mu)) < +\infty. \label{eq:l_431}
    \end{align}
    Thus we have proved that $[f] : \cP_2(X) \mapsto \cP_2(Y)$ is surjective, and in fact inequality \eqref{eq:l_431} also implies that $[f]$ has linearly growing antecedents in $\cP_2(B_0)$.
\end{proof}

\subsection{Fiber products and hierarchical probability spaces}

We collect here various results on how the fiber products in the categories $\Pol$ and $\Pold$ interact with repeated applications of the functors $\cP$ and $\cP_2$. In order to express these results in full generality, we will work in a category $\cC$ and a functor $P : \cC \mapsto \cC$ which will be either $\Pol$ and $\cP$ or $\Pold$ and $\cP_2$. First, we have the following ``hierarchical gluing lemma", which will prove to be useful later for dealing with the existence of certain types of measures:

\begin{lemma} \label{lemma:hierarchical_gluing_lemma}
    Assume that we have a commutative diagram in $\cC$:
    \begin{equation}
        \begin{tikzcd}
            & W \arrow[ddl, bend right, "g_1"'] \arrow[ddr, bend left, "g_2"] \arrow[d, dashed, "h"] &  \\
            & X \times_Z Y \arrow[dl] \arrow[dr] & \\
            X \arrow[r, "f_1"'] & Z & Y \arrow[l, "f_2"] 
        \end{tikzcd}
    \end{equation}
    where $h$ is induced by the universal property of the fiber product. Then, for every $n \geq 0$, we also have a commutative diagram
    \begin{equation} \label{diag:l1912}
        \begin{tikzcd}
            & P^{(n)}(W) \arrow[ddl, bend right, "{P^{(n)}(g_1)}"'] \arrow[ddr, bend left, "{P^{(n)}(g_2)}"] \arrow[d, dashed, "h_n"] &  \\
            & P^{(n)}(X) \times_{P^{(n)}(Z)} P^{(n)}(Y) \arrow[dl] \arrow[dr] & \\
            P^{(n)}(X) \arrow[r, "{P^{(n)}(f_1)}"'] & P^{(n)}(Z) & P^{(n)}(Y) \arrow[l, "{P^{(n)}(f_2)}"] 
        \end{tikzcd}
    \end{equation}
    where $h_n$ is again induced by the universal property of the fiber product. Then:
    \begin{itemize}
        \item If $\cC = \Pol$, $P = \cP$ and $h$ is surjective, then $h_n$ is surjective for every $n \geq 0$.
        \item If $\cC = \Pold$, $P = \cP_2$ and $h$ is surjective with linearly growing antecedents, then $h_n$ is surjective with linearly growing antecedents for every $n \geq 0$.
    \end{itemize}
\end{lemma}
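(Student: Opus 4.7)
The plan is to prove the lemma by induction on $n$, with $n=0$ being immediate (since $P^{(0)} = \Id_\cC$ makes $h_0 = h$, and the hypothesis supplies the conclusion directly). For the inductive step, I would exploit the factorization
\begin{equation*}
    h_{n+1} \;=\; \tilde{h}_{n+1} \circ P(h_n),
\end{equation*}
where $\tilde{h}_{n+1}$ denotes the natural morphism
\begin{equation*}
    \tilde{h}_{n+1} : P\bigl(P^{(n)}(X) \times_{P^{(n)}(Z)} P^{(n)}(Y)\bigr) \;\to\; P^{(n+1)}(X) \times_{P^{(n+1)}(Z)} P^{(n+1)}(Y)
\end{equation*}
induced at the very top level by the universal property of the fiber product on the right, applied to the diagram obtained by applying $P$ to the already-constructed level-$n$ fiber product.

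To verify this factorization, I would check that both sides compose correctly with the two projections of the target fiber product. By definition of $\tilde{h}_{n+1}$, functoriality of $P$ and the universal property characterizing $h_n$ at level $n$, one has for $i=1,2$
\begin{equation*}
    \pi_i \circ \tilde{h}_{n+1} \circ P(h_n) \;=\; P(\pi_i) \circ P(h_n) \;=\; P(\pi_i \circ h_n) \;=\; P\bigl(P^{(n)}(g_i)\bigr) \;=\; P^{(n+1)}(g_i),
\end{equation*}
which matches the defining projection identities for $h_{n+1}$; the claimed equality then follows from uniqueness in the universal property of $h_{n+1}$.

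Given the factorization, the inductive step reduces to combining two surjectivity statements for each of the two factors. On the one hand, $\tilde{h}_{n+1}$ is surjective by \Cref{prop:p0_of_fiber_product_onto_fiber_product_of_p0} in the $\Pol$ case, and surjective with linearly growing antecedents by \Cref{prop:p_of_fiber_product_onto_fiber_product_of_p} together with \Cref{rk:linear_antecedents_p_of_fiber_product_onto_fiber_product_of_p} in the $\Pold$ case. On the other hand, by the induction hypothesis, $h_n$ is surjective (with linearly growing antecedents in the $\Pold$ case); applying \Cref{prop:pushforward_by_surjective_is_surjective_in_P0} (resp. \Cref{prop:pushforward_by_surjective_is_surjective_in_P2}, noting that $h_n$ automatically has linear growth as a morphism in $\Pold$) shows that $P(h_n)$ inherits the same surjectivity property. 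Finally, the composition of two surjections is surjective, and the composition of two maps with linearly growing antecedents retains linearly growing antecedents, so $h_{n+1}$ has the desired property. I do not foresee any serious obstacle: the only delicate point is bookkeeping the naturality identities to justify the factorization $h_{n+1} = \tilde{h}_{n+1} \circ P(h_n)$, after which the proof is a clean induction powered by the level-$1$ surjectivity results already established in the previous two subsections.
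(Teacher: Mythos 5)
Your proof is correct and follows essentially the same route as the paper: induction on $n$, the factorization $h_{n+1} = \tilde{h}_{n+1} \circ P(h_n)$ justified by uniqueness in the universal property, and the same two surjectivity inputs (\Cref{prop:p0_of_fiber_product_onto_fiber_product_of_p0} / \Cref{prop:p_of_fiber_product_onto_fiber_product_of_p} with \Cref{rk:linear_antecedents_p_of_fiber_product_onto_fiber_product_of_p} for $\tilde{h}_{n+1}$, and \Cref{prop:pushforward_by_surjective_is_surjective_in_P0} / \Cref{prop:pushforward_by_surjective_is_surjective_in_P2} for $P(h_n)$), closed off by stability of surjectivity and linearly growing antecedents under composition.
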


\begin{proof}
    We prove this by induction. For $n = 0$, we have $h_0 = h$ so this follows directly from the assumptions. Let $n > 0$, and assume the result holds for $n-1$. Applying the functor $P$ to the diagram \eqref{diag:l1912} with $n \leftarrow (n-1)$, we obtain a new commutative diagram
    \begin{equation}
        \begin{tikzcd}
            & P^{(n)}(W) \arrow[dddl, bend right=40, "{P^{(n)}(g_1)}"'] \arrow[dddr, bend left=40, "{P^{(n)}(g_2)}"] \arrow[d, twoheadrightarrow, "{P(h_{n-1})}"] &  \\
            & P(P^{(n-1)}(X) \times_{P^{(n-1)}(Z)} P^{(n-1)}(Y)) \arrow[ddl, bend right=20] \arrow[ddr, bend left=20] \arrow[d, dashed, twoheadrightarrow, "{\tilde{h}_n}"] & \\
            & P^{(n)}(X) \times_{P^{(n)}(Z)} P^{(n)}(Y) \arrow[dl] \arrow[dr] & \\
            P^{(n)}(X) \arrow[r, "{P^{(n)}(f_1)}"'] & P^{(n)}(Z) & P^{(n)}(Y) \arrow[l, "{P^{(n)}(f_2)}"] 
        \end{tikzcd}
    \end{equation}
    where $\tilde{h}_n$ is again the morphism obtained by applying the universal property of the fiber product. By unicity of the morphism $h_n$, one then has $h_n = \tilde{h}_n \circ P(h_{n-1})$. Then:
    \begin{itemize}
        \item In the case $\cC = \Pol$ and $P = \cP$: since $h_{n-1}$ is surjective by the induction hypothesis, so is $P(h_{n-1}) = [h_{n-1}]$ by \Cref{prop:pushforward_by_surjective_is_surjective_in_P0}. Moreover, $\tilde{h}_n$ is surjective by \Cref{prop:p0_of_fiber_product_onto_fiber_product_of_p0}. Therefore, $h_n$ is surjective as the composition of surjective maps.
        \item In the case $\cC = \Pold$ and $P = \cP_2$: since $h_{n-1}$ is surjective with linearly growing antecedents by the induction hypothesis, so is $P(h_{n-1}) = [h_{n-1}]$ by \Cref{prop:pushforward_by_surjective_is_surjective_in_P2}. Moreover, $\tilde{h}_n$ is surjective and with linearly growing antecedents by \Cref{prop:p_of_fiber_product_onto_fiber_product_of_p} and \Cref{rk:linear_antecedents_p_of_fiber_product_onto_fiber_product_of_p}. Therefore, $h_n$ is surjective and with linearly growing antecedents as the composition of $\tilde{h}_n$ and $[h_{n-1}]$.
    \end{itemize}
    This finishes the proof.
\end{proof}

\begin{remark}
    Let $X \times_Z Y$ be a fiber product in the category $\Pold$, with the projections maps $\pi_1 : X \times_Z Y \mapsto X$ and $\pi_2 : X \times_Z Y \mapsto Y$. Then, for every $n > 0$, $\alpha \in \cP(\cPn{n-1}{X \times_Z Y})$ and $(x_0,y_0) \in X \times_Z Y$, it holds
    \begin{equation} \label{eq:2nd_moment_of_coupling}
        \W_2^2(\delta^{(n)}_{(x_0,y_0)}, \alpha) = \W_2^2(\delta^{(n)}_{x_0},[\pi_1]^{(n)}(\alpha)) + \W_2^2(\delta^{(n)}_{y_0}, [\pi_2]^{(n)}(\alpha).
    \end{equation}
    This is straightforward to check by induction. Indeed, for $n = 1$, this is simply \eqref{eq:2nd_moment_of_transport_plan}, and, for $n > 1$, assuming the equality holds for $n-1$, we have for every $\bA \in \cP(\cPn{n-1}{X \times_Z Y})$
    \begin{align}
        \W_2^2(\delta^{(n)}_{(x_0,y_0)}, \bA) &= \int W_2^2(\delta^{(n-1)}_{(x_0,y_0)}, \alpha) \dd\bA(\alpha) \\
        &= \int \W_2^2(\delta^{(n-1)}_{x_0},[\pi_1]^{(n-1)}(\alpha)) + \W_2^2(\delta^{(n-1)}_{y_0}, [\pi_2]^{(n-1)}(\alpha) \dd\bA(\alpha) \\
        &= \int \W_2^2(\delta^{(n-1)}_{x_0},\gamma_1) \dd[\pi_1]^{(n)}(\bA)(\gamma_1) + \W_2^2(\delta^{(n-1)}_{y_0},\gamma_2) \dd[\pi_2]^{(n)}(\bA)(\gamma_2) \\
        &= \W_2^2(\delta^{(n)}_{x_0},[\pi_1]^{(n)}(\bA)) + \W_2^2(\delta^{(n)}_{y_0}, [\pi_2]^{(n)}(\bA)
    \end{align}
\end{remark}

\begin{proposition} \label{prop:p_of_fiber_to_fiber_of_p_is_proper}
    Let $X \times_Z Y$ be a fiber product in $\cC$, with associated projection maps $\pi_1$, $\pi_2$. Then for every $n \geq 0$, the map $([\pi_1]^{(n)},[\pi_2]^{(n)}) : P^{(n)}(X \times_Z Y) \mapsto P^{(n)}(X) \times_{P^{(n)}(Z)} P^{(n)}(Y)$ is proper (i.e. preimages of compact sets are compact).
\end{proposition}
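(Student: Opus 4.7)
My plan is to proceed by induction on $n$, treating the cases $\cC = \Pol$ (with $P = \cP$) and $\cC = \Pold$ (with $P = \cP_2$) in parallel. The base case $n = 0$ is immediate since $h_0$ is just the identity map of $X \times_Z Y$. Assume now that $h_{n-1}: P^{(n-1)}(X \times_Z Y) \to P^{(n-1)}(X) \times_{P^{(n-1)}(Z)} P^{(n-1)}(Y)$ is proper. Let $K$ be compact in the codomain of $h_n$, and write $K_X := \pi_1(K)$, $K_Y := \pi_2(K)$, which are compact in $P^{(n)}(X)$ and $P^{(n)}(Y)$ respectively. Set $K' := h_n^{-1}(K)$. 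By continuity of $h_n$ (which in turn comes from the fact that $[\pi_1]^{(n)}$ and $[\pi_2]^{(n)}$ are continuous) and closedness of $K$, the set $K'$ is closed, so it only remains to prove that it is relatively compact.

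The key step is the following tightness argument. Given $\epsilon > 0$, by tightness of $K_X$ and $K_Y$ (which holds via Prokhorov since they are compact in $\cP(P^{(n-1)}(\cdot))$), we can find compact subsets $C_X \subseteq P^{(n-1)}(X)$ and $C_Y \subseteq P^{(n-1)}(Y)$ with $\mu(C_X) \geq 1 - \epsilon/2$ for all $\mu \in K_X$ and $\nu(C_Y) \geq 1 - \epsilon/2$ for all $\nu \in K_Y$. The set $D := (C_X \times C_Y) \cap (P^{(n-1)}(X) \times_{P^{(n-1)}(Z)} P^{(n-1)}(Y))$ is compact as a closed subset of the compact product $C_X \times C_Y$, and by the inductive hypothesis $L := h_{n-1}^{-1}(D)$ is a compact subset of $P^{(n-1)}(X \times_Z Y)$. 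For any $\gamma \in K'$, since $\gamma \notin L$ forces $[\pi_1]^{(n-1)}(\eta) \notin C_X$ or $[\pi_2]^{(n-1)}(\eta) \notin C_Y$, we get $\gamma(L^c) \leq [\pi_1]^{(n)}(\gamma)(C_X^c) + [\pi_2]^{(n)}(\gamma)(C_Y^c) \leq \epsilon$, so $K'$ is weakly tight.

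In the case $P = \cP$, Prokhorov's theorem immediately gives that $K'$ is relatively compact, concluding the induction. For the case $P = \cP_2$, I still need to upgrade weak relative compactness to $\W_2$-relative compactness. I will argue via sequences: given $(\gamma_k)_k \subseteq K'$, the tightness argument combined with Prokhorov extracts a subsequence converging weakly to some $\gamma \in P^{(n)}(X \times_Z Y)$. By continuity of the pushforwards and compactness of $K_X$, $K_Y$ in the $\W_2$ topology, up to a further subsequence $[\pi_1]^{(n)}(\gamma_{k_j}) \to \mu$ and $[\pi_2]^{(n)}(\gamma_{k_j}) \to \nu$ in $\W_2$, with $\mu = [\pi_1]^{(n)}(\gamma)$ and $\nu = [\pi_2]^{(n)}(\gamma)$. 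Applying the additive decomposition \eqref{eq:2nd_moment_of_coupling} to both $\gamma_{k_j}$ and $\gamma$ gives
\begin{equation}
    \W_2^2(\delta^{(n)}_{(x_0,y_0)}, \gamma_{k_j}) = \W_2^2(\delta^{(n)}_{x_0}, [\pi_1]^{(n)}(\gamma_{k_j})) + \W_2^2(\delta^{(n)}_{y_0}, [\pi_2]^{(n)}(\gamma_{k_j})) \xrightarrow[j \to \infty]{} \W_2^2(\delta^{(n)}_{(x_0,y_0)}, \gamma),
\end{equation}
which together with weak convergence yields $\gamma_{k_j} \to \gamma$ in the $\W_2$ topology on $\cPn{n}{X \times_Z Y}$ (by the standard characterization of $\W_2$-convergence as weak convergence plus convergence of the second moment). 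Hence $K'$ is sequentially compact, and therefore compact.

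The main obstacle is this last upgrade in the $\cP_2$ setting: since $P(h_{n-1}) = [h_{n-1}]$ cannot in general be shown to be proper (properness of $f$ does not imply uniform $\W_2$-tightness of $[f]^{-1}$), one cannot simply compose properness results through $h_n = \tilde h_n \circ P(h_{n-1})$. The key insight that overcomes this is that the additive splitting \eqref{eq:2nd_moment_of_coupling} of the squared $\W_2$-distance at the base point $\delta^{(n)}_{(x_0,y_0)}$ reduces the control of second moments on the fiber product to independent control on each factor, which is precisely what the compactness of $K_X$ and $K_Y$ provides.
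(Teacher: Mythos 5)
Your proof is correct and follows essentially the same route as the paper: induction on $n$, a weak-tightness argument for the preimage (you re-derive inline what the paper cites as \Cref{lemma:tightness_criterion}), and in the $\cP_2$ case the upgrade from weak to $\W_2$-convergence via the additive second-moment identity \eqref{eq:2nd_moment_of_coupling} together with point \ref{enum:w2_convergence:1_2nd_moment_cvg} of \Cref{th:convergence_in_w2_space}. No substantive differences.
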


\begin{proof}
    Note that this is clearly true in the case $n = 0$, as $(\pi_1,\pi_2)$ is the identity map of $X \times_Z Y$. Moreover, since (in both cases $\cC = \Pol$ and $\cC = \Pold$) $X \times_Z Y$ is a closed subset of $X \times Y$, we only need to prove that the map $([\pi_1]^{(n)},[\pi_2]^{(n)}) : P^{(n)}(X \times_Z Y) \mapsto P^{(n)}(X) \times P^{(n)}(Y)$ is proper for every $n \geq 0$. In the case $\cC = \Pol$, this is easily proved by induction by repeatedly applying \Cref{lemma:tightness_criterion} to the maps $[\pi_1]^{(n)}$ and $[\pi_2]^{(n)}$. Thus, we focus on the case $\cC = \Pold$, and we want to prove that the map $([\pi_1]^{(n)},[\pi_2]^{(n)}) : \cPn{n}{X \times_Z Y} \mapsto \cPn{n}{X} \times \cPn{n}{Y}$ is proper for every $n \geq 0$. We prove this by induction. We have seen that the case $n = 0$ is true, so we let $n > 0$ and we assume that $([\pi_1]^{(n-1)},[\pi_2]^{(n-1)})$ has been proved to be proper. \newline
    Let $\cK_1 \subseteq \cPn{n}{X}, \cK_2 \subseteq \cPn{n}{Y}$ be compact (in the $\W_2$ topology). By \Cref{prop:rel_compact_sets_in_w2_topology}, they are tight. Moreover, since $([\pi_1]^{(n-1)},[\pi_2]^{(n-1)})$ is proper, by \Cref{lemma:tightness_criterion}, the map $([\pi_1]^{(n)},[\pi_2]^{(n)}) : \cP(\cPn{n-1}{X \times_Z Y}) \mapsto \cP(\cPn{n-1}{X}) \times \cP(\cPn{n-1}{Y})$ is proper for the weak topology, and
    \begin{equation}
        \cL = ([\pi_1]^{(n)})^{-1}(\cK_1) \cap ([\pi_2]^{(n)})^{-1}(\cK_2) \cap \cPn{n}{X \times_Z Y}
    \end{equation}
    is tight (and $\W_2$-closed). Now, let $(\alpha^k)_k$ be a sequence in $\cL$, and let $\gamma_1^k := [\pi_1]^{(n)}(\alpha^k)$ and $\gamma_2^k := [\pi_2]^{(n)}(\alpha^k)$. Since $\cL$ is tight, and $\cK_1$ and $\cK_2$ are $\W_2$-compact, up to extracting subsequences, there exists $\alpha \in \cP(\cPn{n-1}{X \times_Z Y})$ and $\gamma_1 \in \cK_1$, $\gamma_2 \in \cK_2$ such that $\alpha^k \rightharpoonup \alpha$ and $\gamma_1^k \to \gamma_1$, $\gamma_2^k \to \gamma_2$ (in particular, $[\pi_1]^{(n)}(\alpha) = \gamma_1$ and $[\pi_2]^{(n)}(\alpha) = \gamma_2$). We want to prove that $\alpha^k \to \alpha$ in the $\W_2$ topology. By point \ref{enum:w2_convergence:1_2nd_moment_cvg} of \Cref{th:convergence_in_w2_space}, for this, we only need to prove that $\W_2^2(\delta^{(n)}_{(x_0,y_0)},\alpha^k) \xrightarrow[k \to +\infty]{} \W_2^2(\delta^{(n)}_{(x_0,y_0)},\alpha)$, where $(x_0,y_0) \in X \times_Z Y$ is arbitrary. But this is the case: indeed, we have, using \eqref{eq:2nd_moment_of_coupling}, that for every $k$,
    \begin{align}
        \W_2^2(\delta^{(n)}_{(x_0,y_0)},\alpha^k) &= \W_2^2(\delta^{(n)}_{x_0}, \gamma_1^k) + \W_2^2(\delta^{(n)}_{y_0}, \gamma_2^k) \\
        &\xrightarrow[k \to +\infty]{} \W_2^2(\delta^{(n)}_{x_0}, \gamma_1) + \W_2^2(\delta^{(n)}_{y_0}, \gamma_2) = \W_2^2(\delta^{(n)}_{(x_0,y_0)},\alpha)
    \end{align}
    by the $\W_2$-convergence of $\gamma_1^k$ and $\gamma_2^k$. Therefore, $\alpha^k$ converges to $\alpha \in \cL$ in the $\W_2$ topology. In particular, $\cL$ is $\W_2$-compact, and the map $([\pi_1]^{(n)},[\pi_2]^{(n)}) : \cPn{n}{T^2\cM} \mapsto \cPn{n}{T\cM} \times \cPn{n}{T\cM}$ is proper. This finishes the proof.
\end{proof}

\subsection{The bundle \texorpdfstring{$T^2\cM$}{T²M}} 

In the following, as in \citep{gigli2011inverse}, we mean by $T^2 \cM$ the metric space defined as
\begin{equation}
    T^2 \cM := \{(x,v_1,v_2), x \in \cM, v_1,v_2 \in T_x \cM \}
\end{equation}
equipped with the metric $d$ such that for every $(x,v_1,v_2), (x',v'_1,v'_2) \in T^2 \cM$,
\begin{equation}
    d^2((x,v_1,v_2),(x',v'_1,v'_2)) := d^2((x,v_1),(x',v'_1)) + d^2((x,v_2),(x',v_2)).
\end{equation}
In particular, this space must not be confused with the second-order tangent bundle $T(T\cM)$, which is also denoted $T^2 \cM$ is the differential geometry litterature.
\medbreak
The space $T^2 \cM$ with its metric identifies as the fiber product $T\cM \times_\cM T\cM$ of two copies of $T\cM$, with the projection $\pi$ as the morphisms $T\cM \mapsto \cM$\footnote{Note that we have previously defined the fiber product $X \times_Z Y$ of metric spaces as endowed with the metric $d = d_X + d_Y$, while here we are instead endowing it with the metric $d' = \sqrt{d_X^2 + d_Y^2}$. However, these two metrics are bi-Lipschitz equivalent, so they define the same object up to isomorphism in the category $\Pol$.}. In particular, it is a Polish space, and the projections $\pi_1^{T^2\cM}, \pi_2^{T^2\cM} : T^2 \cM \mapsto T\cM$ on the coordinates are continuous maps with linear growth. We may endow $T^2\cM$ with the structure of a differential manifold, given by the following atlas: for every local chart $\varphi : U \subseteq \cM \mapsto U' \subset \R^d$, we consider the local chart of $T^2\cM$ given by
\begin{equation}
    \tilde{\varphi} : \left\{\begin{array}{ccc}
        T^2U = \{(x,v_1,v_2), x \in U, v_1,v_2 \in T_x \cM \} & \rightarrow & U' \times \R^d \times \R^d  \\
        (x, v_1, v_2) & \rightarrow & (\varphi(x), D_x\varphi(v_1), D_x\varphi(v_2))
    \end{array}\right.
\end{equation}
One can check that these charts define an atlas for $T^2\cM$, for which the projections $\pi_1^{T^2\cM}$ and $\pi_2^{T^2\cM}$ are smooth. The topology given by the distance $d$ and the differential manifold structure coincide: indeed, in both topologies, $(x_n,v_n,w_n) \xrightarrow[n \to +\infty]{} (x,v,w)$ if and only if $(x_n,v_n) \xrightarrow[n \to +\infty]{} (x,v)$ and $(x_n,w_n) \xrightarrow[n \to +\infty]{} (x,w)$ in $T\cM$. Note, however, that there is a priori no Riemannian metric on $T^2\cM$ for which $d$ is the geodesic distance.
\medbreak
We may define the following \emph{parallel transport operator} on $T^2\cM$: let $(x,v,w) \in T^2\cM$ and $t \in \R$, then let $\PT_t(x,v,w)$ be the element of $T_{\exp_x(tv)} \cM$ obtained by applying to $w$ the parallel transport operator from $x$ to $\exp_x(tv)$ along the geodesic $s \to \exp_x(sv)$. Since $\cM$ is complete, any geodesic can be extended to a geodesic defined on all $\R$, so this operator is defined for any value of $t \in \R$ and $(x,v,w) \in T^2\cM$. Moreover, it is smooth, in the following sense:

\begin{proposition}
    The application $\PT : (t,x,v,w) \in \R \times T^2\cM \mapsto (\exp_x(tv),\PT_t(x,v,w)) \in T\cM$ is smooth, and has linear growth.
\end{proposition}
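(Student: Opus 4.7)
The proof splits cleanly into smoothness and growth. For smoothness, my plan is to realise $\PT$ as the flow of a smooth autonomous ODE on the total space $T^2\cM$. In local coordinates $(x^i, v^i, w^i)$ on a chart of $T^2\cM$, the geodesic equations $\dot x^i = v^i$ and $\dot v^i = -\Gamma^i_{jk}(x) v^j v^k$, together with the parallel-transport equation $\dot w^i = -\Gamma^i_{jk}(x) v^j w^k$, form a coupled smooth first-order autonomous system on $T^2\cM$. Classical ODE theory (smooth dependence on initial data and on time) then gives a flow that is smooth in $(t, x, v, w)$, and geodesic completeness of $\cM$ guarantees this flow is globally defined on $\R \times T^2\cM$. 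Its value at time $t$ starting from $(x, v, w)$ is exactly $(\exp_x(tv), \dot\gamma_{x,v}(t), \PT_t(x,v,w))$, so projecting onto the first and third coordinates yields the smoothness of $\PT$ as a map $\R \times T^2\cM \to T\cM$.

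For the growth estimate, I fix the reference point $(0, x_0, 0, 0) \in \R \times T^2\cM$, whose image under $\PT$ is $(x_0, 0) \in T\cM$, and combine three elementary ingredients: (i) parallel transport is fibrewise an isometry, so $\|\PT_t(x, v, w)\|_{\exp_x(tv)} = \|w\|_x$; (ii) the geodesic $s \mapsto \exp_x(sv)$ has constant speed $\|v\|_x$, hence $d_\cM(x, \exp_x(tv)) \leq |t|\|v\|_x$; (iii) the Sasaki distance on $T\cM$ satisfies $d_{T\cM}((x_0, 0), (y, u)) \leq d_\cM(x_0, y) + \|u\|_y$, obtained by concatenating the horizontal lift of a minimising geodesic from $x_0$ to $y$ with a vertical segment inside the fibre $T_y\cM$. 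Putting these together yields
\begin{equation*}
    d_{T\cM}\bigl((x_0, 0), \PT(t, x, v, w)\bigr) \leq d_\cM(x_0, x) + |t|\|v\|_x + \|w\|_x.
\end{equation*}

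The main obstacle is the cross term $|t|\|v\|_x$, which is \emph{not} linearly dominated by the product distance $d_{\R \times T^2\cM}((0, x_0, 0, 0), (t, x, v, w)) \sim |t| + d_\cM(x_0, x) + \|v\|_x + \|w\|_x$. I would handle it in one of two ways: either by restricting to a bounded time interval (all subsequent uses of $\PT$ in the paper take $t \in [0,1]$, on which $|t|\|v\|_x \leq \|v\|_x$ is indeed linearly controlled by the product distance), or via the AM--GM bound $|t|\|v\|_x \leq \tfrac12(|t|^2 + \|v\|_x^2)$, which controls the cross term by the \emph{squared} product distance. Once the correct version of the growth statement is fixed, the proof is completed by collecting the constants coming from the three ingredients above.
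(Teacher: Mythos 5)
Your smoothness argument is exactly the paper's: the paper introduces the vector field on $T^2\cM$ whose components in local coordinates are $U^k = v^k$, $V^k = -v^iv^j\Gamma^k_{i,j}(x)$, $W^k = -v^iw^j\Gamma^k_{i,j}(x)$, observes that its integral curves are globally defined by completeness of $\cM$, and invokes the fundamental theorem on flows to get a smooth global flow $\Phi$ with $\PT = \pi_2^{T^2\cM}\circ\Phi$. So that half of your proposal matches the source line for line.

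Where you genuinely diverge is on the growth claim, and here you are doing \emph{more} than the paper: the paper's proof simply stops after establishing smoothness and never addresses linear growth at all. Your estimate
\begin{equation*}
    d_{T\cM}\bigl((x_0,0),\PT(t,x,v,w)\bigr) \leq d_\cM(x_0,x) + |t|\,\|v\|_x + \|w\|_x
\end{equation*}
is correct (indeed, by the Sasaki formula $d^2((x_0,0),(y,u)) = d^2(x_0,y) + \|u\|^2_y$ you get it directly, without needing the concatenation argument), and your observation about the cross term $|t|\,\|v\|_x$ is a genuine and accurate criticism of the statement as written: taking $\cM = \R^d$, $x = x_0$ and $\|v\|_x = t = m$ gives output distance $m^2$ against input distance of order $m$, so $\PT$ does \emph{not} have linear growth as a map from $\R\times T^2\cM$ with a product metric. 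The correct reading, and the only one the paper ever uses (in the definitions of $\PT^{(n)}_t$ and $\PT^n_{i,t}$, $t$ is always a fixed parameter), is that for each fixed $t$ the map $(x,v,w)\mapsto(\exp_x(tv),\PT_t(x,v,w))$ has linear growth, with a constant that is locally uniform in $t$; your bound gives precisely this with constant $\max(1,|t|)$. Your AM--GM alternative establishes a quadratic bound but not linear growth in the paper's sense, so the fixed-$t$ (or bounded-$t$-interval) interpretation is the one to adopt. In short: same proof as the paper for smoothness, and a more careful treatment of the growth statement than the paper itself provides.
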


\begin{proof}
    Let $X$ be the vector field on $T^2\cM$ defined for every $(x,v,w) \in T^2\cM$ by $X(x,v,w) := (U,V,W) \in T(T^2\cM)$ where, in local coordinates, 
    \begin{align}
        U^k &= v^k, & k = 1,\ldots,d \\
        V^k &= -v^i v^j \Gamma_{i,j}^k(x), & k = 1,\ldots,d \\
        W^k &= -v^i w^j \Gamma_{i,j}^k(x), & k = 1,\ldots,d
    \end{align}
    (where $d$ is the dimension of $\cM$, and the $\Gamma_{i,j}^k$ are the Christoffel symbols corresponding to the Levi-Civita connection of $\cM$). Now, an integral curve of $X$ is a map $c(t) = (x(t),v(t),w(t))$ defined on an interval $I \subseteq \R$ such that $\dot{c(t)} = X(c(t))$, that is,
    \begin{align}
        \dot{x}(t) &= v^k(t) & k = 1,\ldots,d \label{eq:l_1601} \\
        \dot{v}^k(t) &= -v^i(t) v^j(t) \Gamma_{i,j}^k(x(t)) & k = 1,\ldots,d \label{eq:l_1602} \\
        \dot{w}^k(t) &= -v^i(t) w^j(t) \Gamma_{i,j}^k(x(t)) & k = 1,\ldots,d. \label{eq:l_1603}
    \end{align}
    Notice that the \eqref{eq:l_1601} and \eqref{eq:l_1602} mean that $x(t)$ is a geodesic of $\cM$ with $v(t) = \dot{x}(t)$, and that \eqref{eq:l_1603} means that $w(t)$ is parallel along the curve $x(t)$. In particular, $\cM$ is complete, the integral curves for all initial data are defined on the entire $\R$. Therefore, by the fundamental theorem on flows \citep[Theorem 9.12]{lee2012introductionsmooth}, there exists a smooth map $\Phi : \R \times T^2\cM \mapsto T^2\cM$ such that for every $(x_0,v_0,w_0) \in T^2\cM$, the map $c(t) := \Phi(t,x_0,v_0,w_0)$ is the unique integral curve of $X$ such that $c(t=0) = (x_0,v_0,w_0)$. Now, notice that $\PT = \pi_2^{T^2\cM} \circ \Phi$. This proves that $\PT$ is smooth.
\end{proof}

\medbreak
Similarly, for any $n > 0$ we define $T^n \cM$ as the fiber product $T\cM \times_\cM \ldots \times_\cM T\cM$ of $n$ copies of $T\cM$, which identifies as
\begin{equation}
    T^n \cM := \{(x,v_1,\ldots,v_n), x \in \cM, v_1,\ldots,v_n \in T_x \cM \}
\end{equation}
with the metric
\begin{equation}
    d^2((x,v_1,\ldots,v_n),(x',v'_1,\ldots,v'_n)) := \sum_{i=1}^n d^2((x,v_i),(x',v'_i)).
\end{equation}
For every subset of coordinates $\{i_1,\ldots,i_k\} \subseteq \{1,\ldots,n\}$, we have a map $\pi^{T^n \cM}_{i_1,\ldots,i_k} : T^n \cM \mapsto T^k \cM$ of projection onto these coordinates. As in the case of $T^2\cM$, we can endow $T^n \cM$ with a differential manifold structure using local charts of $\cM$, whose topology is the same as the one defined by the distance $d$, and for which the projection maps $T^n \cM \mapsto T^k \cM$ are smooth. For any $i \in \{1,\ldots,n\}$, we similarly have a parallel transport operator $\PT^n_i$ which is a smooth map $\R \times T^n \cM \mapsto T^n \cM$ defined for every $t \in \R$ and $(x,v_1,\ldots,v_n) \in T^n \cM$ by 
\begin{equation} \label{eq:pt_operator_on_n_uples}
    \PT^n_{i,t}(x,v_1,\ldots,v_n) := (\exp_x(tv_i), \PT_t(x,v_i,v_1), \ldots, \PT_t(x,v_i,v_n)).
\end{equation}

\section{The Wasserstein hierarchy} \label{sec:3_wasserstein_hierarchy}

\subsection{Hierarchical Wasserstein spaces, velocity plans, and couplings}

As explained in \Cref{sec:1_introduction}, using the functor $\cP_2 : \Pold \mapsto \Pold$, we can elegantly recover the variational structure of the space $\cP_2(\cM)$. Fix $n \geq 0$, we then introduce the following definitions:

\begin{itemize}
    \item We call $\cPn{n}{\cM}$ the \emph{$n$-th hierarchical Wasserstein space over $X$}.
    \item Given a measure $\mu \in \cPn{n}{\cM}$, the set of velocity plans with base $\mu$ is defined by
    \begin{equation}
        \cPn{n}{T\cM}_{\mu} := ([\pi]^{(n)})^{-1}(\mu) = \{\gamma \in \cPn{n}{T\cM} \setcond [\pi]^{(n)}(\gamma) = \mu \}
    \end{equation}
    \item Given two measures $\mu, \nu \in \cP_2^{(n)}(\cM)$, the set of velocity plans from $\mu$ to $\nu$ is defined by
    \begin{equation}
        \Gamma(\mu,\nu) = ([\pi]^{(n)})^{-1}(\mu) \cap ([\exp]^{(n)})^{-1}(\nu) = \{\gamma \in \cPn{n}{T\cM}_\mu \setcond [\exp]^{(n)}(\gamma) = \nu \}
    \end{equation}
    \item We define a ``norm" $\|\cdot\|$ on the space $\cPn{n}{T\cM}$ by
    \begin{equation}
        \|\gamma\|^2 := \bE^{(n)}_\gamma[\|v\|^2_x], \quad \gamma \in \cPn{n}{T\cM}
    \end{equation}
    By \Cref{lemma:n_expectancy_regularity}, $\gamma \to \|\gamma\|^2$ is continuous on $\cPn{n}{T\cM}$.
    \item Given two measures $\mu, \nu \in \cPn{n}{\cM}$, the set of \emph{optimal} velocity plans from $\mu$ to $\nu$ is defined by
    \begin{equation}
        \Gamma_o(\mu,\nu) := \{\gamma \in \Gamma(\mu,\nu) \setcond \|\gamma\| = \W_2(\mu,\nu)\}
    \end{equation}
    We further define $\Gamma_o^{(n)}(\cM)$ to be the set of velocity plans that are optimal between their marginals:
    \begin{equation}
        \Gamma_o^{(n)}(\cM) := \{\gamma \in \cPn{n}{T\cM} \setcond \gamma \in \Gamma_o([\pi]^{(n)}(\gamma), [\exp]^{(n)}(\gamma)) \}
    \end{equation}
    \item Given $\mu \in \cPn{n}{\cM}$ and $\gamma_1, \gamma_2 \in \cPn{n}{T\cM}_\mu$, we define the set of couplings between $\gamma_1$ and $\gamma_2$ by
    \begin{equation}
        \begin{split}
            \Gamma_\mu(\gamma_1,\gamma_2) &:= ([\pi_1]^{(n)})^{-1}(\gamma_1) \cap ([\pi_2]^{(n)})^{-1}(\gamma_2) \\
            &= \{\alpha \in \cPn{n}{T^2\cM} \setcond [\pi_1]^{(n)}(\alpha) = \gamma_1, [\pi_2]^{(n)}(\alpha) = \gamma_2 \}
        \end{split}
    \end{equation}
    \item The couplings give an ``almost linear" structure on $\cPn{n}{T\cM}_\mu$. Given $\mu \in \cPn{n}{\cM}$, $\gamma_1, \gamma_2 \in \cPn{n}{T\cM}_\mu$, and $\tau \in \R$, we define:
    \begin{enumerate}
        \item A ``local" zero: $\bm{0}_\mu := [\iota_0]^{(n)}(\mu)$, where $\iota_0 : \cM \mapsto T\cM$ is the map defined by $\iota_0(x) = (x,0)$.
        \item A scalar multiplication: $\tau \cdot \gamma_1 := [m_\tau]^{(n)}(\gamma_1)$, where $m_\tau : T\cM \mapsto T\cM$ is the map defined by $m_\tau(x,v) = (x,\tau v)$.
        \item An addition and a subtraction: for $\alpha \in \Gamma_\mu(\gamma_1,\gamma_2)$, $\gamma_1 +_\alpha \gamma_2 := [\pi_1 + \pi_2]^{(n)}(\alpha)$ and $\gamma_1 -_\alpha \gamma_2 := [\pi_1 - \pi_2]^{(n)}(\alpha)$.
    \end{enumerate}
    We can similarly define a sum of $p$ elements $\gamma_1 +_\alpha \ldots +_\alpha \gamma_p$ with $\alpha \in \Gamma_\mu(\gamma_1,\ldots,\gamma_p)$ (we have not defined this set but it this straightforward from what precedes).
    \item Given a measure $\alpha \in \cPn{n}{T^m \cM}$, we can also define a component-wise scalar multiplication: for every $(\tau_1,\ldots,\tau_m) \in \R^m$, we let $(\tau_1,\ldots,\tau_m) \cdot \alpha := [m_{\tau_1,\ldots,\tau_m}]^{(n)}(\alpha)$, where $m_{\tau_1,\ldots,\tau_m} : T^m \cM \mapsto T^m \cM$ is the map defined by $m_{\tau_1,\ldots,\tau_m}(x,v_1,\ldots,v_m) = (x,\tau_1 v_1,\ldots,\tau_m v_m)$.
    \item For $n > 0$, the \emph{tangent space} of $\cPn{n}{\cM}$ at $\mu \in \cPn{n}{\cM}$ is defined by
    \begin{equation}
        T_\mu \cPn{n}{\cM} := \{\gamma \in \cPn{n}{T\cM}_\mu \setcond \exists \veps_0 > 0, \forall \veps \in (-\veps_0,\veps_0), \veps \cdot \gamma \in \Gamma_o^{(n)}(\cM) \}
    \end{equation}
    We also define the tangent bundle $T\cPn{n}{\cM}$ to be the union of all the $T_\mu \cPn{n}{\cM}$, $\mu \in \cPn{n}{\cM}$.
\end{itemize}

\begin{remark} \label{rk:wasserstein_zero_definitions}
    These are also well-defined for $n = 0$. If $x \in \cPn{0}{\cM} = \cM$, then $\cPn{0}{T\cM} = T\cM$ and $\cPn{0}{T\cM}_x = T_x \cM$. If $x,y \in \cM$, then $\Gamma(x,y) = \{v \in T_x \cM \setcond \exp_x(v) = y\} = \log_x(y)$, and $\Gamma_o(x,y)$ is the set of $v \in \log_x(y)$ such that $d(x,y) = \|v\|_x$. The norm $\|(x,v)\|$ is just $\|v\|_x$. A coupling in $\Gamma_x((x,v_1),(x,v_2))$ is just the pair $(v_1,v_2) \in T_x^2 \cM$. The ``linear structure" on $T_x \cM$ is just its usual vector space structure.
\end{remark}

\subsection{Optimal velocity plans} 

We now investigate the properties of optimal velocity plans, most notably their existence and their relation to optimal transport plans.

\begin{lemma} \label{lemma:pseudo_norm_is_finite}
    Let $n \geq 0$. Then for every $\gamma \in \cPn{n}{\cM}$, $\|\gamma\|^2 = \W_2^2(\gamma,\delta^{(n)}_{(o,0)}) - \W_2^2([\pi]^{(n)}(\gamma), \delta^{(n)}_o)$, where $o \in \cM$ is any fixed base point.
\end{lemma}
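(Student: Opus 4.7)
The plan is to prove the identity by induction on $n$, noting that the statement only makes sense for $\gamma \in \cPn{n}{T\cM}$ (the stated domain appears to contain a typo). The result then decouples into two logically independent pieces: a base-level Pythagorean decomposition of the Sasaki distance, and a purely hierarchical bookkeeping argument using the recurrence for $n$-expectancies.

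For the base case $n=0$, an element $\gamma \in T\cM$ is a pair $(x,v)$, for which $\|\gamma\|^2 = \|v\|_x^2$, $\W_2^2(\gamma,\delta_{(o,0)}) = d_{T\cM}^2((x,v),(o,0))$ and $\W_2^2([\pi](\gamma),\delta_o) = d_\cM^2(x,o)$. The claim therefore reduces to the Sasaki-metric identity
\begin{equation*}
    d_{T\cM}^2\bigl((x,v),(o,0)\bigr) = d_\cM^2(x,o) + \|v\|_x^2,
\end{equation*}
which I would invoke from \Cref{sec:appendix:sasaki_metric}. The lower bound follows by applying Minkowski's integral inequality to the Sasaki length $\int_0^1 \sqrt{\|\dot x\|^2 + \|D_t v\|^2}\, dt$ of any curve $(x(t),v(t))$ between the two points, using $\int\|\dot x\|\geq d_\cM(x,o)$ and $\int\|D_t v\|\geq\|v\|_x$ (the latter via parallel transport back to $T_x\cM$). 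The matching upper bound is achieved by the explicit curve that first runs a length-minimizing geodesic between $x$ and $o$ (which exists as $\cM$ is complete, by Hopf--Rinow) while simultaneously linearly shrinking the parallel transport of $v$ to zero.

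For the inductive step, assume the identity holds at level $n-1$, and let $\gamma \in \cPn{n}{T\cM}$. Using the recurrence $\bE^{(n)}_\gamma[f] = \int \bE^{(n-1)}_\eta[f]\,d\gamma(\eta)$ applied to $f(x,v) = \|v\|_x^2$ gives $\|\gamma\|^2 = \int \|\eta\|^2\,d\gamma(\eta)$. Substituting the inductive hypothesis and splitting yields
\begin{equation*}
    \|\gamma\|^2 = \int \W_2^2\bigl(\eta,\delta^{(n-1)}_{(o,0)}\bigr)\,d\gamma(\eta) - \int \W_2^2\bigl([\pi]^{(n-1)}(\eta),\delta^{(n-1)}_o\bigr)\,d\gamma(\eta).
\end{equation*}
The first integral equals $\W_2^2(\gamma,\delta^{(n)}_{(o,0)})$ by the standard expression of the Wasserstein distance from a Dirac mass in the hierarchical space (unwinding one level of the inductive definition of $\W_2$). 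For the second, a change of variables through the pushforward $[\pi]^{(n)}(\gamma) = [\pi]^{(n-1)}_\#\gamma$ rewrites it as $\int \W_2^2(\mu,\delta^{(n-1)}_o)\,d[\pi]^{(n)}(\gamma)(\mu) = \W_2^2([\pi]^{(n)}(\gamma),\delta^{(n)}_o)$, yielding the claim.

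The only nontrivial step is the metric identity at the base level; everything above it is formal manipulation with $n$-expectancies. As a corollary, finiteness of $\|\gamma\|^2$ follows immediately, since $\gamma \in \cPn{n}{T\cM}$ and $[\pi]^{(n)}(\gamma) \in \cPn{n}{\cM}$ both have finite second moment relative to the appropriate Dirac basepoints, making the right-hand side a difference of two finite terms.
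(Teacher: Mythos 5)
Your proof is correct (and you are right that the statement's domain should read $\gamma \in \cPn{n}{T\cM}$), but it takes a different route from the paper's. You induct on $n$: the base case is the Sasaki identity $d^2((x,v),(o,0)) = d^2(x,o) + \|v\|_x^2$, and the inductive step unwinds one level of the recursive definitions of $\bE^{(n)}$ and $\W_2$, using the product-coupling formula $\W_2^2(\mu,\delta_y) = \int d^2(\cdot,y)\,\dd\mu$ at each stage. The paper instead gives a direct, non-inductive computation: it writes $\|\gamma\|^2 = \bE^{(n)}_\gamma[d^2((o,0),(x,v)) - d^2(o,x)]$ using the same Sasaki identity \eqref{eq:geodesic_distance_sasaki_2}, splits the expectancy by linearity together with \eqref{eq:n_expectancy_of_pushforward}, and then converts each term into a hierarchical second moment via the fact \eqref{eq:total_collapse_preserve_2nd_moment} that the total collapse $\cE^{(n)}$ preserves second moments relative to Dirac base points. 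The two arguments are essentially equivalent in content — your induction is in effect re-deriving \eqref{eq:total_collapse_preserve_2nd_moment} level by level — but the paper's version is shorter because that machinery is already in place; your version is more self-contained and makes the level-by-level bookkeeping explicit. Your re-derivation of the Sasaki identity (Minkowski lower bound plus an explicit competitor curve) is sound but unnecessary, since the appendix already records it as a consequence of the distance formula and the isometry of parallel transport. Your closing remark justifying the splitting of the integral (both pieces finite because $\gamma$ and $[\pi]^{(n)}(\gamma)$ have finite second moments) is the right thing to check and is handled implicitly in the paper by the same observation.
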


\begin{proof}
    For every $\gamma \in \cPn{n}{T\cM}$, letting $\mu := [\pi]^{(n)}(\gamma)$, we have
    \begin{align}
        \bE^{(n)}_\gamma[\|v\|^2_x] &= \bE^{(n)}_\gamma[d^2((o,0),(x,v)) - d^2(o,x)] = \bE^{(n)}_\gamma[d^2((o,0),(x,v))] - \bE^{(n)}_\mu[d^2(o,x)] \\
        &= \W_2^2(\delta_{(o,0)}, \cE_{T\cM}^{(n)}(\gamma)) - \W_2^2(\delta_o,\cE_{\cM}^{(n)}(\mu)) = \W_2^2(\delta_{(o,0)}^{(n)}, \gamma) - \W_2^2(\delta_o^{(n)}, \mu)
    \end{align}
    where we used Equations \eqref{eq:geodesic_distance_sasaki_2}, \eqref{eq:n_expectancy_of_pushforward} and \eqref{eq:total_collapse_preserve_2nd_moment}. This finishes the proof.
\end{proof}

\begin{remark}
    For any $o \in \cM$, we have $\delta^{(n)}_{(o,0)} = \bm{0}_{\delta^{(n)}_o}$. Indeed, by naturality of $\delta : \Id_{\Pold} \mapsto \cP_2$, we have
    \begin{equation}
        \bm{0}_{\delta^{(n)}_o} = [\iota_0]^{(n)}(\delta^{(n)}_o) =  [\iota_0]^{(n)} \circ \delta_{\cM}^{(n)}(o) = \delta_{T\cM}^{(n)} \circ \iota_0(o) = \delta_{T\cM}^{(n)}(o,0) = \delta_{(o,0)}^{(n)}.
    \end{equation}
\end{remark}

\begin{lemma} \label{lemma:pseudo_norm_bounds_dist_to_zero}
    Let $n \geq 0$. For every $\mu \in \cPn{n}{\cM}$ and $\gamma \in \cPn{n}{T\cM}_\mu$, we have $\W_2(\bm{0}_\mu,\gamma) \leq \|\gamma\|$.
\end{lemma}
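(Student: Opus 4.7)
The plan is to proceed by induction on $n$, exhibiting at each level a transport plan whose cost realizes (up to inequality) the squared norm of $\gamma$.

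For the base case $n=0$, $\mu = x \in \cM$ and $\gamma = (x,v) \in T_x\cM$, so the statement reduces to $d\bigl((x,0),(x,v)\bigr) \le \|v\|_x$ in the Sasaki metric on $T\cM$. This is witnessed by the vertical curve $t \in [0,1] \mapsto (x,tv)$, whose Sasaki length equals $\|v\|_x$ since its tangent vectors are purely vertical of norm $\|v\|_x$.

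For the inductive step, fix $n \ge 1$ and assume the bound at level $n-1$. The key object is the continuous, linear-growth map $g := \iota_0 \circ \pi : T\cM \to T\cM$, $(x,v) \mapsto (x,0)$. By functoriality of $\cP_2$, the induced map $[g]^{(n-1)} : \cPn{n-1}{T\cM} \to \cPn{n-1}{T\cM}$ is continuous with linear growth, so the same holds for
\[ \phi : \cPn{n-1}{T\cM} \to \cPn{n-1}{T\cM} \times \cPn{n-1}{T\cM}, \quad \phi(\nu) := \bigl([g]^{(n-1)}(\nu),\nu\bigr). \]
I would then set $\bGamma := \phi_\#\gamma$ and verify it is a transport plan between $\bm{0}_\mu$ and $\gamma$ for the $\W_2$ distance on $\cPn{n}{T\cM}$. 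Its second marginal is $\gamma$ by construction, and its first marginal is $[g]^{(n)}(\gamma) = [\iota_0]^{(n)} \circ [\pi]^{(n)}(\gamma) = [\iota_0]^{(n)}(\mu) = \bm{0}_\mu$ by the functorial identity $[f \circ h]^{(n)} = [f]^{(n)} \circ [h]^{(n)}$.

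To conclude, I would compute the cost of $\bGamma$ and use the inductive hypothesis pointwise. For $\gamma$-a.e. $\nu \in \cPn{n-1}{T\cM}$, set $\mu' := [\pi]^{(n-1)}(\nu)$; then $[g]^{(n-1)}(\nu) = \bm{0}_{\mu'}$ and $\nu \in \cPn{n-1}{T\cM}_{\mu'}$, so the induction hypothesis gives $\W_2^2(\bm{0}_{\mu'},\nu) \le \|\nu\|^2$. Integrating against $\gamma$ and invoking the recurrence $\bE^{(n)}_\gamma[f] = \int \bE^{(n-1)}_\nu[f]\dd\gamma(\nu)$ applied to $f = \|v\|_x^2$ yields
\[ \W_2^2(\bm{0}_\mu,\gamma) \;\le\; \int \W_2^2\bigl([g]^{(n-1)}(\nu),\nu\bigr) \dd\gamma(\nu) \;\le\; \int \|\nu\|^2 \dd\gamma(\nu) \;=\; \bE^{(n)}_\gamma[\|v\|_x^2] \;=\; \|\gamma\|^2. \]
The only subtlety is checking that $\bGamma$ really lies in the class over which the $\W_2$ infimum is taken, but this is automatic: the cost bound above is finite (since $\|\gamma\| < \infty$ by \Cref{lemma:pseudo_norm_is_finite}), and both marginals are in $\cPn{n}{T\cM}$, so $\bGamma$ has finite second moment. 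The main (mild) obstacle is thus purely bookkeeping: tracking how $[g]^{(n)}$, the base point $\mu$, and the expectancy recurrence line up under repeated application of $\cP_2$, which is handled cleanly by the functorial formalism already developed.
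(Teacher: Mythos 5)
Your proposal is correct and follows essentially the same route as the paper: induction on $n$, with the inductive step realized by the transport plan $([\iota_0 \circ \pi]^{(n-1)},\id)_\#\gamma$ between $\bm{0}_\mu$ and $\gamma$, whose cost is bounded by $\|\gamma\|^2$ via the pointwise induction hypothesis. The only cosmetic difference is the base case, where you exhibit the vertical curve $t \mapsto (x,tv)$ while the paper invokes the explicit formula \eqref{eq:geodesic_distance_sasaki_2} for the Sasaki distance (which in fact gives equality there); both are fine.
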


\begin{proof}
    We prove this by induction. For $n = 0$, this amounts to showing that for every $(x,v) \in T\cM$, it holds $d^2((x,0),(x,v)) \leq \|v\|^2$. This is the case by \eqref{eq:geodesic_distance_sasaki_2} (and is in fact we even have equality). Let $n > 0$ and assume that the result holds for $n-1$. Let $\bP \in \cPn{n}{\cM}$ and $\bGamma \in \cPn{n}{T\cM}_\bP$. Since $\bm{0}_\bP = [\iota_0]^{(n)}(\bP) = [\iota_0 \circ \pi]^{(n)}(\bGamma)$, $([\iota_0 \circ \pi]^{(n-1)},\id)_\#\bGamma$ is a transport plan between $\bm{0}_\bP$ and $\bGamma$ (which is not necessarily optimal), so that 
    \begin{align}
        \W_2^2(\bm{0}_\bP,\bGamma) &\leq \int \W_2^2([\iota_0 \circ \pi]^{(n-1)}(\gamma),\gamma)\dd\bGamma(\gamma) = \int \W_2^2(\bm{0}_{[\pi]^{(n-1)}(\gamma)},\gamma) \dd\bGamma(\gamma) \\
        &\leq \int \|\gamma\|^2\dd\bGamma(\gamma) = \|\bGamma\|^2
    \end{align}
    where we used the induction hypothesis to obtain the second line. This finishes the proof.
\end{proof}

\begin{proposition} \label{prop:pseudo_norm_larger_than_w2}
    Let $n \geq 0$. For every $\gamma \in \cPn{n}{T\cM}$ with marginals $\mu = [\pi]^{(n)}(\gamma)$ and $\nu = [\exp]^{(n)}(\gamma)$, we have $\|\gamma\| \geq \W_2(\mu, \nu)$.
\end{proposition}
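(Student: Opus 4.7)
The plan is to prove the inequality $\|\gamma\| \geq \W_2(\mu,\nu)$ by induction on $n$. For the base case $n=0$, the inequality reduces to $\|v\|_x \geq d(x,\exp_x(v))$ for $(x,v) \in T\cM$, which is immediate since $s \mapsto \exp_x(sv)$, $s \in [0,1]$, is a curve of length $\|v\|_x$ from $x$ to $\exp_x(v)$, and the geodesic distance is the infimum of lengths of curves joining two points.

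For the inductive step, fix $n \geq 1$, assume the result at level $n-1$, and let $\bGamma \in \cPn{n}{T\cM}$ with $\bP := [\pi]^{(n)}(\bGamma)$ and $\bQ := [\exp]^{(n)}(\bGamma)$. The natural thing to do is to turn $\bGamma$ into a transport plan between $\bP$ and $\bQ$ via the level-$(n-1)$ projection/exponential: set
\begin{equation}
    \bSigma := ([\pi]^{(n-1)},[\exp]^{(n-1)})_\# \bGamma \in \cP\big(\cPn{n-1}{\cM} \times \cPn{n-1}{\cM}\big).
\end{equation}
Its marginals are $\bP$ and $\bQ$ by construction, so $\bSigma \in \Pi(\bP,\bQ)$ provided it has finite second moment; this follows from $\bP,\bQ \in \cPn{n}{\cM}$ together with the formula \eqref{eq:2nd_moment_of_transport_plan} (applied to the fiber product $\cM \times \cM$ over a singleton). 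Using $\bSigma$ as a competitor in the definition of $\W_2$ on $\cPn{n}{\cM}$ gives
\begin{equation}
    \W_2^2(\bP,\bQ) \;\leq\; \int \W_2^2(\mu',\nu') \dd\bSigma(\mu',\nu') \;=\; \int \W_2^2\!\big([\pi]^{(n-1)}(\gamma),[\exp]^{(n-1)}(\gamma)\big) \dd\bGamma(\gamma).
\end{equation}

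By the induction hypothesis applied $\bGamma$-a.e., the integrand is bounded above by $\|\gamma\|^2 = \bE^{(n-1)}_\gamma[\|v\|_x^2]$. The recurrence relation for $n$-expectancy (applied to $f(x,v) = \|v\|_x^2$ on $T\cM$) then yields
\begin{equation}
    \int \|\gamma\|^2 \dd\bGamma(\gamma) \;=\; \int \bE^{(n-1)}_\gamma[\|v\|_x^2] \dd\bGamma(\gamma) \;=\; \bE^{(n)}_\bGamma[\|v\|_x^2] \;=\; \|\bGamma\|^2,
\end{equation}
which combined with the previous inequality gives $\W_2(\bP,\bQ) \leq \|\bGamma\|$, closing the induction. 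I do not anticipate a real obstacle here; the only minor point to justify carefully is that $\bSigma$ has finite second moment, which is handled by \eqref{eq:2nd_moment_of_transport_plan} and the assumption $\bP,\bQ \in \cPn{n}{\cM}$.
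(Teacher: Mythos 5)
Your proposal is correct and follows essentially the same route as the paper: induction on $n$, with base case $\|v\|_x \geq d(x,\exp_x(v))$, and the inductive step using $([\pi]^{(n-1)},[\exp]^{(n-1)})_\#\bGamma$ as a competitor transport plan between $\bP$ and $\bQ$ followed by the induction hypothesis under the integral. The extra care you take about the finite second moment of the pushed-forward plan is a reasonable refinement but not a divergence in method.
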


\begin{proof}
    We show this by induction. For $n = 0$, this is simply the fact that for every $(x,v) \in T\cM$, $\|v\|_x \geq d(x,\exp_x(v))$. Let $n > 0$ and assume that the property holds for $n-1$. Then for every $\bGamma \in \cPn{n}{T\cM}$ with $\bP = [\pi]^{(n)}(\bGamma)$ and $\bQ = [\exp]^{(n)}(\bGamma)$, since $([\pi]^{(n-1)},[\exp]^{(n-1)})_\#\bGamma$ is a transport plan between $\bP$ and $\bQ$,
    \begin{equation}
        \W_2^2(\bP,\bQ) \leq \int \W_2^2([\pi]^{(n-1)}(\gamma), [\exp]^{(n-1)}(\gamma)) \dd\bGamma(\gamma) \leq \int \|\gamma\|^2 \dd\bGamma(\gamma) = \|\bGamma\|^2
    \end{equation}
    This finishes the proof.
\end{proof}

\begin{proposition} \label{prop:opt_vel_plan_gives_opt_trans_plan}
    Let $n > 0$ and $\mu, \nu \in \cPn{n}{\cM}$, $\gamma \in \Gamma(\mu,\nu)$. Then the measure $\tilde{\gamma} = ([\pi]^{(n-1)}, [\exp]^{(n-1)})_\#\gamma$ is in $\Pi(\mu,\nu)$. Moreover, $\gamma \in \Gamma_o(\mu,\nu)$ if and only if $\tilde{\gamma} \in \Pi_o(\mu,\nu)$ and $\gamma$ is concentrated on $\Gamma_o^{(n-1)}(\cM)$.
\end{proposition}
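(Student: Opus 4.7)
The plan is to establish an inequality chain
\[
\W_2^2(\mu,\nu) \;\le\; \int \W_2^2(\alpha,\beta) \dd\tilde{\gamma}(\alpha,\beta) \;\le\; \int \|\gamma'\|^2 \dd\gamma(\gamma') \;=\; \|\gamma\|^2,
\]
in which the optimality conditions correspond exactly to the two inequalities being equalities.

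First, I would verify that $\tilde{\gamma} \in \Pi(\mu,\nu)$ by a direct computation of marginals: since pushforward is functorial, $\pi_{1\#}\tilde{\gamma} = [\pi]^{(n-1)}_\#\gamma = [\pi]^{(n)}(\gamma) = \mu$, and similarly $\pi_{2\#}\tilde{\gamma} = [\exp]^{(n)}(\gamma) = \nu$ by the assumption $\gamma \in \Gamma(\mu,\nu)$. This is the easy half.

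Next, for the characterization, I would expand the quantities using the recurrence of the $n$-expectancy (which unrolls $\bE^{(n)}_\gamma[\|v\|_x^2]$ as $\int \bE^{(n-1)}_{\gamma'}[\|v\|_x^2]\dd\gamma(\gamma') = \int \|\gamma'\|^2\dd\gamma(\gamma')$), and recognize via the pushforward definition of $\tilde\gamma$ that
\[
\int \W_2^2(\alpha,\beta) \dd\tilde{\gamma}(\alpha,\beta) \;=\; \int \W_2^2\bigl([\pi]^{(n-1)}(\gamma'),\,[\exp]^{(n-1)}(\gamma')\bigr) \dd\gamma(\gamma').
\]
The left-hand inequality $\W_2^2(\mu,\nu) \le \int \W_2^2 \dd\tilde{\gamma}$ is then just the fact that $\tilde\gamma \in \Pi(\mu,\nu)$, and by definition is saturated precisely when $\tilde\gamma \in \Pi_o(\mu,\nu)$. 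The right-hand inequality follows from applying \Cref{prop:pseudo_norm_larger_than_w2} at level $n-1$ to each $\gamma'$ and integrating against $\gamma$, and it is saturated precisely when $\|\gamma'\| = \W_2([\pi]^{(n-1)}(\gamma'),[\exp]^{(n-1)}(\gamma'))$ for $\gamma$-almost every $\gamma'$, i.e.\ when $\gamma$ is concentrated on $\Gamma_o^{(n-1)}(\cM)$.

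Finally, combining the two inequalities, $\gamma \in \Gamma_o(\mu,\nu)$ means $\|\gamma\|^2 = \W_2^2(\mu,\nu)$, which forces both inequalities to be equalities, yielding simultaneously that $\tilde\gamma$ is optimal and that $\gamma$ is concentrated on $\Gamma_o^{(n-1)}(\cM)$. Conversely, if both conditions hold then both inequalities collapse to equalities and $\|\gamma\| = \W_2(\mu,\nu)$. I don't anticipate a serious obstacle: the only subtle point is measurability of the map $\gamma' \mapsto \W_2([\pi]^{(n-1)}(\gamma'),[\exp]^{(n-1)}(\gamma'))$ on $\cPn{n-1}{T\cM}$, which is ensured by the lower semicontinuity of $\W_2$ (\Cref{prop:p2n_embeds_into_pn}) together with continuity of the pushforward maps $[\pi]^{(n-1)}$ and $[\exp]^{(n-1)}$, so that the integral expressions above are well-defined.
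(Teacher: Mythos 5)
Your proof is correct and follows essentially the same route as the paper's: the same sandwich $\W_2^2(\mu,\nu) \le \int \W_2^2 \dd\tilde\gamma \le \|\gamma\|^2$ via \Cref{prop:pseudo_norm_larger_than_w2}, with the two optimality conditions read off as the saturation of each inequality. The added remark on measurability of $\gamma' \mapsto \W_2([\pi]^{(n-1)}(\gamma'),[\exp]^{(n-1)}(\gamma'))$ is harmless (the map is in fact continuous, as the paper uses in \Cref{prop:opt_vel_plans_are_closed}) and does not change the argument.
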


\begin{proof}
    Let $\bP, \bQ \in \cPn{n}{\cM}$, and $\bGamma \in \Gamma(\bP,\bQ)$. It is then a direct consequence of the definition of $\Gamma(\bP,\bQ)$ that $\tilde{\bGamma} = ([\pi]^{(n-1)}, [\exp]^{(n-1)})_\#\bGamma$ is in $\Pi(\bP,\bQ)$. Furthermore we have
    \begin{equation}
        \W_2^2(\bP,\bQ) \leq \int \W_2^2(\mu,\nu) \dd\tilde{\bGamma}(\mu,\nu) = \int \W_2^2([\pi]^{(n-1)}(\gamma), [\pi]^{(n-1)}(\gamma)) \dd\bGamma(\gamma) \leq \int \|\gamma\|^2 \dd\bGamma(\gamma) = \|\bGamma\|^2 \label{eq:l1444}
    \end{equation}
    where we used \Cref{prop:pseudo_norm_larger_than_w2} to obtain the second inequality. Now, notice that:
    \begin{itemize}
        \item The inequalities in \eqref{eq:l1444} are all equalities if and only if $\W_2^2(\bP,\bQ) = \|\bGamma\|^2$, that is $\bGamma \in \Gamma_o(\bP,\bQ)$.
        \item The first inequality in \eqref{eq:l1444} holds if and only if $\W_2^2(\bP, \bQ) = \int \W_2^2(\mu,\nu) \dd\tilde{\bGamma}(\mu,\nu)$, that is $\tilde{\bGamma} \in \Pi_o(\bP, \bQ)$.
        \item The second inequality in \eqref{eq:l1444} holds if and only if for $\bGamma$-a.e. $\gamma$, $\|\gamma\|^2 = \W_2^2(\mu,\nu)$ with $\mu = [\pi]^{(n-1)}(\gamma)$ and $\nu = [\exp]^{(n-1)}(\gamma)$, that is, if and only if $\bGamma$ is concentrated on $\Gamma_o^{(n-1)}(\cM)$.
    \end{itemize}
    From these points, we conclude that $\bGamma \in \Gamma_o(\bP,\bQ)$ if and only if $\tilde{\bGamma} \in \Pi_o(\bP,\bQ)$ and $\bGamma$ is concentrated on $\Gamma_o^{(n-1)}(\cM)$. 
\end{proof}

\begin{proposition} \label{prop:opt_vel_plans_are_closed}
    The set $\Gamma_o^{(n)}(\cM)$ is closed in $\cPn{n}{T\cM}$. In particular it is a Polish space.
\end{proposition}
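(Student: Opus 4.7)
The plan is to prove closure by a direct continuity argument, exploiting the fact that $\Gamma_o^{(n)}(\cM)$ is defined by an equality of two quantities: membership in $\Gamma_o^{(n)}(\cM)$ is equivalent to $\|\gamma\|^2 = \W_2^2([\pi]^{(n)}(\gamma), [\exp]^{(n)}(\gamma))$. If both sides of this equality are continuous functions of $\gamma \in \cPn{n}{T\cM}$ for the $\W_2$ topology, then $\Gamma_o^{(n)}(\cM)$ is the preimage of the diagonal $\{(s,t) \in \R_+^2 \setcond s = t\}$ under a continuous map $\cPn{n}{T\cM} \to \R_+^2$, and therefore closed.

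Continuity of the first quantity follows from point \ref{enum:n_expect_reg:3} of \Cref{lemma:n_expectancy_regularity}, applied to $(x,v) \mapsto \|v\|_x^2$: this function is continuous on $T\cM$ and has quadratic growth, since the Sasaki metric satisfies $\|v\|_x^2 \leq d_{T\cM}^2((o,0),(x,v))$ for any fixed base point $o \in \cM$. Hence $\gamma \mapsto \|\gamma\|^2 = \bE^{(n)}_\gamma[\|v\|_x^2]$ is $\W_2$-continuous on $\cPn{n}{T\cM}$. For the second quantity, one first verifies that both $\pi : T\cM \to \cM$ and $\exp : T\cM \to \cM$ are morphisms in $\Pold$. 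The map $\pi$ is $1$-Lipschitz by the Sasaki formula, and $\exp$ has linear growth because, for any $(o,0) \in T\cM$ and $(x,v) \in T\cM$, $d(o, \exp_x(v)) \leq d(o,x) + \|v\|_x \leq \sqrt{2}\,d_{T\cM}((o,0),(x,v))$. Applying the functor $\cP_2$ iteratively $n$ times then makes $[\pi]^{(n)}$ and $[\exp]^{(n)}$ continuous (in fact, with linear growth) from $\cPn{n}{T\cM}$ to $\cPn{n}{\cM}$, and composing with the continuous distance function $\W_2^2 : \cPn{n}{\cM} \times \cPn{n}{\cM} \to \R_+$ yields the desired continuity of $\gamma \mapsto \W_2^2([\pi]^{(n)}(\gamma), [\exp]^{(n)}(\gamma))$.

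Passing to the limit in the equality $\|\gamma_k\|^2 = \W_2^2([\pi]^{(n)}(\gamma_k), [\exp]^{(n)}(\gamma_k))$ along any $\W_2$-convergent sequence $\gamma_k \to \gamma$ with $\gamma_k \in \Gamma_o^{(n)}(\cM)$ then gives $\gamma \in \Gamma_o^{(n)}(\cM)$, proving that this set is closed. The Polish-space assertion is immediate: $\cPn{n}{T\cM}$ is Polish (obtained by iterating $\cP_2$ on the Polish space $T\cM$), and any closed subset of a Polish space, endowed with the induced topology, is itself Polish. I do not foresee a serious obstacle; the only nontrivial verification is that $\exp$ genuinely has linear growth, which is a routine consequence of the bound $d(x, \exp_x(v)) \leq \|v\|_x$ together with the Sasaki formula $d_{T\cM}^2((o,0),(x,v)) = d^2(o,x) + \|v\|_x^2$.
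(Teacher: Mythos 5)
Your proof is correct and follows essentially the same route as the paper: the paper also identifies $\Gamma_o^{(n)}(\cM)$ as the zero set of the continuous function $\gamma \mapsto \|\gamma\|^2 - \W_2^2([\pi]^{(n)}(\gamma),[\exp]^{(n)}(\gamma))$, which is equivalent to your preimage-of-the-diagonal formulation. You simply make explicit the continuity verifications (via \Cref{lemma:n_expectancy_regularity} and the functoriality of $\cP_2$ applied to the morphisms $\pi$ and $\exp$) that the paper leaves implicit, and these are all sound.
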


\begin{proof}
    Indeed, $\Gamma_o^{(n)}(\cM)$ is by definition the zero set of the continuous function $f : \cPn{n}{T\cM} \mapsto \R$ defined by $f(\gamma) := \|\gamma\|^2 - \W_2^2([\pi]^{(n)}(\gamma), [\exp]^{(n)}(\gamma))$. It is thus closed.
\end{proof}

\begin{remark} \label{rk:exists_opt_trans_plans_selection}
    Let $(X,d)$ be a Polish space. Denote $\Pi_o(X)$ the set of all optimal transport plans, that is
    \begin{equation}
        \Pi_o(X) := \bigcup_{\mu, \nu \in \cP_2(X)} \Pi_o(\mu,\nu)
    \end{equation}
    It is a direct consequence of \citep[Theorem 5.20]{villani2009optimal} that $\Pi_o(X)$ is closed in $\cP_2(X \times X)$ (both for the topology of weak convergence and of $\W_2$). Furthermore, by \citep[Corollary 5.21, Corollary 5.22]{villani2009optimal}, each $\Pi_o(\mu,\nu)$ is weakly compact, and the map $\Pi_o(X) \mapsto \cP_2(X) \times \cP_2(X)$ projecting each plan to its two marginals admits a measurable right inverse (that is a measurable map $s : \cP_2(X) \times \cP_2(X) \mapsto \Pi_o(X)$ such that for every $\mu, \nu \in \cP_2(X)$, $s(\mu,\nu) \in \Pi_o(\mu,\nu)$). Furthermore, this maps admits linearly growing antecedents. Indeed, if $\mu, \nu \in \cP_2(X)$, $\gamma \in \Pi_o(\mu,\nu)$, one has for any $x_0 \in X$,
    \begin{align}
        \W_2^2(\delta_{(x_0,x_0)},\gamma) &= \int d^2((x_0,x_0),(x,y))\dd\gamma(x,y) = \int d^2(x_0,x) + d^2(x_0,y) \dd\gamma(x,y) \\
        &= \W_2^2(\delta_{x_0},\mu) + \W_2^2(\delta_{x_0},\nu).
    \end{align}
    This last equation, in conjunction with \Cref{th:convergence_in_w2_space}, also implies that $\Pi_o(\mu,\nu)$ is compact in the $\W_2$ topology, as all the elements $\gamma \in \Pi_o(\mu,\nu)$ share the same second order moment with respect to $\delta_{(x_0,x_0)}$.
\end{remark}

Let $F$ be a closed subset of $\cM$, we define $\Gamma_o^{(0)}(F) \subseteq T\cM$ by 
\begin{equation}
    \begin{split}
        \Gamma_o^{(0)}(F) &:= \{(x,v) \in T\cM \setcond x \in F, \exp_x(v) \in F, \|v\|_x = d(x,\exp_x(v)) \} \\
        &= \pi^{-1}(K) \cap \exp^{-1}(K) \cap \Gamma_o^{(0)}(\cM)
    \end{split}
\end{equation}
It is not difficult to see that $\Gamma_o^{(0)}(F)$ is closed, and that if $F$ is compact, it is a subset of $\{(x,v) \in T\cM \setcond \|v\|_x \leq \mathrm{diam}(F)\}$, and thus is compact. We then have the following results:

\begin{lemma} \label{lemma:compact_base_implies_opt_plans_compact_base}
    Let $n \geq 0$ and $F \subseteq \cM$ closed. Then for every $\mu, \nu \in \cPn{n}{F}$, we have $\Gamma_o(\mu,\nu) \subseteq \cPn{n}{\Gamma_o^{(0)}(F)}$.
\end{lemma}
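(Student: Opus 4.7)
The plan is to prove this statement by induction on $n$, leveraging the recursive characterization of optimal velocity plans from \Cref{prop:opt_vel_plan_gives_opt_trans_plan} together with the base support recurrence from \Cref{lemma:base_support_recurrence}.

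For the base case $n=0$, the statement unfolds directly: if $x,y \in F$ and $v \in \Gamma_o(x,y)$, then $\exp_x(v) = y \in F$ and $\|v\|_x = d(x,\exp_x(v))$, so $(x,v) \in \Gamma_o^{(0)}(F)$ by definition.

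For the inductive step, I fix $\mu,\nu \in \cPn{n}{F}$ and $\gamma \in \Gamma_o(\mu,\nu)$, and I want to show that the base support $\spt_{T\cM}(\gamma)$ lies in $\Gamma_o^{(0)}(F)$. By \Cref{prop:opt_vel_plan_gives_opt_trans_plan}, $\gamma$ is concentrated (as a measure on $\cPn{n-1}{T\cM}$) on $\Gamma_o^{(n-1)}(\cM)$, so for $\gamma$-a.e.\ $\eta$, one has $\eta \in \Gamma_o([\pi]^{(n-1)}(\eta),[\exp]^{(n-1)}(\eta))$. Moreover, since $[[\pi]^{(n-1)}]_\#\gamma = \mu$ and $[[\exp]^{(n-1)}]_\#\gamma = \nu$, the marginals $[\pi]^{(n-1)}(\eta)$ and $[\exp]^{(n-1)}(\eta)$ lie in $\spt(\mu)$ and $\spt(\nu)$ respectively for $\gamma$-a.e.\ $\eta$. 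But $\mu,\nu \in \cPn{n}{F}$ forces, via \Cref{lemma:base_support_recurrence}, every element of $\spt(\mu) \cup \spt(\nu)$ to have base support inside $F$, hence to lie in $\cPn{n-1}{F}$. The inductive hypothesis then yields that $\eta \in \cPn{n-1}{\Gamma_o^{(0)}(F)}$ for $\gamma$-a.e.\ $\eta$.

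The final step is to promote this a.e.\ statement on the fibers to a statement on the base support of $\gamma$ itself. Since $\Gamma_o^{(0)}(F)$ is closed in $T\cM$, the subset $\cPn{n-1}{\Gamma_o^{(0)}(F)} \subseteq \cPn{n-1}{T\cM}$ is itself closed (the map $\eta \mapsto \cE_{T\cM}^{(n-1)}(\eta)$ is continuous and the set of measures on $T\cM$ giving full mass to a closed set is closed in the weak, hence $\W_2$, topology). Therefore $\spt(\gamma) \subseteq \cPn{n-1}{\Gamma_o^{(0)}(F)}$, and applying \Cref{lemma:base_support_recurrence} one more time gives
\begin{equation}
    \spt_{T\cM}(\gamma) = \overline{\bigcup_{\eta \in \spt(\gamma)} \spt_{T\cM}(\eta)} \subseteq \overline{\Gamma_o^{(0)}(F)} = \Gamma_o^{(0)}(F),
\end{equation}
which is exactly $\gamma \in \cPn{n}{\Gamma_o^{(0)}(F)}$. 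The only delicate point I anticipate is verifying that $\cPn{n-1}{\Gamma_o^{(0)}(F)}$ is closed in the $\W_2$ topology (so that a concentration property on a $\gamma$-full-measure set passes to the topological support), but this follows cleanly from lower semicontinuity of $\sigma \mapsto \sigma(T\cM \setminus \Gamma_o^{(0)}(F))$ and the continuity of the collapse maps $\cE_{T\cM}^{(n-1)}$.
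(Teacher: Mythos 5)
Your proof is correct and follows essentially the same route as the paper's: induction on $n$, using \Cref{prop:opt_vel_plan_gives_opt_trans_plan} to get concentration on $\Gamma_o^{(n-1)}(\cM)$, observing that the marginals of $\gamma$-a.e.\ fiber lie in $\cPn{n-1}{F}$, and invoking the induction hypothesis. The only difference is that you spell out explicitly (via closedness of $\cPn{n-1}{\Gamma_o^{(0)}(F)}$ and the base support recurrence) the passage from ``concentrated on $\cPn{n-1}{\Gamma_o^{(0)}(F)}$'' to ``base support contained in $\Gamma_o^{(0)}(F)$'', which the paper leaves implicit via its earlier identification of $\cPn{n}{F}$ with measures of base support in $F$.
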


\begin{proof}
    We prove this by induction. For $n = 0$, this is just a consequence of the definition of $\Gamma_o^{(0)}(F)$. For $n > 0$, assuming the proposition shown for $n-1$, letting $\bP, \bQ \in \cPn{n}{F}$ and $\bGamma \in \Gamma_o(\bP,\bQ)$: by \Cref{prop:opt_vel_plan_gives_opt_trans_plan}, $\bGamma$ is concentrated on $\Gamma_o^{(n-1)}(\cM)$. However, any velocity plan $\gamma \in \Gamma_o^{(n-1)}(\cM)$ whose projections $\mu = [\pi]^{(n-1)}(\gamma)$ and $\nu = [\exp]^{(n-1)}(\gamma)$ are in $\cPn{n-1}{F}$ satisfies by induction $\gamma \in \cPn{n-1}{\Gamma_o^{(0)}(F)}$. Thus, since $\bP, \bQ$ are in $\cPn{n}{F}$, $\bGamma$ is concentrated on $\cPn{n-1}{\Gamma_o^{(0)}(F)}$, and this finishes the proof.
\end{proof}

We will note $\Gamma_o^{(n)}(F) := \Gamma_o^{(n)}(\cM) \cap \cPn{n}{\Gamma_o^{(0)}(F)}$. If $F$ is compact, then this is a compact set, as $\cPn{n}{\Gamma_o^{(0)}(F)}$ is compact and $\Gamma_o^{(n)}(\cM)$ is closed. Note that our definition is consistent: if $F = \cM$, then the $\Gamma_o^{(n)}(F)$ that we just defined is equal to the $\Gamma_o^{(n)}(\cM)$ that we previously defined.

\begin{lemma} \label{lemma:exists_meas_log}
    There exists a measurable map $\log : \cM \times \cM \mapsto T\cM$ such that, for every $x,y \in \cM$, $\log_x(y) = (x,v) \in T_x\cM$, with $\exp_x(v) = y$ and $d(x,y) = \|v\|_x$.
\end{lemma}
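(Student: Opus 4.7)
The plan is to construct $\log$ as a measurable selector of minimizing initial velocities by applying a uniformization theorem, mirroring the strategy already used in the proof of \Cref{lemma:pushforward_surjective}. First, I would introduce the subset of $\cM \times \cM \times T\cM$ defined by
\begin{equation*}
    R := \{(x, y, (x',v)) \in \cM \times \cM \times T\cM \setcond x' = x,\ \exp_x(v) = y,\ \|v\|_x = d(x,y)\}.
\end{equation*}
Each defining equation is closed by continuity of the bundle projection $\pi$, of the exponential map $\exp$, of the Sasaki norm $(x,v) \mapsto \|v\|_x$ on $T\cM$, and of the geodesic distance $d$ on $\cM$. Hence $R$ is a closed, and in particular Borel, subset of the Polish space $\cM \times \cM \times T\cM$.

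Next I would observe that the projection $\pi_{12} : R \to \cM \times \cM$ onto the first two coordinates is surjective: this is exactly the content of the Hopf-Rinow theorem. Since $\cM$ is complete and connected, any two points $x,y \in \cM$ can be joined by a minimizing geodesic, whose initial velocity $v \in T_x \cM$ satisfies $\exp_x(v) = y$ and $\|v\|_x = d(x,y)$, so that $(x,y,(x,v)) \in R$. Having established that $R$ is Borel with full projection onto $\cM \times \cM$, I would invoke the Jankov--von Neumann uniformization theorem to obtain a universally measurable map $\sigma : \cM \times \cM \to T\cM$ whose graph lies in $R$. Setting $\log_x(y) := \sigma(x,y)$ then yields the desired selector, with $\log_x(y) \in T_x \cM$, $\exp_x(\log_x(y)) = y$, and $\|\log_x(y)\|_x = d(x,y)$ by construction.

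The main subtlety concerns the precise sense of measurability: the Jankov--von Neumann theorem only provides a universally measurable section, not a Borel one. This restriction is in general unavoidable, because at points where the cut locus is non-trivial the multi-valued map $(x,y) \mapsto \{v \in T_x \cM \setcond \exp_x(v) = y,\ \|v\|_x = d(x,y)\}$ can fail to admit a Borel selection. Universal measurability is nevertheless sufficient for the rest of the paper, since it guarantees that $\log$ is measurable with respect to the completion of any Borel probability measure on $\cM \times \cM$, which is exactly what is needed to integrate $\log$ and to form pushforwards $[\log]_\# \mu$ for $\mu \in \cP(\cM \times \cM)$.
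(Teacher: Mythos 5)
Your construction is sound as far as it goes, and it is genuinely different from what the paper does: the paper simply defers to an external reference (\citep[Lemma C.2]{bonet2025flowing}), whereas you give a self-contained selection argument. The set $R$ is indeed closed (hence Borel) by continuity of $\pi$, $\exp$, the Sasaki norm and $d$, and the surjectivity of the projection onto $\cM \times \cM$ is exactly Hopf--Rinow for a complete connected manifold. So the skeleton of the argument is correct.

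However, there is a defect in the final step. The Jankov--von Neumann theorem only yields a \emph{universally} measurable selector, and you assert that this weakening is in general unavoidable because of the cut locus. That assertion is wrong for this particular multifunction: the fiber of the projection $R \to \cM \times \cM$ over $(x,y)$ is
\begin{equation*}
    R_{(x,y)} = \{v \in T_x\cM \setcond \exp_x(v) = y,\ \|v\|_x = d(x,y)\},
\end{equation*}
which is a closed subset of the sphere of radius $d(x,y)$ in the finite-dimensional space $T_x\cM$, hence \emph{compact}. A continuous surjection between Polish spaces with compact fibers admits a genuinely \emph{Borel} right inverse --- this is precisely \Cref{th:selection} in the paper's appendix (equivalently, Kuratowski--Ryll-Nardzewski or Arsenin--Kunugui), and it is the same selection principle the paper uses elsewhere, e.g.\ in \Cref{cor:selection_map_opt_vel_plans}. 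Since the paper's convention is that ``measurable'' means Borel measurable (see the definition of $\mu$-sections and the statement of \Cref{th:selection}), you should replace the appeal to Jankov--von Neumann by an appeal to \Cref{th:selection} applied to the projection $\pi_{12} : R \to \cM \times \cM$, after noting compactness of the fibers. With that substitution the proof is complete and delivers the Borel map the lemma requires; as written, it proves a strictly weaker statement and justifies the weakening with a false claim.
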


\begin{proof}
    This is essentially a reformulation of \citep[Lemma C.2]{bonet2025flowing}.
\end{proof}

\begin{proposition} \label{prop:opt_vel_plans_exist}
    For every $n \geq 0$, the projection map $([\pi]^{(n)}, [\exp]^{(n)}) : \Gamma_o^{(n)}(\cM) \mapsto \cPn{n}{\cM} \times \cPn{n}{\cM}$ is surjective (and with linearly growing antecedents). Furthermore, if $n > 0$, then the map $[[\pi]^{(n-1)}, [\exp]^{(n-1)}] : \Gamma_o^{(n)}(\cM) \mapsto \Pi_o(\cPn{n}{\cM})$ is surjective (and with linearly growing antecedents). In particular, for any $\mu, \nu \in \cPn{n}{\cM}$, $\Gamma_o(\mu,\nu)$ is not empty.  
\end{proposition}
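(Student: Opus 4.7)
The plan is to argue by induction on $n$, using \Cref{lemma:exists_meas_log} for the base case and combining the induction hypothesis with \Cref{prop:pushforward_by_surjective_is_surjective_in_P2} and \Cref{prop:opt_vel_plan_gives_opt_trans_plan} for the induction step; the non-emptiness of $\Gamma_o(\mu,\nu)$ is an immediate consequence of the first surjectivity claim. For $n=0$, the map $(\pi,\exp) : \Gamma_o^{(0)}(\cM) \to \cM \times \cM$ admits as a measurable right inverse the $\log$ map of \Cref{lemma:exists_meas_log}, whose image lies in $\Gamma_o^{(0)}(\cM)$ by construction. The linearly growing antecedents condition follows from $\|\log_x(y)\|_x = d(x,y)$ combined with the Sasaki-metric bound controlling $d_{T\cM}^2((x_0,0),(x,v))$ by $d^2(x_0,x) + \|v\|_x^2$.

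For the induction step $n > 0$, fix $(\bP,\bQ) \in \cPn{n}{\cM} \times \cPn{n}{\cM}$. First, by \Cref{rk:exists_opt_trans_plans_selection} applied to the Polish space $\cPn{n-1}{\cM}$, I obtain an optimal transport plan $\tilde{\bGamma} \in \Pi_o(\bP,\bQ)$ with second moment linearly controlled by those of $\bP$ and $\bQ$. By the induction hypothesis, $f_{n-1} := ([\pi]^{(n-1)},[\exp]^{(n-1)}) : \Gamma_o^{(n-1)}(\cM) \to \cPn{n-1}{\cM} \times \cPn{n-1}{\cM}$ is surjective with linearly growing antecedents, and by \Cref{prop:opt_vel_plans_are_closed} the set $\Gamma_o^{(n-1)}(\cM)$ is a closed (hence Borel) subset of $\cPn{n-1}{T\cM}$; hence \Cref{prop:pushforward_by_surjective_is_surjective_in_P2} applied with $B_0 = \Gamma_o^{(n-1)}(\cM)$ lifts $\tilde{\bGamma}$ to a measure $\bGamma \in \cPn{n}{T\cM}$ that is concentrated on $\Gamma_o^{(n-1)}(\cM)$, satisfies $[f_{n-1}](\bGamma) = \tilde{\bGamma}$, and has second moment linearly controlled by that of $\tilde{\bGamma}$. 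Pushing forward further gives $[\pi]^{(n)}(\bGamma) = \bP$ and $[\exp]^{(n)}(\bGamma) = \bQ$, so $\bGamma \in \Gamma(\bP,\bQ)$; combined with $\tilde{\bGamma} \in \Pi_o(\bP,\bQ)$ and the concentration of $\bGamma$ on $\Gamma_o^{(n-1)}(\cM)$, \Cref{prop:opt_vel_plan_gives_opt_trans_plan} forces $\bGamma \in \Gamma_o(\bP,\bQ) \subseteq \Gamma_o^{(n)}(\cM)$. Composing the two growth bounds yields the linearly growing antecedents statement for $([\pi]^{(n)},[\exp]^{(n)})$, and the same $\bGamma$ witnesses the second surjectivity claim, since $([\pi]^{(n-1)},[\exp]^{(n-1)})_\#\bGamma = \tilde{\bGamma}$ realizes an arbitrary optimal transport plan with the same growth control.

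The main obstacle I anticipate is the careful bookkeeping of the linearly growing antecedents property through the two successive liftings, and in particular justifying that \Cref{prop:pushforward_by_surjective_is_surjective_in_P2} can be applied with the strict Borel subset $B_0 = \Gamma_o^{(n-1)}(\cM)$ rather than the full $\cPn{n-1}{T\cM}$: this restriction is essential because it forces the lifted measure $\bGamma$ to be concentrated on optimal velocity plans, which is precisely what \Cref{prop:opt_vel_plan_gives_opt_trans_plan} requires to conclude that $\bGamma \in \Gamma_o^{(n)}(\cM)$.
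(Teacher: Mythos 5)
Your proof is correct and follows essentially the same route as the paper's: induction on $n$, with the base case handled by the existence of length-minimizing geodesics (the paper argues this directly in a footnote where you invoke \Cref{lemma:exists_meas_log}), and the induction step obtained by lifting an optimal transport plan through \Cref{prop:pushforward_by_surjective_is_surjective_in_P2} and identifying the lift as an optimal velocity plan via \Cref{prop:opt_vel_plan_gives_opt_trans_plan}, then composing the two linear-growth bounds coming from \Cref{rk:exists_opt_trans_plans_selection} and the lifting. The only difference is presentational: the paper applies the $\cP_2$ functor to the map $([\pi]^{(n-1)},[\exp]^{(n-1)})$ viewed on the Polish space $\Gamma_o^{(n-1)}(\cM)$ itself, whereas you keep the ambient space $\cPn{n-1}{T\cM}$ and take $B_0 = \Gamma_o^{(n-1)}(\cM)$ as the Borel set of antecedents, which is an equivalent use of the same lemma.
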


\begin{proof}
    We prove this by induction. For $n = 0$, this is simply because the map $(\pi,\exp) : \Gamma_o^{(n)}(\cM) \mapsto \cM \times \cM$ is surjective and with linearly growing antecedents\footnote{Indeed, if $(x,v) \in \Gamma_o^{(0)}(\cM)$, then $d^2((o,0),(x,v)) = d^2(o,x) + \|v\|^2 = d^2(o,x) + d^2(x,\exp_x(v)) \leq C(d^2(o,x) + d^2(o,\exp_x(v)))$.}. Now, let $n > 0$, and assume that the result holds for $n-1$. Applying the functor $[\cdot]$ to the map $([\pi]^{(n-1)}, [\exp]^{(n-1)}) : \Gamma_o^{(n-1)}(\cM) \mapsto \cPn{n-1}{\cM} \times \cPn{n-1}{\cM}$ yields by \Cref{prop:pushforward_by_surjective_is_surjective_in_P2} a surjective map $\cP_2(\Gamma_o^{(n-1)}(\cM)) \mapsto \cP_2(\cPn{n-1}{\cM} \times \cPn{n-1}{\cM})$ with linearly growing antecedents. However, \Cref{prop:opt_vel_plan_gives_opt_trans_plan} implies that the preimage of $\Pi_o(\cPn{n}{\cM})$ by this map is precisely $\Gamma_o^{(n)}(\cM)$. Thus the map $[[\pi]^{(n-1)}, [\exp]^{(n-1)}] : \Gamma_o^{(n)}(\cM) \mapsto \Pi_o(\cPn{n}{\cM})$ is surjective and with linearly growing antecedents. Composing it with the projection map $\Pi_o(\cPn{n}{\cM}) \mapsto \cPn{n}{\cM} \times \cPn{n}{\cM}$ which by \Cref{rk:exists_opt_trans_plans_selection} is also surjective and with linearly growing antecedents, we conclude that the resulting map $([\pi]^{(n)}, [\exp]^{(n)}) : \Gamma_o^{(n)}(\cM) \mapsto \cPn{n}{\cM} \times \cPn{n}{\cM}$ is also surjective and with linearly growing antecedents.
\end{proof}

\begin{remark} \label{rk:opt_vel_plans_exist_restricted}
    In fact, if $F$ is a closed subset of $\Gamma_o^{(0)}(\cM)$ such that $(\pi,\exp)(F) = \cM \times \cM$ (that is for every pair $x,y \in \cM$ there exists $v \in T_x\cM$ such that $(x,v) \in F$, $\exp_x(v) = y$ and $\|v\|_x = d(x,y)$), then a simple variation of the proof of \Cref{prop:opt_vel_plans_exist} shows that the projection maps $([\pi]^{(n)},[\exp]^{(n)}) : \Gamma_o^{(n)}(\cM) \cap \cPn{n}{F} \mapsto \cPn{n}{\cM} \times \cPn{n}{\cM}$ and $[[\pi]^{(n-1)},[\exp]^{(n-1)}] : \Gamma_o^{(n)}(\cM) \cap \cPn{n}{F} \mapsto \Pi_o(\cPn{n}{\cM})$ are both surjective. In other words, given any $\mu,\nu \in \cPn{n}{\cM}$, any optimal transport plan between them is induced by an optimal velocity plan whose base support is in $F$ (i.e. for which the mass is moved exclusively through geodesics described in $F$). \newline
    Similarly and more generally, if $F$ is a closed subset of $\Gamma_o^{(k)}(\cM)$ such that $([\pi]^{(k)},[\exp]^{(k)})(F) = \cPn{k}{\cM} \times \cPn{k}{\cM}$, then the projection maps $([\pi]^{(n)},[\exp]^{(n)}) : \Gamma_o^{(n)}(\cM) \cap \cPn{n-k}{F} \mapsto \cPn{n}{\cM} \times \cPn{n}{\cM}$ and $[[\pi]^{(n-1)},[\exp]^{(n-1)}] : \Gamma_o^{(n)}(\cM) \cap \cPn{n-k}{F} \mapsto \Pi_o(\cPn{n}{\cM})$ are both surjective.
\end{remark}

\begin{proposition} \label{prop:opt_vel_plan_proj_map_is_proper}
    For every $n \geq 0$, the projection map $([\pi]^{(n)}, [\exp]^{(n)}) : \Gamma_o^{(n)}(\cM) \mapsto \cPn{n}{\cM} \times \cPn{n}{\cM}$ is proper. In particular, for every $\mu, \nu \in \cPn{n}{\cM}$, $\Gamma_o(\mu,\nu)$ is compact.
\end{proposition}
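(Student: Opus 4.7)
The plan is to proceed by induction on $n$. The base case $n=0$ reduces to showing that $(\pi,\exp):\Gamma_o^{(0)}(\cM)\to\cM\times\cM$ is proper: if $K\subseteq\cM\times\cM$ is compact, let $K_1,K_2\subseteq\cM$ be its coordinate projections and set $R:=\mathrm{diam}(K_1\cup K_2)$. Every $(x,v)\in(\pi,\exp)^{-1}(K)$ satisfies $x\in K_1$ and $\|v\|_x=d(x,\exp_x(v))\leq R$. The closed $R$-disk bundle over the compact set $K_1$ is compact in $T\cM$ (by Hopf--Rinow, $\cM$ and thus $T\cM$ with the Sasaki metric are complete and locally compact), so $(\pi,\exp)^{-1}(K)$, being closed in it, is compact.

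For the inductive step, assume the result holds at level $n-1$ and fix compact $\cK_1,\cK_2\subseteq\cPn{n}{\cM}$. Set
\[
\cL:=([\pi]^{(n)})^{-1}(\cK_1)\cap([\exp]^{(n)})^{-1}(\cK_2)\cap\Gamma_o^{(n)}(\cM),
\]
which is closed by \Cref{prop:opt_vel_plans_are_closed} and continuity of the pushforwards. Pick a sequence $(\bGamma^k)_k\subseteq\cL$ and, up to subsequence, assume $\bP^k:=[\pi]^{(n)}(\bGamma^k)\to\bP\in\cK_1$ and $\bQ^k:=[\exp]^{(n)}(\bGamma^k)\to\bQ\in\cK_2$ in $\W_2$.

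The central step (and the main obstacle) is to establish tightness of $(\bGamma^k)_k$ in $\cP(\cPn{n-1}{T\cM})$, which is where the induction hypothesis is used. Since $\bP^k\to\bP$ and $\bQ^k\to\bQ$ weakly, Prokhorov's theorem provides, for each $\veps>0$, compact sets $\cK'_{1,\veps},\cK'_{2,\veps}\subseteq\cPn{n-1}{\cM}$ with $\bP^k(\cK'_{1,\veps})\geq 1-\veps$ and $\bQ^k(\cK'_{2,\veps})\geq 1-\veps$ uniformly in $k$. By \Cref{prop:opt_vel_plan_gives_opt_trans_plan}, each $\bGamma^k$ is concentrated on $\Gamma_o^{(n-1)}(\cM)$. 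The inductive hypothesis then implies that
\[
K_\veps:=([\pi]^{(n-1)})^{-1}(\cK'_{1,\veps})\cap([\exp]^{(n-1)})^{-1}(\cK'_{2,\veps})\cap\Gamma_o^{(n-1)}(\cM)
\]
is compact in $\cPn{n-1}{T\cM}$; a union bound on the marginals of $\bGamma^k$ yields $\bGamma^k(K_\veps)\geq 1-2\veps$ for every $k$. Hence $(\bGamma^k)_k$ is tight, and along a further subsequence $\bGamma^k\rightharpoonup\bGamma$ weakly in $\cP(\cPn{n-1}{T\cM})$.

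It remains to identify $\bGamma$ and upgrade to $\W_2$-convergence. By continuity of $[\pi]^{(n)}$ and $[\exp]^{(n)}$ for weak convergence, $[\pi]^{(n)}(\bGamma)=\bP$ and $[\exp]^{(n)}(\bGamma)=\bQ$, and by the Portmanteau theorem $\bGamma$ is concentrated on the closed set $\Gamma_o^{(n-1)}(\cM)$. By lower semicontinuity,
\[
\|\bGamma\|^2=\bE_{\bGamma}^{(n)}[\|v\|_x^2]\leq\liminf_k\|\bGamma^k\|^2=\liminf_k\W_2^2(\bP^k,\bQ^k)=\W_2^2(\bP,\bQ),
\]
while \Cref{prop:pseudo_norm_larger_than_w2} gives the reverse inequality; hence $\|\bGamma\|^2=\W_2^2(\bP,\bQ)$, so $\bGamma\in\Gamma_o(\bP,\bQ)\subseteq\cL$. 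Finally, by \Cref{lemma:pseudo_norm_is_finite} applied at a base point $o\in\cM$,
\[
\W_2^2(\delta^{(n)}_{(o,0)},\bGamma^k)=\|\bGamma^k\|^2+\W_2^2(\bP^k,\delta^{(n)}_o)\xrightarrow[k\to+\infty]{}\|\bGamma\|^2+\W_2^2(\bP,\delta^{(n)}_o)=\W_2^2(\delta^{(n)}_{(o,0)},\bGamma),
\]
so point \ref{enum:w2_convergence:1_2nd_moment_cvg} of \Cref{th:convergence_in_w2_space} upgrades $\bGamma^k\rightharpoonup\bGamma$ to $\W_2$-convergence. This shows $\cL$ is sequentially compact, proving properness; the last assertion follows by taking $\cK_1=\{\mu\}$, $\cK_2=\{\nu\}$.
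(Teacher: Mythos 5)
Your proof is correct and follows essentially the same route as the paper's: induction on $n$, with the base case handled by bounding $\|v\|_x$ over a compact disk bundle, and the inductive step extracting a weak limit via tightness, identifying it as an optimal velocity plan, and upgrading to $\W_2$-convergence through the second-moment identity of \Cref{lemma:pseudo_norm_is_finite} and point \ref{enum:w2_convergence:1_2nd_moment_cvg} of \Cref{th:convergence_in_w2_space}. The only (cosmetic) differences are that you derive tightness by hand via Prokhorov and a union bound rather than invoking \Cref{lemma:tightness_criterion}, and you certify optimality of the limit directly from $\|\bGamma\|=\W_2(\bP,\bQ)$ by lower semicontinuity of the norm rather than via the weak closedness of $\Pi_o$.
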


\begin{proof}
    We prove this by induction. For the case $n = 0$, we want to prove that the map $(\pi,\exp) : \Gamma_o^{(0)}(\cM) \mapsto \cM \times \cM$ is proper. Let $K_1, K_2 \subseteq \cM$ compact, and let $L = \pi^{-1}(K) \cap \exp^{-1}(L) \cap \Gamma_o^{(0)}(\cM)$. It is closed, and it is contained in the bounded set $\{(x,v) \in T\cM \setcond x \in K, \|v\|_x \leq R\}$ where $R := \max_{(x,y) \in K \times L} d(x,y)$, so it is compact. Thus the map $(\pi,\exp) : \Gamma_o^{(0)}(\cM) \mapsto \cM \times \cM$ is proper. Now, let $n > 0$, and assume that the map $([\pi]^{(n-1)}, [\exp]^{(n-1)}) : \Gamma_o^{(n-1)}(\cM) \mapsto \cPn{n-1}{\cM} \times \cPn{n-1}{\cM}$ is proper. Then, by \Cref{lemma:tightness_criterion}, the continuous\footnote{The pushforward operator by a continuous function is continuous for the topology of weak convergence.} map $([\pi]^{(n)}, [\exp]^{(n)}) : \cP(\Gamma_o^{(n-1)}(\cM)) \mapsto \cP(\cPn{n-1}{\cM}) \times \cP(\cPn{n-1}{\cM})$ is proper for the topology of weak convergence. Now, let $\cK_1, \cK_2 \subseteq \cPn{n}{\cM}$ be two compact sets (for the $\W_2$ topology), and let $\cL := ([\pi]^{(n)})^{-1}(\cK_1) \cap ([\exp]^{(n)})^{-1}(\cK_2) \cap \Gamma_o^{(n)}(\cM)$. We want to show that $\cL$ is compact. By \Cref{prop:rel_compact_sets_in_w2_topology}, $\cK_1$ and $\cK_2$ are tight, so $\cL$ is tight as $([\pi]^{(n)}, [\exp]^{(n)})$ is proper for the topology of weak convergence. Moreover, $\cL$ is closed in the $\W_2$ topology. Now, let $(\gamma_n)_n$ be a sequence of elements in $\cL$, and let $\mu_n := [\pi]^{(n)}(\gamma_n)$ and $\nu_n := [\exp]^{(n)}(\gamma_n)$. Then, since $\cK_1$ and $\cK_2$ are compact and $\cL$ is tight, up to extracting subsequences, there exists $\mu \in \cK_1$, $\nu \in \cK_2$ and $\gamma \in \cP(\Gamma_o^{(n-1)}(\cM))$ such that $\mu_n \to \mu$, $\nu_n \to \nu$ and $\gamma_n \rightharpoonup \gamma$ (in particular, $\mu = [\pi]^{(n)}(\gamma)$ and $\nu = [\pi]^{(n)}(\gamma)$). We have in fact $\gamma_n \in \cP_2(\Gamma_o^{(n-1)}(\cM))$: indeed, we have
    \begin{align}
        \W_2^2(\delta_{(o,0)}^{(n)}, \gamma) &\leq \liminf_{n \to +\infty} \W_2^2(\delta_{(o,0)}^{(n)}, \gamma_n) \\
        &\leq \liminf_{n \to +\infty} \W_2^2(\delta_o^{(n)}, \mu_n) + \|\gamma_n\|^2 \\
        &\leq \liminf_{n \to +\infty} \W_2^2(\delta_o^{(n)}, \mu_n) + \W_2^2(\mu_n, \nu_n) = \W_2^2(\delta_o^{(n)}, \mu) + \W_2^2(\mu, \nu) < +\infty,
    \end{align}
    where $o \in \cM$ is some fixed base point, the first line is a consequence of the lower semicontinuity of $\W_2^2(\delta_{(o,0)}^{(n)},\cdot)$ with respect to the topology of weak convergence (see \Cref{sec:appendix:w2_space}), we used \Cref{lemma:pseudo_norm_is_finite} in the second line, and the optimality of the $\gamma_n$ in the third line. Moreover, by \Cref{prop:opt_vel_plan_gives_opt_trans_plan}, for every $n$ we have $([\pi]^{(n-1)},[\exp]^{(n-1)})_\#\gamma_n \in \Pi_o(\cPn{n-1}{\cM})$, which is a weakly closed set by \Cref{rk:exists_opt_trans_plans_selection}, so that $([\pi]^{(n-1)},[\exp]^{(n-1)})_\#\gamma \in \Pi_o(\cPn{n-1}{\cM})$. Thus, by \Cref{prop:opt_vel_plan_gives_opt_trans_plan}, since $\gamma$ is concentrated on $\Gamma_o^{(n-1)}(\cM)$, this implies $\gamma \in \Gamma_o^{(n)}(\cM)$. Now, by point \ref{enum:w2_convergence:1_2nd_moment_cvg} of \Cref{th:convergence_in_w2_space}, all we need to do to prove that $\gamma_n \to \gamma$ is to show that $\W_2^2(\delta_{(o,0)}^{(n)},\gamma_n) \xrightarrow[n \to +\infty]{} \W_2^2(\delta_{(o,0)}^{(n)},\gamma)$. This is the case since, by optimality of $\gamma$ and the $\gamma_n$ and by \Cref{lemma:pseudo_norm_is_finite}, 
    \begin{align}
        \lim_{n \to +\infty} \W_2^2(\delta_{(o,0)}^{(n)},\gamma_n) &= \lim_{n \to +\infty} \W_2^2(\delta_{(o,0)}^{(n)},\mu_n) + \|\gamma_n\|^2 = \lim_{n \to +\infty} \W_2^2(\delta_o^{(n)}, \mu_n) + \W_2^2(\mu_n, \nu_n) \\
        &= \W_2^2(\delta_o^{(n)}, \mu) + \W_2^2(\mu, \nu) = \W_2^2(\delta_o^{(n)}, \mu) + \|\gamma\|^2 = \W_2^2(\delta_{(o,0)}^{(n)},\gamma)
    \end{align}
    Thus $\gamma_n \to \gamma$, and $\gamma \in \cL$. This proves that $\cL$ is compact. Thus the map $([\pi]^{(n)}, [\exp]^{(n)}) : \Gamma_o^{(n)}(\cM) \mapsto \cPn{n}{\cM} \times \cPn{n}{\cM}$ is proper, and this finishes the proof.
\end{proof}

A direct consequence of \Cref{prop:opt_vel_plan_proj_map_is_proper} is:

\begin{corollary} \label{cor:limit_points_of_seq_of_opt_vel_plans}
    Let $(\gamma_m)_m$ be a sequence in $\Gamma_o^{(n)}(\cM)$. For every $m$, let $\mu_m := [\pi]^{(n)}(\gamma^m)$ and $\nu_m := [\exp]^{(n)}(\gamma_m)$. Assume that there exists $\mu, \nu \in \cPn{n}{\cM}$ such that $\mu_m \to \mu$ and $\nu_m \to \nu$. Then the set $\{\gamma_m\}_m$ is relatively compact, and the limit points of the sequence $(\gamma_m)_m$ are in $\Gamma_o(\mu,\nu)$. Moreover, if there is an unique optimal velocity plan $\gamma \in \Gamma_o(\mu,\nu)$, then $\gamma_m \to \gamma$.
\end{corollary}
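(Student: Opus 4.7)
The plan is to invoke \Cref{prop:opt_vel_plan_proj_map_is_proper} and \Cref{prop:opt_vel_plans_are_closed} essentially directly. First, I would set $\cK := \{\mu_m\}_m \cup \{\mu\}$ and $\cL := \{\nu_m\}_m \cup \{\nu\}$; these are compact subsets of $\cPn{n}{\cM}$ for the $\W_2$ topology, since they are convergent sequences together with their limits. By \Cref{prop:opt_vel_plan_proj_map_is_proper}, the preimage
\begin{equation}
    ([\pi]^{(n)}, [\exp]^{(n)})^{-1}(\cK \times \cL) \subseteq \Gamma_o^{(n)}(\cM)
\end{equation}
is a compact subset of $\cPn{n}{T\cM}$. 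Since $\gamma_m$ belongs to this preimage for every $m$, the set $\{\gamma_m\}_m$ is relatively compact, which gives the first part of the claim.

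Next, for any limit point $\gamma \in \cPn{n}{T\cM}$ of $(\gamma_m)_m$, extract a subsequence (still denoted $(\gamma_m)_m$) with $\gamma_m \to \gamma$. Continuity of the pushforward maps $[\pi]^{(n)}$ and $[\exp]^{(n)}$ for the $\W_2$ topology gives $[\pi]^{(n)}(\gamma) = \lim_m [\pi]^{(n)}(\gamma_m) = \mu$ and similarly $[\exp]^{(n)}(\gamma) = \nu$. Moreover, since $\Gamma_o^{(n)}(\cM)$ is closed by \Cref{prop:opt_vel_plans_are_closed} and contains every $\gamma_m$, it contains $\gamma$. Hence $\gamma \in \Gamma_o^{(n)}(\cM) \cap ([\pi]^{(n)})^{-1}(\mu) \cap ([\exp]^{(n)})^{-1}(\nu) = \Gamma_o(\mu,\nu)$.

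Finally, assume $\Gamma_o(\mu,\nu) = \{\gamma\}$. To show $\gamma_m \to \gamma$, I would argue by contradiction: if $\gamma_m \not\to \gamma$, then there is an open $\W_2$-neighborhood $U$ of $\gamma$ and a subsequence $(\gamma_{m_k})_k$ lying outside $U$. By the relative compactness established above, $(\gamma_{m_k})_k$ admits a further subsequence converging to some $\gamma' \in \cPn{n}{T\cM} \setminus U$. The first two parts of the proof applied to this subsubsequence yield $\gamma' \in \Gamma_o(\mu,\nu)$, hence $\gamma' = \gamma$, contradicting $\gamma' \notin U$. Thus the full sequence converges to $\gamma$, concluding the proof. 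No step is particularly delicate here, since the heavy lifting has already been done in \Cref{prop:opt_vel_plan_proj_map_is_proper}.
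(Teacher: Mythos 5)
Your proof is correct and follows exactly the route the paper intends: the paper states this corollary as a direct consequence of \Cref{prop:opt_vel_plan_proj_map_is_proper} without writing out the details, and your argument (properness applied to the compact sets formed by the convergent sequences and their limits, continuity of the pushforward marginal maps plus closedness of $\Gamma_o^{(n)}(\cM)$ to locate the limit points, and the standard subsequence argument for the uniqueness case) is precisely the intended filling-in. No gaps.
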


Another consequence of \Cref{prop:opt_vel_plan_proj_map_is_proper} is that we can obtain a measurable selection of $\Gamma_o^{(n)}(\cM) \mapsto \cPn{n}{\cM} \times \cPn{n}{\cM}$:

\begin{corollary} \label{cor:selection_map_opt_vel_plans}
    Then for every $n \geq 0$, the projection map $([\pi]^{(n)}, [\exp]^{(n)}) : \Gamma_o^{(n)}(\cM) \mapsto \cPn{n}{\cM} \times \cPn{n}{\cM}$ admits a measurable right inverse $s_n : \cPn{n}{\cM} \times \cPn{n}{\cM} \mapsto \Gamma_o^{(n)}(\cM)$.
\end{corollary}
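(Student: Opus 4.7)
The plan is to view the statement as a measurable selection problem for the fiber decomposition of the continuous map $p := ([\pi]^{(n)},[\exp]^{(n)}) : \Gamma_o^{(n)}(\cM) \to \cPn{n}{\cM}\times\cPn{n}{\cM}$ and to invoke a classical selection theorem from descriptive set theory. First I would observe that both source and target are Polish: the source is closed in $\cPn{n}{T\cM}$ by \Cref{prop:opt_vel_plans_are_closed}, and the target is a product of Polish spaces. The map $p$ is continuous, hence Borel; it is surjective by \Cref{prop:opt_vel_plans_exist}, so its image is trivially Borel; and by \Cref{prop:opt_vel_plan_proj_map_is_proper} every fiber $p^{-1}(\mu,\nu)=\Gamma_o(\mu,\nu)$ is compact, in particular $\sigma$-compact.

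The main step is then to apply the Arsenin--Kunugui measurable selection theorem (see, e.g., \citep[Theorem 18.18]{kechris_classical_1995}), which asserts that any Borel map between Polish spaces with Borel image and $\sigma$-compact fibers admits a Borel right inverse on its image. This directly produces the desired measurable map $s_n : \cPn{n}{\cM}\times\cPn{n}{\cM} \to \Gamma_o^{(n)}(\cM)$ with $p \circ s_n = \id_{\cPn{n}{\cM}\times\cPn{n}{\cM}}$.

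If one wishes to avoid invoking Arsenin--Kunugui, the weaker conclusion of a merely universally measurable right inverse still follows from tools already used in the paper: the graph of $p$, namely $\{((\mu,\nu),\gamma) : p(\gamma)=(\mu,\nu)\}$, is closed in the product $(\cPn{n}{\cM}\times\cPn{n}{\cM})\times\Gamma_o^{(n)}(\cM)$ by continuity of $p$, hence Borel, so the Jankov--von Neumann uniformization theorem (already exploited in the proof of \Cref{lemma:pushforward_surjective}) provides a universally measurable uniformization, which for surjective $p$ is exactly a universally measurable right inverse. I expect no serious obstacle beyond correctly invoking the selection theorem, as the entire corollary is essentially a repackaging of the properness and surjectivity of $p$ established in \Cref{prop:opt_vel_plan_proj_map_is_proper} and \Cref{prop:opt_vel_plans_exist}.
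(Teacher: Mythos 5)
Your proposal is correct and follows essentially the same route as the paper: the paper also notes that the projection is Borel and surjective (by \Cref{prop:opt_vel_plans_exist}) with compact fibers $\Gamma_o(\mu,\nu)$ (by \Cref{prop:opt_vel_plan_proj_map_is_proper}) and then applies the compact-fiber measurable selection theorem stated as \Cref{th:selection}, which is the special case of the Arsenin--Kunugui result you invoke. The alternative Jankov--von Neumann argument you sketch is a valid fallback but only yields universal measurability, which is strictly weaker than what the corollary asserts.
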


\begin{proof}
    The map $([\pi]^{(n)}, [\exp]^{(n)}) : \Gamma_o^{(n)}(\cM) \mapsto \cPn{n}{\cM} \times \cPn{n}{\cM}$ is surjective by \Cref{prop:opt_vel_plans_exist} and Borel. For any $\mu, \nu \in \cPn{n}{\cM}$, its fiber is $\Gamma_o(\mu,\nu)$ which is compact by \Cref{prop:opt_vel_plan_proj_map_is_proper}. Therefore this map admits a measurable right inverse by \Cref{th:selection}.
\end{proof}

\subsection{Couplings between velocity plans} 

An important question is whether, given $\mu \in \cPn{n}{\cM}$ and two velocity plans $\gamma_1, \gamma_2 \in \cPn{n}{T\cM}_\mu$, the set of couplings $\Gamma_\mu(\gamma_1, \gamma_2)$ is non-empty. Indeed, the couplings $\alpha \in \Gamma_\mu(\gamma_1, \gamma_2)$ will play in the following an important role for defining a distance and ``inner product" on $\cPn{n}{T\cM}_\mu$, as well as for defining the differential structure of $\cPn{n}{\cM}$. This question can be formulated in terms of fiber products: consider the two maps $[\pi_1]^{(n)}, [\pi_2]^{(n)} : \cPn{n}{T^2\cM} \mapsto \cPn{n}{T\cM}$. By the universal property of the fiber product, they induce a morphism $h_n : \cPn{n}{T^2\cM} \mapsto \cPn{n}{T\cM} \times_{\cPn{n}{\cM}} \cPn{n}{T\cM}$ in the category $\Pol$. Then, for every $\mu \in \cPn{n}{\cM}$ and $\gamma_1, \gamma_2 \in \cPn{n}{T\cM}_\mu$, one has $\Gamma_\mu(\gamma_1,\gamma_2) = h_n^{-1}((\gamma_1,\gamma_2))$, so that the question of the existence of couplings between $\gamma_1$ and $\gamma_2$ reduces to that of the surjectivity of $h_n$. Fortunately, while it is not immediate, it can be shown that $h_n$ is indeed surjective.

\begin{proposition} \label{prop:couplings_exist}
    The map $h_n : \cPn{n}{T^2\cM} \mapsto \cPn{n}{T\cM} \times_{\cPn{n}{\cM}} \cPn{n}{T\cM}$ is surjective.
\end{proposition}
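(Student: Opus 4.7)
The plan is to obtain this surjectivity as an immediate consequence of the hierarchical gluing lemma (\Cref{lemma:hierarchical_gluing_lemma}), applied to the fiber product description of $T^2\cM$ already noted in the paper.

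First, I would recall that $T^2\cM$ is by construction the fiber product $T\cM \times_\cM T\cM$ in $\Pold$, with the coordinate projections $\pi_1, \pi_2 : T^2\cM \to T\cM$ both covering the bundle projection $\pi : T\cM \to \cM$. Setting $W = T^2\cM$, $X = Y = T\cM$, $Z = \cM$, $f_1 = f_2 = \pi$, $g_1 = \pi_1$, $g_2 = \pi_2$ in the setup of \Cref{lemma:hierarchical_gluing_lemma}, the morphism $h : W \to X \times_Z Y$ produced by the universal property of the fiber product is the identity $\id_{T^2\cM}$. In particular, $h$ is trivially surjective with linearly growing antecedents.

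I would then invoke \Cref{lemma:hierarchical_gluing_lemma} in the case $\cC = \Pold$, $P = \cP_2$. Its conclusion gives that for every $n \geq 0$, the morphism
\begin{equation}
    \cPn{n}{T^2\cM} \to \cPn{n}{T\cM} \times_{\cPn{n}{\cM}} \cPn{n}{T\cM}
\end{equation}
induced by the universal property of the fiber product is surjective (and in fact with linearly growing antecedents). By the uniqueness clause of the universal property, this morphism coincides with the $h_n$ appearing in the statement, which yields the result.

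The only nontrivial work is already concealed in \Cref{lemma:hierarchical_gluing_lemma} itself, which packages an induction combining the single-step gluing statement \Cref{prop:p_of_fiber_product_onto_fiber_product_of_p} with the fact that the pushforward by a surjection with linearly growing antecedents inherits the same property (\Cref{prop:pushforward_by_surjective_is_surjective_in_P2}). Given that machinery, the present proposition reduces to identifying $T^2\cM$ with the relevant fiber product, which is essentially tautological; no further analytic argument is needed.
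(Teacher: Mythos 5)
Your proposal is correct and follows exactly the paper's own argument: identify $T^2\cM$ with the fiber product $T\cM \times_\cM T\cM$ so that $h_0$ is the identity (hence trivially surjective with linearly growing antecedents), then apply \Cref{lemma:hierarchical_gluing_lemma} in the case $\cC = \Pold$, $P = \cP_2$. No further comment is needed.
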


\begin{proof}
    By \Cref{lemma:hierarchical_gluing_lemma}, all we need to do is to prove that $h_0$ is surjective and with linearly growing antecedents. This is trivially the case since $T^2\cM = T\cM \times_\cM T\cM$, so $h_0$ is just the identity map of $T^2\cM$.
\end{proof}

\begin{remark} \label{rk:general_couplings_exist}
    In fact, one can generalize \Cref{prop:couplings_exist} to the following one: if $m > 0$ and $\{i_{1,1},\ldots,i_{1,k_1}\}$, ..., $\{i_{p,1},\ldots,i_{p,k_p}\}$ is a partition of $\{1,\ldots,m\}$, then for any $n > 0$ the projection maps $[\pi^m_{i_{j,1},\ldots,i_{j,k_j}}]^{(n)} : \cPn{n}{T^m\cM} \mapsto \cPn{n}{T^{k_j}\cM}$ induce a surjective morphism
    \begin{equation}
        \cPn{n}{T^m \cM} \longrightarrow \cPn{n}{T^{k_1}\cM} \times_{\cPn{n}{\cM}} \ldots \times_{\cPn{n}{\cM}} \cPn{n}{T^{k_p}\cM}.
    \end{equation}
\end{remark}

We have thus shown that the sets of couplings $\Gamma_\mu(\gamma_1,\gamma_2)$ are always non-empty. In fact, they are even compact, as the following proposition shows:

\begin{proposition} \label{prop:couplings_between_plans_are_compact}
    For every $\mu \in \cPn{n}{\cM}$ and $\gamma_1,\gamma_2 \in \cPn{n}{T\cM}_\mu$, $\Gamma_\mu(\gamma_1,\gamma_2)$ is compact.
\end{proposition}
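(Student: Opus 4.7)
The plan is to observe that $\Gamma_\mu(\gamma_1,\gamma_2)$ is precisely the fiber of the morphism
\[
h_n : \cPn{n}{T^2\cM} \longrightarrow \cPn{n}{T\cM} \times_{\cPn{n}{\cM}} \cPn{n}{T\cM}
\]
(induced by $[\pi_1]^{(n)}$ and $[\pi_2]^{(n)}$) above the point $(\gamma_1,\gamma_2)$, and then to invoke the properness of $h_n$ to conclude. This is essentially a one-line deduction once the right previous result is identified, so there is no real obstacle.

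More concretely, first I would note that by definition
\[
\Gamma_\mu(\gamma_1,\gamma_2) = \{\alpha \in \cPn{n}{T^2\cM} \setcond [\pi_1]^{(n)}(\alpha) = \gamma_1,\ [\pi_2]^{(n)}(\alpha) = \gamma_2 \} = h_n^{-1}\bigl(\{(\gamma_1,\gamma_2)\}\bigr),
\]
where we use the identification $T^2\cM = T\cM \times_\cM T\cM$ in $\Pold$ (the projections $\pi_1,\pi_2 : T^2\cM \to T\cM$ being the canonical ones) together with the fact that $(\gamma_1,\gamma_2)$ lies in the fiber product $\cPn{n}{T\cM} \times_{\cPn{n}{\cM}} \cPn{n}{T\cM}$, since $[\pi]^{(n)}(\gamma_1) = [\pi]^{(n)}(\gamma_2) = \mu$.

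Next I would apply \Cref{prop:p_of_fiber_to_fiber_of_p_is_proper} in the case $\cC = \Pold$, $P = \cP_2$, with $X = Y = T\cM$, $Z = \cM$: this yields that $h_n$ is a proper map. The singleton $\{(\gamma_1,\gamma_2)\}$ is trivially compact, hence its preimage $\Gamma_\mu(\gamma_1,\gamma_2)$ under the proper map $h_n$ is compact, which is the desired statement. The hardest work has already been done in proving \Cref{prop:p_of_fiber_to_fiber_of_p_is_proper}; here it is just a direct specialization.
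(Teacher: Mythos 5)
Your proof is correct and follows exactly the paper's route: the paper likewise identifies $\Gamma_\mu(\gamma_1,\gamma_2)$ as the fiber of $([\pi_1]^{(n)},[\pi_2]^{(n)})$ over $(\gamma_1,\gamma_2)$ and invokes the properness of this map, which it derives (via \Cref{lemma:coupling_proj_map_is_proper}) from \Cref{prop:p_of_fiber_to_fiber_of_p_is_proper} using the identification $T^2\cM = T\cM \times_\cM T\cM$. No gaps.
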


\begin{proof}
    The set $\Gamma_\mu(\gamma_1,\gamma_2)$ is the preimage of $(\gamma_1,\gamma_2)$ by the map $([\pi_1]^{(n)},[\pi_2]^{(n)}) : \cPn{n}{T^2\cM} \mapsto \cPn{n}{T\cM} \times_{\cPn{n}{\cM}} \cPn{n}{T\cM}$, which is proper by \Cref{lemma:coupling_proj_map_is_proper} below. Therefore, it is compact.
\end{proof}

\begin{lemma} \label{lemma:coupling_proj_map_is_proper}
    For every $n \geq 0$, the map $([\pi_1]^{(n)},[\pi_2]^{(n)}) : \cPn{n}{T^2\cM} \mapsto \cPn{n}{T\cM} \times_{\cPn{n}{\cM}} \cPn{n}{T\cM}$ is proper.
\end{lemma}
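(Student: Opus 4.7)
The plan is to recognize that this lemma is an immediate instance of the general result already established in \Cref{prop:p_of_fiber_to_fiber_of_p_is_proper}. Indeed, in the subsection ``The bundle $T^2\cM$'', it was shown that $T^2\cM$ coincides with the fiber product $T\cM \times_\cM T\cM$ in $\Pold$, where the two structure morphisms $T\cM \to \cM$ are both the bundle projection $\pi$, and where the projections $\pi_1, \pi_2 : T^2\cM \to T\cM$ are precisely the canonical fiber-product projections. Therefore, we can simply apply \Cref{prop:p_of_fiber_to_fiber_of_p_is_proper} with $\cC = \Pold$, $P = \cP_2$, $X = Y = T\cM$, $Z = \cM$ and $f_1 = f_2 = \pi$, which yields directly the properness of the induced map
\begin{equation*}
    ([\pi_1]^{(n)},[\pi_2]^{(n)}) : \cPn{n}{T^2\cM} \longrightarrow \cPn{n}{T\cM} \times_{\cPn{n}{\cM}} \cPn{n}{T\cM}.
\end{equation*}

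Nothing more is required, since the general statement was proved precisely by the inductive scheme that one would otherwise redo here by hand (combining tightness of $\cPn{n-1}{T\cM}$-compact sets via \Cref{prop:rel_compact_sets_in_w2_topology}, the criterion of \Cref{lemma:tightness_criterion}, and the key additivity identity \eqref{eq:2nd_moment_of_coupling} to upgrade weak convergence to $\W_2$-convergence through point \ref{enum:w2_convergence:1_2nd_moment_cvg} of \Cref{th:convergence_in_w2_space}).

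If one preferred a self-contained argument rather than invoking \Cref{prop:p_of_fiber_to_fiber_of_p_is_proper}, I would redo the induction: the base case $n=0$ is trivial since $(\pi_1,\pi_2)$ is the identity on $T^2\cM$; for $n>0$, given compact sets $\cK_1,\cK_2 \subseteq \cPn{n}{T\cM}$, their tightness combined with the induction hypothesis and \Cref{lemma:tightness_criterion} yields tightness of the preimage $\cL$. Any sequence $(\alpha^k)_k \subseteq \cL$ then admits a weak limit point $\alpha$, and \eqref{eq:2nd_moment_of_coupling} applied with some base point $(x_0,0,0) \in T^2\cM$ shows that the second moments $\W_2^2(\delta^{(n)}_{(x_0,0,0)}, \alpha^k)$ converge to $\W_2^2(\delta^{(n)}_{(x_0,0,0)}, \alpha)$, upgrading the weak convergence to $\W_2$-convergence; combined with closedness of $\cL$ this gives compactness. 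The main (and essentially only) subtle point, in either presentation, is this upgrade from weak to $\W_2$ convergence, which is precisely what the additivity identity \eqref{eq:2nd_moment_of_coupling} is designed for.
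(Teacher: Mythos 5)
Your proposal is correct and follows exactly the paper's own proof, which likewise just cites \Cref{prop:p_of_fiber_to_fiber_of_p_is_proper} using the identification $T^2\cM \cong T\cM \times_\cM T\cM$. The optional self-contained induction you sketch also mirrors the proof of that proposition, so nothing is missing.
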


\begin{proof}
    This is simply a consequence of \Cref{prop:p_of_fiber_to_fiber_of_p_is_proper}, as $T^2\cM$ identifies as the fiber product $T\cM \times_{\cM} T\cM$.
\end{proof}

\begin{corollary} \label{cor:measurable_selection_couplings}
    For every $n \geq 0$, there exists a measurable map $s : \cPn{n}{T\cM} \times_{\cPn{n}{\cM}} \cPn{n}{T\cM} \mapsto \cPn{n}{T^2\cM}$ which associates to every $\mu \in \cPn{n}{\cM}$ and $\gamma_1, \gamma_2 \in \cPn{n}{T\cM}_\mu$ a coupling $s(\gamma_1,\gamma_2) \in \Gamma_\mu(\gamma_1,\gamma_2)$.
\end{corollary}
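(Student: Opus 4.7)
The strategy is essentially a direct application of the measurable selection theorem (\Cref{th:selection}), in the same spirit as the proof of \Cref{cor:selection_map_opt_vel_plans}. Indeed, everything we need has already been assembled: we have a concrete continuous map whose fibers coincide precisely with the sets $\Gamma_\mu(\gamma_1,\gamma_2)$ of couplings we wish to select from.

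More precisely, I would proceed as follows. Consider the map
\begin{equation}
    h_n := ([\pi_1]^{(n)},[\pi_2]^{(n)}) : \cPn{n}{T^2\cM} \longrightarrow \cPn{n}{T\cM} \times_{\cPn{n}{\cM}} \cPn{n}{T\cM}.
\end{equation}
By \Cref{prop:couplings_exist}, $h_n$ is surjective, and it is clearly Borel measurable (in fact continuous, as a restriction to the fiber product of the continuous pushforward maps $[\pi_1]^{(n)}$ and $[\pi_2]^{(n)}$, with codomain restricted to the closed subspace $\cPn{n}{T\cM} \times_{\cPn{n}{\cM}} \cPn{n}{T\cM}$ of $\cPn{n}{T\cM} \times \cPn{n}{T\cM}$). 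Moreover, for any $(\gamma_1,\gamma_2)$ in the fiber product, the preimage $h_n^{-1}(\gamma_1,\gamma_2)$ is exactly the set of couplings $\Gamma_{\mu}(\gamma_1,\gamma_2)$, where $\mu = [\pi]^{(n)}(\gamma_1) = [\pi]^{(n)}(\gamma_2)$; by \Cref{prop:couplings_between_plans_are_compact}, this set is compact.

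We therefore have a Borel surjection between Polish spaces with compact fibers (the target is Polish as a closed subspace of the Polish product $\cPn{n}{T\cM} \times \cPn{n}{T\cM}$). By \Cref{th:selection}, $h_n$ admits a measurable right inverse $s : \cPn{n}{T\cM} \times_{\cPn{n}{\cM}} \cPn{n}{T\cM} \mapsto \cPn{n}{T^2\cM}$, which by construction satisfies $s(\gamma_1,\gamma_2) \in \Gamma_\mu(\gamma_1,\gamma_2)$ for every admissible triple $(\mu,\gamma_1,\gamma_2)$. There is no substantive obstacle here — all the real work has been done in establishing the existence of couplings (\Cref{prop:couplings_exist}) and the compactness of the fibers (\Cref{lemma:coupling_proj_map_is_proper}, via \Cref{prop:p_of_fiber_to_fiber_of_p_is_proper}); the measurable selection is then an immediate consequence.
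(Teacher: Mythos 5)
Your argument is correct and matches the paper's own proof essentially verbatim: both apply the measurable selection theorem (\Cref{th:selection}) to the projection map $([\pi_1]^{(n)},[\pi_2]^{(n)})$, using \Cref{prop:couplings_exist} for surjectivity and \Cref{prop:couplings_between_plans_are_compact} for compactness of the fibers. The extra details you supply (continuity of the map, Polishness of the fiber product as a closed subspace) are accurate and only make the argument more explicit.
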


\begin{proof}
    The projection map $\cPn{n}{T^2\cM} \mapsto \cPn{n}{T\cM} \times_{\cPn{n}{\cM}} \cPn{n}{T\cM}$ is surjective by \Cref{prop:couplings_exist} and has compact fibers by \Cref{prop:couplings_between_plans_are_compact}. The existence of the map $s$ is then given by \Cref{th:selection}.
\end{proof}

Another direct consequence of \Cref{lemma:coupling_proj_map_is_proper} is:

\begin{corollary} \label{cor:limit_points_of_seq_of_couplings}
    Let $(\alpha^m)_m$ be a sequence in $\cPn{n}{T^2\cM}$. For every $m$, let $\gamma^m_1 := [\pi_1]^{(n)}(\alpha^m)$ and $\gamma^m_2 := [\pi_2]^{(n)}(\alpha^m)$. Assume that there exists $\gamma_1, \gamma_2 \in \cPn{n}{T\cM}$ such that $\gamma_1^m \to \gamma_1$ and $\gamma_2^m \to \gamma_2$. Then the set $\{\alpha^m\}_m$ is relatively compact, and the limit points of the sequence $(\alpha^m)_m$ are in $\Gamma_\mu(\gamma_1,\gamma_2)$ (with $\mu = [\pi]^{(n)}(\gamma_1)$). Moreover, if there is an unique coupling $\alpha \in \Gamma_\mu(\gamma_1,\gamma_2)$, then $\alpha^m \to \alpha$.
\end{corollary}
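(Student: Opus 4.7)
The plan is to invoke the properness of the map $([\pi_1]^{(n)},[\pi_2]^{(n)})$ from \Cref{lemma:coupling_proj_map_is_proper} to extract compactness of the sequence, and then argue limit-point membership by continuity of the projections.

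First, I would verify that the pair $(\gamma_1,\gamma_2)$ belongs to the fiber product $\cPn{n}{T\cM} \times_{\cPn{n}{\cM}} \cPn{n}{T\cM}$. Since $[\pi]^{(n)}(\gamma_1^m) = [\pi]^{(n)}(\gamma_2^m)$ for every $m$ (both equal $[\pi]^{(n)}([\pi_1]^{(n)}(\alpha^m)) = [\pi]^{(n)}([\pi_2]^{(n)}(\alpha^m))$ since $\pi \circ \pi_1 = \pi \circ \pi_2$ on $T^2\cM$), passing to the limit by continuity of $[\pi]^{(n)}$ yields $[\pi]^{(n)}(\gamma_1) = [\pi]^{(n)}(\gamma_2) =: \mu$.

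Next, I would set $K := \{(\gamma_1^m,\gamma_2^m)\}_m \cup \{(\gamma_1,\gamma_2)\}$, which is a compact subset of $\cPn{n}{T\cM} \times_{\cPn{n}{\cM}} \cPn{n}{T\cM}$ (a convergent sequence together with its limit). By \Cref{lemma:coupling_proj_map_is_proper}, the preimage
\begin{equation}
    L := ([\pi_1]^{(n)},[\pi_2]^{(n)})^{-1}(K) \subseteq \cPn{n}{T^2\cM}
\end{equation}
is compact. Since $\alpha^m \in L$ for every $m$, the set $\{\alpha^m\}_m$ is relatively compact. For any limit point $\alpha$, extracting a subsequence $\alpha^{m_k} \to \alpha$ and applying the continuity of $[\pi_1]^{(n)}$ and $[\pi_2]^{(n)}$ gives $[\pi_1]^{(n)}(\alpha) = \gamma_1$ and $[\pi_2]^{(n)}(\alpha) = \gamma_2$, so $\alpha \in \Gamma_\mu(\gamma_1,\gamma_2)$.

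Finally, for the uniqueness statement, I would apply the standard subsequence argument: if $\Gamma_\mu(\gamma_1,\gamma_2) = \{\alpha\}$, every subsequence of $(\alpha^m)_m$ admits, by relative compactness, a further convergent subsequence whose limit lies in $\Gamma_\mu(\gamma_1,\gamma_2)$ and hence equals $\alpha$. This forces the full sequence $(\alpha^m)_m$ to converge to $\alpha$. There is no real obstacle here since the entire argument rests on the already-established properness result; the only point requiring a line of justification is the preservation of the fiber-product compatibility condition under the limit, which is handled by continuity of $[\pi]^{(n)}$.
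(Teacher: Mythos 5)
Your proof is correct and is exactly the argument the paper intends: the corollary is stated as a direct consequence of the properness of $([\pi_1]^{(n)},[\pi_2]^{(n)})$ from \Cref{lemma:coupling_proj_map_is_proper}, and your writeup (compactness of the preimage of the convergent sequence together with its limit, continuity of the projections for limit-point membership, and the standard subsequence argument for uniqueness) simply spells out the details the paper leaves implicit.
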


\subsection{The distance \texorpdfstring{$\W_\mu$}{W\_mu} and its ``inner product"}

Let $n \geq 0$ and $\mu \in \cPn{n}{\cM}$. We define a distance and an ``inner product" on the space $\cPn{n}{T\cM}_\mu$ the following way: let $\gamma_1, \gamma_2 \in \cPn{n}{T\cM}$, the distance $\W_\mu(\gamma_1,\gamma_2)$ is defined as
\begin{equation}
    \W_\mu^2(\gamma_1,\gamma_2) := \inf_{\alpha \in \Gamma_\mu(\gamma_1,\gamma_2)} \bE^{(n)}_\alpha[\|v_1 - v_2\|^2_x]
\end{equation}
This infimum is taken among a non-empty set, as by \Cref{prop:couplings_exist}, $\Gamma_\mu(\gamma_1,\gamma_2)$ is never empty. Since $\|v_1 - v_2\|^2_x \leq 4(\|v_1\|^2_x + \|v_2\|^2_x)$ for $(x,v_1,v_2) \in T^2 \cM$, we have for every $\alpha \in \Gamma_\mu(\gamma_1,\gamma_2)$ that
\begin{equation}
    \bE^{(n)}_\alpha[\|v_1 - v_2\|^2_x] \leq 4(\bE^{(n)}_{\gamma_1}[\|v_1\|^2] + \bE^{(n)}_{\gamma_2}[\|v_2\|^2]) = 4(\|\gamma_1\|^2 + \|\gamma_2\|^2)
\end{equation}
Therefore $\W^2_\mu(\gamma_1,\gamma_2) \leq 4(\|\gamma_1\|^2+\|\gamma_2\|^2) < +\infty$. We also define the ``inner product" $\sca{\gamma_1}{\gamma_2}_\mu$ by
\begin{equation}
    \sca{\gamma_1}{\gamma_2}_\mu := \sup_{\alpha \in \Gamma_\mu(\gamma_1,\gamma_2)} \bE^{(n)}_\alpha[\sca{v_1}{v_2}_x]
\end{equation}
Since $\|v_1 - v_2\|_x^2 = \|v_1\|^2_x - 2\sca{v_1}{v_2}_x + \|v_2\|^2_x$ for $(x,v_1,v_2) \in T^2 \cM$, $\sca{\gamma_1}{\gamma_2}_\mu$ is well-defined and finite, with
\begin{equation} \label{eq:w_mu_norm_inner_prod_rel}
    \W_\mu^2(\gamma_1, \gamma_2) = \|\gamma_1\|^2 - 2\sca{\gamma_1}{\gamma_2}_\mu + \|\gamma_2\|^2
\end{equation}

\begin{proposition} \label{prop:dist_and_inner_prod_are_min_and_max}
    The $\inf$ and the $\sup$ in the definitions of $\W_\mu(\gamma_1,\gamma_1)$ and $\sca{\gamma_1}{\gamma_2}_\mu$ are actually a $\min$ and a $\max$.
\end{proposition}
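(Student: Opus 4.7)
The plan is to combine two ingredients already established in the excerpt: the compactness of $\Gamma_\mu(\gamma_1,\gamma_2)$ (\Cref{prop:couplings_between_plans_are_compact}) and the continuity of $n$-expectancies against test functions of quadratic growth (\Cref{lemma:n_expectancy_regularity}, point \ref{enum:n_expect_reg:3}). The statement then reduces to a standard Weierstrass argument: a continuous function on a non-empty compact set attains its extrema.

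First I would observe that the two test functions of interest, namely $f(x,v_1,v_2) := \|v_1-v_2\|_x^2$ and $g(x,v_1,v_2) := \sca{v_1}{v_2}_x$, are continuous on the Polish space $T^2\cM$ equipped with its Sasaki-type metric, and both satisfy a quadratic growth bound of the form $|f|,|g| \leq C_1 + C_2 d^2(\cdot,(x_0,0,0))$ (for $f$ this follows from $\|v_1-v_2\|_x^2 \leq 2(\|v_1\|_x^2+\|v_2\|_x^2)$, and for $g$ from Cauchy--Schwarz). Applying point \ref{enum:n_expect_reg:3} of \Cref{lemma:n_expectancy_regularity} to $f$ and $g$ (viewed as test functions on the Polish metric space $T^2\cM$), the maps
\begin{equation}
\alpha \in \cPn{n}{T^2\cM} \longmapsto \bE^{(n)}_\alpha[\|v_1-v_2\|_x^2], \qquad \alpha \in \cPn{n}{T^2\cM} \longmapsto \bE^{(n)}_\alpha[\sca{v_1}{v_2}_x]
\end{equation}
are continuous with respect to the $\W_2$ topology.

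Next, I would invoke \Cref{prop:couplings_between_plans_are_compact} to conclude that $\Gamma_\mu(\gamma_1,\gamma_2)$ is a non-empty (by \Cref{prop:couplings_exist}) compact subset of $\cPn{n}{T^2\cM}$. Since a continuous real-valued function on a non-empty compact set attains both its minimum and maximum, both the infimum in the definition of $\W_\mu^2(\gamma_1,\gamma_2)$ and the supremum in the definition of $\sca{\gamma_1}{\gamma_2}_\mu$ are achieved.

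As a shortcut for the $\sup$, I would also note that for every $\alpha \in \Gamma_\mu(\gamma_1,\gamma_2)$, the marginal conditions $[\pi_1]^{(n)}(\alpha)=\gamma_1$ and $[\pi_2]^{(n)}(\alpha)=\gamma_2$ yield $\bE^{(n)}_\alpha[\|v_i\|_x^2]=\|\gamma_i\|^2$ for $i=1,2$, so that
\begin{equation}
\bE^{(n)}_\alpha[\sca{v_1}{v_2}_x] = \tfrac12\left(\|\gamma_1\|^2+\|\gamma_2\|^2-\bE^{(n)}_\alpha[\|v_1-v_2\|_x^2]\right).
\end{equation}
Thus maximizing $\bE^{(n)}_\alpha[\sca{v_1}{v_2}_x]$ over $\Gamma_\mu(\gamma_1,\gamma_2)$ is equivalent to minimizing $\bE^{(n)}_\alpha[\|v_1-v_2\|_x^2]$, and any coupling that attains the first also attains the second; this simultaneously gives the identity \eqref{eq:w_mu_norm_inner_prod_rel}. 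The only potential obstacle is the quadratic growth of the integrands, which rules out the plain weak-continuity one would get from bounded continuous test functions; however, this is precisely what point \ref{enum:n_expect_reg:3} of \Cref{lemma:n_expectancy_regularity} was designed to handle, and the $\W_2$-compactness of $\Gamma_\mu(\gamma_1,\gamma_2)$ is exactly what is needed to match this stronger continuity, so no further work is required.
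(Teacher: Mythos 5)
Your proof is correct and follows essentially the same route as the paper: continuity of the maps $\alpha \mapsto \bE^{(n)}_\alpha[\|v_1-v_2\|^2_x]$ and $\alpha \mapsto \bE^{(n)}_\alpha[\sca{v_1}{v_2}_x]$ via point \ref{enum:n_expect_reg:3} of \Cref{lemma:n_expectancy_regularity}, combined with the non-emptiness and compactness of $\Gamma_\mu(\gamma_1,\gamma_2)$ from \Cref{prop:couplings_exist} and \Cref{prop:couplings_between_plans_are_compact}, then Weierstrass. Your explicit verification of the quadratic growth bounds and the polarization remark are fine additions but already implicit in the paper's discussion preceding the proposition.
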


\begin{proof}
    Indeed, by \Cref{lemma:n_expectancy_regularity}, the maps $\alpha \mapsto \bE^{(n)}_\alpha[\|v_1-v_2\|^2_x]$ and $\alpha \mapsto \bE^{(n)}_\alpha[\sca{v_1}{v_2}_x]$ are continuous on $\cPn{n}{T^2\cM}$. Moreover, by \Cref{prop:couplings_between_plans_are_compact}, for any $\mu \in \cPn{n}{\cM}$ and $\gamma_1, \gamma_2 \in \cPn{n}{T\cM}_\mu$, the set $\Gamma_\mu(\gamma_1, \gamma_2)$ is compact, so any minimizing sequence of $\alpha \mapsto \bE^{(n)}_\alpha[\|v_1-v_2\|^2_x]$ (resp. maximizing sequence of $\alpha \mapsto \bE^{(n)}_\alpha[\sca{v_1}{v_2}_x]$) on $\Gamma_\mu(\gamma_1,\gamma_2)$ converges up to extracting a subsequence to a minimizer (resp. a maximizer).
\end{proof}

By \eqref{eq:w_mu_norm_inner_prod_rel}, the minimizers $\alpha$ for $\W_\mu(\gamma_1,\gamma_2)$ are exactly the maximizers for $\sca{\gamma_1}{\gamma_2}_\mu$. We will call these elements the \emph{optimal couplings between $\gamma_1$ and $\gamma_2$}, and we will denote $\Gamma_{\mu,o}(\gamma_1,\gamma_2)$ the set of such couplings.

\begin{proposition} \label{prop:w_mu_is_distance}
    For every $\mu \in \cPn{n}{\cM}$, $\W_\mu$ is a distance on $\cPn{n}{T\cM}_\mu$.
\end{proposition}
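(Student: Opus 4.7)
The plan is to verify the three axioms of a distance—non-negativity/symmetry, separation, and the triangle inequality—in turn. Non-negativity is immediate since $\W_\mu^2$ is defined as an infimum of integrals of the non-negative function $\|v_1-v_2\|_x^2$. For symmetry, I would observe that the swap map $\sigma : T^2\cM \mapsto T^2\cM$, $(x,v_1,v_2) \mapsto (x,v_2,v_1)$, induces through $[\sigma]^{(n)}$ a bijection $\Gamma_\mu(\gamma_1,\gamma_2) \leftrightarrow \Gamma_\mu(\gamma_2,\gamma_1)$ that preserves the integrand.

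For separation, the equality $\W_\mu(\gamma,\gamma) = 0$ follows by testing against the ``diagonal'' coupling $[\Delta]^{(n)}(\gamma)$, where $\Delta : T\cM \mapsto T^2\cM$ is the map $(x,v) \mapsto (x,v,v)$: since $\pi_1 \circ \Delta = \pi_2 \circ \Delta = \id_{T\cM}$, the plan $[\Delta]^{(n)}(\gamma)$ does belong to $\Gamma_\mu(\gamma,\gamma)$ and has cost $0$. Conversely, if $\W_\mu(\gamma_1,\gamma_2) = 0$, I would take an optimal coupling $\alpha \in \Gamma_{\mu,o}(\gamma_1,\gamma_2)$, which exists by \Cref{prop:dist_and_inner_prod_are_min_and_max}; the vanishing of $\bE^{(n)}_\alpha[\|v_1-v_2\|_x^2]$ forces $\pi_1 = \pi_2$ on $\spt_{T^2\cM}(\alpha)$, and \Cref{lemma:when_maps_agree_on_base_support} then yields $\gamma_1 = [\pi_1]^{(n)}(\alpha) = [\pi_2]^{(n)}(\alpha) = \gamma_2$.

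The main obstacle is the triangle inequality, which hinges on a ``three-fold gluing'' of couplings. Fixing $\gamma_1,\gamma_2,\gamma_3 \in \cPn{n}{T\cM}_\mu$, I would select optimal couplings $\alpha_{12} \in \Gamma_{\mu,o}(\gamma_1,\gamma_2)$ and $\alpha_{23} \in \Gamma_{\mu,o}(\gamma_2,\gamma_3)$, and then invoke \Cref{rk:general_couplings_exist} applied to the partition $\{\{1,2\},\{2,3\}\}$ of $\{1,2,3\}$—equivalently, the surjectivity of the natural morphism $\cPn{n}{T^3\cM} \mapsto \cPn{n}{T^2\cM} \times_{\cPn{n}{T\cM}} \cPn{n}{T^2\cM}$—to produce a measure $\beta \in \cPn{n}{T^3\cM}$ whose projections $[\pi^{3}_{1,2}]^{(n)}(\beta)$ and $[\pi^{3}_{2,3}]^{(n)}(\beta)$ recover $\alpha_{12}$ and $\alpha_{23}$ respectively. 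Setting $\alpha_{13} := [\pi^{3}_{1,3}]^{(n)}(\beta) \in \Gamma_\mu(\gamma_1,\gamma_3)$ then gives an admissible coupling whose cost I bound by combining the pointwise triangle inequality $\|v_1-v_3\|_x \leq \|v_1-v_2\|_x + \|v_2-v_3\|_x$ with the Minkowski inequality of \Cref{lemma:n_minkowsky_ineq} for $p = 2$ applied to $\beta$. This delivers $\W_\mu(\gamma_1,\gamma_3) \leq \W_\mu(\gamma_1,\gamma_2) + \W_\mu(\gamma_2,\gamma_3)$, once one notes that the projections of $\beta$ yield the same $n$-expectancies as $\alpha_{12}$ and $\alpha_{23}$ via \eqref{eq:n_expectancy_of_pushforward}.
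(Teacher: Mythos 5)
Your proof is correct, and while the reflexivity, symmetry and triangle-inequality steps match the paper's almost verbatim (diagonal coupling, swap map, three-fold gluing of optimal couplings followed by the pointwise triangle inequality and \Cref{lemma:n_minkowsky_ineq}), your separation argument takes a genuinely different route. The paper proves the whole proposition by induction on $n$: for separation it disintegrates an optimal coupling $\bA$, deduces that $\W_{[\pi]^{(n-1)}(\alpha)}([\pi_1]^{(n-1)}(\alpha),[\pi_2]^{(n-1)}(\alpha)) = 0$ for $\bA$-a.e.\ $\alpha$, and invokes the induction hypothesis one level down. You instead argue directly at level $n$: zero cost forces the continuous nonnegative integrand $\|v_1-v_2\|^2_x$ to vanish on $\spt_{T^2\cM}(\alpha)$, hence $\pi_1 = \pi_2$ there, and \Cref{lemma:when_maps_agree_on_base_support} gives $[\pi_1]^{(n)}(\alpha) = [\pi_2]^{(n)}(\alpha)$. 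This removes the need for any induction (none of the other axioms uses the induction hypothesis either) and is arguably cleaner; the paper's version has the mild advantage of not relying on the base-support machinery. One small correction: for the gluing step you cite \Cref{rk:general_couplings_exist}, but that remark concerns \emph{disjoint} partitions of $\{1,\ldots,m\}$, whereas $\{1,2\}$ and $\{2,3\}$ overlap. The surjectivity you actually need — that of $\cPn{n}{T^3\cM} \mapsto \cPn{n}{T^2\cM} \times_{\cPn{n}{T\cM}} \cPn{n}{T^2\cM}$, which you state explicitly — is exactly \Cref{lemma:generalized_couplings_exist_3}, so the argument stands; only the reference should be fixed.
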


\begin{proof}
    We prove this by induction. For $n = 0$, $\W_x$ is just the distance induced by the norm $\|\cdot\|_x$ on the vector space $T_x \cM$. Let $n > 0$, and assume that the result holds for $n-1$. Fix $\bP \in \cPn{n}{T\cM}$, then:
    \begin{itemize}
        \item For every $\bGamma \in \cPn{n}{T\cM}_\bP$, $\W_\bP(\bGamma,\bGamma) = 0$. Indeed, taking $\bA := [(x,v) \mapsto (x,v,v)]^{(n)}(\bGamma) \in \Gamma_\bP(\bGamma,\bGamma)$, we have
        \begin{equation}
            \W_\bP^2(\bGamma,\bGamma) \leq \bE^{(n)}_\bA[\|v_1 - v_2\|^2] = \bE^{(n)}_\bGamma[0] = 0.
        \end{equation}
        \item For every $\bGamma_1, \bGamma_2 \in \cPn{n}{T\cM}_\bP$, if $\W_\bP(\bGamma_1,\bGamma_2) = 0$, then $\bGamma_1 = \bGamma_2$. Let $\bA \in \Gamma_\bP(\bGamma_1,\bGamma_2)$ be optimal, then
        \begin{equation}
            0 = \bE^{(n)}_\bA[\|v_1 - v_2\|^2_x] = \int \bE^{(n-1)}_\alpha[\|v_1 - v_2\|^2_x] \dd\bA(\alpha)
        \end{equation}
        so that, for $\bA$-a.e. $\alpha$, we have $\bE^{(n-1)}_\alpha[\|v_1 - v_2\|^2_x] = 0$, that is $\W_{[\pi]^{(n-1)}(\alpha)}([\pi_1]^{(n-1)}(\alpha), [\pi_2]^{(n-1)}(\alpha)) = 0$ so that $[\pi_1]^{(n-1)}(\alpha) = [\pi_2]^{(n-1)}(\alpha)$. Therefore $[\pi_1]^{(n-1)}$ and $[\pi_2]^{(n-1)}$ coincide $\bA$-almost everywhere, so that $\bGamma_1 = [\pi_1]^{(n)}(\bA) = [\pi_2]^{(n)}(\bA) = \bGamma_2$.
        \item For every $\bGamma_1, \bGamma_2 \in \cPn{n}{T\cM}_\bP$, it holds $\W_\bP(\bGamma_1, \bGamma_2) = \W_\bP(\bGamma_1,\bGamma_2)$. Indeed, let $\bA \in \Gamma(\bGamma_1,\bGamma_2)$ be optimal, and let $\bA' = [(x,v_1,v_2) \mapsto (x,v_2,v_1)]^{(n)}(\bA)$. Then $\bA' \in \Gamma_\bP(\bGamma_2,\bGamma_1)$, and
        \begin{equation}
            \W^2_\bP(\bGamma_1,\bGamma_2) = \bE^{(n)}_\bA[\|v_1-v_2\|^2_x] = \bE^{(n)}_{\bA'}[\|v_2 - v_1\|^2_x] \geq \W^2_\bP(\bGamma_2,\bGamma_1),
        \end{equation}
        and by symmetry we conclude that $\W_\bP(\bGamma_1, \bGamma_2) = \W_\bP(\bGamma_1,\bGamma_2)$.
        \item For every $\bGamma_1, \bGamma_2, \bGamma_3 \in \cPn{n}{T\cM}_\bP$, we have $\W_\bP(\bGamma_1,\bGamma_3) \leq \W_\bP(\bGamma_1,\bGamma_2) + \W_\bP(\bGamma_2,\bGamma_3)$. Let $\bA_1 \in \Gamma_\bP(\bGamma_1,\bGamma_2)$ and $\bA_1 \in \Gamma_\bP(\bGamma_2,\bGamma_3)$ be optimal, and let $\bB \in \Gamma_\bP(\bGamma_1,\bGamma_2,\bGamma_3)$ be such that $[\pi^3_{1,2}]^{(n)}(\bB) = \bA_1$ and $[\pi^3_{2,3}]^{(n)}(\bB) = \bA_2$ (such a $\bB$ exists: see \Cref{lemma:generalized_couplings_exist_3}). Then $[\pi^3_{1,3}]^{(n)}(\bB) \in \bGamma_\bP(\bGamma_1,\bGamma_3)$, so that
        \begin{align}
            \W_\bP(\bGamma_1,\bGamma_3) &\leq \sqrt{\bE^{(n)}_\bB[\|v_1 - v_3\|^2_x]} \leq \sqrt{\bE^{(n)}_{\bB}[\|v_1 - v_2\|^2_x]} + \sqrt{\bE^{(n)}_{\bB}[\|v_2 - v_3\|^2_x]} \\
            &\leq \sqrt{\bE^{(n)}_{\bA_1}[\|v_1 - v_2\|^2_x]} + \sqrt{\bE^{(n)}_{\bA_2}[\|v_1 - v_2\|^2_x]} = \W_\bP(\bGamma_1,\bGamma_2) + \W_\bP(\bGamma_2,\bGamma_3)
        \end{align}
        where we used the Minkowski inequality (\Cref{lemma:n_minkowsky_ineq}) in the first line, and the optimality of $\bA_1$ and $\bA_2$ in the second line.
    \end{itemize}
    We have thus shown that on $\cPn{n}{T\cM}_\bP$, $\W_\bP$ is reflexive, symmetric, and transitive. It is therefore a distance.
\end{proof}

\begin{proposition} \label{prop:various_results_on_inner_prod}
    Let $n > 0$, $\mu \in \cPn{n}{\cM}$ and $\gamma, \gamma_1, \gamma_2, \gamma_3 \in \cPn{n}{T\cM}_\mu$. The following statements hold:
    \begin{enumerate}
        \item $\lambda \bm{0}_\mu = \bm{0}_\mu$ for any $\lambda \in \R$.
        \item $0 \gamma = \bm{0}_\mu$.
        \item $\sca{\bm{0}_\mu}{\gamma}_\mu = 0$, and in fact $\bE^{(n)}_\alpha[\sca{v_1}{v_2}_x] = 0$ for any $\alpha \in \Gamma_\mu(\bm{0}_\mu, \gamma)$.
        \item \label{enum:tangent_struct:inner_prod_homogeneous} $\sca{\lambda_1 \gamma_1}{\lambda_2 \gamma_2}_\mu = \lambda_1 \lambda_2 \sca{\gamma_1}{\gamma_2}_\mu$ for any $\lambda_1, \lambda_2 \geq 0$.
        \item $(\lambda_1,\lambda_2) \cdot \alpha \in \Gamma_\mu(\lambda_1 \gamma_1, \lambda_2 \gamma_2)$ for any $\alpha \in \Gamma_\mu(\gamma_1,\gamma_2)$, $\lambda_1,\lambda_2 \in \R$.
        \item \label{enum:tangent_struct:inner_prod_subadditive} $\sca{\gamma_1 +_\alpha \gamma_2}{\gamma_3}_\mu \leq \sca{\gamma_1}{\gamma_3}_\mu + \sca{\gamma_2}{\gamma_3}_\mu$ for any $\alpha \in \Gamma_\mu(\gamma_1,\gamma_2)$.
        \item \label{enum:tangent_struct:commutativity} $\gamma_1 +_\alpha \gamma_2 = \gamma_2 +_{\alpha'} \gamma_1$ for any $\alpha \in \Gamma_\mu(\gamma_1,\gamma_2)$, $\alpha' = [\pi_2,\pi_1]^{(n)}(\alpha)$.
        \item \label{enum:tangent_struct:minus_eq_plus_neg} $\gamma_1 -_\alpha \gamma_2 = \gamma_1 +_{\alpha'} (-\gamma_2)$ for any $\alpha \in \Gamma_\mu(\gamma_1,\gamma_2)$, $\alpha' = (1,-1) \cdot \alpha$.
        \item \label{enum:tangent_struct:zero_neutral_add} $\gamma +_\alpha \bm{0}_\mu = \gamma$ for any $\alpha \in \Gamma_\mu(\gamma,\bm{0}_\mu)$.
        \item \label{enum:tangent_struct:scalar_mult_distributive_scalar_add} $(\lambda_1 + \lambda_2)\gamma = \lambda_1 \gamma +_\alpha \lambda_2 \gamma$ for any $\lambda_1, \lambda_2 \geq 0$, $\alpha = [(x,v) \mapsto (x,\lambda_1 v, \lambda_2 v)]^{(n)}(\gamma)$.
        \item \label{enum:tangent_struct:scalar_mult_distributive_vector_add} $\lambda (\gamma_1 +_\alpha \gamma_2) = \lambda \gamma_1 +_{\alpha'} \lambda \gamma_2$ for any $\lambda \in \R$, $\alpha \in \Gamma_\mu(\gamma_1,\gamma_2)$, $\alpha' = (\lambda,\lambda) \cdot \alpha$.
        \item \label{enum:tangent_struct:triangle_ineq} $\|\gamma_1 +_\alpha \gamma_2\| \leq \|\gamma_1\| + \|\gamma_2\|$ for any $\alpha \in \Gamma_\mu(\gamma_1,\gamma_2)$.
        \item \label{enum:tangent_struct:inner_prod_symmetric} $\sca{\gamma_1}{\gamma_2}_\mu = \sca{\gamma_2}{\gamma_1}_\mu$.
        \item \label{enum:tangent_struct:inner_prod_cauchy_schwarz} $|\sca{\gamma_1}{\gamma_2}_\mu| \leq \|\gamma_1\| \|\gamma_2\|$.
        \item \label{enum:tangent_struct:inner_prod_positive_def} $\sca{\gamma}{\gamma}_\mu = \|\gamma\|^2$. 
        \item \label{enum:tangent_struct:norm_is_positive_def} $\|\gamma\| = 0$ if and only if $\gamma = \bm{0}_\mu$.
    \end{enumerate}
\end{proposition}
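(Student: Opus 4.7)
I would verify the items in a dependency-respecting order: first the algebraic identities via functoriality of $[\cdot]^{(n)}$, then the inner-product identity at the zero plan, then the homogeneity, symmetry, and the hierarchical Cauchy--Schwarz and Minkowski inequalities, and finally the subadditivity of the inner product, which is the technical crux and requires a hierarchical gluing construction.

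The purely algebraic identities (items~1, 2, 5, and 7--11) each reduce to a pointwise equality of maps on $T\cM$ or $T^m\cM$ lifted through the functorial rule $[f]^{(n)} \circ [g]^{(n)} = [f \circ g]^{(n)}$: explicitly $m_\lambda \circ \iota_0 = \iota_0$ (item~1), $m_0 = \iota_0 \circ \pi$ (item~2), $\pi_i \circ m_{\lambda_1,\lambda_2} = m_{\lambda_i} \circ \pi_i$ (item~5), and fibrewise commutativity, distributivity, and zero-neutrality of vector addition in $T_x\cM$ (items~\ref{enum:tangent_struct:commutativity}--\ref{enum:tangent_struct:scalar_mult_distributive_vector_add}); for item~\ref{enum:tangent_struct:zero_neutral_add} I would additionally use \Cref{lemma:when_maps_agree_on_base_support} to replace the ``sum'' map by the projection $\pi_1$ on the base support of $\alpha$. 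Item~3 uses that $\cE_{T\cM}^{(n)}(\bm{0}_\mu) = [\iota_0](\cE_\cM^{(n)}(\mu))$ is supported in the zero section $\{(x,0)\} \subseteq T\cM$, so every coupling $\alpha \in \Gamma_\mu(\bm{0}_\mu,\gamma)$ has $\cE_{T^2\cM}^{(n)}(\alpha)$ supported on $\{v_1 = 0\}$, forcing $\bE^{(n)}_\alpha[\sca{v_1}{v_2}_x] = 0$. Item~\ref{enum:tangent_struct:inner_prod_symmetric} (symmetry) follows from the coordinate-swap bijection of couplings, which preserves the integrand. Item~\ref{enum:tangent_struct:inner_prod_homogeneous} follows from the bijection $\alpha \mapsto (\lambda_1,\lambda_2)\cdot\alpha$ between $\Gamma_\mu(\gamma_1,\gamma_2)$ and $\Gamma_\mu(\lambda_1\gamma_1,\lambda_2\gamma_2)$ when $\lambda_i > 0$, combined with the pointwise bilinearity $\sca{\lambda_1 v_1}{\lambda_2 v_2}_x = \lambda_1\lambda_2\sca{v_1}{v_2}_x$ (the degenerate case $\lambda_1\lambda_2 = 0$ reduces to item~3).

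The norm-related items (\ref{enum:tangent_struct:triangle_ineq}, \ref{enum:tangent_struct:inner_prod_cauchy_schwarz}, \ref{enum:tangent_struct:inner_prod_positive_def}, \ref{enum:tangent_struct:norm_is_positive_def}) come from the hierarchical H\"older and Minkowski inequalities. For item~\ref{enum:tangent_struct:inner_prod_cauchy_schwarz}, pointwise Cauchy--Schwarz combined with \Cref{lemma:n_holder_ineq} gives $|\bE^{(n)}_\alpha[\sca{v_1}{v_2}_x]| \leq \bE^{(n)}_\alpha[\|v_1\|_x\|v_2\|_x] \leq \|\gamma_1\|\|\gamma_2\|$ for \emph{every} coupling $\alpha$, hence both the upper bound (taking the supremum) and the lower bound (evaluating at any particular coupling) on $\sca{\gamma_1}{\gamma_2}_\mu$. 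For item~\ref{enum:tangent_struct:triangle_ineq}, applying \Cref{lemma:n_minkowsky_ineq} to $\|v_1+v_2\|_x$ under any $\alpha \in \Gamma_\mu(\gamma_1,\gamma_2)$ gives $\|\gamma_1 +_\alpha \gamma_2\| \leq \|\gamma_1\| + \|\gamma_2\|$. Item~\ref{enum:tangent_struct:inner_prod_positive_def} combines the lower bound from the diagonal coupling $[(x,v) \mapsto (x,v,v)]^{(n)}(\gamma) \in \Gamma_\mu(\gamma,\gamma)$, which gives $\bE^{(n)}_\gamma[\|v\|_x^2] = \|\gamma\|^2$, with the upper bound from item~\ref{enum:tangent_struct:inner_prod_cauchy_schwarz}. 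Item~\ref{enum:tangent_struct:norm_is_positive_def} follows because $\|\gamma\| = 0$ together with items~\ref{enum:tangent_struct:inner_prod_positive_def} and~3 yields $\W_\mu^2(\bm{0}_\mu,\gamma) = 0$ via \eqref{eq:w_mu_norm_inner_prod_rel}, forcing $\gamma = \bm{0}_\mu$ by \Cref{prop:w_mu_is_distance}.

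The main obstacle is item~\ref{enum:tangent_struct:inner_prod_subadditive} (subadditivity of the inner product in the first argument), for which I would lift an optimal coupling to a 3-way coupling via a hierarchical gluing. Let $\alpha^* \in \Gamma_\mu(\gamma_1 +_\alpha \gamma_2, \gamma_3)$ attain the supremum (\Cref{prop:dist_and_inner_prod_are_min_and_max}). I would identify $T^3\cM$ with the fibre product $T^2\cM \times_{T\cM} T^2\cM$ using the fibrewise sum $(x,v_1,v_2) \mapsto (x,v_1+v_2)$ on the first factor and the projection $(x,v_{12},v_3) \mapsto (x,v_{12})$ on the second; under this identification the base comparison morphism $h_0$ is a bijection, and in particular surjective with linear-growth antecedents. \Cref{lemma:hierarchical_gluing_lemma} then yields $\beta \in \cPn{n}{T^3\cM}$ with $[\pi_X]^{(n)}(\beta) = \alpha$ and $[\pi_Y]^{(n)}(\beta) = \alpha^*$, where $\pi_X(x,v_1,v_2,v_3) = (x,v_1,v_2)$ and $\pi_Y(x,v_1,v_2,v_3) = (x,v_1+v_2,v_3)$; the compatibility condition for gluing is exactly $[\pi_1 + \pi_2]^{(n)}(\alpha) = [\pi_1]^{(n)}(\alpha^*) = \gamma_1 +_\alpha \gamma_2$. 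In particular, $[\pi^3_i]^{(n)}(\beta) = \gamma_i$ for each $i$, and $[\pi^3_{i,3}]^{(n)}(\beta) \in \Gamma_\mu(\gamma_i,\gamma_3)$ for $i=1,2$. Pointwise linearity of the inner product in $T_x\cM$ then gives
\[
\sca{\gamma_1 +_\alpha \gamma_2}{\gamma_3}_\mu = \bE^{(n)}_\beta[\sca{v_1+v_2}{v_3}_x] = \bE^{(n)}_\beta[\sca{v_1}{v_3}_x] + \bE^{(n)}_\beta[\sca{v_2}{v_3}_x] \leq \sca{\gamma_1}{\gamma_3}_\mu + \sca{\gamma_2}{\gamma_3}_\mu,
\]
where the last inequality uses the supremum definition of the inner product applied to each of the two coupling marginals $[\pi^3_{i,3}]^{(n)}(\beta)$.
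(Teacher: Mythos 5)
Your proposal is correct and follows essentially the same route as the paper's: the algebraic items via functoriality of $[\cdot]^{(n)}$, the norm items via the hierarchical H\"older and Minkowski inequalities and the diagonal coupling, and the crux (item~\ref{enum:tangent_struct:inner_prod_subadditive}) via exactly the fibre-product gluing that the paper isolates as \Cref{lemma:generalized_couplings_exist_2} — you lift a maximizing coupling through that surjection rather than rewriting the supremum over it, which is equivalent. The only cosmetic deviations are proving item~2 directly from $m_0 = \iota_0 \circ \pi$ instead of by induction, getting symmetry from the coordinate-swap bijection instead of from the symmetry of $\W_\mu$, and deducing item~\ref{enum:tangent_struct:norm_is_positive_def} from $\W_\mu$ and \Cref{prop:w_mu_is_distance} instead of from the $\W_2$ bound of \Cref{lemma:pseudo_norm_bounds_dist_to_zero}; all are sound.
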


\begin{proof}
    \begin{enumerate}
        \item Indeed $\lambda \bm{0}_\mu = [m_\lambda \circ \iota_0]^{(n)}(\mu) = [\iota_0]^{(n)}(\mu) = \bm{0}_\mu$.
        \item By induction: for $n = 0$ it's just the fact that $0v = 0$ for every $v \in T_x\cM$. For $n > 0$, we have
        \begin{align}
            0_\bGamma &= [m_0]^{(n)}(\bGamma) = (\gamma \to [m_0]^{(n-1)}(\gamma))_\#\bGamma = (\gamma \to 0\gamma)_\#\bGamma = (\gamma \to \bm{0}_{[\pi]^{(n-1)}(\gamma)})_\#\bGamma \\
            &= ([\iota_0]^{(n-1)} \circ [\pi]^{(n-1)})_\#\bGamma = [\iota_0]^{(n)}([\pi]^{(n)}(\bGamma)) = [\iota_0]^{(n)}(\bP) = \bm{0}_\bP.
        \end{align}
        \item The base support of $\bm{0}_\mu$ is in $\{(x,0), x \in \cM\} \subseteq T\cM$. Therefore the base support of any $\alpha \in \Gamma_\mu(\bm{0}_\mu,\gamma)$ is in $\{(x,0,v), (x,v) \in T\cM\} \subseteq T^2\cM$. Thus for every $(x,v_1,v_2) \in \spt_{T^2\cM}(\alpha)$, $\sca{v_1}{v_2}_x = 0$, so that $\bE^{(n)}_\alpha[\sca{v_1}{v_2}_x] = 0$.
        \item If $\lambda_1 = 0$ or $\lambda_2 = 0$, then $\sca{\lambda_1 \gamma_1}{\lambda_2 \gamma_2}_\mu = 0 = \lambda_1 \lambda_2 \sca{\gamma_1}{\gamma_2}_\mu$ by the preceding points. Assume now $\lambda_1, \lambda_2 > 0$. Then for every $\alpha \in \Gamma_\mu(\gamma_1,\gamma_2)$, we have $(\lambda_1,\lambda_2) \cdot \alpha \in \Gamma_\mu(\lambda_1 \gamma_1, \lambda_2 \gamma_2)$ so that 
        \begin{equation}
            \sca{\lambda_1 \gamma_1}{\lambda_2 \gamma_2}_\mu \geq \bE^{(n)}_{(\lambda_1,\lambda_2)\alpha}[\sca{v_1}{v_2}_x] = \lambda_1 \lambda_2 \bE^{(n)}_\alpha[\sca{v_1}{v_2}_x]
        \end{equation}
        and taking the $\sup$ over $\alpha$ we find $\sca{\lambda_1 \gamma_1}{\lambda_2 \gamma_2}_\mu \geq \lambda_1 \lambda_2 \sca{\gamma_1}{\gamma_2}_\mu$. We obtain the converse inequality by applying this inequality to $\tilde{\gamma}_1 = \lambda_1 \gamma_1$, $\tilde{\gamma}_2 = \lambda_2 \gamma_2$, $\tilde{\lambda}_1 = \lambda_1^{-1}$ and $\tilde{\lambda}_2 = \lambda_2^{-1}$. 
        \item Let $\alpha' = (\lambda_1,\lambda_2)\cdot \alpha$. Then, for $i \in \{1,2\}$, we have $[\pi_i]^{(n)}(\alpha') = [\pi_i \circ m_{\lambda_1,\lambda_2}]^{(n)}(\alpha) = [m_{\lambda_i} \circ \pi_i]^{(n)}(\alpha) = [m_{\lambda_i}]^{(n)}(\gamma_i) = \lambda_i \gamma_i$.
        \item Let $\alpha \in \Gamma_\mu(\gamma_1,\gamma_2)$. We have
        \begin{align}
            \sca{\gamma_1 +_\alpha \gamma_2}{\gamma_3}_\mu &= \sup_{\beta \in \Gamma_\mu(\gamma_1 +_\alpha \gamma_2, \gamma_3)} \bE^{(n)}_\beta[\sca{v_{12}}{v_3}_x] \\
            &= \sup_{\beta \in \Gamma_\mu(\alpha,\gamma_3)} \bE^{(n)}_\beta[\sca{v_1 + v_2}{v_3}_x] \\
            &= \sup_{\beta \in \Gamma_\mu(\alpha,\gamma_3)} \bE^{(n)}_\beta[\sca{v_1}{v_3}_x] + \bE^{(n)}_\beta[\sca{v_2}{v_3}_x] \\
            &\leq \sup_{\beta \in \Gamma_\mu(\alpha,\gamma_3)} \bE^{(n)}_\beta[\sca{v_1}{v_3}_x] + \sup_{\beta \in \Gamma_\mu(\alpha,\gamma_3)} \bE^{(n)}_\beta[\sca{v_2}{v_3}_x] \\
            &\leq \sup_{\beta \in \Gamma_\mu(\gamma_1,\gamma_3)} \bE^{(n)}_\beta[\sca{v_1}{v_2}_x] + \sup_{\beta \in \Gamma_\mu(\gamma_2,\gamma_3)} \bE^{(n)}_\beta[\sca{v_1}{v_2}_x] \\
            &\leq \sca{\gamma_1}{\gamma_3}_\mu + \sca{\gamma_2}{\gamma_3}_\mu
        \end{align}
        Here $\Gamma_\mu(\alpha, \gamma_3)$ is the set of $\beta \in \cPn{n}{T^3\cM}$ such that $[\pi^3_{1,2}]^{(n)}(\beta) = \alpha$ and $[\pi^3_3]^{(n)}(\beta) = \gamma_3$, which is non-empty by \Cref{rk:general_couplings_exist}. The second line is justified by the fact that $[(\pi^3_1+\pi^3_2,\pi^3_3)]^{(n)}(\Gamma_\mu(\alpha,\gamma_3)) = \Gamma_\mu(\gamma_1 +_\alpha \gamma_2, \gamma_3)$ (the inclusion $\subseteq$ is straightforward. The equality, not so much, and it is proved in \Cref{lemma:generalized_couplings_exist_2}), and the fifth line is justified by $[(\pi^3_1,\pi^3_3)]^{(n)}(\Gamma_\mu(\alpha,\gamma_3)) \subseteq \Gamma_\mu(\gamma_1, \gamma_3)$ and $[(\pi^3_2,\pi^3_3)]^{(n)}(\Gamma_\mu(\alpha,\gamma_3)) \subseteq \Gamma_\mu(\gamma_2, \gamma_3)$.
        \item Clearly $\alpha' \in \Gamma_\mu(\gamma_2,\gamma_1)$, and $\gamma_2 +_{\alpha'} \gamma_1 = [\pi_1 + \pi_2]^{(n)}(\alpha') = [(\pi_1 + \pi_2) \circ (\pi_2,\pi_1)]^{(n)}(\alpha) = [\pi_1+\pi_2]^{(n)}(\alpha) = \gamma_1 +_\alpha \gamma_2$.
        \item We have $\gamma_1 -_\alpha \gamma_2 = [\pi_1-\pi_2]^{(n)}(\alpha) = [(\pi_1+\pi_2) \circ m_{1,-1}]^{(n)}(\alpha) = [\pi_1+\pi_2]^{(n)}(\alpha') = \gamma_1 +_{\alpha'} (-\gamma_2)$.
        \item Again, if $\alpha \in \Gamma_\mu(\gamma,\bm{0}_\mu)$, then its base support is in $\{(x,v,0), (x,v) \in T\cM\} \subseteq T^2\cM$. In particular, $\pi_1+\pi_2 = \pi_1$ on $\spt_{T^2\cM}(\alpha)$, so that $\gamma +_\alpha \bm{0}_\mu = [\pi_1+\pi_2]^{(n)}(\alpha) = [\pi_1]^{(n)}(\alpha) = \gamma$. 
        \item We have $\lambda_1 \gamma +_\alpha \lambda_2 \gamma = [\pi_1 + \pi_2]^{(n)}(\alpha) = [(\pi_1 + \pi_2) \circ ((x,v) \mapsto (x, \lambda_1 v, \lambda_2 v))]^{(n)}(\gamma) = [m_{\lambda_1+\lambda_2}]^{(n)}(\gamma) = (\lambda_1+\lambda_2)\gamma$.
        \item We have $\lambda (\gamma_1 +_\alpha \gamma_2) = [m_\lambda \circ (\pi_1 + \pi_2)]^{(n)}(\alpha) = [(\pi_1 + \pi_2) \circ m_{\lambda,\lambda}]^{(n)}(\alpha) = [\pi_1 + \pi_2]^{(n)}(\alpha') = \lambda \gamma_1 +_{\alpha'} \lambda \gamma_2$
        \item By the Minkowski inequality (\Cref{lemma:n_minkowsky_ineq}), we have
        \begin{align}
            \|\gamma_1 +_\alpha \gamma_2\| &= \sqrt{\bE^{(n)}_\alpha[\|v_1 + v_2\|^2_x]} \\
            &\leq \sqrt{\bE^{(n)}_\alpha[\|v_1\|^2_x]} + \sqrt{\bE^{(n)}_\alpha[\|v_2\|^2_x]} \\
            &\leq \sqrt{\bE^{(n)}_{\gamma_1}[\|v_1\|^2_x]} + \sqrt{\bE^{(n)}_{\gamma_2}[\|v_2\|^2_x]} = \|\gamma_1\| + \|\gamma_2\|.
        \end{align}
        \item The inner product $\sca{\cdot}{\cdot}_\mu$ is symmetric as a consequence of \eqref{eq:w_mu_norm_inner_prod_rel} and of the fact that $\W_\mu$ is symmetric as a distance on $\cPn{n}{T\cM}_\mu$ (\Cref{prop:w_mu_is_distance}).
        \item For any $\alpha \in \Gamma_\mu(\gamma_1,\gamma_2)$, we have by the Cauchy-Schwarz inequality (\Cref{lemma:n_holder_ineq})
        \begin{equation}
            |\bE^{(n)}_\alpha[\sca{v_1}{v_2}_x]| \leq \bE^{(n)}_\alpha[|\sca{v_1}{v_2}_x|] \leq \bE^{(n)}_\alpha[\|v_1\|_x \|v_2\|_x] \leq \sqrt{\bE^{(n)}_\alpha[\|v_1\|^2_x]} \sqrt{\bE^{(n)}_\alpha[\|v_2\|^2_x]} = \|\gamma_1\| \|\gamma_2\|.
        \end{equation}
        Taking $\alpha \in \Gamma_{\mu,o}(\gamma_1, \gamma_2)$, we thus find $|\sca{\gamma_1}{\gamma_2}_\mu| \leq \|\gamma_1\|\|\gamma_2\|$.
        \item By the previous point, $\sca{\gamma}{\gamma}_\mu \leq \|\gamma\|^2$. Taking $\alpha = [(x,v) \mapsto (x,v,v)]^{(n)}(\gamma) \in \Gamma_\mu(\gamma,\gamma)$, we then observe that
        \begin{equation}
            \sca{\gamma}{\gamma}_\mu \geq \bE^{(n)}_\alpha[\sca{v_1}{v_2}_x] = \bE^{(n)}_\gamma[\|v\|^2_x] = \|\gamma\|^2,
        \end{equation}
        so that $\sca{\gamma}{\gamma}_\mu = \|\gamma\|^2$.
        \item First, if $\gamma = \bm{0}_\mu = [\iota_0]^{(n)}(\mu)$, then $\|\gamma\|^2 = \bE^{(n)}_\gamma[\|v\|^2] = \bE^{(n)}_\mu[0] = 0$. Conversely, if $\|\gamma\| = 0$, then by \Cref{lemma:pseudo_norm_bounds_dist_to_zero}, we have $\W_2(\bm{0}_\mu,\gamma) = 0$ so that $\gamma = \bm{0}_\mu$.
    \end{enumerate}
\end{proof}

\begin{lemma} \label{lemma:generalized_couplings_exist_3}
    Let $\mu \in \cPn{n}{\cM}$ and $\gamma_1, \gamma_2, \gamma_3 \in \cPn{n}{T\cM}_\mu$, then the map $([\pi^3_{1,2}]^{(n)},[\pi^3_{2,3}]^{(n)}) : \Gamma_\mu(\gamma_1,\gamma_2,\gamma_3) \mapsto \Gamma_\mu(\gamma_1,\gamma_2) \times \Gamma_\mu(\gamma_2,\gamma_3)$ is surjective.
\end{lemma}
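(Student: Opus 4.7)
My plan is to reduce the statement to the hierarchical gluing lemma (\Cref{lemma:hierarchical_gluing_lemma}) by recognizing $T^3\cM$ as a fiber product. Specifically, $T^3\cM$ identifies with the fiber product $T^2\cM \times_{T\cM} T^2\cM$ in $\Pold$, via the isomorphism
\begin{equation*}
    (x,v_1,v_2,v_3) \longleftrightarrow \big((x,v_1,v_2),(x,v_2,v_3)\big),
\end{equation*}
where the two structural morphisms $T^2\cM \to T\cM$ used to form the fiber product are $\pi^2_2$ on the first copy and $\pi^2_1$ on the second (both matching up the shared data $(x,v_2)$). Under this identification, the two projections $T^3\cM \to T^2\cM$ coming from the fiber product structure correspond precisely to $\pi^3_{1,2}$ and $\pi^3_{2,3}$.

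Once this is in place, the map $h_0 : T^3\cM \to T^2\cM \times_{T\cM} T^2\cM$ induced by the universal property is just this isomorphism, so it is trivially surjective with linearly growing antecedents. By \Cref{lemma:hierarchical_gluing_lemma} (applied with $\cC = \Pold$ and $P = \cP_2$), the induced morphism
\begin{equation*}
    h_n : \cPn{n}{T^3\cM} \longrightarrow \cPn{n}{T^2\cM} \times_{\cPn{n}{T\cM}} \cPn{n}{T^2\cM}, \qquad \beta \mapsto \big([\pi^3_{1,2}]^{(n)}(\beta), [\pi^3_{2,3}]^{(n)}(\beta)\big),
\end{equation*}
is surjective for every $n \geq 0$.

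Given $\alpha_{12} \in \Gamma_\mu(\gamma_1,\gamma_2)$ and $\alpha_{23} \in \Gamma_\mu(\gamma_2,\gamma_3)$, I observe that $[\pi^2_2]^{(n)}(\alpha_{12}) = \gamma_2 = [\pi^2_1]^{(n)}(\alpha_{23})$, so the pair $(\alpha_{12},\alpha_{23})$ lies in the fiber product $\cPn{n}{T^2\cM} \times_{\cPn{n}{T\cM}} \cPn{n}{T^2\cM}$. By surjectivity of $h_n$, there exists $\beta \in \cPn{n}{T^3\cM}$ with $[\pi^3_{1,2}]^{(n)}(\beta) = \alpha_{12}$ and $[\pi^3_{2,3}]^{(n)}(\beta) = \alpha_{23}$.

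Finally, I check that $\beta \in \Gamma_\mu(\gamma_1,\gamma_2,\gamma_3)$, i.e.\ that $[\pi^3_i]^{(n)}(\beta) = \gamma_i$ for $i=1,2,3$. Since $\pi^3_1 = \pi^2_1 \circ \pi^3_{1,2}$, functoriality of $[\cdot]^{(n)}$ gives $[\pi^3_1]^{(n)}(\beta) = [\pi^2_1]^{(n)}(\alpha_{12}) = \gamma_1$, and similarly $[\pi^3_3]^{(n)}(\beta) = [\pi^2_2]^{(n)}(\alpha_{23}) = \gamma_3$, while $[\pi^3_2]^{(n)}(\beta) = [\pi^2_2]^{(n)}(\alpha_{12}) = \gamma_2$. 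There is no real obstacle here; all the technical work has been absorbed into \Cref{lemma:hierarchical_gluing_lemma}. The only point that requires care is correctly setting up the fiber product identification so that the two natural projections on the fiber product side match the projections $\pi^3_{1,2}$ and $\pi^3_{2,3}$ used in the statement.
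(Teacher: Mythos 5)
Your proof is correct and follows essentially the same route as the paper: identify $T^3\cM$ with the fiber product $T^2\cM \times_{T\cM} T^2\cM$ so that $h_0$ is an isomorphism, then invoke \Cref{lemma:hierarchical_gluing_lemma} to get surjectivity of $h_n$. Your final check that $\beta$ lands in $\Gamma_\mu(\gamma_1,\gamma_2,\gamma_3)$ is a welcome bit of extra care that the paper leaves implicit.
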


\begin{proof}
    To prove the lemma, we reformulate the problem in terms of fiber products. Let $\alpha \in \Gamma_\mu(\gamma_1,\gamma_2)$ and $\alpha' \in \Gamma_\mu(\gamma_2,\gamma_2)$, then they satisfy $[\pi_2]^{(n)}(\alpha) = [\pi_1]^{(n)}(\alpha') = \gamma_2$. To prove surjectivity, we are looking for some $\beta \in \cPn{n}{T^3\cM}$ such that $[\pi_{1,2}^3]^{(n)}(\beta) = \alpha$ and $[\pi_{2,3}^3]^{(n)}(\beta) = \alpha'$. In other words, we want to prove that the map $h_n$ induced by the universal property of the fiber product in the commutative diagram
    \begin{equation}
        \begin{tikzcd}
            & \cPn{n}{T^3\cM} \arrow[ddl, bend right, "{[\pi^3_{1,2}]^{(n)}}"'] \arrow[ddr, bend left, "{[\pi^3_{2,3}]^{(n)}}"] \arrow[d, "h_n"] &  \\
            & \cPn{n}{T^2\cM} \times_{\cPn{n}{T\cM}} \cPn{n}{T^2\cM} \arrow[dl] \arrow[dr] & \\
            \cPn{n}{T^2\cM} \arrow[r, "{[\pi_2]^{(n)}}"'] & \cPn{n}{T\cM} & \cPn{n}{T^2\cM} \arrow[l, "{[\pi_1]^{(n)}}"] 
        \end{tikzcd}
    \end{equation}
    is surjective. By \Cref{lemma:hierarchical_gluing_lemma}, we only need to prove that $h_0$ is surjective and with linearly growing antecedents. But this is straightforward: indeed, we can check that the fiber product 
    \begin{equation}
        T^2\cM \times_{T\cM} T^2\cM = \{((x,v_1,v_2),(y,u_1,u_2)) \in T^2\cM \times T^2\cM \setcond (x,v_2) = (y,u_1) \}
    \end{equation}
    identifies as $T^3\cM$, so that $h_0$ is the identity map.
\end{proof}

\begin{lemma} \label{lemma:generalized_couplings_exist_2}
    Let $\mu \in \cPn{n}{\cM}$, $\gamma_1, \gamma_2, \gamma_3 \in \cPn{n}{T\cM}_\mu$ and $\alpha \in \Gamma_\mu(\gamma_1,\gamma_2)$, then the map $[\pi_1^3+\pi_2^3, \pi_3^3]^{(n)}$ from $\Gamma_\mu(\alpha,\gamma_3)$ to $\Gamma_\mu(\gamma_1 +_\alpha \gamma_2, \gamma_3)$ is surjective.
\end{lemma}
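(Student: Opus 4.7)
The plan is to reformulate the surjectivity question as the surjectivity of a morphism between a hierarchical probability space and a fiber product of hierarchical probability spaces, and then invoke the hierarchical gluing lemma (\Cref{lemma:hierarchical_gluing_lemma}) to reduce the whole problem to a level-$0$ statement that can be checked by inspection.

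Given $\eta \in \Gamma_\mu(\gamma_1 +_\alpha \gamma_2, \gamma_3)$, we want to produce $\beta \in \cPn{n}{T^3\cM}$ satisfying $[\Phi]^{(n)}(\beta) = \eta$ and $[\pi^3_{1,2}]^{(n)}(\beta) = \alpha$, where $\Phi : T^3\cM \to T^2\cM$ denotes the smooth map $(x,v_1,v_2,v_3) \mapsto (x,v_1+v_2,v_3)$ (so that $\Phi = (\pi^3_1+\pi^3_2,\pi^3_3)$). Note that if such a $\beta$ exists, then automatically $[\pi^3_3]^{(n)}(\beta) = [\pi^2_2]^{(n)}([\Phi]^{(n)}(\beta)) = [\pi^2_2]^{(n)}(\eta) = \gamma_3$, so $\beta$ will indeed belong to $\Gamma_\mu(\alpha,\gamma_3)$. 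Moreover, the pair $(\eta,\alpha)$ satisfies the compatibility condition $[\pi^2_1]^{(n)}(\eta) = \gamma_1 +_\alpha \gamma_2 = [\pi^2_1 + \pi^2_2]^{(n)}(\alpha)$, so it defines an element of the fiber product
\begin{equation}
    \cPn{n}{T^2\cM} \times_{\cPn{n}{T\cM}} \cPn{n}{T^2\cM},
\end{equation}
where the left arrow is $[\pi^2_1]^{(n)}$ and the right arrow is $[\pi^2_1 + \pi^2_2]^{(n)}$. Thus the question reduces to showing that the morphism
\begin{equation}
    h_n := \bigl([\Phi]^{(n)}, [\pi^3_{1,2}]^{(n)}\bigr) : \cPn{n}{T^3\cM} \longrightarrow \cPn{n}{T^2\cM} \times_{\cPn{n}{T\cM}} \cPn{n}{T^2\cM}
\end{equation}
induced by the universal property of the fiber product is surjective.

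By \Cref{lemma:hierarchical_gluing_lemma}, it suffices to check that $h_0 : T^3\cM \to T^2\cM \times_{T\cM} T^2\cM$ is surjective with linearly growing antecedents. The fiber product at level $0$ consists of pairs $\bigl((x,u_1,w_1),(x,u_2,w_2)\bigr)$ with $u_1 = u_2 + w_2$, and $h_0$ sends $(x,v_1,v_2,v_3) \in T^3\cM$ to $\bigl((x,v_1+v_2,v_3),(x,v_1,v_2)\bigr)$. The map $\bigl((x,u_2+w_2,w_1),(x,u_2,w_2)\bigr) \mapsto (x,u_2,w_2,w_1)$ is a continuous inverse, so $h_0$ is a homeomorphism. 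The linear-growth-of-antecedents condition is immediate: taking the basepoint $(o,0,0,0) \in T^3\cM$ and comparing the Sasaki-type metrics coordinate-by-coordinate, one sees that for every $A$ in the fiber product, $d(h_0^{-1}(A), (o,0,0,0)) \leq d(A, h_0(o,0,0,0))$, since the metric on $T^3\cM$ involves exactly three of the four terms that appear in the fiber-product metric.

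The only slightly delicate point in this plan is making sure one has stated the fiber-product diagram with the correct two morphisms (here $[\pi^2_1]^{(n)}$ on one side and $[\pi^2_1 + \pi^2_2]^{(n)}$ on the other, which is what encodes the compatibility between $\eta$ and $\alpha$) and recognizing that $\Phi$ and $\pi^3_{1,2}$ are precisely the lifts of those two morphisms through $T^3\cM$; once this is set up correctly, the rest of the proof is a direct application of the machinery already developed.
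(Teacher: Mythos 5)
Your proposal is correct and follows essentially the same route as the paper: reformulate the claim as surjectivity of the induced morphism into the fiber product $\cPn{n}{T^2\cM} \times_{\cPn{n}{T\cM}} \cPn{n}{T^2\cM}$ (with structure maps $[\pi_1+\pi_2]^{(n)}$ and $[\pi_1]^{(n)}$), invoke \Cref{lemma:hierarchical_gluing_lemma}, and verify by hand that $h_0$ is surjective with linearly growing antecedents. The only difference is the (immaterial) ordering of the two factors, and your explicit inverse and metric comparison at level $0$ are slightly more detailed than the paper's.
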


\begin{proof}
    Again, we first reformulate this lemma in terms of fiber products. Let $\alpha' \in \Gamma_\mu(\gamma_1 +_\alpha \gamma_2, \gamma_3)$, then the measures $\alpha$ and $\alpha'$ satisfy $[\pi_1+\pi_2]^{(n)}(\alpha) = [\pi_1]^{(n)}(\alpha') = \gamma_1 +_\alpha \gamma_2$. To prove surjectivity, we are looking for some $\beta \in \cPn{n}{T^3\cM}$ such that $[\pi^3_{1,2}]^{(n)}(\beta) = \alpha$ and $[\pi_1^3+\pi_2^3,\pi_3^3]^{(n)}(\beta) = \alpha'$. In other words, we want to prove that the map $h_n$ induced by the universal property of the fiber product in the commutative diagram
    \begin{equation} \label{diag:l1934}
        \begin{tikzcd}
            & \cPn{n}{T^3\cM} \arrow[ddl, bend right, "{[\pi^3_{1,2}]^{(n)}}"'] \arrow[ddr, bend left, "{[\pi^3_1+\pi^3_2,\pi^3_3]^{(n)}}"] \arrow[d, "h_n"] &  \\
            & \cPn{n}{T^2\cM} \times_{\cPn{n}{T\cM}} \cPn{n}{T^2\cM} \arrow[dl] \arrow[dr] & \\
            \cPn{n}{T^2\cM} \arrow[r, "{[\pi_1+\pi_2]^{(n)}}"'] & \cPn{n}{T\cM} & \cPn{n}{T^2\cM} \arrow[l, "{[\pi_1]^{(n)}}"] 
        \end{tikzcd}
    \end{equation}
    is surjective. By \Cref{lemma:hierarchical_gluing_lemma}, we only need to prove that $h_0$ is surjective and with linearly growing antecedents. Let $\bm{x} = ((x,v_1,v_2),(y,u_1,u_2)) \in T^2\cM \times_{T\cM} T^2\cM$. Then $(\pi_1+\pi_2)(x,v_1,v_2) = (x,v_1+v_2) = (y, u_1) = \pi_1(y,u_1,u_2)$, so that $h_0(x,v_1,v_2,u_2) = \bm{x}$. Thus $h_0$ is surjective, and we can check that the element of the preimage of $\bm{x}$ that we constructed satisfies the assumption of linearly growing antecedents. This finishes the proof.
\end{proof}

We finish this subsection by deriving a characterization of optimal couplings in $\cPn{n}{T^2\cM}$. Let $n > 0$, and fix $\bP \in \cPn{n}{\cM}$, $\bGamma_1, \bGamma_2 \in \cPn{n}{T\cM}_\bP$ and $\bA \in \Gamma_\bP(\bGamma_1,\bGamma_2)$. Applying repeatedly the disintegration theorem (\Cref{th:disintegration}), we can write $\bA$ as
\begin{equation}
    \dd \bA(\alpha) = \dd \bA_{\gamma_1,\gamma_1}(\alpha) \dd \bA_\mu(\gamma_1,\gamma_2) \dd \bP(\mu)
\end{equation}
where for $\bP$-a.e. $\mu$, $\bA_\mu$ is a probability measure on $\cPn{n-1}{T\cM}_\mu \times \cPn{n-1}{T\cM}_\mu$, and for $\bP$-a.e. $\mu$ and $\bA_\mu$-a.e. $\gamma_1, \gamma_2$, $\bA_{\gamma_1,\gamma_2}$ is a probability measure on $\Gamma_\mu(\gamma_1,\gamma_2)$. Similarly, for $i \in \{1,2\}$, we write $\bGamma_i$ as
\begin{equation}
    \dd \bGamma_i(\gamma) = \dd \bGamma_{i,\mu}(\gamma) \dd \bP(\mu)
\end{equation}
where for $\bP$-a.e. $\mu$, $\bGamma_{i,\mu}$ is a probability measure on $\cPn{n-1}{T\cM}_\mu$. For any nonnegative measurable map $f : \cPn{n-1}{T\cM} \mapsto \R$, we have
\begin{align}
    \int f(\gamma) \dd\bGamma_i(\gamma) &= \int f([\pi_i]^{(n-1)}(\alpha)) \dd\bA(\alpha) \\
    &= \iiint f([\pi_i]^{(n-1)}(\alpha)) \dd\bA_{\gamma_1,\gamma_2}(\alpha) \dd \bA_\mu(\gamma_1,\gamma_2) \bP(\mu) \\
    &= \iint f(\gamma_i) \dd \bA_\mu(\gamma_1,\gamma_2) \bP(\mu) \\
    &= \iint f(\gamma) \dd (\pi_{i\#}\bA_\mu)(\gamma) \bP(\mu)
\end{align}
so by unicity of the disintegration, $\pi_{i,\#}\bA_\mu = \bGamma_{i,\mu}$ for $\bP$-a.e. $\mu$. In particular, for $\bP$-a.e. $\mu$, $\bA_\mu$ is a transport plan between $\bGamma_{1,\mu}$ and $\bGamma_{2,\mu}$. The following necessary and sufficient conditions then characterize the optimality of $\bA$:

\begin{proposition} \label{prop:opt_coupling_characterization}
    The coupling $\bA$ is optimal if and only if both the following conditions hold:
    \begin{enumerate}
        \item \label{enum:opt_coupling:1} For $\bP$-a.e. $\mu$, $\bA_\mu$ is an optimal transport plan between $\bGamma_{1,\mu}$ and $\bGamma_{2,\mu}$ for the ground cost $\W^2_\mu$.
        \item \label{enum:opt_coupling:2} For $\bP$-a.e. $\mu$ and $\bA_\mu$-a.e. $\gamma_1,\gamma_2$, $\bA_{\gamma_1,\gamma_2}$ is concentrated on the set of optimal couplings $\Gamma_{\mu,o}(\gamma_1,\gamma_2)$.
    \end{enumerate}
\end{proposition}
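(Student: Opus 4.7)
The plan is to unfold $\bE^{(n)}_\bA[\|v_1-v_2\|^2_x]$ via the two-step disintegration of $\bA$ and peel off two successive lower bounds, one per layer. At the inner layer, each $\bA_{\gamma_1,\gamma_2}$-average of $\bE^{(n-1)}_\alpha[\|v_1-v_2\|^2_x]$ is bounded below by $\W^2_\mu(\gamma_1,\gamma_2)$, with equality exactly when $\bA_{\gamma_1,\gamma_2}$ is concentrated on $\Gamma_{\mu,o}(\gamma_1,\gamma_2)$. At the outer layer, $\bA_\mu$ is (by the discussion preceding the proposition statement) a transport plan in $\Pi(\bGamma_{1,\mu},\bGamma_{2,\mu})$, so its $\W^2_\mu$-cost is bounded below by the value of the OT problem between these marginals with ground cost $\W^2_\mu$, with equality iff $\bA_\mu$ is optimal. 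The two equality cases are precisely conditions 2 and 1 of the statement.

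Concretely, Fubini combined with the chain rule for $n$-expectancies gives
\begin{equation*}
    \bE^{(n)}_\bA[\|v_1-v_2\|^2_x] = \iiint \bE^{(n-1)}_\alpha[\|v_1-v_2\|^2_x] \dd\bA_{\gamma_1,\gamma_2}(\alpha) \dd\bA_\mu(\gamma_1,\gamma_2) \dd\bP(\mu),
\end{equation*}
and chaining the two lower bounds yields
\begin{equation*}
    \bE^{(n)}_\bA[\|v_1-v_2\|^2_x] \geq \iint \W^2_\mu(\gamma_1,\gamma_2) \dd\bA_\mu(\gamma_1,\gamma_2) \dd\bP(\mu) \geq \int \mathcal{T}(\mu) \dd\bP(\mu) =: L,
\end{equation*}
where $\mathcal{T}(\mu) := \inf_{\beta \in \Pi(\bGamma_{1,\mu},\bGamma_{2,\mu})} \int \W^2_\mu(\gamma_1,\gamma_2) \dd\beta(\gamma_1,\gamma_2)$. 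The first inequality is an equality iff condition 2 holds, and the second iff condition 1 holds.

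What remains is to show that this lower bound is sharp, i.e.\ $L = \W^2_\bP(\bGamma_1,\bGamma_2)$; equivalently, to produce an admissible $\bA$ achieving $L$. I intend to build such a $\bA$ by a joint Borel selection. First, combining \Cref{cor:limit_points_of_seq_of_couplings} with continuity of $\alpha \mapsto \bE^{(n-1)}_\alpha[\|v_1-v_2\|^2_x]$, one verifies that $(\mu,\gamma_1,\gamma_2) \mapsto \W^2_\mu(\gamma_1,\gamma_2)$ is lower semicontinuous on the fiber product $\cPn{n-1}{T\cM} \times_{\cPn{n-1}{\cM}} \cPn{n-1}{T\cM}$, which allows applying \Cref{prop:ot_problem_in_fiber_product_has_solutions} to obtain a fibered optimal transport plan $\tilde{\bA}$ between $\bGamma_1$ and $\bGamma_2$ with ground cost $\W^2_\mu$. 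Second, by a variant of \Cref{cor:measurable_selection_couplings} exploiting compactness of $\Gamma_{\mu,o}(\gamma_1,\gamma_2)$ together with the just-established lower semicontinuity, fix a Borel map $s$ sending each pair $(\gamma_1,\gamma_2)$ in the fiber product to an optimal coupling in $\Gamma_{\mu,o}(\gamma_1,\gamma_2)$. Setting $\bA := s_\#\tilde{\bA}$ produces an element of $\Gamma_\bP(\bGamma_1,\bGamma_2)$ satisfying both 1 and 2, whose cost equals $L$.

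The main technical obstacle is the Borel selection of optimal couplings, which demands both joint measurability in $(\mu,\gamma_1,\gamma_2)$ and pointwise optimality; this rests on the lower semicontinuity of $\W^2_\bullet$ on the fiber product, a point that requires care because the distance depends on the base $\mu$. Once these are in place, combining the universal bound $\bE^{(n)}_\bA[\cdot] \geq L$ with its attainability forces $\W^2_\bP(\bGamma_1,\bGamma_2) = L$, and the biconditional follows at once: $\bA$ is optimal iff $\bE^{(n)}_\bA[\|v_1-v_2\|^2_x] = L$, iff equality holds in both lower bound steps, iff conditions 1 and 2 simultaneously hold.
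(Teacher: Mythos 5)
Your proposal is correct and follows essentially the same route as the paper: the same two-layer disintegration with the two lower bounds (inner-layer equality $\Leftrightarrow$ condition 2, outer-layer equality $\Leftrightarrow$ condition 1), the same appeal to \Cref{prop:ot_problem_in_fiber_product_has_solutions} and to the lower semicontinuity of $\W_\bullet$ (which is already \Cref{prop:w_mu_is_lsc}, so you need not re-derive it), and the same final step of showing the bound is attained so that it equals $\W^2_\bP(\bGamma_1,\bGamma_2)$. The only divergence is cosmetic: where you push $\tilde{\bA}$ forward by a single-valued Borel selection of optimal couplings (justified by \Cref{th:selection}, since $\Gamma_{\mu,o}(\gamma_1,\gamma_2)$ is a nonempty compact subset of the compact $\Gamma_\mu(\gamma_1,\gamma_2)$), the paper instead lifts $\tilde{\bA}_0$ to a measure concentrated on the Borel set of optimal couplings via \Cref{prop:pushforward_by_surjective_is_surjective_in_P2}; both rest on the same measurable-selection machinery and yield a witness satisfying conditions 1 and 2.
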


\begin{proof}
    First, assume that there exists $\bA_0 \in \Gamma_\bP(\bGamma_1,\bGamma_2)$ satisfying conditions \ref{enum:opt_coupling:1} and \ref{enum:opt_coupling:2}. We will show that 
    \begin{equation} \label{eq:l_1057}
        \bE^{(n)}_\bA[\|v_1 - v_2\|^2_x] \geq \bE^{(n)}_{\bA_0}[\|v_1-v_2\|_x^2]
    \end{equation}
    with equality if and only if $\bA$ itself satisfies conditions \ref{enum:opt_coupling:1} and \ref{enum:opt_coupling:2}. Indeed, rewriting $\bA_0$ using the disintegration theorem as
    \begin{equation}
        \dd \bA_0(\alpha) = \dd \bA_{0,\gamma_1,\gamma_1}(\alpha) \dd \bA_{0,\mu}(\gamma_1,\gamma_2) \dd \bP(\mu)
    \end{equation}
    just like for $\bA$. Then we have
    \begin{align}
        \bE^{(n)}_{\bA}[\|v_1-v_2\|^2_x] &= \iiint \bE_\alpha^{(n-1)}[\|v_1-v_2\|^2_x] \dd \bA_{\gamma_1,\gamma_1}(\alpha) \dd \bA_\mu(\gamma_1,\gamma_2) \dd \bP(\mu) \\
        &\geq \iiint \W_\mu^2(\gamma_1,\gamma_2) \dd \bA_{\gamma_1,\gamma_1}(\alpha) \dd \bA_\mu(\gamma_1,\gamma_2) \dd \bP(\mu) \label{eq:l_1067} \\
        &= \iint \W_\mu^2(\gamma_1,\gamma_2) \dd \bA_\mu(\gamma_1,\gamma_2) \dd \bP(\mu) \\
        &\geq \iint \W_\mu^2(\gamma_1,\gamma_2) \dd \bA_{0,\mu}(\gamma_1,\gamma_2) \dd \bP(\mu) \label{eq:l_1069} \\
        &= \iint \W_\mu^2(\gamma_1,\gamma_2) \dd \bA_{0,\gamma_1,\gamma_2}(\alpha) \dd \bA_{0,\mu}(\gamma_1,\gamma_2) \dd \bP(\mu) \\
        &= \iint \bE^{(n-1)}_\alpha[\|v_1-v_2\|^2_x] \dd \bA_{0,\gamma_1,\gamma_2}(\alpha) \dd \bA_{0,\mu}(\gamma_1,\gamma_2) \dd \bP(\mu) \\
        &= \bE^{(n)}_{\bA_0}[\|v_1-v_2\|^2_x]
    \end{align}
    where we used in the fourth line that $\bA_{0,\mu}$ is for $\bP$-a.e. $\mu$ an optimal transport plan between $\bGamma_{1,\mu}$ and $\bGamma_{2,\mu}$ for the ground cost $\W_\mu^2$, and we used in the sixth line that $\bA_{0,\gamma_1,\gamma_2}$ is concentrated on $\Gamma_{\mu,o}(\gamma_1,\gamma_2)$ for $\bP$-a.e. $\mu$ and $\bA_{0,\mu}$-a.e. $\gamma_1,\gamma_2$. This shows \eqref{eq:l_1057}. Moreover, the inequality \eqref{eq:l_1067} is an equality if and only if $\bA$ satisfies condition \ref{enum:opt_coupling:2}, and the inequality \eqref{eq:l_1069} is an equality if and only if $\bA$ satisfies condition \ref{enum:opt_coupling:1}, so the inequality \eqref{eq:l_1057} is an equality if and only if $\bA$ satisfies the conditions \ref{enum:opt_coupling:1} and \ref{enum:opt_coupling:2}. We have thus shown that the conditions \ref{enum:opt_coupling:1} and \ref{enum:opt_coupling:2} are necessary and sufficient for a coupling between $\bGamma_1$ and $\bGamma_2$ to be optimal, \emph{provided at least one such coupling $\bA_0$ exists}. We thus finish the proof by constructing $\bA_0$.
    \medbreak
    Applying \Cref{prop:ot_problem_in_fiber_product_has_solutions} to the fiber product $\cPn{n}{T\cM} \times_{\cPn{n}{\cM}} \cPn{n}{T\cM}$, the measures $\bGamma_1$ and $\bGamma_2$, and the cost function $\W^2$ (which is lower semicontinuous in the $\W_2$ topology by \Cref{prop:w_mu_is_lsc}), there exists $\tilde{\bA}_0 \in \cP(\cPn{n}{T\cM} \times_{\cPn{n}{\cM}} \cPn{n}{T\cM})$ which writes as
    \begin{equation}
        \dd\tilde{\bA}_0(\gamma_1,\gamma_2) = \dd\tilde{\bA}_{0,\mu}(\gamma_1,\gamma_2) \dd\bP(\mu)
    \end{equation}
    where $\tilde{\bA}_0$ is for $\bP$-a.e. $\mu$ an optimal transport plan between $\bGamma_{1,\mu}$ and $\bGamma_{2,\mu}$ for the ground cost $\W_\mu^2$. Moreover it is easily checked that $\tilde{\bA}_0$ has finite order 2 moment. Now, let $B_0$ be the set of couplings $\alpha \in \cPn{n}{T^2\cM}$ which are optimal between their marginal velocity plans. Then $B_0$ is a Borel subset of $\cPn{n}{T^2\cM}$, as $B_0 = f^{-1}(0)$ where $f : \cPn{n}{T^2\cM} \mapsto \R$ is the measurable function defined by
    \begin{equation}
        f(\alpha) := \bE^{(n)}_\alpha[\|v_1-v_2\|_x^2] - \W_{[\pi]^{(n)}(\alpha)}^2([\pi_1]^{(n)}(\alpha), [\pi_2]^{(n)}(\alpha))
    \end{equation}
    (recall that lower semicontinuous functions are measurable). Thus, by \Cref{prop:dist_and_inner_prod_are_min_and_max} and Equation \eqref{eq:2nd_moment_of_coupling}, the projection map $([\pi_1]^{(n)},[\pi_2]^{(n)}) : \cPn{n}{T^2\cM} \mapsto \cPn{n}{T\cM} \times_{\cPn{n}{\cM}} \cPn{n}{T\cM}$ has linearly growing antecedents in $B_0$. Therefore, by \Cref{prop:pushforward_by_surjective_is_surjective_in_P2}, there exists $\bA_0 \in \cPn{n+1}{T^2\cM}$ which is concentrated on $B_0$ and such that $[[\pi_1]^{(n)},[\pi_2]^{(n)}](\bA_0) = \tilde{\bA}_0$. Using the disintegration theorem, we write
    \begin{equation}
        \dd\bA_0(\alpha) = \dd\bA_{0,\gamma_1,\gamma_2}(\alpha) \dd\tilde{\bA}_0(\gamma_1,\gamma_2) = \dd\bA_{0,\gamma_1,\gamma_2}(\alpha) \dd\tilde{\bA}_{0,\mu}(\gamma_1,\gamma_2) \dd\bP(\mu)
    \end{equation}
    where $\bA_{0,\gamma_1,\gamma_2}$ is concentrated on $\Gamma_{[\pi]^{(n)}(\gamma_1)}(\gamma_1,\gamma_2)$ for $\tilde{\bA}_0$-a.e. $\gamma_1,\gamma_2$ (that is for $\bP$-a.e. $\mu$ and $\tilde{\bA}_{0,\mu}$-a.e. $\gamma_1,\gamma_2$). But since $\bA_0$ is concentrated on $B_0$, this implies that $\bA_{0,\gamma_1,\gamma_2}$ is concentrated on $\Gamma_{\mu,o}(\gamma_1,\gamma_2)$ for $\bP$-a.e. $\mu$ and $\tilde{\bA}_{0,\mu}$-a.e. $\gamma_1,\gamma_2$. Thus $\bA_0$ satisfies conditions \ref{enum:opt_coupling:1} and \ref{enum:opt_coupling:2}, and this finishes the proof.
\end{proof}

\section{Fully deterministic velocity plans} \label{sec:4_fully_det}

\subsection{Fully deterministic measures}

The goal of this subsection is to elaborate a theory of ``fully deterministic measures", which will be in turn used to define a generalization to the spaces $\cPn{n}{\cM}$ of the notion of transport maps. As we strive for full generality, we will fix a category $\cC$ and a functor $P : \cC \mapsto \cC$, which may be either $\Pol$ and $\cP$ or $\Pold$ and $\cP_2$, and we will formulate the results of this subsection in terms of $\cC$ and $P$.
\medbreak
In this subsection, we will fix a morphism $\pi : V \mapsto X$, which we will assume to be surjective and, if $\cC = \Pold$, with linearly growing antecedents. For every $n \geq 0$ and $\mu \in \Pn{n}{X}$, we define $\Pn{n}{V}_\mu$ to be the set of $\gamma \in \Pn{n}{V}$ such that $[\pi]^{(n)}(\gamma) = \mu$, and for every $\gamma_1, \gamma_2 \in \Pn{n}{V}_\mu$, we define $\Gamma_\mu(\gamma_1,\gamma_2)$ to be the set of $\alpha \in \Pn{n}{V \times_X V}$ such that $[\pi_1]^{(n)}(\alpha) = \gamma_1$ and $[\pi_2]^{(n)}(\alpha) = \gamma_2$. Note that these notations are consistent with those of the previous section as they define the same sets when $\pi$ is the projection $T\cM \mapsto \cM$ in $\Pold$. Similarly, we know, using  \Cref{prop:p_of_fiber_to_fiber_of_p_is_proper} and \Cref{lemma:hierarchical_gluing_lemma}, that $\Gamma_\mu(\gamma_1,\gamma_2)$ is not empty and is a compact subset of $\Pn{n}{V \times_X V}$, and using \Cref{th:selection}, that there is a choice $\alpha_{\gamma_1,\gamma_2} \in \Gamma_\mu(\gamma_1,\gamma_2)$ which is measurable. Given a morphism $f : Y \mapsto Z$ in $\cC$ and $\mu \in \cP(Y)$, we will call a \emph{$\mu$-section} of $f$ any Borel measurable map $s : Z \mapsto Y$ such that $f(s(z)) = z$ for $\mu$-almost every $z \in Z$.

\begin{definition}
    The set of \emph{deterministic probability measures} on $V$ relatively to $\pi$, denoted $P_{\det,\pi}(V)$ or $P_{\det}(V)$, is the set of measures $\gamma \in P(V)$ for which there exists a $\mu$-section $s : X \mapsto V$ of $\pi$ such that $\gamma = s_\#\mu$, where $\mu := \pi_\#\gamma \in P(X)$.
\end{definition}

\begin{definition} \label{def:fully_det_measure}
    For every $n \geq 0$, we define inductively the set of \emph{fully deterministic probability measures} on $V$ relatively to $\pi$, denoted $P^{(n)}_{\det,\pi}(V)$ or more simply $\Pndet{n}{V}$, by setting $\Pndet{0}{V} := V$, and for $n > 0$, by defining $\Pndet{n}{V}$ to be the set of measures $\gamma \in P(\Pndet{n-1}{V})$ of the form $\gamma = s_\#\mu$ where $\mu := [\pi]^{(n)}(\gamma)$ and $s : \Pn{n-1}{X} \mapsto \Pn{n-1}{V}$ is a $\mu$-section of $[\pi]^{(n-1)}$ which is valued $\mu$-almost everywhere in $\Pndet{n-1}{V}$.
\end{definition}

Note that we indeed have $\Pndet{1}{V} = P_{\det}(V)$. For every $\mu \in \Pn{n}{X}$, we adopt the natural choice of notation $\Pndet{n}{V}_\mu := \Pndet{n}{V} \cap \Pn{n}{V}_\mu$.

\begin{proposition} \label{prop:coupling_with_fully_det_is_unique_general}
    Let $n \geq 0$, $\mu \in \Pn{n}{X}$ and $\gamma_1, \gamma_2 \in \Pndet{n}{V}_\mu$. If at least one of the $\gamma_1$ and $\gamma_2$ is fully deterministic, then $\Gamma_\mu(\gamma_1,\gamma_2)$ is a singleton.
\end{proposition}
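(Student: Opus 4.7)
The plan is to proceed by induction on $n$, mirroring the inductive definition of $\Pndet{n}{V}$. The base case $n=0$ is immediate: $\Pndet{0}{V} = V$, and for any $v_1,v_2 \in \pi^{-1}(\mu)$ the fiber product description of $V \times_X V$ forces $\Gamma_\mu(v_1,v_2) = \{(v_1,v_2)\}$.

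For the inductive step, assume the claim at level $n-1$, and suppose by symmetry that $\gamma_1 = s_\# \mu$ for some Borel $\mu$-section $s$ of $[\pi]^{(n-1)}$ valued $\mu$-a.e.\ in $\Pndet{n-1}{V}$. Fix any $\alpha \in \Gamma_\mu(\gamma_1,\gamma_2)$; I would split the uniqueness argument into two steps. First, since $[[\pi_1]^{(n-1)}]_\# \alpha = \gamma_1$ is concentrated on $\Pndet{n-1}{V}$, for $\alpha$-a.e.\ $\beta \in \Pn{n-1}{V \times_X V}$ the marginal $[\pi_1]^{(n-1)}(\beta)$ is fully deterministic. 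The inductive hypothesis then forces $\beta$ to be the \emph{unique} coupling of its marginals $[\pi_1]^{(n-1)}(\beta)$ and $[\pi_2]^{(n-1)}(\beta)$. Invoking the measurable selection machinery already developed in the paper (cf.\ \Cref{cor:measurable_selection_couplings} and \Cref{th:selection}), one obtains a Borel map $c$ returning this unique coupling on the relevant domain, and therefore $\alpha = c_\# \eta$, where $\eta := ([\pi_1]^{(n-1)}, [\pi_2]^{(n-1)})_\# \alpha$. This reduces the uniqueness of $\alpha$ to that of $\eta$, a measure on $\Pn{n-1}{V} \times_{\Pn{n-1}{X}} \Pn{n-1}{V}$ with marginals $\gamma_1,\gamma_2$.

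Second, I would show $\eta$ is itself uniquely determined. Disintegrating $\eta$ with respect to $\mu$ via the map $(\nu_1,\nu_2) \mapsto [\pi]^{(n-1)}(\nu_1)$ yields $\eta = \int \eta_{\mu'} \dd\mu(\mu')$. For $\mu$-a.e.\ $\mu'$, the first marginal of $\eta_{\mu'}$ coincides with the disintegration of $\gamma_1 = s_\# \mu$ at $\mu'$, which equals $\delta_{s(\mu')}$; because any probability measure with a Dirac first marginal is a product, $\eta_{\mu'} = \delta_{s(\mu')} \otimes \gamma_{2,\mu'}$, with $\{\gamma_{2,\mu'}\}$ denoting the disintegration of $\gamma_2$ along $[\pi]^{(n-1)}$. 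This expression is unique, hence so is $\eta$, and finally $\alpha$. The main obstacle I anticipate is the careful justification of the Borel-measurability of the selector $c$ on the subset where the first marginal is fully deterministic, together with the verification that $\Pndet{n-1}{V}$ is itself Borel so that the ``$\alpha$-a.e.\ $\beta$'' reasoning is meaningful; once these technicalities are dispensed with, both steps above reduce to routine applications of the disintegration theorem.
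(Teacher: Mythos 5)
Your proof is correct and follows essentially the same route as the paper's: induction on $n$, reduction by symmetry to the case where $\gamma_1 = s_\#\mu$ is fully deterministic, the observation that a coupling whose first marginal disintegrates into Dirac masses must be the product $\delta_{s(\mu')} \otimes \gamma_{2,\mu'}$, and the induction hypothesis together with the measurable coupling selection to pin down the fiber-level couplings. Your factorization $\alpha = c_\#\eta$ followed by the uniqueness of $\eta$ is just a repackaging of the paper's single nested disintegration, and the measurability technicalities you flag are handled there by working with the section $s$ pointwise on a full-measure Borel set and by using the globally defined measurable selection $(\gamma_1,\gamma_2) \mapsto \alpha_{\gamma_1,\gamma_2}$, which automatically returns the unique coupling wherever one of the marginals is fully deterministic.
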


\begin{proof}
    We show this by induction. In the case $n = 0$, this is immediate, since $\Gamma_x(v_1,v_2) = \{(v_1,v_2)\}$ is always a singleton. Let $n > 0$ and assume the proposition holds for $n-1$. Let $\bP \in \Pn{n}{X}$, $\bGamma_1,\bGamma_2 \in \Pn{n}{V}_\bP$, and assume that at least one of them is fully deterministic. Since $[(\pi_2,\pi_1)]^{(n)}$ induces a bijection between $\Gamma_\bP(\bGamma_1,\bGamma_2)$ and $\Gamma_\bP(\bGamma_2,\bGamma_1)$, we may assume without loss of generality that it is $\bGamma_1$ which is fully deterministic, and we write $\bGamma_1 = s_{1\#}\bP$ where $s_1 : \Pn{n-1}{X} \mapsto \Pn{n-1}{V}$ is a $\bP$-section of $[\pi]^{(n-1)}$ which is valued $\bP$-a.e. in $\Pndet{n-1}{V}$. We know that $\Gamma_\mu(\gamma_1,\gamma_2) \neq \emptyset$. Fix then $\bA \in \Gamma_\mu(\gamma_1,\gamma_2)$, and apply the disintegration theorem (\Cref{th:disintegration}) to write it as
    \begin{equation}
        \dd\bA(\alpha) = \dd\bA_{\gamma_1,\gamma_2}(\alpha) \dd\bA_\mu(\gamma_1,\gamma_2) \dd\bP(\mu)
    \end{equation}
    where $\bA_\mu$ is (for $\bP$-a.e. $\mu$) concentrated on $\Pn{n-1}{V}_\mu \times \Pn{n-1}{V}_\mu$ and $\bA_{\gamma_1,\gamma_2}$ is (for $\bP$-a.e. $\mu$, $\bA_\mu$-a.e. $\gamma_1,\gamma_2$) concentrated on $\Gamma_\mu(\gamma_1,\gamma_2)$. We similarly rewrite $\bGamma_1$ and $\bGamma_2$ as
    \begin{equation}
        \dd\bGamma_1(\gamma_1) = \dd\delta_{s_1(\mu)}(\gamma_1) \dd\bP(\mu), \quad \dd\bGamma_2(\gamma_2) = \dd\bGamma_{2,\mu}(\gamma_2) \dd\bP(\mu)
    \end{equation}
    where $\bGamma_{2,\mu}$ is (for $\bP$-a.e. $\mu$) supported on $\Pn{n-1}{V}_\mu$. We can check furthermore that for $\bP$-a.e. $\mu$, $\bA_\mu$ is a transport plan between $\delta_{s_1(\mu)}$ and $\bGamma_{2,\mu}$, so that $\bA_\mu = \delta_{s_1(\mu)} \otimes \bGamma_{2,\mu}$. There is thus a set $A \subseteq \Pn{n-1}{X}$ of full $\bP$-measure such that $s_1(\mu) \in \Pndet{n-1}{V}_\mu$ and $\bA_\mu = \delta_{s_1(\mu)} \otimes \bGamma_{2,\mu}$ for every $\mu \in A$. Moreover, up to restricting $A$, we may also assume that for every $\mu \in A$, there is a set $A_\mu$ of full $\bA_\mu$-measure such that $\bA_{\gamma_1,\gamma_2}$ is concentrated on $\Gamma_\mu(\gamma_1,\gamma_2)$ for every $(\gamma_1,\gamma_2) \in A_\mu$. Fixing $\mu \in A$, since $\bA_\mu = \delta_{s_1(\mu)} \otimes \bGamma_{2,\mu}$, we may assume up to restricting $A_\mu$ that $A_\mu = \{s_1(\mu)\} \times B_\mu$ where $B_\mu$ is of full $\bGamma_{2,\mu}$-measure. But since $s_1(\mu) \in \Pndet{n-1}{V}_\mu$ is fully deterministic, by the induction hypothesis, for every $\gamma_2 \in B_\mu$, there is a unique element in $\Gamma_\mu(s_1(\mu),\gamma_2)$ which is $\alpha_{s_1(\mu),\gamma_2}$ (where $\eta_1,\eta_2 \mapsto \alpha_{\eta_1,\eta_2}$ is the measurable selection defined above), so that $\bA_{s_1(\mu),\gamma_2}$, which is concentrated on $\Gamma_\mu(s_1(\mu),\gamma_2)$, is equal to $\delta_{\alpha_{s_1(\mu),\gamma_2}}$. Therefore, for every measurable $f : \Pn{n-1}{V \times_X V} \mapsto \R_+$, we have
    \begin{align}
        \int f(\alpha) \dd\bA(\alpha) &= \iint f(\alpha) \dd\bA_{\gamma_1,\gamma_2}(\alpha) \dd\bA_\mu(\gamma_1,\gamma_2) \dd\bP(\mu) \\
        &= \int_A \int_{A_\mu} \int f(\alpha) \dd\bA_{\gamma_1,\gamma_2}(\alpha) \dd(\delta_{s_1(\mu)} \otimes \bGamma_{2,\mu})(\gamma_1,\gamma_2) \dd\bP(\mu) \\
        &= \int_A \int_{B_\mu} \int f(\alpha) \dd\bA_{s_1(\mu),\gamma_2} \dd\bGamma_{2,\mu}(\gamma_2) \dd\bP(\mu) \\
        &= \int_A \int_{B_\mu} f(\alpha_{s_1(\mu),\gamma_2}) \dd\bGamma_{2,\mu}(\gamma_2) \dd\bP(\mu) \\
        &= \int f(\alpha_{s_1([\pi]^{(n-1)}(\gamma)), \gamma}) \dd\bGamma_2(\gamma).
    \end{align}
    Therefore, we have $\bA = (\gamma \mapsto \alpha_{[\pi]^{(n-1)}(\gamma),\gamma})_\#\bGamma_2$, and in particular $\bA$ is uniquely defined by $s_1$ and $\bGamma_2$. This finishes the proof.
\end{proof}

\begin{example} \label{ex:morphism_induces_fully_det}
    For any morphism $f : X \mapsto V$ such that $\pi \circ f = \id_X$, for every $\mu \in \Pn{n}{X}$, $\gamma := [f]^{(n)}(\mu)$ is fully deterministic. Moreover, for any other $\eta \in \Pn{n}{V}_\mu$, the unique $\alpha \in \Gamma_\mu(\gamma,\eta)$ is given by $\alpha = [v \mapsto (f(\pi(v)), v)]^{(n)}(\eta)$.  This is straightforward to check by induction.
\end{example}

We now define an ``unrolling" operation on measures, which will prove useful when investigating the links between fully deterministic measures in $\Pndet{n}{V}$ and measurable functions $\Pn{n-1}{X} \times \ldots \times P(X) \times X \mapsto V$.

\begin{proposition}
    Let $X$ and $Y$ be two elements of $\cC$. Then there exists a morphism $\cU_{X,Y} : P(X \times P(Y)) \mapsto P(X \times P(Y) \times Y)$ in $\cC$, such that for every measure $\bP \in P(X \times P(Y))$, $\tilde{\bP} := \cU_{X,Y}(\bP)$ is the unique measure in $P(X \times P(Y) \times Y)$ such that for every Borel measurable $f : X \times P(Y) \times Y \mapsto \R_+$,
    \begin{equation} \label{eq:unrolled_prob_definition}
        \int f(x,\mu,y) \dd\tilde{\bP}(x,\mu,y) = \iint f(x,\mu,y) \dd\mu(y) \dd\bP(x,\mu).
    \end{equation}
    Moreover, the morphisms $\cU_{X,Y}$ are natural in $X,Y$, in the sense that for every pair of morphisms $f : X \mapsto X'$ and $g: Y \mapsto Y'$ in $\cC$, the equality of morphisms $[f,[g],g] \circ \cU_{X,Y} = \cU_{X',Y'} \circ [f,[g]]$ holds\footnote{In other words, the $\cU_{X,Y}$ define a natural transformation $\cU$ between the functors $F_1, F_2 : \cC \times \cC \mapsto \cC$ defined by $F_1(X,Y) := P(X \times P(Y))$ and $F_2(X,Y) := P(X \times P(Y) \times Y)$.}.
\end{proposition}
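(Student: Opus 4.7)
The plan is to define the morphism $\cU_{X,Y}$ as a composition. Consider the map $\psi : X \times P(Y) \to P(X \times P(Y) \times Y)$ given by $\psi(x,\mu) := \delta_x \otimes \delta_\mu \otimes \mu$, and set
\[
\cU_{X,Y}(\bP) := \cE_{X \times P(Y) \times Y}\bigl([\psi](\bP)\bigr).
\]
A direct Fubini-type computation against Borel test functions $f : X \times P(Y) \times Y \to \R_+$ confirms that $\tilde{\bP} := \cU_{X,Y}(\bP)$ satisfies \eqref{eq:unrolled_prob_definition}; conversely, applying \eqref{eq:unrolled_prob_definition} to $f \in C_b(X \times P(Y) \times Y)$ pins down $\tilde{\bP}$ uniquely as a probability measure.

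\textbf{Verifying $\psi$ is a morphism in $\cC$.} If $\psi$ is itself a morphism, then $\cU_{X,Y} = \cE_{X \times P(Y) \times Y} \circ [\psi]$ is also a morphism. For continuity in the weak topology, given $(x_n,\mu_n) \to (x,\mu)$ in $X \times P(Y)$ and $f \in C_b(X \times P(Y) \times Y)$, one must show that $\int f(x_n,\mu_n,y)\dd\mu_n(y) \to \int f(x,\mu,y)\dd\mu(y)$. The approach is standard: the set $\{\mu_n\} \cup \{\mu\}$ is tight by Prokhorov, so pick a compact $K \subseteq Y$ carrying all but $\veps$ of the mass of every $\mu_n$ and $\mu$; use uniform continuity of $f$ on the compact set $(\{(x_n,\mu_n)\}_n \cup \{(x,\mu)\}) \times K$ to replace $f(x_n,\mu_n,\cdot)$ by $f(x,\mu,\cdot)$ on $K$ up to error $O(\veps)$; control the remainder on $Y\setminus K$ by $O(\veps\|f\|_\infty)$; and conclude using the weak convergence $\mu_n \rightharpoonup \mu$ applied to the \emph{fixed} function $y \mapsto f(x,\mu,y)$.

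\textbf{The $\Pold$ case.} In the case $\cC = \Pold$, $P = \cP_2$, I would additionally verify linear growth and $\W_2$-continuity of $\psi$. Fixing base points $x_0 \in X$, $y_0 \in Y$, and using the product distance $\sqrt{d_X^2 + \W_2^2 + d_Y^2}$ on $X \times \cP_2(Y) \times Y$ together with the identity $\int d_Y^2(y_0,y)\dd\mu(y) = \W_2^2(\delta_{y_0},\mu)$, the formula \eqref{eq:unrolled_prob_definition} yields
\[
\W_2^2\bigl(\delta_{(x_0,\delta_{y_0},y_0)}, \tilde{\bP}\bigr) = \int\bigl(d_X^2(x_0,x) + 2\W_2^2(\delta_{y_0},\mu)\bigr)\dd\bP(x,\mu) \leq 2\,\W_2^2\bigl(\delta_{(x_0,\delta_{y_0})}, \bP\bigr),
\]
giving linear growth. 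For $\W_2$-continuity, $\bP_n \to \bP$ implies $\tilde{\bP}_n \rightharpoonup \tilde{\bP}$ by the preceding step; moreover the integrand $(x,\mu) \mapsto d_X^2(x_0,x) + 2\W_2^2(\delta_{y_0},\mu)$ is continuous with quadratic growth, so point \ref{enum:w2_convergence:4_quadratic_growth} of \Cref{th:convergence_in_w2_space} (applied to $\bP_n$) combined with the displayed identity gives convergence of second moments of $\tilde{\bP}_n$, and point \ref{enum:w2_convergence:1_2nd_moment_cvg} of the same theorem concludes the $\W_2$-convergence $\tilde{\bP}_n \to \tilde{\bP}$.

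\textbf{Naturality and main obstacle.} Naturality amounts to an unrolling of definitions: testing both $[f,[g],g]\bigl(\cU_{X,Y}(\bP)\bigr)$ and $\cU_{X',Y'}\bigl([f,[g]](\bP)\bigr)$ against $\varphi \in C_b(X' \times P(Y') \times Y')$, both sides reduce via \eqref{eq:unrolled_prob_definition} and the change of variables $\int h\dd[g](\mu) = \int h \circ g\dd\mu$ to the common value $\iint \varphi(f(x),[g](\mu),g(y))\dd\mu(y)\dd\bP(x,\mu)$, and since $C_b$ test functions determine the measures, the identity of morphisms follows. The principal technical obstacle in the whole proof is the continuity of $\psi$: because the integrand against $\mu_n$ depends on the varying base point $(x_n,\mu_n)$, one cannot invoke weak convergence directly, and the tightness-plus-uniform-continuity argument is the genuine input. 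Everything else — the Fubini identity, the linear growth bound, the second-moment formula, and the naturality — is a routine consequence of \eqref{eq:unrolled_prob_definition}.
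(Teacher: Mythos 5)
Your proof is correct, but the construction is organized differently from the paper's. The paper builds $\tilde{\bP}$ directly by observing that $(\mu)_{(x,\mu) \in X \times \cP(Y)}$ is a Borel family of measures (so the measure satisfying \eqref{eq:unrolled_prob_definition} exists and is unique), and then proves continuity of $\bP \mapsto \tilde{\bP}$ by hand, invoking the continuity of the partial-integration map $(x,\mu) \mapsto \int f(x,\mu,y)\dd\mu(y)$ established in \eqref{eq:partial_integral_weakly_continuous}. You instead factor $\cU_{X,Y} = \cE_{X \times P(Y) \times Y} \circ [\psi]$ with $\psi(x,\mu) = \delta_x \otimes \delta_\mu \otimes \mu$, so that once $\psi$ is shown to be a morphism, $\cU_{X,Y}$ is a morphism by functoriality of $P$ together with the fact that $\cE$ is a morphism (\Cref{prop:collapse_restricts_to_p2} in the $\Pold$ case). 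This is a clean structural shortcut, and it correctly isolates the genuine analytic content: the weak continuity of $\psi$, which you prove by exactly the tightness-plus-uniform-continuity argument that the paper uses for \eqref{eq:partial_integral_weakly_continuous} — so the key technical input is the same in both proofs. Two minor remarks. First, your $\Pold$ paragraph announces that you will verify linear growth and $\W_2$-continuity of $\psi$, but the displayed estimate and the second-moment argument that follow are about $\cU_{X,Y}(\bP)$ itself, not about $\psi$; either version works (for $\psi$ one would note $\W_2^2(\delta_{(x_0,\delta_{y_0},y_0)},\psi(x,\mu)) = d_X^2(x_0,x) + 2\W_2^2(\delta_{y_0},\mu)$ and conclude by points \ref{enum:w2_convergence:1_2nd_moment_cvg} and \ref{enum:w2_convergence:4_quadratic_growth} of \Cref{th:convergence_in_w2_space}), but as written the paragraph silently reverts to the paper's direct argument. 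Second, your factorization would also let naturality follow from the naturality of $\cE$ combined with the easily checked pointwise identity $[f,[g],g] \circ \psi_{X,Y} = \psi_{X',Y'} \circ (f,[g])$; you instead verify it by the same direct computation against test functions as the paper, which is equally valid.
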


\begin{proof}
    We first consider the case $\cC = \Pol$ and $P = \cP$. Let $X$ and $Y$ be two Polish spaces, and fix $\bP \in \cP(X \times \cP(Y))$. Then the family $(\mu)_{(x,\mu) \in X \times \cP(Y)}$ is clearly a Borel family of measures, so that there exists a unique measure $\tilde{\bP} \in \cP(X \times \cP(Y) \times Y)$ satisfying \eqref{eq:unrolled_prob_definition}, and $\cU_{X,Y}(\bP) := \tilde{\bP}$ is well-defined. We want to prove that $\cU_{X,Y}$ is continuous for the topology of weak convergence. Let $(\bP_n)_n$ be a sequence in $\cP(X \times \cP(Y))$ such that $\bP_n \rightharpoonup \bP$. Denote $\tilde{\bP}_n := \cU_{X,Y}(\bP_n)$ and $\tilde{\bP} := \cU_{X,Y}(\bP)$, we want to prove that $\tilde{\bP}_n \rightharpoonup \tilde{\bP}$. For this, set $\varphi \in C_b(X \times \cP(X) \times X)$ a continuous and bounded map. Then, we have
    \begin{align}
        \int f(x,\mu,y) \dd\tilde{\bP}_n(x,\mu,y) &= \iint f(x,\mu,y) \dd\mu(y) \dd\bP_n(x,\mu) \\
        &\xrightarrow[n \to +\infty]{} \iint f(x,\mu,y) \dd\mu(y) \dd\bP(x,\mu) \\
        &= \int f(x,\mu,y) \dd\tilde{\bP}(x,\mu,y)
    \end{align}
    where we obtained the second line by using the weak convergence of $\bP_n$ to $\bP$ in conjunction with the continuity of the map $(x,\mu) \mapsto \int f(x,\mu,y)\dd\mu(y)$ (which follows by a standard argument in measure theory, see \eqref{eq:partial_integral_weakly_continuous}). Therefore, $\tilde{\bP}_n \rightharpoonup \tilde{\bP}$, and $\cU_{X,Y}$ is continuous. We now prove the naturality of the $\cU_{X,Y}$: let $f : X \mapsto X'$ and $g : Y \mapsto Y'$ be two continuous maps in $\Pol$ and $\bP \in \cP(X \times \cP(Y))$, and set $\tilde{\bP} := \cU_{X,Y}(\bP)$, $\bP' := [f,[g]](\bP)$ and $\tilde{\bP}' := \cU_{X',Y'}(\bP')$, we want to prove that $\tilde{\bP}' = [f,[g],g](\tilde{\bP})$. We have for every Borel measurable $\varphi : X' \times \cP(Y') \times Y' \mapsto \R_+$ that
    \begin{align}
        \int \varphi \dd([f,[g],g](\tilde{\bP})) &= \int \varphi(f(x),[g](\mu),g(y)) \dd\tilde{\bP}(x,\mu,y) \\
        &= \iint \varphi(f(x),[g](\mu),g(y)) \dd\mu(y) \dd\bP(x,\mu) \\
        &= \iint \varphi(f(x),[g](\mu),y') \dd([g](\mu))(y) \dd\bP(x,\mu)
    \end{align}
    and likewise
    \begin{align}
        \int \varphi \dd\tilde{\bP}' &= \iint \varphi(x',\mu',y') \dd\mu'(y') \dd\bP'(x',\mu') \\
        &= \iint \varphi(x',\mu',y') \dd\mu'(y') \dd([f,[g]](\bP))(x',\mu') \\
        &= \iint \varphi(f(x),[g](\mu),y') \dd([g](\mu))(y') \dd\bP(x,\mu)
    \end{align}
    so these two integrals are equal, and we thus indeed have $\tilde{\bP}' = [f,[g],g](\tilde{\bP})$. This finishes proving the proposition in the case $\cC = \Pol$. \newline
    Now, we prove the case where $\cC = \Pold$ and $P = \cP_2$. Let $(X,d_X)$ and $(Y,d_Y)$ be two Polish spaces. All we need to do is to prove that the map $\cU_{X,Y} : \cP(X \times \cP(Y)) \mapsto \cP(X \times \cP(Y) \times Y)$ restricts to a map $\cP_2(X \times \cP_2(Y)) \mapsto \cP_2(X \times \cP_2(Y) \times Y)$ which is continuous with linear growth in the $\W_2$ topology. Fix $\bP \in \cP_2(X \times \cP_2(Y))$ and set $\tilde{\bP} := \cU_{X,Y}(\bP)$. Since $\bP$ is the marginal of $\tilde{\bP}$ on the first two variables, we have $\tilde{\bP} \in \cP(X \times \cP_2(Y) \times Y)$. Moreover, for every $(x_0,y_0) \in X \times Y$, we have
    \begin{align}
        \W_2^2(\delta_{(x_0,\delta_{y_0},y_0)}, \tilde{\bP}) &= \int d^2((x_0,\delta_{y_0},y_0),(x,\mu,y)) \dd\tilde{\bP}(x,\mu,y) \\
        &= \iint d^2((x_0,\delta_{y_0},y_0),(x,\mu,y)) \dd\mu(y) \dd\bP(x,\mu) \\ 
        &= \iint d^2(x_0,x) + \W_2^2(\delta_{y_0},\mu) + d^2(y_0,y)\dd\mu(y) \dd\bP(x,\mu) \\ 
        &= \int d^2(x_0,x) + 2\W_2^2(\delta_{y_0},\mu) \dd\bP(x,\mu) \label{eq:l_1258} \\ 
        &\leq 2\int d^2((x_0,\delta_{y_0}),(x,\mu))\dd\bP(x,\mu) = 2\W_2^2(\delta_{(x_0,\delta_{y_0})},\bP) < +\infty
    \end{align}
    so from this we see that $\bP$ is indeed in $\cP_2(X \times \cP_2(Y) \times Y)$ and that $\cU_{X,Y}$ restricted to $\cP_2(X \times \cP_2(Y))$ has linear growth (with respect to the $\W_2$ distance). All that is left is to check that it is $\W_2$-continuous. Let $(\bP_n)_n$ be a sequence in $\cP_2(X \times \cP_2(Y))$ such that $\bP_n \to \bP$ in the $\W_2$ topology. Denote $\tilde{\bP}_n := \cU_{X,Y}(\bP_n)$ and $\tilde{\bP} := \cU_{X,Y}(\bP)$, we want to prove that $\tilde{\bP}_n \to \tilde{\bP}$. Since we already now that $\bP_n \rightharpoonup \bP$, the $\W_2$-convergence of $\bP_n$ will follow from point \ref{enum:w2_convergence:1_2nd_moment_cvg} of \Cref{th:convergence_in_w2_space} and from the fact that for every $n$,
    \begin{align}
        \W_2^2(\delta_{(x_0,\delta_{y_0},y_0)}, \tilde{\bP}_n) &= \int d^2(x_0,x) + 2\W_2^2(\delta_{y_0},\mu) \dd\bP_n(x,\mu) \\
        &\xrightarrow[n \to +\infty]{} \int d^2(x_0,x) + 2\W_2^2(\delta_{y_0},\mu) \dd\bP(x,\mu) \\
        &= \W_2^2(\delta_{(x_0,\delta_{y_0},y_0)}, \tilde{\bP}),
    \end{align}
    where we used \eqref{eq:l_1258} in the first and the third line, and we used point \ref{enum:w2_convergence:4_quadratic_growth} of \Cref{th:convergence_in_w2_space} to obtain the second line. This finishes the proof.
\end{proof}

For every $Y$ in $\cC$ and every $n > k \geq 0$, we define a map $\cU_Y^{(n,k)} : \Pn{n}{Y} \mapsto P(\Pn{n-1}{Y} \times \ldots \times \Pn{k}{Y})$ by
\begin{equation}
    \cU_Y^{(n,k)} := \cU_{\Pn{n-1}{Y} \times \ldots \times \Pn{k+2}{Y}, \Pn{k}{Y}} \circ \ldots \circ \cU_{\Pn{n-1}{Y}, \Pn{n-3}{Y}} \circ \cU_{\{*\},\Pn{n-2}{Y}}.
\end{equation}
It associates to every $\bP \in \Pn{n}{Y}$ the only probability measure $\tilde{\bP} := \cU_Y^{(n,k)}(\bP)$ such that for every measurable $f : \Pn{n-1}{X} \times \ldots \times \Pn{k}{Y} \mapsto \R_+$,
\begin{equation}
    \int f \dd\tilde{\bP} = \iint \ldots \iint f(\mu^{n-1},\ldots,\mu^k) \dd\mu^{k+1}(\mu^k) \dd\mu^{k+2}(\mu^{k+1}) \ldots \dd\mu^{n-1}(\mu^{n-2}) \dd\bP(\mu^{n-1}).
\end{equation}
In particular, notice that for every $\bP \in \Pn{n}{Y}$, for every $n > l > k \geq 0$, the marginal on the first $n-l$ variables of $\cU_Y^{(n,k)}(\bP)$ is
\begin{equation} \label{eq:marginal_of_unrolled}
    \pi_{1,\ldots,n-l\#}\cU_Y^{(n,k)}(\bP) = \cU_Y^{(n,l)}(\bP) \in P(\Pn{n-1}{Y} \times \ldots \times \Pn{l}{Y}).
\end{equation}
Moreover, the disintegration of $\cU_Y^{(n,k)}(\bP)$ with respect to the first $n-l$ variables is given for $\cU_Y^{(n,l)}(\bP)$-a.e. $\mu^{n-1},\ldots,\mu^l$ by
\begin{equation} \label{eq:disintegration_of_unrolled}
    \cK((\mu^{n-1},\ldots,\mu^l), \cU_Y^{(n,k)}(\bP)) = \cU_Y^{(l,k)}(\mu^l).
\end{equation}
where $\cK$ is the disintegration map defined in \Cref{th:universal_disintegration} (we omit the subscript $\cK_{X,Y}$ for the sake of brevity). We will also denote $\cU_Y^{(n)} := \cU_Y^{(n,0)}$.
\medbreak
In the following, for any $n \geq 1$ and $\bP \in \Pn{n}{X}$, we will denote by $\cM_{\pi,\bP}$ the set of Borel measurable functions $f : \Pn{n-1}{X} \times \ldots \times P(X) \times X \mapsto V$ such that $\pi(f(\mu^{n-1},\ldots,\mu,x)) = x$ for $\cU_X^{(n)}(\bP)$-almost every $(\mu^{n-1},\ldots,\mu,x)$. In particular, for $n = 1$, for every $\mu \in P(X)$, $\cM_{\pi,\mu}$ is simply the set of $\mu$-sections of $\pi$.

\begin{proposition} \label{prop:maps_induce_fully_det}
    For every $n \geq 1$ and $\bP \in \Pn{n}{X}$, we can construct a map $j_\bP : \cM_{\pi,\bP} \mapsto \cPzndet{n}{V}_\bP$\footnote{Note that $j_n$ is valued in $\cPzn{n}{V}$ and not in $\Pn{n}{V}$ !} by induction on $n$, in the following way:
    \begin{itemize}
        \item For $n = 1$ and $\mu \in P(X)$, we set $j_\mu(f) := f_\#\mu$ for every $f \in \cM_{\pi,\mu}$.
        \item For $n \geq 2$ and $\bP \in \Pn{n}{X}$, we set $j_\bP(f) := (\mu \mapsto j_{\mu}(f(\mu,\cdot)))_\#\bP$ for every $f \in \cM_{\pi,\bP}$ (where $f(\mu,\cdot)$ is the partial evaluation of $f$ at $\mu$).
    \end{itemize}
\end{proposition}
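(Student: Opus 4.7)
The plan is to prove by induction on $n$ that the recursive definition yields a well-defined map $j_\bP : \cM_{\pi,\bP} \to \cPzndet{n}{V}_\bP$, verifying three facts at each level: that the partial evaluation $f(\mu,\cdot)$ belongs to $\cM_{\pi,\mu}$ for $\bP$-a.e.\ $\mu$ so that the previous case applies, that the resulting map $g_f : \mu \mapsto j_\mu(f(\mu,\cdot))$ is Borel measurable (after extension by a default value off the full-measure set) so that the pushforward $(g_f)_\#\bP$ is defined, and that this pushforward is fully deterministic with base $\bP$. The base case $n = 1$ is immediate: if $f \in \cM_{\pi,\mu}$ is a $\mu$-section of $\pi$, then $[\pi](f_\#\mu) = (\pi \circ f)_\#\mu = \mu$ because $\pi \circ f = \id_X$ holds $\mu$-a.e., and $f_\#\mu$ is in $\cPzndet{1}{V}_\mu$ by the very definition of $\cPzndet{1}{V}$.

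For the inductive step, the crucial tool that transfers the a.e.\ identity defining $\cM_{\pi,\bP}$ down one level is the disintegration formula \eqref{eq:disintegration_of_unrolled}: applied with $l = n-1$, $k = 0$, it asserts that for $\bP$-a.e.\ $\mu$, the conditional of $\cU_X^{(n)}(\bP)$ given the first coordinate $\mu^{n-1} = \mu$ is exactly $\cU_X^{(n-1)}(\mu)$. Combined with a Fubini-type slicing of the $\cU_X^{(n)}(\bP)$-a.e.\ identity $\pi \circ f = \id$ that defines membership in $\cM_{\pi,\bP}$, this gives $f(\mu,\cdot) \in \cM_{\pi,\mu}$ for $\bP$-a.e.\ $\mu$, so the inductive hypothesis produces $j_\mu(f(\mu,\cdot)) \in \cPzndet{n-1}{V}_\mu$. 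Granted measurability of $g_f$, the final verification that $(g_f)_\#\bP \in \cPzndet{n}{V}_\bP$ is then a direct unwinding of \Cref{def:fully_det_measure}: $g_f$ is a $\bP$-section of $[\pi]^{(n-1)}$ with values $\bP$-a.e.\ in $\cPzndet{n-1}{V}$, so $(g_f)_\#\bP$ is fully deterministic, and $[\pi]^{(n)}(j_\bP(f)) = ([\pi]^{(n-1)} \circ g_f)_\#\bP = \bP$.

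The main obstacle I expect is establishing the Borel measurability of $g_f$, which is the only nontrivial ingredient in the construction. My plan is a parallel induction: testing against a continuous bounded $\varphi$ on the appropriate Polish space, the base case ($n = 2$) reduces to $\int \varphi \, dg_f(\mu) = \int \varphi \circ f(\mu,\cdot) \, d\mu$, which is Borel in $\mu$ since the integrand is jointly Borel and bounded and integration against the identity kernel preserves measurability. The higher-$n$ step iterates this by unrolling $g_f(\mu)$ once more using the measurability of the universal disintegration kernel from \Cref{th:universal_disintegration}, ensuring that the assignment $\mu \mapsto j_\mu(f(\mu,\cdot))$ remains Borel even as the recursion unfolds over several layers of pushforwards. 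Once this technical point is handled, the rest of the proof is essentially bookkeeping on the definitions.
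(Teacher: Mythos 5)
Your proposal follows essentially the same route as the paper: induction on $n$, using the marginal and disintegration identities for $\cU_X^{(n,k)}$ to show the partial evaluations lie in the classes $\cM_{\pi,\mu^k}$, and establishing Borel measurability of $\mu \mapsto j_\mu(f(\mu,\cdot))$ by testing against bounded continuous $\varphi$ and iterating the fact that integrating a jointly Borel bounded kernel yields a Borel function. The only point where the paper is more careful than your sketch is that this measurability induction must be run over the \emph{joint} maps $j_{n,k}(\mu^{n-1},\ldots,\mu^k) := j_{\mu^k}(f(\mu^{n-1},\ldots,\mu^k,\cdot))$ of all prefix variables at once (not slice-wise in the first variable), and the step ``integration preserves measurability'' for merely Borel, non-continuous integrands is justified there via the functional monotone class theorem rather than by continuity of \eqref{eq:partial_integral_weakly_continuous} alone.
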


\begin{proof}
    We must prove that the $j_\mu$ are well-defined. We show this by induction on $n$. For $n = 1$, if $\mu \in P(X)$ and $f \in \cM_{\pi,\mu}$ is a $\mu$-section of $\pi$, then clearly $j_\mu(f) := f_\#\mu$ is well-defined, and is an element of $\Pndet{1}{V}_\mu = P_{\det}(V)_\mu$ by definition of deterministic measures. \newline
    Now, let $n > 1$, and assume that the proposition holds for $1,\ldots,n-1$. Fix $f : \Pn{n-1}{X} \times \ldots P(X) \times X \mapsto V$ an element of $\cM_{\pi,\bP}$. Let $0 \leq k \leq n-1$, then, by \eqref{eq:marginal_of_unrolled} and \eqref{eq:disintegration_of_unrolled}, the partial evaluation $f(\mu^{n-1},\ldots,\mu^k,\cdot)$ is an element of $\cM_{\pi,\mu^k}$ for $\cU_X^{(n,k)}(\bP)$-almost every $(\mu^{n-1},\ldots,\mu^k)$, so that $j_{\mu^k}(f(\mu^{n-1},\ldots,\mu^k,\cdot))$ is well-defined by the induction hypothesis (for $k = 0$, we set $j_x(v) = v$ for every $x \in X$, $v \in V$). In particular, the function 
    \begin{equation}
        j_{n,k} : \left\{\begin{array}{ccc}
             \Pn{n-1}{X} \times \ldots \Pn{k}{X} & \rightarrow & \cPzn{k}{V} \\
             (\mu^{n-1}, \ldots, \mu^k) & \rightarrow & j_{\mu^k}(f(\mu^{n-1},\ldots,\mu^k,\cdot)) 
        \end{array}\right.
    \end{equation}
    is well-defined $\cU_X^{(n,k)}(\bP)$-almost everywhere. We want to prove that it is Borel measurable. We again demonstrate this by induction on $k$. For $k = 0$ this is trivial since $j_{n,0}$ is just the evaluation map $j_{n,0}(\mu^{n-1},\ldots,\mu^1,x) = f(x)$. Let $k > 0$, and assume that $j_{n,k-1}$ is measurable. To prove the measurability of $j_{n,k}$, we only need to prove that for every $\varphi \in C_b(\cPzn{k-1}{V})$ continuous and bounded, the function
    \begin{equation} \label{eq:l_1300}
        (\mu^{n-1},\ldots,\mu^k) \in \Pn{n-1}{X} \times \ldots \times \Pn{k}{X} \mapsto \int \varphi \dd j_{n,k}(\mu^{n-1},\ldots,\mu^k)
    \end{equation}
    is measurable (see the discussion in \Cref{sec:appendix:disintegration}). Let $\cB$ be the class of Borel measurable functions $z : \Pn{n-1}{X} \times \ldots \Pn{k-1}{X} \mapsto \R$ for which the map 
    \begin{equation}
        (\mu^n,\ldots,\mu^k) \in \Pn{n-1}{X} \times \ldots \times \Pn{k}{X} \mapsto \int z(\mu^n,\ldots,\mu^k,\mu^{k-1}) \dd\mu^k(\mu^{k-1})
    \end{equation}
    is Borel measurable. Then, by \eqref{eq:partial_integral_weakly_continuous}, $\cB$ contains the continuous and bounded functions. It is not difficult to check that $\cB$ is also closed under uniform and monotone limits, so that, by the functional monotone class theorem \citep[Theorem 2.12.9]{bogachev2007measure}, $\cB$ contains all the bounded Borel functions. In particular, taking $z = \varphi \circ j_{n,k-1}$, which is bounded (as $\varphi$ is bounded) and Borel (as $j_{n,k-1}$ is Borel by the induction hypothesis), and using the fact that for $\cU_X^{(n,k)}(\bP)$-almost every $\mu^n,\ldots,\mu^k \in \Pn{n}{X} \times \ldots \times \Pn{k}{X}$ we have
    \begin{align}
        \int \varphi \dd j_{n,k}(\mu^n,\ldots,\mu^k) &= \int \varphi \dd j_{\mu^k}(f(\mu^n,\ldots,\mu^k,\cdot)) \\
        &= \int \varphi(j_{\mu^{k-1}}(f(\mu^n,\ldots,\mu^k,\mu^{k-1},\cdot))) \dd\mu^k(\mu^{k-1}) \\
        &= \int \varphi(j_{n,k-1}(\mu^n,\ldots,\mu^{k-1})) \dd\mu^k(\mu^{k-1}),
    \end{align}
    we deduce that the map defined in \eqref{eq:l_1300} is Borel measurable. This finishes the induction on $k$. \newline
    Therefore, the map $s : \cPzn{n-1}{X} \mapsto \cPzn{n-1}{V}$ defined by $s(\mu) := j_{n,n-1}(\mu) = j_\mu(f(\mu,\cdot))$ is well-defined $\bP$-almost everywhere (as $\cU_X^{(n,n-1)}(\bP) = \bP$) and Borel measurable. Moreover, by the induction hypothesis on the $j_\mu$ for $\mu \in \Pn{n-1}{X}$, $s$ is a $\bP$-section of $[\pi]^{(n-1)}$ valued $\bP$-a.e. in $\cPzndet{n-1}{V}$. Thus, the measure $j_\bP(f) := s_\#\bP$ is well-defined, and is an element of $\cPzndet{n}{V}_\bP$ by definition of fully deterministic measures. This finishes the proof.
\end{proof}

We will now prove that in the case the maps $j_\mu$ are surjective:

\begin{proposition} \label{prop:fully_det_induced_by_map}
    Let $n \geq 1$, $\gamma \in \Pndet{n}{V}$, and $\mu := [\pi]^{(n)}(\gamma)$. Define the probability measure $\tilde{\gamma} \in P(\Pn{n-1}{X} \times \ldots \times P(X) \times X \times V)$ by $\tilde{\gamma} := [[\pi]^{(n-1)},\ldots,[\pi],\pi,\id_V] \circ \cU_V^{(n)}(\gamma)$. Then, there exists a map $f \in \cM_{\pi,\mu}$ such that $\tilde{\gamma} = (\id,f)_\#\cU_X^{(n)}(\mu)$. Moreover, this $f$ satisfies $j_\mu(f) = \gamma$.
\end{proposition}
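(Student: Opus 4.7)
I proceed by induction on $n \geq 1$.

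For the base case $n = 1$, since $\gamma \in \Pndet{1}{V} = P_{\det,\pi}(V)$, there is a $\mu$-section $s : X \to V$ of $\pi$ with $\gamma = s_\#\mu$. The map $\cU_V^{(1)}$ is the identity on $P(V)$, so a direct computation yields $\tilde{\gamma} = (\pi, \id_V)_\#(s_\#\mu) = (\pi \circ s, s)_\#\mu = (\id_X, s)_\#\mu$, the last equality because $\pi \circ s = \id_X$ $\mu$-a.e. Taking $f := s \in \cM_{\pi,\mu}$ gives the claimed identity, and $j_\mu(f) = s_\#\mu = \gamma$.

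For the inductive step ($n \geq 2$), write $\gamma = s_\#\mu$ with $s : \Pn{n-1}{X} \to \Pn{n-1}{V}$ a $\mu$-section of $[\pi]^{(n-1)}$ valued $\mu$-a.e.\ in $\Pndet{n-1}{V}$. Using the disintegration relation \eqref{eq:disintegration_of_unrolled} together with $\gamma = s_\#\mu$ yields
\begin{equation*}
    \cU_V^{(n)}(\gamma) = \int \delta_{s(\mu^{n-1})} \otimes \cU_V^{(n-1)}(s(\mu^{n-1})) \, d\mu(\mu^{n-1}),
\end{equation*}
and pushing forward by $[[\pi]^{(n-1)},\ldots,\pi,\id_V]$, together with $[\pi]^{(n-1)}(s(\mu^{n-1})) = \mu^{n-1}$, gives
\begin{equation*}
    \tilde{\gamma} = \int \delta_{\mu^{n-1}} \otimes \tilde{\gamma}_{\mu^{n-1}} \, d\mu(\mu^{n-1}), \qquad \tilde{\gamma}_{\mu^{n-1}} := [[\pi]^{(n-2)},\ldots,\pi,\id_V] \circ \cU_V^{(n-1)}(s(\mu^{n-1})).
\end{equation*}
For $\mu$-a.e.\ $\mu^{n-1}$, $s(\mu^{n-1}) \in \Pndet{n-1}{V}$ has base $\mu^{n-1}$, so the induction hypothesis furnishes $f_{\mu^{n-1}} \in \cM_{\pi,\mu^{n-1}}$ with $\tilde{\gamma}_{\mu^{n-1}} = (\id, f_{\mu^{n-1}})_\#\cU_X^{(n-1)}(\mu^{n-1})$ and $j_{\mu^{n-1}}(f_{\mu^{n-1}}) = s(\mu^{n-1})$.

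The main obstacle is now to turn the pointwise-defined $f_{\mu^{n-1}}$ into a globally Borel measurable map $f$, since the induction provides no joint measurability in $\mu^{n-1}$. To do this, I disintegrate $\tilde{\gamma}$ with respect to its first $n$ coordinates: running the same computation as above at the $X$ level (equivalently, applying naturality of $\cU$ to $[\pi]^{(n)}(\gamma) = \mu$), one sees that the marginal of $\tilde{\gamma}$ on these coordinates is $\cU_X^{(n)}(\mu)$, so one obtains a Borel family $(\eta_{\bar{\mu}})_{\bar{\mu}}$ in $P(V)$ with $\tilde{\gamma} = \int \delta_{\bar{\mu}} \otimes \eta_{\bar{\mu}} \, d\cU_X^{(n)}(\mu)(\bar{\mu})$. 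The tower rule for disintegrations, applied successively to the coordinate $\mu^{n-1}$ and then to the remaining coordinates inside each slice $\tilde{\gamma}_{\mu^{n-1}} = (\id, f_{\mu^{n-1}})_\#\cU_X^{(n-1)}(\mu^{n-1})$, forces $\eta_{(\mu^{n-1},\ldots,x)} = \delta_{f_{\mu^{n-1}}(\mu^{n-2},\ldots,x)}$ for $\cU_X^{(n)}(\mu)$-a.e.\ $\bar{\mu}$; in particular $\eta_{\bar{\mu}}$ is almost surely a Dirac mass. Since the Dirac embedding $\delta : V \to P(V)$ is a Borel isomorphism onto a Borel subset of $P(V)$, composing its inverse with the Borel map $\bar{\mu} \mapsto \eta_{\bar{\mu}}$ (and extending arbitrarily off the resulting Borel set) produces a Borel measurable $f : \Pn{n-1}{X} \times \ldots \times X \to V$ satisfying $\eta_{\bar{\mu}} = \delta_{f(\bar{\mu})}$ a.e., hence $\tilde{\gamma} = (\id, f)_\#\cU_X^{(n)}(\mu)$.

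It remains to check the two final conclusions. Since $\tilde{\gamma}$ is concentrated on $\{(\bar{\mu}, v) : \pi(v) = x\}$ by the very definition of the pushforward map $[[\pi]^{(n-1)},\ldots,\pi,\id_V]$, we obtain $\pi(f(\bar{\mu})) = x$ a.e., so $f \in \cM_{\pi,\mu}$. Uniqueness of disintegration (applied to $\tilde{\gamma}_{\mu^{n-1}}$ in its two representations $(\id, f(\mu^{n-1},\cdot))_\#\cU_X^{(n-1)}(\mu^{n-1})$ and $(\id, f_{\mu^{n-1}})_\#\cU_X^{(n-1)}(\mu^{n-1})$) gives $f(\mu^{n-1},\cdot) = f_{\mu^{n-1}}$ $\cU_X^{(n-1)}(\mu^{n-1})$-a.e.\ for $\mu$-a.e.\ $\mu^{n-1}$, so that $j_{\mu^{n-1}}(f(\mu^{n-1},\cdot)) = j_{\mu^{n-1}}(f_{\mu^{n-1}}) = s(\mu^{n-1})$ by the induction hypothesis, yielding $j_\mu(f) = s_\#\mu = \gamma$.
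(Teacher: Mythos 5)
Your proof is correct and follows essentially the same route as the paper: induction on $n$, writing $\gamma = s_\#\mu$, identifying the disintegration of $\tilde{\gamma}$ over the first coordinate with the unrolled slices $\tilde{\gamma}_{\mu^{n-1}}$, applying the induction hypothesis slice-wise, and then recovering a jointly measurable $f$ from the almost-everywhere-Dirac disintegration. The only cosmetic difference is that you re-derive inline (via the Borel isomorphism of the Dirac embedding onto its closed image) what the paper delegates to \Cref{rk:disintegration_given_by_map_of_two_variables}.
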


\begin{proof}
    We prove this by induction. In the case $n = 1$, if $\gamma \in \Pndet{1}{V}$ with $\mu = \pi_\#\gamma$, then there exists a $\mu$-section $f : X \mapsto V$ of $\pi$ such that $\gamma = f_\#\mu$. We then have $f \in \cM_{\pi,\mu}$ (by definition), and $j_\mu(f) = f_\#\mu = \gamma$. This shows the proposition in the case $n = 1$. \newline
    Now, let $n > 1$, and assume that the proposition holds for $n-1$. Let $\bGamma \in \Pndet{n}{V}$, and $\bP := [\pi]^{(n)}(\bGamma)$. Let $\tilde{\bGamma} := [[\pi]^{(n-1)},\ldots,[\pi],\pi,\id_V](\cU_V^{(n)}(\bGamma))$. Notice that by naturality of $\cU$, we have
    \begin{equation}
        [[\pi]^{(n-1)},\ldots,[\pi],\pi](\cU_V^{(n)}(\bGamma)) = \cU_X^{(n)}([\pi]^{(n)}(\bGamma)) = \cU_X^{(n)}(\bP).
    \end{equation}
    In other words the marginal of $\bGamma$ with respect to the first $n$ variables is $\cU_X^{(n)}(\bP)$. Let $s : \Pn{n-1}{X} \mapsto \Pn{n-1}{V}$ be a $\bP$-section of $[\pi]^{(n-1)}$, valued $\bP$-a.e. in $\Pndet{n-1}{V}$, and such that $\bGamma = s_\#\bP$. Then, for every Borel measurable map $f : \Pn{n-1}{X} \times \ldots \times P(X) \times X \times V \mapsto \R_+$, we have
    \begin{align}
        & \int f(\mu^{n-1},\ldots,\mu,x,v) \dd\tilde{\bGamma}(\mu^{n-1},\ldots,\mu,x,v) \\
        = &\int f([\pi]^{(n-1)}(\gamma^{n-1}),\ldots,[\pi](\gamma),\pi(v),v) \dd\cU_V^{(n)}(\bGamma)(\gamma^{n-1},\ldots,\gamma,v) \\
        = & \iint \ldots \iint f([\pi]^{n-1}(\gamma^{n-1}),\ldots,[\pi](\gamma),\pi(v),v) \dd\gamma(v)\dd\gamma^2(\gamma) \ldots \dd\gamma^{n-1}(\gamma^{n-2}) \dd\bGamma(\gamma^{n-1}) \\
        = & \iint \ldots \iint f(\mu^{n-1},\ldots,[\pi](\gamma),\pi(v),v) \dd\gamma(v)\dd\gamma^2(\gamma) \ldots \dd(s(\mu^{n-1}))(\gamma^{n-2}) \dd\bP(\mu^{n-1}) \\
        = & \iint f(\mu^{n-1},\ldots,\mu,x,v) \dd \tilde{s}(\mu^{n-1})(\mu^{n-2},\ldots,\mu,x,v) \dd \bP(\mu^{n-1})
    \end{align}
    where we note $\tilde{s}(\mu) := [[\pi]^{(n-2)},\ldots,[\pi],\pi,\id_V](\cU_V^{(n-1)}(s(\mu)))$ for every $\mu \in \Pn{n-1}{X}$. Thus, we see that the marginal of $\tilde{\bGamma}$ with respect to the first variable is $\bP$, and that
    \begin{equation}
        \cK(\mu^{n-1}, \tilde{\bGamma}) = \tilde{s}(\mu^{n-1}), \quad \bP-\hbox{a.e.} \mu^{n-1}.
    \end{equation}
    Moreover, since $s(\mu^{n-1}) \in \Pndet{n-1}{V}_\mu$ for $\bP$-almost every $\mu^{n-1}$, there exists by the induction hypothesis $f_{\mu^{n-1}} \in \cM_{\pi,\mu^{n-1}}$ such that $\tilde{s}(\mu^{n-1}) = (\id,f_{\mu^{n-1}})_\#\cU_X^{(n-1)}(\mu^{n-1})$ and $j_{\mu^{n-1}}(f_{\mu^{n-1}}) = s(\mu^{n-1})$. Thus, by \Cref{rk:disintegration_given_by_map_of_two_variables}, there exists a mesurable $f : \Pn{n-1}{X} \times \ldots \times P(X) \times X \mapsto V$ such that $(\id,f)_\#\cU_X^{(n)}(\bP) = \tilde{\bGamma}$. Furthermore, for $\bP$-a.e. $\mu^{n-1}$, $f(\mu^{n-1},\cdot)$ coincides $\cU_X^{(n-1)}(\mu^{n-1})$-a.e. with $f_{\mu^{n-1}}$, and from this we also conclude that $f \in \cM_{\pi,\bP}$. All that is left to do is to prove $j_\bP(f) = \bGamma$. This is the case since
    \begin{equation}
        j_\bP(f) = (\mu^{n-1} \mapsto j_{\mu^{n-1}}(f(\mu^{n-1},\cdot)))_\#\bP = (\mu^{n-1} \mapsto j_{\mu^{n-1}}(f_{\mu^{n-1}}))_\#\bP = s_\#\bP = \bGamma.
    \end{equation}
    This finishes the proof.
\end{proof}

\subsection{The case of velocity plans}

Now, we will apply our theory of fully deterministic measures to the case where $\cC = \Pold$ and where $\pi$ is the projection map $T\cM \mapsto \cM$. We will call the elements of $\cPn{n}{T\cM}$ the \emph{fully deterministic velocity plans}.

\begin{proposition} \label{prop:coupling_with_fully_det_is_unique}
    Let $n \geq 0$, $\mu \in \cPn{n}{\cM}$ and $\gamma_1, \gamma_2 \in \cPn{n}{T\cM}_\mu$. If $\gamma_1$ is fully deterministic, then $\Gamma_\mu(\gamma_1,\gamma_2)$ is a singleton.
\end{proposition}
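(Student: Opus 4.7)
The plan is to deduce this proposition as a direct corollary of the more general \Cref{prop:coupling_with_fully_det_is_unique_general}, applied in the specific setting $\cC = \Pold$, $P = \cP_2$, and with $\pi$ taken to be the footpoint projection $\pi : T\cM \mapsto \cM$. All that is needed is to verify that this particular $\pi$ satisfies the standing assumptions of the general framework, namely that $\pi$ is a morphism of $\Pold$ which is surjective and admits linearly growing antecedents.

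First I would note that $\pi : T\cM \to \cM$ is smooth (hence continuous) and Lipschitz with constant $1$ for the Sasaki metric on $T\cM$, so it is a morphism in $\Pold$. Surjectivity is immediate: for every $x \in \cM$, the zero vector $(x,0) \in T_x\cM$ satisfies $\pi(x,0) = x$. For the linearly growing antecedents property, fix a base point $x_0 \in \cM$ and set $v_0 := (x_0,0) \in T\cM$. For any $x \in \cM$ the antecedent $v := (x,0)$ satisfies $\pi(v) = x$ and, since the zero section $\iota_0 : \cM \hookrightarrow T\cM$ is an isometry for the Sasaki metric (cf.\ \Cref{sec:appendix:sasaki_metric}), one has
\begin{equation}
    d_{T\cM}(v_0,v) \;=\; d_{T\cM}((x_0,0),(x,0)) \;=\; d(x_0,x) \;\leq\; 1 \cdot (1 + d(x_0,x)),
\end{equation}
which establishes the linearly growing antecedents property with constant $C = 1$.

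With these two facts verified, the hypotheses of \Cref{prop:coupling_with_fully_det_is_unique_general} are met, and moreover our notations $\cPn{n}{T\cM}_\mu$, $\Gamma_\mu(\gamma_1,\gamma_2)$, and $\cPndet{n}{T\cM}$ from the previous section coincide exactly with the general notations $\Pn{n}{V}_\mu$, $\Gamma_\mu(\gamma_1,\gamma_2)$, $\Pndet{n}{V}$ instantiated for this choice of $\pi$. The conclusion that $\Gamma_\mu(\gamma_1,\gamma_2)$ is a singleton whenever $\gamma_1$ is fully deterministic then follows directly. There is no substantive obstacle here; the only minor subtlety is ensuring the consistency of the definitions, which is immediate from the instantiation.
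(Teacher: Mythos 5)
Your proof is correct and follows exactly the same route as the paper, which likewise derives the statement as an instance of \Cref{prop:coupling_with_fully_det_is_unique_general} applied to $\pi : T\cM \mapsto \cM$ in $\Pold$. Your additional verification that $\pi$ is a surjective $1$-Lipschitz morphism with linearly growing antecedents (via the zero section) is accurate and merely makes explicit what the paper leaves implicit.
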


\begin{proof}
    This is simply a consequence of \Cref{prop:coupling_with_fully_det_is_unique_general} applied to $\pi : T\cM \mapsto \cM$ in $\Pold$.
\end{proof}

\begin{remark} \label{rk:unique_coupling_expression}
    Let $n > 0$, $\mu \in \cPn{n}{\cM}$ and $\gamma_1, \gamma_2 \in \cPn{n}{T\cM}_\bP$, with $\gamma_1$ fully deterministic. Let $s_1 : \cPn{n-1}{\cM} \mapsto \cPndet{n-1}{T\cM}$ be a measurable section of $[\pi]^{(n-1)}$ such that $\gamma_1 = s_{1\#}\mu$. Then the proof of \Cref{prop:coupling_with_fully_det_is_unique_general} gives an expression of the unique coupling $\alpha \in \Gamma_\mu(\gamma_1,\gamma_2)$: it is given by $\alpha = s(s_1 \circ [\pi]^{(n-1)}, \id)_\#\gamma_2$, where $s$ is the measurable coupling selection map constructed in \Cref{cor:measurable_selection_couplings}.
\end{remark}

\begin{example}
    For any continuous section $f : \cM \mapsto T\cM$ with linear growth and any $\mu \in \cPn{n}{\cM}$, the velocity plan $\gamma := [f]^{(n)}(\mu)$ is fully deterministic, and for any other $\eta \in \cPn{n}{T\cM}_\mu$, the unique coupling $\alpha \in \Gamma_\mu(\gamma,\eta)$ is given by $\alpha := [(x,v) \mapsto (x,f(x),v)]^{(n)}(\eta)$ (this is a particular case of \Cref{ex:morphism_induces_fully_det}). In particular the velocity plan $\bm{0}_\mu$ is fully deterministic.
\end{example}

Fully deterministic plans have the particularity that the distance $\W_\mu$, while being only lower semicontinuous in general for the $\W_2$ topology, is continuous at pairs of plans $\gamma_1,\gamma_2$ where at least one of the plans is fully deterministic.

\begin{proposition} \label{prop:w_mu_is_lsc}
    Denote $X = \cPn{n}{T\cM} \times_{\cPn{n}{\cM}} \cPn{n}{T\cM}$. The map $\W : X \mapsto \R_+$ which associates to every $(\gamma_1,\gamma_2) \in X$ the value $\W(\gamma_1,\gamma_2) := \W_\mu(\gamma_1,\gamma_2)$, where $\mu = [\pi]^{(n)}(\gamma_1) = [\pi]^{(n)}(\gamma_2)$, is lower semicontinuous (with respect to the $\W_2$ topology). Moreover, if $(\gamma_1,\gamma_2) \in X$ is such that either $\gamma_1$ or $\gamma_2$ is fully deterministic, then $\W$ is continuous at $(\gamma_1,\gamma_2)$.
\end{proposition}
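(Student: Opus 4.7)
The plan is to reduce both claims to $\W_2$-compactness of optimal couplings combined with continuity of the integrated cost functional. First I would observe that the pointwise cost $c(x,v_1,v_2) := \|v_1 - v_2\|_x^2$ is continuous on $T^2\cM$ with quadratic growth: picking any basepoint $o \in \cM$, the triangle inequality and the relation $\|v\|_x^2 = d^2((x,v),(x,0))$ yield
\begin{equation}
    \|v_1 - v_2\|_x^2 \leq 2\|v_1\|_x^2 + 2\|v_2\|_x^2 \leq C\bigl(1 + d_{T^2\cM}^2((o,0,0),(x,v_1,v_2))\bigr).
\end{equation}
Hence point \ref{enum:n_expect_reg:3} of \Cref{lemma:n_expectancy_regularity} implies that the functional
$F : \alpha \in \cPn{n}{T^2\cM} \mapsto \bE^{(n)}_\alpha[\|v_1 - v_2\|_x^2]$
is $\W_2$-continuous, and by \Cref{prop:dist_and_inner_prod_are_min_and_max} one has $\W_\mu^2(\gamma_1,\gamma_2) = \min_{\alpha \in \Gamma_\mu(\gamma_1,\gamma_2)} F(\alpha)$.

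For the lower semicontinuity, I would take a sequence $(\gamma_1^k,\gamma_2^k) \to (\gamma_1,\gamma_2)$ in $X$, extract a subsequence realizing $\liminf_k \W^2(\gamma_1^k,\gamma_2^k)$, and pick optimal couplings $\alpha^k \in \Gamma_{\mu^k,o}(\gamma_1^k,\gamma_2^k)$ for each $k$ (with $\mu^k = [\pi]^{(n)}(\gamma_1^k)$). Since $\gamma_i^k \to \gamma_i$ in $\cPn{n}{T\cM}$ for $i=1,2$, \Cref{cor:limit_points_of_seq_of_couplings} guarantees that $(\alpha^k)_k$ is relatively compact in $\W_2$ and that every limit point lies in $\Gamma_\mu(\gamma_1,\gamma_2)$. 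After one more extraction, $\alpha^k \to \alpha \in \Gamma_\mu(\gamma_1,\gamma_2)$, and continuity of $F$ gives
\begin{equation}
    \liminf_k \W^2(\gamma_1^k,\gamma_2^k) = \lim_k F(\alpha^k) = F(\alpha) \geq \W_\mu^2(\gamma_1,\gamma_2).
\end{equation}

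For the continuity claim at a pair $(\gamma_1,\gamma_2)$ where (say) $\gamma_1$ is fully deterministic, I would invoke \Cref{prop:coupling_with_fully_det_is_unique} to conclude that $\Gamma_\mu(\gamma_1,\gamma_2) = \{\alpha\}$ is a singleton. For any sequence $(\gamma_1^k,\gamma_2^k) \to (\gamma_1,\gamma_2)$, pick optimal $\alpha^k \in \Gamma_{\mu^k,o}(\gamma_1^k,\gamma_2^k)$. By \Cref{cor:limit_points_of_seq_of_couplings} again, $(\alpha^k)_k$ is relatively compact, and since its only possible limit point is $\alpha$, the full sequence must converge: $\alpha^k \to \alpha$ in $\W_2$. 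Continuity of $F$ then delivers
\begin{equation}
    \W^2(\gamma_1^k,\gamma_2^k) = F(\alpha^k) \xrightarrow[k \to +\infty]{} F(\alpha) = \W_\mu^2(\gamma_1,\gamma_2),
\end{equation}
where the last equality uses that $\alpha$ is the unique (hence optimal) element of $\Gamma_\mu(\gamma_1,\gamma_2)$. The symmetric case where $\gamma_2$ is fully deterministic is identical.

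The only real obstacle in this plan is guaranteeing $\W_2$-compactness of a sequence of couplings whose marginals converge in $\W_2$, but this is exactly what is supplied by the properness of the projection $([\pi_1]^{(n)},[\pi_2]^{(n)})$ established in \Cref{lemma:coupling_proj_map_is_proper} and packaged as \Cref{cor:limit_points_of_seq_of_couplings}. No further analytic input is required beyond the elementary quadratic-growth estimate on the cost above.
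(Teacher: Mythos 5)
Your proposal is correct and follows essentially the same route as the paper's proof: extract a subsequence realizing the liminf, pass optimal couplings to a limit via the properness/compactness packaged in \Cref{cor:limit_points_of_seq_of_couplings}, and conclude by continuity of $\alpha \mapsto \bE^{(n)}_\alpha[\|v_1-v_2\|^2_x]$; for the fully deterministic case, uniqueness of the coupling from \Cref{prop:coupling_with_fully_det_is_unique} forces convergence of the whole sequence of optimal couplings. The only difference is cosmetic: you spell out the quadratic-growth estimate justifying the continuity of the cost functional, which the paper takes as already established in the discussion preceding \Cref{prop:dist_and_inner_prod_are_min_and_max}.
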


\begin{proof}
    Let $(\gamma_1^m,\gamma_2^m)_m$ be a sequence in $X$ which converges to $(\gamma_1,\gamma_2) \in X$, and note $\mu^m := [\pi]^{(n)}(\gamma_1^m)$ and $\mu := [\pi]^{(n)}(\gamma_1)$. We want to show that
    \begin{equation} \label{eq:l_2153}
        \W_\mu(\gamma_1,\gamma_2) \leq \liminf_{m \to +\infty} \W_{\mu^m}(\gamma_1^m, \gamma_2^m).
    \end{equation}
    For every $m$, let $\alpha^m \in \Gamma_{\mu^m}(\gamma_1^m,\gamma_2^m)$ be a optimal coupling, so that $\W_{\mu^m}^2(\gamma_1^m,\gamma_2^m) = \bE^{(n)}_{\alpha^m}[\|v_1 - v_2\|^2_x]$. Let $(m_k)_k$ be a subsequence such that
    \begin{equation} \label{eq:l_2157}
        \liminf_{m \to +\infty} \W_{\mu^m}(\gamma_1^m, \gamma_2^m) = \lim_{k \to +\infty} \W_{\mu^{m_k}}(\gamma_1^{m_k}, \gamma_2^{m_k}) = \lim_{k \to +\infty} \sqrt{\bE^{(n)}_{\alpha^{m_k}}[\|v_1-v_2\|^2_x]}.
    \end{equation}
    By \Cref{cor:limit_points_of_seq_of_couplings}, up to re-extracting a subsequence, there exists $\alpha \in \Gamma_\mu(\gamma_1,\gamma_2)$ such that $\alpha^{m_k} \to \alpha$, so that by continuity of the function $\beta \mapsto \bE^{(n)}_\beta[\|v_1-v_2\|^2_x]$, we have
    \begin{equation} \label{eq:l_2161}
        \lim_{k \to +\infty} \sqrt{\bE^{(n)}_{\alpha^{m_k}}[\|v_1-v_2\|^2_x]} = \sqrt{\bE^{(n)}_{\alpha}[\|v_1-v_2\|^2_x]} \geq \W_\mu(\gamma_1,\gamma_2).
    \end{equation}
    Equations \eqref{eq:l_2161} and \eqref{eq:l_2157} together imply \eqref{eq:l_2153}, so that $\W$ is indeed lower semicontinuous.
    \medbreak
    Now, assume that either $\gamma_1$ or $\gamma_2$ is fully deterministic. Then, by \Cref{prop:coupling_with_fully_det_is_unique}, there is an unique coupling $\alpha \in \Gamma_\mu(\gamma_1,\gamma_2)$ (which is then optimal). Therefore, by \Cref{cor:limit_points_of_seq_of_couplings}, we have $\alpha^m \to \alpha$, so that
    \begin{equation}
        \lim_{m \to +\infty} \W_{\mu^m}(\gamma_1^m,\gamma_2^m) = \lim_{m \to +\infty} \sqrt{\bE^{(n)}_{\alpha^m}[\|v_1-v_2\|^2_x]} = \sqrt{\bE^{(n)}_{\alpha}[\|v_1-v_2\|^2_x]} = \W_\mu(\gamma_1,\gamma_2),
    \end{equation}
    and this shows that $\W$ is continuous at $(\gamma_1,\gamma_2)$.
\end{proof}

Note that, if $\mu \in \cPn{n}{\cM}$ and $\gamma_1, \gamma_2 \in \cPn{n}{T\cM}_\mu$ are such that at least one of the $\gamma_i$ is fully deterministic, we can denote $\gamma_1 + \gamma_2 := \gamma_1 +_\alpha \gamma_2$ and $\gamma_1 - \gamma_2 := \gamma_1 -_\alpha \gamma_2$ where $\alpha$ is the unique coupling between $\gamma_1$ and $\gamma_2$. Moreover, by unicity of $\alpha$, we have necessarily
\begin{equation} \label{eq:fully_det_w_mu_is_norm_of_diff}
    \W_\mu^2(\gamma_1,\gamma_2) = \bE^{(n)}_\alpha[\|v_1 - v_2\|^2_x] = \|\gamma_1 - \gamma_2\|^2.
\end{equation}

\begin{lemma} \label{lemma:sum_of_fully_det_is_det}
    Let $\mu \in \cPn{n}{\cM}$ and $\gamma_1, \gamma_2 \in \cPndet{n}{T\cM}_\mu$, and $\lambda \in \R$. Then $\gamma_1 + \gamma_2$ and $\lambda \gamma_1$ are both in $\cPndet{n}{T\cM}_\mu$.
\end{lemma}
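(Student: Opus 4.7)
The proof proceeds by induction on $n$. The base case $n = 0$ is immediate: $\cPndet{0}{T\cM} = T\cM$, and both operations reduce to ordinary vector addition and scalar multiplication inside the tangent fiber $T_x\cM$, which stay inside $T_x\cM$. For the inductive step, by \Cref{def:fully_det_measure} we fix measurable $\mu$-sections $s_1, s_2 : \cPn{n-1}{\cM} \mapsto \cPn{n-1}{T\cM}$ of $[\pi]^{(n-1)}$ such that $\gamma_i = s_{i\#}\mu$ and $s_i(\nu) \in \cPndet{n-1}{T\cM}$ for $\mu$-a.e.\ $\nu$.

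For the scalar multiplication, define $s(\nu) := [m_\lambda]^{(n-1)}(s_1(\nu))$. Since $\pi \circ m_\lambda = \pi$, one checks that $[\pi]^{(n-1)} \circ s = [\pi]^{(n-1)} \circ s_1$, so $s$ is a $\mu$-section of $[\pi]^{(n-1)}$; it is measurable by composition with the continuous map $[m_\lambda]^{(n-1)}$, and by the inductive hypothesis $s(\nu) = \lambda\cdot s_1(\nu) \in \cPndet{n-1}{T\cM}$ for $\mu$-a.e.\ $\nu$. Since $\lambda\gamma_1 = [m_\lambda]^{(n)}(s_{1\#}\mu) = s_\#\mu$, we conclude by \Cref{def:fully_det_measure} that $\lambda\gamma_1 \in \cPndet{n}{T\cM}_\mu$.

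For the addition, by \Cref{prop:coupling_with_fully_det_is_unique} there is a unique coupling $\alpha \in \Gamma_\mu(\gamma_1,\gamma_2)$, and $\gamma_1 + \gamma_2 = [\pi_1+\pi_2]^{(n)}(\alpha)$. The central step is to identify $\alpha$ in terms of $s_1$ and $s_2$. For $\mu$-a.e.\ $\nu$, both $s_1(\nu)$ and $s_2(\nu)$ are fully deterministic, so \Cref{prop:coupling_with_fully_det_is_unique} produces a unique $\alpha_\nu \in \Gamma_\nu(s_1(\nu), s_2(\nu))$; composing the measurable selection from \Cref{cor:measurable_selection_couplings} with the measurable map $\nu \mapsto (s_1(\nu), s_2(\nu))$ makes $\nu \mapsto \alpha_\nu$ measurable (and by uniqueness it is independent of the selection, $\mu$-a.e.). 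Reproducing the disintegration argument used in the proof of \Cref{prop:coupling_with_fully_det_is_unique_general} — disintegrating $\alpha$ along the base $\mu$ and exploiting that the two fiber marginals $s_1(\nu), s_2(\nu)$ are fully deterministic so that their coupling is forced — identifies $\alpha = (\nu \mapsto \alpha_\nu)_\#\mu$. Hence
\begin{equation*}
    \gamma_1 + \gamma_2 = [\pi_1 + \pi_2]^{(n)}(\alpha) = \bigl(\nu \mapsto s_1(\nu) + s_2(\nu)\bigr)_\#\mu.
\end{equation*}

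Setting $s(\nu) := s_1(\nu) + s_2(\nu)$, the map $s$ is measurable by composition, is a $\mu$-section of $[\pi]^{(n-1)}$ (since for $\alpha_\nu \in \cPn{n-1}{T^2\cM}$ with base $\nu$ one has $[\pi]^{(n-1)}\circ[\pi_1+\pi_2]^{(n-1)}(\alpha_\nu)=\nu$, because $\pi\circ(\pi_1+\pi_2)$ is the base-point projection $T^2\cM\mapsto\cM$), and takes values $\mu$-a.e.\ in $\cPndet{n-1}{T\cM}$ by the inductive hypothesis. Thus $\gamma_1 + \gamma_2 = s_\#\mu \in \cPndet{n}{T\cM}_\mu$. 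The only delicate point — and the step that carries the content of the argument — is the identification of the top-level unique coupling $\alpha$ as the pushforward of the fiberwise unique couplings $\alpha_\nu$; everything else is bookkeeping with sections, pushforwards, and the naturality relation $\pi\circ m_\lambda = \pi$.
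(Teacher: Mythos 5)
Your proof is correct and follows essentially the same route as the paper: induction on $n$, with the scalar case handled by $\pi\circ m_\lambda=\pi$, and the addition case resting on identifying the unique top-level coupling as the pushforward $(\nu\mapsto\alpha_{s_1(\nu),s_2(\nu)})_\#\mu$ of the fiberwise unique couplings. The paper gets that identification directly from \Cref{rk:unique_coupling_expression} rather than re-running the disintegration argument, but the content is identical.
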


\begin{proof}
    We show this by induction. For $n = 0$ this is trivial since $\cPndet{0}{T\cM}_x = T_x \cM$. Let $n > 0$ and assume that the proposition holds for $n-1$. Let $\bP \in \cPn{n}{\cM}$, $\bGamma_1,\bGamma_2 \in \cPndet{n}{T\cM}_\bP$ and $\lambda \in \R$. We write $\bGamma_1 = s_{1\#}\bP$ and $\bGamma_2 = s_{2\#}\bP$ where $s_1,s_2$ are measurable maps $\cPn{n-1}{\cM} \mapsto \cPn{n-1}{T\cM}$ which are sections of $[\pi]^{(n-1)}$ and are valued $\bP$-a.e. in $\cPndet{n-1}{T\cM}$. We then have
    \begin{equation} \label{eq:fully_det_times_scalar_expression}
        \lambda \bGamma_1 = [m_\lambda]^{(n)}(s_{1\#} \bP) = ([m_\lambda]^{(n-1)} \circ s_1)_\#\bP = (\mu \mapsto \lambda s_1(\mu))_\#\bP
    \end{equation}
    and by induction the map $\mu \mapsto \lambda s_1(\mu)$ is still valued $\bP$-a.e. in $\cPndet{n-1}{T\cM}$, so that $\lambda \bGamma_1$ is fully deterministic.  Moreover, by \Cref{rk:unique_coupling_expression}, the unique coupling $\bA \in \Gamma_\bP(\bGamma_1,\bGamma_2)$ is given by $\bA = (\mu \mapsto \alpha_{s_1(\mu),s_2(\mu)})_\#\bP$ where, for $\bP$-a.e. $\mu$, $\alpha_{s_1(\mu),s_2(\mu)}$ is the unique coupling between the fully deterministic plans $s_1(\mu)$ and $s_2(\mu)$. Therefore,
    \begin{align}
        \bGamma_1 + \bGamma_2 &= [\pi_1 + \pi_2]^{(n)}(\bA) \\
        &= ([\pi_1 + \pi_2]^{(n-1)} \circ (\mu \mapsto \alpha_{s_1(\mu),s_2(\mu)}))_\#\bP \\
        &= (\mu \mapsto s_1(\mu) + s_2(\mu))_\#\bP \label{eq:fully_det_addition_expression}
    \end{align}
    and by induction the map $\mu \mapsto s_1(\mu) + s_2(\mu)$ is still valued $\bP$-a.e. in $\cPndet{n-1}{T\cM}$, so that $\bGamma_1 + \bGamma_2$ is fully deterministic.
\end{proof}

\begin{proposition} \label{prop:fully_det_plan_addition_associative_commutative}
    Let $\mu \in \cPn{n}{\cM}$ and $\gamma_1, \gamma_2, \gamma_3 \in \cPn{n}{T\cM}_\mu$. Assume that at least two of the $\gamma_i$ are fully deterministic. Then the following hold:
    \begin{enumerate}
        \item (Associativity) $(\gamma_1 + \gamma_2) + \gamma_3 = \gamma_1 + (\gamma_2 + \gamma_3)$.
        \item (Commutativity) $\gamma_1 + \gamma_2 = \gamma_2 + \gamma_1$.
    \end{enumerate}
    Moreover, if $\gamma_1$ is fully deterministic, then $\gamma_1 - \gamma_1 = \bm{0}_\mu$.
\end{proposition}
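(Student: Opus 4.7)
The common thread is the uniqueness of couplings when at least one of the plans is fully deterministic (Proposition \ref{prop:coupling_with_fully_det_is_unique}). This uniqueness pins down each $+_\alpha$ and $-_\alpha$ appearing in the statements, so the identities reduce to pushforward identities on $T^m\cM$ that follow from pointwise identities of fiberwise addition.

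First I would handle $\gamma_1 - \gamma_1 = \bm{0}_\mu$. The diagonal map $\iota_\Delta : T\cM \to T^2\cM$, $(x,v) \mapsto (x,v,v)$, satisfies $\pi_1 \circ \iota_\Delta = \pi_2 \circ \iota_\Delta = \id_{T\cM}$, so $\alpha := [\iota_\Delta]^{(n)}(\gamma_1)$ lies in $\Gamma_\mu(\gamma_1,\gamma_1)$ and, by Proposition \ref{prop:coupling_with_fully_det_is_unique}, is the unique such coupling. Then
\[
\gamma_1 -_\alpha \gamma_1 = [(\pi_1 - \pi_2) \circ \iota_\Delta]^{(n)}(\gamma_1) = [\iota_0 \circ \pi]^{(n)}(\gamma_1) = [\iota_0]^{(n)}(\mu) = \bm{0}_\mu.
\]
For commutativity, the standing hypothesis forces at least one of $\gamma_1, \gamma_2$ to be fully deterministic (otherwise only $\gamma_3$ would be, violating the count). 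Let $\alpha \in \Gamma_\mu(\gamma_1,\gamma_2)$ be the unique coupling. Then $\alpha' := [(\pi_2,\pi_1)]^{(n)}(\alpha) \in \Gamma_\mu(\gamma_2,\gamma_1)$ must itself be the unique coupling of that pair. Item \ref{enum:tangent_struct:commutativity} of Proposition \ref{prop:various_results_on_inner_prod} gives $\gamma_1 +_\alpha \gamma_2 = \gamma_2 +_{\alpha'} \gamma_1$, which by uniqueness is exactly $\gamma_1 + \gamma_2 = \gamma_2 + \gamma_1$.

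For associativity, assume without loss of generality that $\gamma_1, \gamma_2$ are fully deterministic. Lemma \ref{lemma:sum_of_fully_det_is_det} makes $\gamma_1 + \gamma_2$ fully deterministic, so $(\gamma_1 + \gamma_2) + \gamma_3$ is well-defined; likewise $\gamma_1 + (\gamma_2 + \gamma_3)$ is well-defined since $\gamma_2$ is fully deterministic (making $\gamma_2 + \gamma_3$ defined) and $\gamma_1$ is fully deterministic (making the outer sum defined). Using Lemma \ref{lemma:generalized_couplings_exist_3} I would pick a triple coupling $\beta \in \Gamma_\mu(\gamma_1,\gamma_2,\gamma_3)$ whose $(1,2)$- and $(2,3)$-projections are the unique pairwise couplings $\alpha_{12}$ and $\alpha_{23}$. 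Then $[(\pi_1^3 + \pi_2^3, \pi_3^3)]^{(n)}(\beta)$ lies in $\Gamma_\mu(\gamma_1+\gamma_2,\gamma_3)$ and, as $\gamma_1 + \gamma_2$ is fully deterministic, it is the unique such coupling; hence
\[
(\gamma_1 + \gamma_2) + \gamma_3 = [(\pi_1^3 + \pi_2^3) + \pi_3^3]^{(n)}(\beta).
\]
Symmetrically, $[(\pi_1^3, \pi_2^3 + \pi_3^3)]^{(n)}(\beta)$ is the unique coupling in $\Gamma_\mu(\gamma_1, \gamma_2 + \gamma_3)$, so $\gamma_1 + (\gamma_2 + \gamma_3) = [\pi_1^3 + (\pi_2^3 + \pi_3^3)]^{(n)}(\beta)$, and the two expressions coincide because fiberwise addition on $T^3\cM$ is associative.

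The main obstacle is bookkeeping rather than conceptual: one has to check carefully that each sum and difference in the statement is well-defined under the "at least two fully deterministic" hypothesis (invoking Lemma \ref{lemma:sum_of_fully_det_is_det} where needed), and then verify at each step that the pushforwards of $\beta$ under the relevant coordinate maps genuinely yield couplings of the right marginals, so that uniqueness can be invoked to tie them back to the couplings defining the outer sums.
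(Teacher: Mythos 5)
Your proof is correct and follows essentially the same route as the paper: take a triple coupling, push it forward under the coordinate sums, and use uniqueness of couplings with a fully deterministic plan to identify the results with the iterated sums (the paper takes an arbitrary $\alpha \in \Gamma_\mu(\gamma_1,\gamma_2,\gamma_3)$ and shows both sides equal $[\pi_1+\pi_2+\pi_3]^{(n)}(\alpha)$, exactly as you do). One small caveat: the reduction ``WLOG $\gamma_1,\gamma_2$ are fully deterministic'' is not literally without loss of generality, since associativity is not symmetric in the three indices and the hypothesis also allows, e.g., only $\gamma_1$ and $\gamma_3$ to be fully deterministic; but your argument goes through verbatim in the remaining cases, one just justifies uniqueness of the coupling in $\Gamma_\mu(\gamma_1+\gamma_2,\gamma_3)$ via the full determinism of $\gamma_3$ rather than via Lemma~\ref{lemma:sum_of_fully_det_is_det}.
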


\begin{proof}
    The commutativity is simply a consequence of point \ref{enum:tangent_struct:commutativity} of \Cref{prop:various_results_on_inner_prod} and of the unicity of couplings. Now, fix any $\alpha \in \Gamma_\mu(\gamma_1,\gamma_2,\gamma_3)$ (which exists by \Cref{rk:general_couplings_exist}). Then, letting $\alpha_1 := [\pi^3_{1,2}]^{(n)}(\alpha) \in \Gamma_\mu(\gamma_1,\gamma_2)$ and $\alpha_2 := [(x,v_1,v_2,v_3) \mapsto (x,v_1+v_2,v_3)]^{(n)}(\alpha) \in \Gamma_\mu(\gamma_1 +_{\alpha_1} \gamma_2, \gamma_3)$, we have
    \begin{align}
        (\gamma_1 + \gamma_2) + \gamma_3 &= (\gamma_1 +_{\alpha_1} \gamma_2) +_{\alpha_2} \gamma_3 \\
        &= [\pi_1 + \pi_2]^{(n)}(\alpha_2) \\
        &= [\pi_1 + \pi_2 + \pi_3]^{(n)}(\alpha).
    \end{align}
    Similarly, letting this time $\alpha_3 := [\pi^3_{2,3}]^{(n)}(\alpha) \in \Gamma_\mu(\gamma_2,\gamma_3)$ and $\alpha_4 := [(x,v_1,v_2,v_3) \mapsto (x,v_1,v_2+v_3)]^{(n)}(\alpha) \in \Gamma_\mu(\gamma_1, \gamma_2  +_{\alpha_3} \gamma_3)$, we find
    \begin{align}
        \gamma_1 + (\gamma_2 + \gamma_3) &= \gamma_1 +_{\alpha_4} (\gamma_2 +_{\alpha_3} \gamma_3 \\
        &= [\pi_1 + \pi_2]^{(n)}(\alpha_4) \\
        &= [\pi_1 + \pi_2 + \pi_3]^{(n)}(\alpha),
    \end{align}
    so that $(\gamma_1 + \gamma_2) + \gamma_3 = \gamma_1 + (\gamma_2 + \gamma_3)$. Finally, assume that $\gamma_1$ is fully deterministic. Let $\alpha \in \Gamma_\mu(\gamma_1,\gamma_1)$ be defined by $\alpha := [(x,v) \mapsto (x,v,v)]^{(n)}(\gamma_1)$. Then
    \begin{equation}
        \gamma_1 - \gamma_1 = [\pi_1 - \pi_2]^{(n)}(\alpha) = [(x,v) \mapsto (x,v-v)]^{(n)}(\gamma_1) = [\iota_0 \circ \pi]^{(n)}(\gamma_1) = [\iota_0]^{(n)}(\mu) = \bm{0}_\mu. 
    \end{equation}
    This finishes the proof.
\end{proof}

\begin{proposition} \label{prop:fully_det_is_vector_space}
    For every $\mu \in \cPn{n}{T\cM}_\mu$, the space $\cPndet{n}{T\cM}_\mu$ equipped with $+$, $-$ and the scalar multiplication is a vector space. Its zero vector is $\bm{0}_\mu$ and the additive inverse of $\gamma \in \cPndet{n}{T\cM}_\mu$ is $-\gamma$. 
\end{proposition}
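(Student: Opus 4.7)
The plan is to verify each vector space axiom by assembling the results already established, exploiting systematically the fact, proved in \Cref{prop:coupling_with_fully_det_is_unique}, that the coupling between two velocity plans is unique whenever at least one of them is fully deterministic — this is what makes the otherwise multi-valued operations $+_\alpha$ and $-_\alpha$ into well-defined operations on $\cPndet{n}{T\cM}_\mu$. Closure of $\cPndet{n}{T\cM}_\mu$ under $+$ and scalar multiplication is exactly \Cref{lemma:sum_of_fully_det_is_det}, and commutativity and associativity of $+$ are the content of \Cref{prop:fully_det_plan_addition_associative_commutative} (again using the uniqueness of couplings to dispense with the choice of $\alpha$). Note that $\bm{0}_\mu = [\iota_0]^{(n)}(\mu)$ is fully deterministic by \Cref{ex:morphism_induces_fully_det}.

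For the neutral element, $\gamma + \bm{0}_\mu = \gamma$ follows directly from point \ref{enum:tangent_struct:zero_neutral_add} of \Cref{prop:various_results_on_inner_prod} applied to the unique coupling in $\Gamma_\mu(\gamma,\bm{0}_\mu)$. For the additive inverse: $-\gamma := (-1)\gamma \in \cPndet{n}{T\cM}_\mu$ by \Cref{lemma:sum_of_fully_det_is_det}, and combining point \ref{enum:tangent_struct:minus_eq_plus_neg} of \Cref{prop:various_results_on_inner_prod} (which gives $\gamma -_\alpha \gamma = \gamma +_{\alpha'} (-\gamma)$ for the unique couplings involved) with the identity $\gamma - \gamma = \bm{0}_\mu$ from \Cref{prop:fully_det_plan_addition_associative_commutative} yields $\gamma + (-\gamma) = \bm{0}_\mu$.

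For the scalar multiplication axioms: $1 \cdot \gamma = \gamma$ is immediate from $m_1 = \id_{T\cM}$ and functoriality, $(\lambda_1 \lambda_2)\gamma = \lambda_1(\lambda_2 \gamma)$ follows from $m_{\lambda_1 \lambda_2} = m_{\lambda_1} \circ m_{\lambda_2}$ and functoriality of $[\cdot]^{(n)}$, and distributivity over vector addition $\lambda(\gamma_1 + \gamma_2) = \lambda\gamma_1 + \lambda\gamma_2$ is point \ref{enum:tangent_struct:scalar_mult_distributive_vector_add} of \Cref{prop:various_results_on_inner_prod}. The main (and only nontrivial) obstacle is distributivity of scalar multiplication over scalar addition, $(\lambda_1 + \lambda_2)\gamma = \lambda_1 \gamma + \lambda_2 \gamma$, since point \ref{enum:tangent_struct:scalar_mult_distributive_scalar_add} only supplies it for $\lambda_1,\lambda_2 \geq 0$.

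To extend this identity to arbitrary real scalars, I would proceed in two steps. First, establish $(-1)(\lambda \gamma) = -(\lambda \gamma) = \lambda(-\gamma)$: the first equality is the definition of the additive inverse in the abelian group $(\cPndet{n}{T\cM}_\mu, +)$, which is well-defined once we have the inverse from the previous paragraph, and both equalities reduce, via functoriality, to the pointwise identity $-(\lambda v) = \lambda(-v)$ on $T_x\cM$, which is transported to $\cPndet{n}{T\cM}_\mu$ via $[\cdot]^{(n)}$ applied to $m_{-1} \circ m_\lambda = m_\lambda \circ m_{-1}$. Second, given arbitrary real scalars, split by sign: the case $\lambda_1, \lambda_2 \geq 0$ is given, the case $\lambda_1, \lambda_2 \leq 0$ reduces to it by factoring $-1$ and using the identity just established, and the mixed case (say $\lambda_1 \geq 0 \geq \lambda_2$ with $\lambda_1 + \lambda_2 \geq 0$) is handled by applying the nonnegative case to $\lambda_1 = (\lambda_1 + \lambda_2) + (-\lambda_2)$ to get $\lambda_1 \gamma = (\lambda_1+\lambda_2)\gamma + (-\lambda_2)\gamma$, then adding $\lambda_2 \gamma$ on both sides and using associativity, commutativity and $(-\lambda_2)\gamma + \lambda_2 \gamma = \bm{0}_\mu$ (itself a consequence of the additive inverse property together with $(-\lambda_2)\gamma = -(\lambda_2 \gamma)$). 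The remaining sign configuration is symmetric. All these manipulations are legal because every sum involved has at least one fully deterministic summand, so the coupling $\alpha$ is unique and the operation $+$ is unambiguous.
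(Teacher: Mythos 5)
Your proof is correct and follows essentially the same route as the paper's, which simply assembles \Cref{lemma:sum_of_fully_det_is_det}, \Cref{prop:fully_det_plan_addition_associative_commutative} and points \ref{enum:tangent_struct:minus_eq_plus_neg}, \ref{enum:tangent_struct:zero_neutral_add}, \ref{enum:tangent_struct:scalar_mult_distributive_scalar_add} and \ref{enum:tangent_struct:scalar_mult_distributive_vector_add} of \Cref{prop:various_results_on_inner_prod} together with the uniqueness of couplings between fully deterministic plans. The one place where you go beyond the paper is the extension of $(\lambda_1+\lambda_2)\gamma = \lambda_1\gamma + \lambda_2\gamma$ to scalars of mixed sign; your sign-splitting argument is valid, but you could have shortened it by observing that the computation proving point \ref{enum:tangent_struct:scalar_mult_distributive_scalar_add} never actually uses $\lambda_1,\lambda_2 \geq 0$ --- it only uses that $[(x,v)\mapsto(x,\lambda_1 v,\lambda_2 v)]^{(n)}(\gamma)$ lies in $\Gamma_\mu(\lambda_1\gamma,\lambda_2\gamma)$, which by uniqueness is \emph{the} coupling in the fully deterministic case, so the identity holds for all real scalars directly.
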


\begin{proof}
    This is simply a consequence of \Cref{prop:fully_det_plan_addition_associative_commutative} and of the points \ref{enum:tangent_struct:zero_neutral_add}, \ref{enum:tangent_struct:scalar_mult_distributive_vector_add}, \ref{enum:tangent_struct:scalar_mult_distributive_scalar_add} and \ref{enum:tangent_struct:minus_eq_plus_neg} of \Cref{prop:various_results_on_inner_prod} combined with the unicity of couplings.
\end{proof}

\begin{proposition} \label{prop:fully_det_is_prehilbert}
    For every $\mu \in \cPn{n}{T\cM}_\mu$, the vector space $\cPndet{n}{T\cM}_\mu$ is a prehilbertian space with associated norm $\|\cdot\|$, inner product $\sca{\cdot}{\cdot}_\mu$ and distance $\W_\mu$.
\end{proposition}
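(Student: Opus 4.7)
The plan is to verify the pre-Hilbert axioms by leveraging \Cref{prop:coupling_with_fully_det_is_unique}: whenever $\gamma_1,\gamma_2 \in \cPndet{n}{T\cM}_\mu$, the set $\Gamma_\mu(\gamma_1,\gamma_2)$ is a singleton, so the norm, the distance $\W_\mu$, and the ``inner product'' $\sca{\cdot}{\cdot}_\mu$ all admit a clean expression in terms of this single coupling. Since \Cref{prop:fully_det_is_vector_space} already establishes that $\cPndet{n}{T\cM}_\mu$ is a real vector space, the remaining tasks are to verify that $\|\cdot\|$ is a norm, that $\W_\mu$ is the distance induced by this norm, and that $\sca{\cdot}{\cdot}_\mu$ is a symmetric bilinear positive-definite form coinciding with the polarization of $\|\cdot\|^2$.

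First I would dispatch the easy parts. Definiteness of $\|\cdot\|$ is point \ref{enum:tangent_struct:norm_is_positive_def} of \Cref{prop:various_results_on_inner_prod}; homogeneity $\|\lambda\gamma\|^2 = \lambda^2 \|\gamma\|^2$ follows from $\lambda\gamma = [m_\lambda]^{(n)}(\gamma)$ combined with \eqref{eq:n_expectancy_of_pushforward}; and the triangle inequality is point \ref{enum:tangent_struct:triangle_ineq} of \Cref{prop:various_results_on_inner_prod} applied to the unique coupling. The identity $\W_\mu(\gamma_1,\gamma_2) = \|\gamma_1-\gamma_2\|$ is exactly \eqref{eq:fully_det_w_mu_is_norm_of_diff}, while symmetry of $\sca{\cdot}{\cdot}_\mu$ and the diagonal relation $\sca{\gamma}{\gamma}_\mu = \|\gamma\|^2$ are points \ref{enum:tangent_struct:inner_prod_symmetric} and \ref{enum:tangent_struct:inner_prod_positive_def} of \Cref{prop:various_results_on_inner_prod}.

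The main obstacle is bilinearity of $\sca{\cdot}{\cdot}_\mu$: the supremum definition does not interact transparently with $+$ and $\lambda\cdot$, and a direct additivity argument would force the construction and uniqueness of 3-way couplings among fully deterministic plans, essentially re-proving \Cref{prop:coupling_with_fully_det_is_unique_general} in an arity-3 setting. I would bypass this by routing through the polarization identity and the Jordan--von Neumann theorem. The key observation is that if $\alpha \in \Gamma_\mu(\gamma_1,\gamma_2)$ is the unique coupling, then $\gamma_1 \pm \gamma_2 = [\pi_1 \pm \pi_2]^{(n)}(\alpha)$, so a single expansion of $\|v_1 \pm v_2\|_x^2$ under $\bE^{(n)}_\alpha$ yields the parallelogram identity
\begin{equation*}
\|\gamma_1+\gamma_2\|^2 + \|\gamma_1-\gamma_2\|^2 = \bE^{(n)}_\alpha\!\left[2\|v_1\|_x^2 + 2\|v_2\|_x^2\right] = 2\|\gamma_1\|^2 + 2\|\gamma_2\|^2.
\end{equation*}
By the Jordan--von Neumann theorem applied to the real normed space $\cPndet{n}{T\cM}_\mu$, the polarization formula $\langle \gamma_1,\gamma_2\rangle' := \tfrac{1}{4}(\|\gamma_1+\gamma_2\|^2 - \|\gamma_1-\gamma_2\|^2)$ then defines a symmetric bilinear form with $\langle \gamma,\gamma\rangle' = \|\gamma\|^2$.

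It remains only to identify $\langle\cdot,\cdot\rangle'$ with $\sca{\cdot}{\cdot}_\mu$. Expanding both squares under the unique coupling $\alpha$ gives $\langle\gamma_1,\gamma_2\rangle' = \bE^{(n)}_\alpha[\sca{v_1}{v_2}_x]$, and since $\Gamma_\mu(\gamma_1,\gamma_2) = \{\alpha\}$, the supremum defining $\sca{\gamma_1}{\gamma_2}_\mu$ is attained at $\alpha$, so $\sca{\gamma_1}{\gamma_2}_\mu = \bE^{(n)}_\alpha[\sca{v_1}{v_2}_x] = \langle \gamma_1,\gamma_2\rangle'$. Thus $\sca{\cdot}{\cdot}_\mu$ is an inner product on $\cPndet{n}{T\cM}_\mu$ with associated norm $\|\cdot\|$ and induced distance $\W_\mu$, completing the proof.
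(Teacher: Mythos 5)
Your proof is correct, but it reaches bilinearity of $\sca{\cdot}{\cdot}_\mu$ by a genuinely different route than the paper. The paper proves linearity in the first argument directly: it fixes any three-way coupling $\alpha \in \Gamma_\mu(\gamma_1,\gamma_2,\gamma_3)$ (which exists by \Cref{rk:general_couplings_exist}), observes that $[(x,v_1,v_2,v_3) \mapsto (x,v_1+\lambda v_2,v_3)]^{(n)}(\alpha)$ must be \emph{the} unique coupling in $\Gamma_\mu(\gamma_1+\lambda\gamma_2,\gamma_3)$ (since $\gamma_1+\lambda\gamma_2$ is again fully deterministic by \Cref{lemma:sum_of_fully_det_is_det}, so \Cref{prop:coupling_with_fully_det_is_unique} applies), and expands $\bE^{(n)}_\alpha[\sca{v_1+\lambda v_2}{v_3}_x]$ linearly; note that this only needs uniqueness of two-way couplings, not of the three-way one, so the obstacle you describe is milder than you suggest. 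Your alternative — establishing the parallelogram identity under the unique two-way coupling, invoking Jordan--von Neumann to get an inner product by polarization, and then identifying the polarized form with $\sca{\cdot}{\cdot}_\mu$ because the supremum in its definition runs over a singleton — is airtight: every expansion you perform is licensed by \eqref{eq:n_expectancy_of_pushforward} and the uniqueness of the coupling, and the vector space structure needed for Jordan--von Neumann is supplied by \Cref{prop:fully_det_is_vector_space}. What your route buys is economy within the paper's machinery (no three-way couplings at all) at the cost of importing a classical functional-analysis theorem; what the paper's route buys is an explicit, self-contained exhibition of bilinearity through the coupling calculus that is stylistically consistent with the surrounding development. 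Either proof is acceptable.
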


\begin{proof}
    The fact that $\|\cdot\|$ is a norm on $\cPndet{n}{T\cM}_\mu$ is a consequence of the unicity of couplings combined with points \ref{enum:tangent_struct:norm_is_positive_def} and \ref{enum:tangent_struct:triangle_ineq} of \Cref{prop:various_results_on_inner_prod} (it is clear that $\|\lambda \gamma\| = |\lambda| \|\gamma\|$ for any $\lambda \in \R$ and velocity plan $\gamma$). Moreover, we know by \Cref{eq:fully_det_w_mu_is_norm_of_diff} that $\W_\mu$ is the distance associated to the norm $\|\cdot\|$ on $\cPndet{n}{T\cM}_\mu$. We want to show that $\sca{\cdot}{\cdot}_\mu$ is an inner product associated with $\|\cdot\|$. Let $\gamma_1,\gamma_2,\gamma_3 \in \cPndet{n}{T\cM}_\mu$, and let $\lambda \in \R$. Fix any $\alpha \in \Gamma_\mu(\gamma_1,\gamma_2,\gamma_3)$. Then we can check that $\alpha' := [(x,v_1,v_2,v_3) \mapsto (x,v_1+\lambda v_2,v_3)]^{(n)}(\alpha)$ is the unique coupling in $\Gamma_\mu(\gamma_1 + \lambda \gamma_2, \gamma_3)$, so that
    \begin{align}
        \sca{\gamma_1 + \lambda \gamma_2}{\gamma_3}_\mu &= \bE^{(n)}_{\alpha'}[\sca{v_1}{v_2}_x] = \bE^{(n)}_\alpha[\sca{v_1 + \lambda v_2}{v_3}_x] \\
        &= \bE^{(n)}_\alpha[\sca{v_1}{v_3}_x] + \lambda \bE^{(n)}_\alpha[\sca{v_2}{v_3}_x] \\
        &= \sca{\gamma_1}{\gamma_3}_\mu + \lambda \sca{\gamma_2}{\gamma_3}_\mu.
    \end{align}
    This, combined with points \ref{enum:tangent_struct:inner_prod_symmetric} and \ref{enum:tangent_struct:inner_prod_positive_def} of \Cref{prop:various_results_on_inner_prod}, shows that $\sca{\cdot}{\cdot}_\mu$ is a symmetric bilinear form on $\cPndet{n}{T\cM}_\mu$ which is positive definite ; and it is associated to the norm $\|\cdot\|$ by \eqref{eq:w_mu_norm_inner_prod_rel}.
\end{proof}

Let now $n > 0$ and $\bP \in \cPndet{n}{\cM}$. It turns out, in fact, that the space $\cPndet{n}{T\cM}_\mu$ has the structure of a Hilbert space. To see this, we set $\tilde{\bP} := \cU_X^{(n)}(\bP)$, and we define $L^2_\pi(\bP,T\cM)$ to be space of measurable functions $f : \cPn{n-1}{\cM} \times \ldots \times \cP_2(\cM) \times \cM \mapsto T\cM$ such that $\pi(f(\mu^{n-1},\ldots,\mu,x)) = x$ $\tilde{\bP}$-almost everywhere, and with finite $L^2$ norm
\begin{equation}
    \|f\|_{L^2(\bP)}^2 := \int \|f(\mu^{n-1},\ldots,\mu,x)\|_x^2 \dd\tilde{\bP}(\mu^{n-1},\ldots,\mu,x) < +\infty.
\end{equation}
In particular, $L^2_\pi(\bP,T\cM)$ can be seen as a subset of $\cM_{\pi,\bP}$, and it can be shown to be a Hilbert space using standard arguments.

\begin{proposition}
    The map $j_\bP$ defined in \Cref{prop:maps_induce_fully_det} restricted to $L^2_\pi(\bP,T\cM)$ is a bijective linear isometry between $L^2_\pi(\bP,T\cM)$ and $\cPndet{n}{T\cM}_\bP$. In particular, $\cPndet{n}{T\cM}_\bP$ is a Hilbert space.
\end{proposition}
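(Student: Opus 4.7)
The plan is to reduce the statement to the fact, which can be proven by induction on $n$, that the ``end-of-pipe'' marginal of $j_\bP(f)$ equals $f_\#\tilde{\bP}$. Concretely, I first establish the identity
\begin{equation}
    \cE^{(n)}_{T\cM}(j_\bP(f)) = f_\# \tilde{\bP}, \quad \forall f \in \cM_{\pi,\bP}. \tag{$\star$}
\end{equation}
For $n=1$ this is tautological since $\cE^{(1)}_{T\cM} = \id$, $\tilde{\mu} = \mu$, and $j_\mu(f) = f_\#\mu$. For $n>1$, using the recursive definition $j_\bP(f) = (\mu \mapsto j_\mu(f(\mu,\cdot)))_\#\bP$, the recurrence $\cE^{(n)}_{T\cM}(\bP) = \int \cE^{(n-1)}_{T\cM}(\gamma) \dd\bP(\gamma)$ that follows from the definition of $\cE^{(n)}$, the induction hypothesis applied to $j_\mu(f(\mu,\cdot))$, and the disintegration identity \eqref{eq:disintegration_of_unrolled} for $\tilde{\bP}$, a direct computation against a test function $\varphi$ yields $(\star)$.

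From $(\star)$, applying $\varphi(x,v) = \|v\|_x^2$ gives immediately $\|j_\bP(f)\|^2 = \|f\|_{L^2(\bP)}^2$. This has three consequences. First, if $f \in L^2_\pi(\bP,T\cM)$ then $\|j_\bP(f)\| < +\infty$, which combined with Proposition \ref{prop:maps_induce_fully_det} and Lemma \ref{lemma:pseudo_norm_is_finite} shows $j_\bP(f) \in \cPndet{n}{T\cM}_\bP$, so the restriction to $L^2_\pi(\bP,T\cM)$ is well-defined and valued in $\cPndet{n}{T\cM}_\bP$. Second, it gives the norm-preservation half of the isometry claim. Third, combined with surjectivity from Proposition \ref{prop:fully_det_induced_by_map}, it gives the $L^2$-integrability of the section $f$ produced there, so that $j_\bP : L^2_\pi(\bP,T\cM) \to \cPndet{n}{T\cM}_\bP$ is surjective.

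For linearity, I argue by induction on $n$: the case $n=1$ is immediate since $j_\mu(f) = f_\#\mu$, and for $n>1$, if $\gamma_1 = j_\bP(f_1) = (\mu \mapsto j_\mu(f_1(\mu,\cdot)))_\#\bP$ and similarly for $\gamma_2$, then formulas \eqref{eq:fully_det_times_scalar_expression} and \eqref{eq:fully_det_addition_expression} from the proof of Lemma \ref{lemma:sum_of_fully_det_is_det} give $\lambda\gamma_1 = (\mu \mapsto \lambda j_\mu(f_1(\mu,\cdot)))_\#\bP$ and $\gamma_1 + \gamma_2 = (\mu \mapsto j_\mu(f_1(\mu,\cdot)) + j_\mu(f_2(\mu,\cdot)))_\#\bP$. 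Applying the induction hypothesis level-by-level, these equal $j_\bP(\lambda f_1)$ and $j_\bP(f_1+f_2)$ respectively. Injectivity is then a triviality: $j_\bP(f_1) = j_\bP(f_2)$ implies $\|f_1 - f_2\|_{L^2(\bP)} = \|j_\bP(f_1 - f_2)\| = 0$ by linearity and the isometry of norms.

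Finally, the inner products match by polarization, since both $\sca{\cdot}{\cdot}_\bP$ on $\cPndet{n}{T\cM}_\bP$ (by Proposition \ref{prop:fully_det_is_prehilbert}) and the $L^2$ inner product are associated to their respective norms and satisfy the parallelogram identity. Because $L^2_\pi(\bP,T\cM)$ is a Hilbert space by standard arguments, and $j_\bP$ is a linear isometric bijection, $\cPndet{n}{T\cM}_\bP$ inherits the Hilbert structure. The only step that requires some care is $(\star)$: the unrolling and the iterated $\cE$ are both defined recursively but ``contract'' the hierarchy in slightly different orders, so bookkeeping of the measurability and integration variables must be done carefully; once this is done, everything else follows cleanly.
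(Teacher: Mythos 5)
Your proof is correct and follows essentially the same route as the paper's: an induction establishing the norm identity $\|j_\bP(f)\| = \|f\|_{L^2(\bP)}$ and the linearity of $j_\bP$ via \eqref{eq:fully_det_times_scalar_expression} and \eqref{eq:fully_det_addition_expression}, with surjectivity supplied by \Cref{prop:fully_det_induced_by_map}. Your only departure is to package the norm computation as the pushforward identity $(\star)$ for $\cE^{(n)}_{T\cM}(j_\bP(f))$ and then test against $\|v\|^2_x$, which is a harmless (and slightly more informative) reorganization of the same inductive calculation.
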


\begin{proof} 
    We show this by induction on $n$. Consider first the case $n = 1$. If $\mu \in \cP_2(\cM)$, $f \in \cM_{\pi,\mu}$ and $\gamma := j_\mu(f) = f_\#\mu \in \cP_{\det}(T\cM)$, then we have
    \begin{equation} \label{eq:l_1534}
        \|\gamma\|^2 = \int \|v\|_x^2 \dd\gamma(v) = \int \|f(x)\|_x^2 \dd\mu(x) = \|f\|^2_{L^2(\mu)}.
    \end{equation}
    In particular, by \Cref{lemma:pseudo_norm_is_finite}, if $f \in L^2_\pi(\mu,T\cM)$, then $\gamma \in \cP_2(T\cM)$, with $\|\gamma\| = \|f\|_{L^2(\mu)}$. Moreover, if $\lambda \in \R$, then by \eqref{eq:fully_det_times_scalar_expression}, we have $\lambda j_\mu(f) = \lambda f_\#\mu = (\lambda f)_\#\mu = j_\mu(\lambda f)$. Furthermore, if $f_1, f_2 \in L^2_\pi(\mu,T\cM)$, we have from \eqref{eq:fully_det_addition_expression} that $j_\mu(f_1) + j_\mu(f_2) = f_{1\#}\mu + f_{2\#}\mu = (f_1+f_2)_\#\mu = j_\mu(f_1+f_2)$. Thus $j_\mu$ restricts to a isometric linear map $L^2_\pi(\mu,T\cM) \mapsto \cPndet{n}{T\cM}_\mu$. Finally, this map is surjective. Indeed, for every $\gamma \in \cPndet{n}{T\cM}_\mu$, there exists by \Cref{prop:fully_det_induced_by_map} some $f \in \cM_{\pi,\mu}$ such that $j_\mu(f) = \gamma$, and we have in fact $f \in L^2_\pi(\mu,T\cM)$ by \eqref{eq:l_1534}. This finishes the proof in the case $n = 1$. \newline
    Now, let $n > 1$, assume the proposition is true for $n-1$, and let $\bP \in \cPn{n}{\cM}$. Then, for every $f \in \cM_{\pi,\bP}$, letting $\bGamma := j_\bP(f)$, we find
    \begin{align}
        \|\bGamma\|^2 &= \int \|\gamma\|^2 \dd\bGamma(\gamma) = \int \|j_\mu(f(\mu,\cdot))\|^2 \dd\bP(\mu) \\
        &= \int \|f(\mu,\cdot)\|^2_{L^2(\mu)} \dd\bP(\mu) = \|f\|^2_{L^2(\bP)} \label{eq:l_1541}
    \end{align}
    where we used the induction hypothesis in the second line. Thus, if $f \in L^2_\pi(\bP,T\cM)$, we have again by \Cref{lemma:pseudo_norm_is_finite} that $\bGamma \in \cPn{n}{T\cM}$ with $\|\bGamma\| = \|f\|_{L^2(\bP)}$. Moreover, if $\lambda \in \R$, then by \eqref{eq:fully_det_times_scalar_expression}, we have 
    \begin{align}
        \lambda j_\bP(f) &= \lambda (\mu \to j_\mu(f(\mu,\cdot)))_\#\bP = (\mu \to \lambda j_\mu(f(\mu,\cdot)))_\#\bP \\
        &= (\mu \to j_\mu(\lambda f(\mu,\cdot)))_\#\bP = j_\bP(\lambda f)
    \end{align}
    where we used the induction hypothesis in the second line. Furthermore, if $f_1, f_2 \in L^2_\pi(\bP,T\cM)$, we have from \eqref{eq:fully_det_addition_expression} that 
    \begin{align}
        j_\bP(f_1) + j_\bP(f_2) &= (\mu \to j_\mu(f_1(\mu,\cdot)))_\#\bP + (\mu \to j_\mu(f(\mu,\cdot)))_\#\bP \\
        &= (\mu \to j_\mu(f_1(\mu,\cdot)) + j_\mu(f_2(\mu,\cdot)))_\#\bP \\
        &= (\mu \to j_\mu(f_1(\mu,\cdot) + f_2(\mu,\cdot)))_\#\bP = j_\bP(f_1+f_2)
    \end{align}
    where we used the induction hypothesis in the third line. Thus $j_\bP$ restricts to a isometric linear map $L^2_\pi(\bP,T\cM) \mapsto \cPndet{n}{T\cM}_\bP$. Finally, this map is surjective. Indeed, for every $\bGamma \in \cPndet{n}{T\cM}_\bP$, there exists by \Cref{prop:fully_det_induced_by_map} some $f \in \cM_{\pi,\bP}$ such that $j_\bP(f) = \bGamma$, and we have in fact $f \in L^2_\pi(\bP,T\cM)$ by \eqref{eq:l_1541}. This finishes the proof.
\end{proof}

\section{Geodesics} \label{sec:5_geodesics}

In this section, we will study the properties of the geodesics of the $n$-th hierarchical space $\cPn{n}{\cM}$. 

\subsection{Terminology}

We clarify in this subsection the terminology that we will use regarding the notion of geodesics. Indeed, the concept of ``geodesic" arises both in Riemannian geometry and in the field of analysis in metric spaces. However, the meanings this notion has in these two fields do not completely overlap. Since we will here refer to the notion of geodesics from both fields, this calls for some clarifications in order to avoid any ambiguity. In the following we fix $(X,d)$ to be a metric space. In this subsection only, $\cM$ will denote a generic Riemannian manifold.

\begin{definition}
    Let $I \subseteq \R$ be a bounded interval. A curve $c : I \mapsto X$ is said to be \emph{absolutely continuous} if there exists $m \in L^1(I)$ such that $d(c(s),c(t)) \leq \int_s^t m(r) \dd r$ for every $s,t \in I$ with $s \leq t$. We denote $AC(I;X)$ the class of absolutely continuous curves $c : I \mapsto X$.
\end{definition}

\begin{definition}
    The \emph{length} of a curve $c : [a,b] \mapsto X$ is defined by
    \begin{equation}
        \mathrm{Length}(c) := \sup \left\{ \sum_{k=0}^{n-1} d(c(t_k),c(t_{k+1})) \setcond n \geq 1, a = t_0 < t_1 < \ldots < t_n = b \right\}.
    \end{equation}
\end{definition}

Clearly, for any curve $c : [a,b] \mapsto X$, one has $d(c(a),c(b)) \leq \mathrm{Length}(c)$.

\begin{definition}
    An absolutely continuous curve $c : [0,1] \mapsto X$ is said to be a \emph{geodesic} between $x = c(0)$ and $y = c(1)$ if $\mathrm{Length}(c) = d(x,y)$. It is said to be a \emph{constant speed geodesic} if $d(c(s),c(t)) = |t-s| d(x,y)$ for every $s,t \in [0,1]$.
\end{definition}

It is clear from the definition of the length that a constant speed geodesic is a geodesic. To differentiate this notion of geodesic from the one encountered in Riemannian geometry, we adopt the following terminology:

\begin{definition}
    Let $I \subseteq \R$ be an interval. A $C^1$ curve $c : I \mapsto \cM$ is said to be a \emph{Riemannian geodesic} if it is a geodesic in the sense of Riemannian geometry, that is if it satisfy the geodesic equation $D_{\dot{c}} \dot{c} = 0$.  
\end{definition}

By \citep[Theorem 3.15]{burtscher2015lengthstructure}, for any piecewise $C^1$ curve $c : I \mapsto \cM$, where $I$ is a bounded interval, one has
\begin{equation}
    \mathrm{Length}(c) = \int_I \|\dot{c}(t)\|_{c(t)} \dd t.
\end{equation}

A Riemannian geodesic is in general not a geodesic, as it is only locally length minimizing. On the other hand, a constant speed geodesic on a Riemannian manifold is in fact a Riemannian geodesic, as the following proposition shows:

\begin{proposition} \label{prop:cs_geodesics_in_manifolds}
    Let $c : [0,1] \mapsto \cM$ be a constant speed geodesic between $x = c(0)$ and $y = c(1)$. Then, there exists a unique $v \in T_x \cM$ such that $c(t) = \exp_x(tv)$ for every $t \in [0,1]$, and it satisfies $\|v\|_x = d(x,y)$. Moreover, for every $t,s \in [0,1]$, whenever one of the $t,s$ is not in $\{0,1\}$, there is a unique $w \in T_{c(t)}\cM$ such that $\exp_{c(t)}(w) = c(s)$ and $\|w\|_{c(t)} = d(c(t),c(s))$, and it is given by $w = (s-t)\PT_t(x,v,v)$.
\end{proposition}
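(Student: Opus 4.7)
The plan is to reduce the statement to a standard fact about Riemannian geodesics by first showing that $c$ is (the restriction to $[0,1]$ of) a Riemannian geodesic, and then reading off $v$ as its initial velocity.

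I would start by noting that the hypothesis $d(c(s),c(t)) = |t-s|\,d(x,y)$ makes $c$ Lipschitz, hence absolutely continuous. Fix $t_0 \in (0,1)$ and choose a convex normal neighborhood $U$ of $c(t_0)$ together with $\epsilon > 0$ small enough that $c([t_0-\epsilon,t_0+\epsilon]) \subseteq U$. The restriction $c|_{[t_0-\epsilon,t_0+\epsilon]}$ is a constant speed geodesic between its endpoints, hence length-minimizing; but in a convex normal neighborhood the unique length-minimizer between two points (up to reparameterization) is the Riemannian geodesic connecting them, so $c$ coincides on this interval with a smooth Riemannian geodesic. Since $t_0$ is arbitrary, $c$ is a Riemannian geodesic on $(0,1)$, and by completeness of $\cM$ combined with uniqueness of geodesics given initial conditions, it extends to a Riemannian geodesic defined on all of $\R$; continuity at the endpoints identifies $c$ on $[0,1]$ with the restriction of this geodesic.

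Setting $v := \dot{c}(0) \in T_x\cM$, we then have $c(t) = \exp_x(tv)$ for $t \in [0,1]$, and $\|v\|_x$ equals the metric speed of $c$ at $0$, which is $d(x,y)$. Uniqueness of $v$ is immediate by differentiating the formula $c(t) = \exp_x(tv')$ at $t = 0$. For the second assertion, suppose $t, s \in [0,1]$ with at least one of them in $(0,1)$, say $t \in (0,1)$, and without loss of generality $t < s$. Define $w := (s-t)\,\PT_t(x,v,v)$. Since the velocity of a Riemannian geodesic parallel transports along itself, $\dot{c}(t) = \PT_t(x,v,v)$, so $\exp_{c(t)}(w) = c(s)$ and $\|w\|_{c(t)} = |s-t|\,\|v\|_x = d(c(t),c(s))$, showing existence.

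For uniqueness, suppose $w'$ is another vector with $\exp_{c(t)}(w') = c(s)$ and $\|w'\|_{c(t)} = d(c(t),c(s))$. The curve $\gamma : [t,s] \to \cM$, $\gamma(r) := \exp_{c(t)}((r-t) w'/(s-t))$, is a minimizing Riemannian geodesic from $c(t)$ to $c(s)$. Since $t > 0$, I can concatenate $c|_{[t-\epsilon,t]}$ (for some small $\epsilon > 0$) with $\gamma$ to obtain a curve $\tilde{c}$ on $[t-\epsilon,s]$ of total length $(\epsilon + s-t)\,d(x,y) = d(c(t-\epsilon),c(s))$. Thus $\tilde{c}$ is length-minimizing, parameterized with constant speed, and therefore, by the same convex-neighborhood argument applied near the potential corner at $t$, is itself a smooth Riemannian geodesic. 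Matching left and right velocities of $\tilde{c}$ at $t$ forces $\dot{c}(t) = w'/(s-t)$, hence $w' = (s-t)\dot{c}(t) = w$. The main technical point throughout is the assertion that a constant speed length-minimizer is a smooth Riemannian geodesic; this is obtained cleanly by localizing inside a convex normal neighborhood, after which the rest of the proof is a direct unwinding of the definitions of $\exp$ and $\PT$.
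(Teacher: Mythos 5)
Your proof is correct and follows essentially the same strategy as the paper's: both reduce the statement to classical Riemannian geometry by localizing $c$ inside small (convex/totally normal) neighborhoods, where a constant-speed length-minimizer must coincide with the unique minimizing Riemannian geodesic, and then conclude via a concatenation/corner-smoothing argument. The only point to tighten is the ``say $t \in (0,1)$'' in your uniqueness argument for $w$: the statement also requires uniqueness when $t \in \{0,1\}$ and $s \in (0,1)$, in which case the corner-smoothing concatenation must be performed at the interior point $s$ (extending $c$ past $s$) rather than at $t$, though the argument is otherwise identical.
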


\begin{proof}
    We only need to prove that the path is $c$ is piecewise $C^1$, and the rest will follow from the properties of Riemannian geodesics. Since $c([0,1])$ is compact in $\cM$, by \citep[Theorem 2.92]{gallot2004riemannian}, we know that there exists some subdivision $0 = t_0 < t_1 < \ldots < t_n = 1$ such that for every $i \in \{0,\ldots,n-1\}$ and $t_i \leq t \leq s \leq t_{i+1}$, there exists an unique length minimizing Riemannian geodesic $\gamma_{t,s} : [t,s] \mapsto \cM$ between $c(t)$ and $c(s)$. Now, if we let $i \in \{0,\ldots,n-1\}$ and $s \in (t_i,t_{i+1})$, then the curve $\gamma : [t_i, t_{i+1}] \mapsto \cM$ obtained by concatenation of the curves $\gamma_{t_i,s}$ and $\gamma_{s,t_{i+1}}$ is a length minimizing piecewise $C^1$ curve from $c(t_i)$ to $c(t_{i+1})$, since
    \begin{align}
        \mathrm{Length}(\gamma) &\leq \mathrm{Length}(\gamma_{t_i,s}) + \mathrm{Length}(\gamma_{s,t_{i+1}}) = d(c(t_i),c(s)) + d(c(s),c(t_{i+1})) \\ 
        &= (s-t_i) d(x,y) + (t_{i+1}-s)d(x,y) = (t_{i+1}-t_i) d(x,y) = d(c(t_i),c(t_{i+1})). 
    \end{align}
    Therefore, by \citep[Proposition 2.97]{gallot2004riemannian}, $\gamma$ is actually a length minimizing Riemannian geodesic, so that $\gamma = \gamma_{t_i,t_{i+1}}$ by unicity of $\gamma_{t_i,t_{i+1}}$. In particular, we see that $\gamma_{t_i,t_{i+1}}(s) = c(s)$ for any $s \in [t_i, t_{i+1}]$. From this, we conclude that for every $i \in \{0,\ldots,n-1\}$, $c$ coincides with $\gamma_{t_i,t_{i+1}}$ on $[t_i,t_{i+1}]$. Thus $c$ is a length minimizing piecewise $C^1$ curve from $x$ to $y$. This finishes the proof.
\end{proof}

\subsection{Optimal velocity plans induce constant speed geodesics}

We first show that we can use optimal velocity plans $\gamma \in \Gamma_o^{(n)}(\cM)$ to construct constant speed geodesics between any pair of measures $\mu, \nu \in \cPn{n}{\cM}$. In particular, the space $\cPn{n}{\cM}$ will be a geodesic space.

\begin{proposition} \label{prop:opt_vel_plans_give_geodesics}
    Let $n \geq 0$, $\mu, \nu \in \cPn{n}{\cM}$ and $\gamma \in \Gamma_o(\mu,\nu)$. For every $t \in \R$, let $\mu_t := [\exp]^{(n)}(t\gamma)$. Then $t \in [0,1] \mapsto \mu_t$ is a constant speed geodesic between $\mu$ and $\nu$. 
\end{proposition}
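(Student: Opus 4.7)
The plan is to construct, for every $s, t \in [0,1]$, an explicit velocity plan from $\mu_s$ to $\mu_t$ whose norm equals $|t-s|\,\W_2(\mu,\nu)$, and then to use the triangle inequality to promote this upper bound into an equality. First I would dispose of the endpoints: naturality gives $\mu_0 = [\exp]^{(n)}(\bm{0}_\mu) = [\exp \circ \iota_0]^{(n)}(\mu) = \mu$ (since $\exp_x(0) = x$) and $\mu_1 = [\exp]^{(n)}(\gamma) = \nu$ by definition of $\Gamma(\mu,\nu)$.

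Next, I would introduce the interpolation map
\begin{equation}
F_{s,t} : (x,v) \in T\cM \longmapsto (\exp_x(sv),\, (t-s)\,\PT_s(x,v,v)) \in T\cM,
\end{equation}
which is smooth with linear growth thanks to the regularity of $\exp$ and $\PT$, and set $\gamma_{s,t} := [F_{s,t}]^{(n)}(\gamma) \in \cPn{n}{T\cM}$. Two Riemannian identities of continuous maps $T\cM \to \cM$ are central: $\pi \circ F_{s,t} = \exp \circ m_s$, which is tautological, and $\exp \circ F_{s,t} = \exp \circ m_t$, which encodes the fact that for any $(x,v) \in T\cM$ the geodesic $r \mapsto \exp_x(rv)$ can be restarted at any time $s$ with velocity $\PT_s(x,v,v)$. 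Pushing these through $[\,\cdot\,]^{(n)}$ and using the definitions yields $[\pi]^{(n)}(\gamma_{s,t}) = \mu_s$ and $[\exp]^{(n)}(\gamma_{s,t}) = \mu_t$, so $\gamma_{s,t} \in \Gamma(\mu_s,\mu_t)$.

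I would then compute the norm of $\gamma_{s,t}$ using that parallel transport is a fiberwise isometry:
\begin{equation}
\|\gamma_{s,t}\|^2 = \bE^{(n)}_\gamma\!\left[\|(t-s)\PT_s(x,v,v)\|^2_{\exp_x(sv)}\right] = (t-s)^2 \bE^{(n)}_\gamma[\|v\|^2_x] = (t-s)^2\,\|\gamma\|^2.
\end{equation}
Combined with optimality of $\gamma$ (so that $\|\gamma\| = \W_2(\mu,\nu)$) and \Cref{prop:pseudo_norm_larger_than_w2}, this yields $\W_2(\mu_s,\mu_t) \leq |t-s|\,\W_2(\mu,\nu)$ for all $s,t \in [0,1]$. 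To upgrade this to equality I would apply the triangle inequality: for $0 \leq s \leq t \leq 1$,
\begin{equation}
\W_2(\mu,\nu) \leq \W_2(\mu,\mu_s) + \W_2(\mu_s,\mu_t) + \W_2(\mu_t,\nu) \leq [s + (t-s) + (1-t)]\,\W_2(\mu,\nu) = \W_2(\mu,\nu),
\end{equation}
forcing each intermediate distance to saturate its upper bound, so $\W_2(\mu_s,\mu_t) = |t-s|\,\W_2(\mu,\nu)$, which is exactly the constant speed geodesic property.

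The only delicate point will be verifying the identity $\exp \circ F_{s,t} = \exp \circ m_t$ as continuous maps on all of $T\cM$ (not merely on some base support), which reduces to the elementary Riemannian fact that a geodesic has parallel velocity field combined with completeness of $\cM$; once this and the regularity of $F_{s,t}$ are in hand, the remainder is bookkeeping with naturality of the pushforward functor and the definition of the norm through the $n$-expectancy $\bE^{(n)}$.
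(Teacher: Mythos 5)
Your proof is correct, but it takes a genuinely different route from the paper's. The paper proves the bound $\W_2(\mu_t,\mu_s) \leq |t-s|\,\W_2(\mu,\nu)$ by induction on $n$: the base case is the Riemannian fact, and the inductive step pushes $\bGamma$ forward through $([\exp \circ m_t]^{(n-1)},[\exp\circ m_s]^{(n-1)})$ to produce a transport plan between $\bP_t$ and $\bP_s$, applies the induction hypothesis under the integral, and then uses optimality. You instead avoid induction at this level by constructing an explicit interpolating velocity plan $\gamma_{s,t} = [F_{s,t}]^{(n)}(\gamma) \in \Gamma(\mu_s,\mu_t)$ via parallel transport, computing its norm with the isometry property of $\PT$, and invoking \Cref{prop:pseudo_norm_larger_than_w2} (which carries the induction for you). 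Both arguments close with the identical triangle-inequality saturation. Your construction is essentially the content of \Cref{lemma:pt_of_plan_follows_curve}, which the paper only develops afterwards in the parallel-transport subsection: your $\gamma_{s,t}$ is exactly $(t-s)\PT_s^{(n)}(\gamma)$ in the paper's later notation, and nothing you use is unavailable at this point ($\PT$ and its smoothness are set up in the preliminaries, and the linear-growth check you flag does go through via \eqref{eq:geodesic_distance_sasaki_2}). What each approach buys: the paper's induction is leaner here, needing only marginal projections of transport plans; yours front-loads the parallel-transport machinery but delivers, as a byproduct, an explicit velocity plan between any two intermediate times — i.e., the Lipschitz estimate of \Cref{lemma:pt_of_plan_follows_curve} — which the paper must re-derive separately before proving the converse statement.
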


\begin{proof}
    We prove this by induction. For $n = 0$, this is simply a reformulation of a well-known property of Riemannian manifolds : if $(x,y) \in \cM^2$ and $v \in T_x\cM$ are such that $\exp_x(v) = y$ and $d(x,y) = \|v\|_x$, then $t \in [0,1] \mapsto \exp_x(tv)$ is a length-minimizing constant speed geodesic from $x$ to $y$. Now, let $n > 0$, and assume that the proposition has been proved for $n-1$. Let $\bP, \bQ \in \cPn{n}{\cM}$ and $\bGamma \in \Gamma_o(\bP,\bQ)$. For every $t \in \R$ let $\bP_t := [\exp]^{(n)}(t\bGamma)$. Then for every $t,s \in [0,1]$,
    \begin{align}
        \W_2^2(\bP_t, \bP_s) &\leq \int \W_2^2([\exp]^{(n-1)}(t\gamma), [\exp]^{(n-1)}(s\gamma)) \dd\bGamma(\gamma) \\
        &\leq \int (t-s)^2 \W_2^2([\pi]^{(n-1)}(\gamma), [\exp]^{(n-1)}(\gamma)) \dd\bGamma(\gamma) \\
        &\leq (t-s)^2 \W_2^2(\bP, \bQ)
    \end{align}
    where we used that $([\exp \circ m_t]^{(n-1)}, [\exp \circ m_s]^{(n-1)})_\#\bGamma$ is a (not necessarily optimal) transport plan between $\bP_t$ and $\bP_s$ in the first line, the induction hypothesis in the second line and the optimality of $\bGamma$ is the third line. Thus $\W_2(\bP_t,\bP_s) \leq |t-s|\W_2(\bP,\bQ)$ for every $t,s \in [0,1]$. In particular, for every $t,s \in [0,1]$ with $t \leq s$, using the triangle inequality,
    \begin{align}
        \W_2(\bP,\bQ) &\leq \W_2(\bP,\bP_t) + \W_2(\bP_t,\bP_s) + \W_2(\bP_s,\bQ)\\
        &\leq \W_2(\bP_0,\bP_t) + \W_2(\bP_t,\bP_s) + \W_2(\bP_s,\bP_1) \\
        &\leq t\W_2(\bP,\bQ) + (s-t)\W_2(\bP,\bQ) + (1-s)\W_2(\bP,\bQ) = \W_2(\bP,\bQ),
    \end{align}
    implying that all these inequalities are equalities, so that $\W_2(\bP_t, \bP_s) = |t-s|\W_2(\bP,\bQ)$ for every $t,s \in [0,1]$. This finishes the proof.
\end{proof}

\begin{corollary}
    For every $n > 0$, $\cPn{n}{\cM}$ is a geodesic space.
\end{corollary}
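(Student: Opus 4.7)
The plan is to combine the existence of optimal velocity plans between any pair of measures with the construction from \Cref{prop:opt_vel_plans_give_geodesics}. Fix $n > 0$ and let $\mu, \nu \in \cPn{n}{\cM}$. By \Cref{prop:opt_vel_plans_exist}, the set $\Gamma_o(\mu,\nu)$ is nonempty, so pick any $\gamma \in \Gamma_o(\mu,\nu)$. Define the curve $t \in [0,1] \mapsto \mu_t := [\exp]^{(n)}(t\gamma)$.

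By \Cref{prop:opt_vel_plans_give_geodesics}, this curve is a constant speed geodesic on $\cPn{n}{\cM}$. The remaining task is to verify that its endpoints match $\mu$ and $\nu$. For $t = 1$, this is immediate from the definition of $\Gamma(\mu,\nu)$: $\mu_1 = [\exp]^{(n)}(\gamma) = \nu$. For $t = 0$, note that $0 \cdot \gamma = \bm{0}_{\mu} = [\iota_0]^{(n)}(\mu)$ by point 2 of \Cref{prop:various_results_on_inner_prod}, so that
\begin{equation}
    \mu_0 = [\exp]^{(n)}(\bm{0}_\mu) = [\exp \circ \iota_0]^{(n)}(\mu) = [\id_\cM]^{(n)}(\mu) = \mu,
\end{equation}
where we used that $\exp \circ \iota_0 = \id_\cM$ (since $\exp_x(0) = x$ for every $x \in \cM$) together with functoriality of $[\cdot]^{(n)}$.

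This gives a constant speed geodesic from $\mu$ to $\nu$, so $\cPn{n}{\cM}$ is a geodesic space. There is no real obstacle here: the work has already been done in \Cref{prop:opt_vel_plans_exist} (existence of optimal velocity plans between arbitrary marginals) and \Cref{prop:opt_vel_plans_give_geodesics} (constant speed geodesic property). The only trivial verification needed is that the $t = 0$ endpoint is indeed $\mu$, which follows from the identity $\exp \circ \iota_0 = \id_\cM$.
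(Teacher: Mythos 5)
Your proof is correct and follows the same route as the paper: nonemptiness of $\Gamma_o(\mu,\nu)$ (via \Cref{prop:opt_vel_plans_exist}) combined with \Cref{prop:opt_vel_plans_give_geodesics}. The endpoint verification at $t=0$ is a harmless extra, since \Cref{prop:opt_vel_plans_give_geodesics} already asserts the curve is a constant speed geodesic \emph{between $\mu$ and $\nu$}.
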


\begin{proof}
    Indeed, by \Cref{cor:selection_map_opt_vel_plans}, for any pair $\mu,\nu \in \cPn{n}{\cM}$, $\Gamma_o(\mu,\nu)$ is not empty, so by \Cref{prop:opt_vel_plans_give_geodesics} there are geodesics from $\mu$ to $\nu$.
\end{proof}

\subsection{Parallel transport of velocity plans}

The converse implication, that is, that if $(\mu_t)_{t \in [0,1]}$ is a constant speed geodesic between $\mu$ and $\nu$ in $\cPn{n}{\cM}$, then it is induced by some optimal velocity plan $\gamma \in \Gamma_o(\mu,\nu)$, is more difficult to show. For this, we will need to define for every $t \in \R$ an operator $\PT_t^{(n)} : \cPn{n}{T\cM} \mapsto \cPn{n}{T\cM}$ by 
\begin{equation}
    \PT_t^{(n)} := [(x,v) \mapsto (\exp_x(tv), \PT_t(x,v,v))]^{(n)}.
\end{equation}
Given $\gamma \in \cPn{n}{T\cM}$, the map $t \mapsto \PT_t^{(n)}(\gamma)$ can be considered to be the ``parallel transport" of $\gamma$ through the curve $s \to [\exp]^{(n)}(s\gamma)$. In the case $t = 1$, we will also use the shorthand notation $\PT^{(n)} := \PT^{(n)}_1$. Clearly, $\PT_0^{(n)}$ is the identity map of $\cPn{n}{T\cM}$. 

\begin{lemma} \label{lemma:pt_group_and_commutator}
    The operators $\PT_t^{(n)}$ satisfy the following properties:
    \begin{enumerate}
        \item $\PT_t^{(n)} \circ \PT_s^{(n)} = \PT_{t+s}^{(n)}$ for any $t,s \in \R$.
        \item $[\pi]^{(n)} \circ \PT_t^{(n)} = [\exp \circ \ m_t]^{(n)}$ for any $t \in \R$.
        \item $\PT_t^{(n)} \circ [m_s]^{(n)} = [m_s]^{(n)} \circ \PT_{st}^{(n)}$ for any $t,s \in \R$.
    \end{enumerate}
\end{lemma}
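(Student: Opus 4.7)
The plan is to reduce each of the three identities to a pointwise identity between continuous maps on $T\cM$, and then invoke functoriality of $\cP_2$ (namely $[f\circ g]^{(n)} = [f]^{(n)} \circ [g]^{(n)}$) to lift the identity to $\cPn{n}{T\cM}$. Writing $f_t : T\cM \mapsto T\cM$ for the map $f_t(x,v) = (\exp_x(tv), \PT_t(x,v,v))$, so that $\PT_t^{(n)} = [f_t]^{(n)}$, the three claims are equivalent to
\begin{equation}
f_t \circ f_s = f_{t+s}, \qquad \pi \circ f_t = \exp \circ m_t, \qquad f_t \circ m_s = m_s \circ f_{st},
\end{equation}
on $T\cM$, for every $t,s \in \R$.

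The second identity is immediate: $\pi(f_t(x,v)) = \exp_x(tv) = \exp(m_t(x,v))$. The first identity encodes the group law for parallel transport along a geodesic. Given $(x,v) \in T\cM$, the geodesic $r \mapsto \exp_x(rv)$ has velocity field $r \mapsto \PT_r(x,v,v)$, so the geodesic starting at $(\exp_x(sv), \PT_s(x,v,v))$ is just the tail of the original geodesic reparametrised by $r \mapsto r+s$. Consequently $\exp_{\exp_x(sv)}\bigl(t\,\PT_s(x,v,v)\bigr) = \exp_x((s+t)v)$, and the parallel transport of $\PT_s(x,v,v)$ over time $t$ along this continuation equals $\PT_{s+t}(x,v,v)$; together these give $f_t \circ f_s = f_{s+t}$. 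The third identity is a reparametrisation statement: the geodesic $r \mapsto \exp_x(r\cdot sv)$ is the geodesic $r \mapsto \exp_x(rv)$ evaluated at $rs$, and parallel transport is linear in the transported vector, so
\begin{equation}
\PT_t(x,sv,sv) = s\,\PT_t(x,sv,v) = s\,\PT_{st}(x,v,v),
\end{equation}
which together with $\exp_x(tsv) = \exp_x((st)v)$ yields $f_t(m_s(x,v)) = m_s(f_{st}(x,v))$.

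Once each pointwise identity on $T\cM$ is established, I would conclude by applying the functorial pushforward $[\cdot]^{(n)}$: since $\cP_2$ is a functor on $\Pold$, it satisfies $[f\circ g]^{(n)} = [f]^{(n)} \circ [g]^{(n)}$ for any two composable morphisms, so the three base identities immediately promote to the asserted equalities between operators on $\cPn{n}{T\cM}$. In particular $\PT_0^{(n)} = [\id_{T\cM}]^{(n)} = \id_{\cPn{n}{T\cM}}$, and property (1) gives $\PT_t^{(n)} \circ \PT_{-t}^{(n)} = \id$, so each $\PT_t^{(n)}$ is a bijection with inverse $\PT_{-t}^{(n)}$.

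The only step that requires genuine care is the first identity $f_t \circ f_s = f_{t+s}$: one must verify both the exponential part (continuation of geodesics) and the parallel transport part (tangent field of a geodesic is itself parallel, so parallel transporting it by $t$ more gives the velocity at time $t+s$). This is a standard fact in Riemannian geometry, but it is the only place in the proof where one uses more than the definitions of $m_t$, $\exp$, $\pi$ and linearity of parallel transport in its last argument; the remaining two identities are essentially unpacking of the definitions.
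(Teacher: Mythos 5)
Your proof is correct and follows the same route as the paper: reduce all three identities to the $n=0$ case via functoriality of the pushforward, then verify the pointwise identities on $T\cM$ using standard facts about geodesics and parallel transport. The paper simply asserts the base case as "basic properties of geodesic curves and parallel transport"; your explicit verification of those three identities is a welcome elaboration, not a deviation.
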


\begin{proof}
    Since $\PT_t^{(n)} := [(x,v) \mapsto (\exp_x(tv), \PT_t(x,v,v))]^{(n)}$, by functoriality of $[\cdot]$, we only need to prove these points for $n = 0$. This follows by the basic properties of geodesic curves and parallel transport on Riemannian manifolds.
\end{proof}

The second point of the lemma means that $[\pi]^{(n)}(\PT_t^{(n)}(\gamma)) = [\exp]^{(n)}(t\gamma)$ for any $t \in \R$ and $\gamma \in \cPn{n}{T\cM}$, while the third point means that $\PT_t^{(n)}(s\gamma) = s\PT_{st}^{(n)}(\gamma)$ for any $s,t \in \R$ and $\gamma \in \cPn{n}{T\cM}$.

\begin{lemma} \label{lemma:pt_of_plan_follows_curve}
    Let $\gamma \in \cPn{n}{T\cM}$, and let $\mu_t := [\exp]^{(n)}(t\gamma)$ for every $t \in \R$. Then, for every $t,s \in \R$, one has $\|\PT_t^{(n)}(\gamma)\| = \|\gamma\|$ and $s\PT_t^{(n)}(\gamma) \in \Gamma(\mu_t, \mu_{t+s})$. In particular, the curve $(\mu_t)_{t \in \R}$ is $\|\gamma\|$-Lipschitz continuous. 
\end{lemma}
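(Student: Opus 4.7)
The plan is to prove the two displayed equalities by reducing them via functoriality to pointwise statements on $T\cM$, and then derive the Lipschitz bound using \Cref{prop:pseudo_norm_larger_than_w2}.

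First, I would establish $\|\PT_t^{(n)}(\gamma)\| = \|\gamma\|$. By definition of the norm and by the pushforward formula for the $n$-expectancy \eqref{eq:n_expectancy_of_pushforward} applied to the map $g_t : (x,v) \mapsto (\exp_x(tv), \PT_t(x,v,v))$, we have
\begin{equation}
\|\PT_t^{(n)}(\gamma)\|^2 = \bE^{(n)}_{[g_t]^{(n)}(\gamma)}[\|w\|^2_y] = \bE^{(n)}_\gamma\bigl[\|\PT_t(x,v,v)\|^2_{\exp_x(tv)}\bigr].
\end{equation}
Since parallel transport along any geodesic is a linear isometry between the tangent spaces at its endpoints, the integrand equals $\|v\|_x^2$ for every $(x,v) \in T\cM$, and we conclude $\|\PT_t^{(n)}(\gamma)\|^2 = \bE^{(n)}_\gamma[\|v\|_x^2] = \|\gamma\|^2$.

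Next, I would check that $s\PT_t^{(n)}(\gamma) \in \Gamma(\mu_t, \mu_{t+s})$. Using functoriality of $[\cdot]^{(n)}$ together with $\pi \circ m_s = \pi$ and point 2 of \Cref{lemma:pt_group_and_commutator}, the first marginal reads
\begin{equation}
[\pi]^{(n)}(s\PT_t^{(n)}(\gamma)) = [\pi \circ m_s]^{(n)}(\PT_t^{(n)}(\gamma)) = [\pi]^{(n)}(\PT_t^{(n)}(\gamma)) = [\exp \circ m_t]^{(n)}(\gamma) = \mu_t.
\end{equation}
For the second marginal, functoriality reduces the identity $[\exp]^{(n)}(s\PT_t^{(n)}(\gamma)) = \mu_{t+s}$ to the pointwise equality $\exp \circ m_s \circ g_t = \exp \circ m_{t+s}$ on $T\cM$, i.e.\ $\exp_{\exp_x(tv)}(s\,\PT_t(x,v,v)) = \exp_x((t+s)v)$ for all $(x,v) \in T\cM$. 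This is the standard fact that the Riemannian geodesic $r \mapsto \exp_x(rv)$ has velocity $\PT_r(x,v,v)$ at time $r$, so restarting the geodesic at time $t$ with the transported velocity just reparametrizes it. Hence both marginal conditions hold.

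Finally, I would deduce the Lipschitz property. Using the scaling $\|s\gamma'\| = |s|\|\gamma'\|$ (immediate from $[m_s]^{(n)}$ and \eqref{eq:n_expectancy_of_pushforward}) and the already proven isometry property, we have $\|s\PT_t^{(n)}(\gamma)\| = |s|\,\|\gamma\|$. Since $s\PT_t^{(n)}(\gamma)$ is a velocity plan from $\mu_t$ to $\mu_{t+s}$, \Cref{prop:pseudo_norm_larger_than_w2} yields $\W_2(\mu_t,\mu_{t+s}) \leq |s|\,\|\gamma\|$ for every $t,s \in \R$, which is precisely $\|\gamma\|$-Lipschitz continuity of $t \mapsto \mu_t$. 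I do not expect any significant obstacle here; the argument is essentially a bookkeeping exercise in the functor $\cP_2$, with all the nontrivial content packaged into the isometric nature of parallel transport and the defining property of Riemannian geodesics on $\cM$.
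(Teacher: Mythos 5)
Your proof is correct and follows essentially the same route as the paper: the isometry of parallel transport gives the norm identity, functoriality reduces the marginal conditions to pointwise facts on $T\cM$, and \Cref{prop:pseudo_norm_larger_than_w2} converts the norm bound into the Lipschitz estimate. The only cosmetic difference is that for the second marginal the paper chains the group-law identities $\PT_t^{(n)} \circ \PT_s^{(n)} = \PT_{t+s}^{(n)}$ and $[\pi]^{(n)} \circ \PT_t^{(n)} = [\exp \circ m_t]^{(n)}$ from \Cref{lemma:pt_group_and_commutator}, whereas you unfold directly to the pointwise geodesic identity $\exp_{\exp_x(tv)}(s\,\PT_t(x,v,v)) = \exp_x((t+s)v)$ — which is exactly how that lemma is itself proved.
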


\begin{proof}
    Indeed, we have
    \begin{align}
        \|\PT_t^{(n)}(\gamma)\|^2 &= \bE^{(n)}_{\PT_t^{(n)}(\gamma)}[\|v\|^2_x] \\
        &= \bE^{(n)}_\gamma[\|\PT_t(x,v,v)\|^2_{\exp_x(tv)}] \\
        &= \bE^{(n)}_\gamma[\|v\|^2] = \|\gamma\|^2
    \end{align}
    where we obtain the third line by using the fact that the parallel transport is an isometry between tangent spaces. Moreover, we have by point 2 of \Cref{lemma:pt_group_and_commutator} that
    \begin{equation}
        [\pi]^{(n)}(s\PT_t^{(n)}(\gamma)) = [\pi]^{(n)}(\PT_t^{(n)}(\gamma)) = [\exp]^{(n)}(t\gamma) = \mu_t
    \end{equation}
    and also that
    \begin{align}
        [\exp]^{(n)}(s\PT_t^{(n)}(\gamma)) &= [\pi]^{(n)}(\PT_s^{(n)}(\PT_t^{(n)}(\gamma))) \\
        &= [\pi]^{(n)}(\PT_{t+s}^{(n)}(\gamma)) \\
        &= [\exp]^{(n)}((t+s)\gamma) = \mu_{t+s}
    \end{align}
    where we obtained the first, second and third line using respectively point 2, 1 and again 2 of \Cref{lemma:pt_group_and_commutator}. This proves $s\PT_t^{(n)}(\gamma) \in \Gamma(\mu_t,\mu_{t+s})$. Now, we show that $(\mu_t)_t$ is $\|\gamma\|$-Lipschitz continuous. Let $t,s \in \R$, then by the preceding, we have $(s-t)\PT_t^{(n)}(\gamma) \in \Gamma(\mu_t,\mu_s)$, so that 
    \begin{equation}
        \W_2(\mu_t,\mu_s) \leq \|(s-t)\PT_t^{(n)}(\gamma)\| = |t-s| \cdot \|\PT_t^{(n)}(\gamma)\| = |t-s| \cdot \|\gamma\|.
    \end{equation}
    This finishes the proof.
\end{proof}

A consequence of this proposition is that if we let $\gamma \in \cPn{n}{T\cM}$ and define the curve $\mu_t := [\exp]^{(n)}(t\gamma)$, $t \in \R$, then, if we fix $t_0, t_1 \in \R$ and $\tilde{\gamma} := t_1 \PT_{t_0}^{(n)}(\gamma)$, then the curve $\tilde{\mu}_s := [\exp]^{(n)}(s\gamma)$, $s \in \R$ is simply a linear reparametrization of the curve $(\mu_t)_t$: that is 
\begin{equation} \label{eq:pt_of_vel_plan_induce_reparametrized_geodesic}
    \tilde{\mu}_s = \mu_{t_0 + st_1}, \quad \forall s \in \R.
\end{equation}

\begin{proposition} \label{prop:pt_of_opt_vel_plan_is_optimal}
    Let $n \geq 0$, $\mu, \nu \in \cPn{n}{\cM}$ and $\gamma \in \Gamma_o(\mu,\nu)$, and let $\mu_t := [\exp]^{(n)}(t\gamma)$ for every $t \in \R$. Then, for every $t \in [0,1]$ and $s \in \R$ such that $t+s \in [0,1]$, $s\PT_t^{(n)}(\gamma) \in \Gamma_o(\mu_t,\mu_{t+s})$.
\end{proposition}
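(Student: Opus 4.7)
The plan is to combine Lemma \ref{lemma:pt_of_plan_follows_curve} with the constant speed geodesic property established in Proposition \ref{prop:opt_vel_plans_give_geodesics}. Optimality of a velocity plan $\gamma \in \Gamma(\mu,\nu)$ reduces to the identity $\|\gamma\| = \W_2(\mu,\nu)$, so the task boils down to matching a norm on one side with a Wasserstein distance on the other.

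First, I would invoke Lemma \ref{lemma:pt_of_plan_follows_curve} to obtain both the membership $s\PT_t^{(n)}(\gamma) \in \Gamma(\mu_t,\mu_{t+s})$ and the norm preservation $\|\PT_t^{(n)}(\gamma)\| = \|\gamma\|$. Combining the latter with the homogeneity $\|s\eta\|^2 = \bE^{(n)}_\eta[\|sv\|_x^2] = s^2 \|\eta\|^2$, I get
\begin{equation}
    \|s\PT_t^{(n)}(\gamma)\| = |s| \cdot \|\gamma\|.
\end{equation}

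Next, since $\gamma \in \Gamma_o(\mu,\nu)$ by hypothesis, we have $\|\gamma\| = \W_2(\mu,\nu)$. By Proposition \ref{prop:opt_vel_plans_give_geodesics}, the curve $(\mu_t)_{t\in[0,1]}$ is a constant speed geodesic from $\mu_0 = \mu$ to $\mu_1 = \nu$, so for $t, t+s \in [0,1]$ it holds $\W_2(\mu_t,\mu_{t+s}) = |s| \cdot \W_2(\mu,\nu) = |s| \cdot \|\gamma\|$. Chaining these equalities gives $\|s\PT_t^{(n)}(\gamma)\| = \W_2(\mu_t,\mu_{t+s})$, which is precisely the optimality condition for $s\PT_t^{(n)}(\gamma)$ as a velocity plan from $\mu_t$ to $\mu_{t+s}$.

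There is no real obstacle here: the proof is essentially a one-line assembly. The only thing worth being careful about is the case $s < 0$, but it is absorbed by the absolute value appearing in both $\|s\eta\| = |s|\|\eta\|$ and the constant speed identity $\W_2(\mu_t,\mu_{t+s}) = |s|\W_2(\mu,\nu)$, and by noting that $s\PT_t^{(n)}(\gamma) \in \Gamma(\mu_t,\mu_{t+s})$ already covers negative $s$ in Lemma \ref{lemma:pt_of_plan_follows_curve}.
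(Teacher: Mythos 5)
Your proof is correct and follows essentially the same route as the paper: membership and norm preservation via Lemma \ref{lemma:pt_of_plan_follows_curve}, homogeneity of the norm, optimality of $\gamma$, and the constant speed property from Proposition \ref{prop:opt_vel_plans_give_geodesics}, chained into the single identity $\|s\PT_t^{(n)}(\gamma)\| = \W_2(\mu_t,\mu_{t+s})$. Nothing to add.
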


\begin{proof}
    We already know by \Cref{lemma:pt_of_plan_follows_curve} that $s\PT_t^{(n)}(\gamma) \in \Gamma(\mu_t,\mu_{t+s})$. We show that it is optimal: indeed, we have
    \begin{align}
        \|s\PT_t^{(n)}(\gamma)\| &= |s| \ \|\PT_t^{(n)}(\gamma)\| \\
        &= |s| \ \|\gamma\| \\
        &= |s| \ \W_2(\mu,\nu) \\
        &= \W_2(\mu_t, \mu_{t+s})
    \end{align}
    where we used \Cref{lemma:pt_of_plan_follows_curve} to obtain the second line, the optimality of $\gamma$ for the third line, and the fact that $(\mu_t)_t$ is a constant speed geodesic (\Cref{prop:opt_vel_plans_give_geodesics}) for the fourth line. This shows optimality of $s\PT_t^{(n)}(\gamma)$.
\end{proof}

\subsection{Constant speed geodesics arise from optimal velocity plans}

We are now ready to show the converse of \Cref{prop:opt_vel_plans_give_geodesics}:

\begin{proposition} \label{prop:cs_geodesics_come_from_opt_vel_plans}
    Let $n \geq 0$ and $\mu, \nu \in \cPn{n}{\cM}$. If $(\mu_t)_{t \in [0,1]}$ is a constant speed geodesic from $\mu$ to $\nu$, then there exists a unique $\gamma \in \Gamma_o(\mu,\nu)$ such that $\mu_t = [\exp]^{(n)}(t\gamma)$ for every $t \in [0,1]$. Moreover, for every $t,s \in [0,1]$, whenever one of $t,s$ is not in $\{0,1\}$, there is a unique optimal velocity plan $\gamma_{t,s} \in \Gamma_o(\mu_t,\mu_s)$, which is given by $\gamma_{t,s} := (s-t)\PT_t^{(n)}(\gamma)$. Moreover, when $t \in (0,1)$, $\gamma_{t,s}$ is fully deterministic.
\end{proposition}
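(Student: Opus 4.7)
The plan is to proceed by induction on $n$. The base case $n = 0$ follows directly from \Cref{prop:cs_geodesics_in_manifolds}: a constant speed geodesic from $x$ to $y$ in $\cM$ has the form $t \mapsto \exp_x(tv)$ for a unique $v \in T_x\cM$ with $\|v\|_x = d(x,y)$, and the same proposition provides the parallel transport formula for the unique connecting velocity at interior times. Full determinism at level $0$ is vacuous since $\cPndet{0}{T\cM}_x = T_x \cM$.

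For the inductive step, assume the statement holds at level $n-1$ and let $(\bP_t)_{t \in [0,1]}$ be a constant speed geodesic in $\cPn{n}{\cM}$. The first step is to lift this curve to a probability measure $\mathbf{Q}$ concentrated on the set of constant speed geodesics $c : [0,1] \mapsto \cPn{n-1}{\cM}$, such that the evaluation map $c \mapsto c(t)$ pushes $\mathbf{Q}$ to $\bP_t$ for every $t$. I would do this by a standard multi-marginal approximation: for each dyadic partition $0 = t_0^N < \ldots < t_N^N = 1$, apply \Cref{rk:exists_opt_trans_plans_selection} to select optimal transport plans $\bGamma_i^N \in \Pi_o(\bP_{t_i^N}, \bP_{t_{i+1}^N})$, and then use iterated application of the gluing lemma (a direct consequence of \Cref{prop:p_of_fiber_product_onto_fiber_product_of_p} and \Cref{lemma:hierarchical_gluing_lemma}) to glue them into a multi-marginal measure $\mathbf{Q}^N$ on $\cPn{n-1}{\cM}^{N+1}$ with marginals $\bP_{t_i^N}$ and consecutive pairwise marginals $\bGamma_i^N$. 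The constant speed property forces $\mathbf{Q}^N$-a.e. tuple to satisfy $\W_2(\mu_{t_i^N}, \mu_{t_{i+1}^N}) = (t_{i+1}^N - t_i^N) \W_2(\mu_{t_0^N}, \mu_{t_N^N})$, making the approximations concentrated on ``geodesic skeleta''. A tightness argument, using \Cref{prop:rel_compact_sets_in_w2_topology} and the uniform bound on second moments supplied by \eqref{eq:2nd_moment_of_coupling}, then yields a limit $\mathbf{Q}$ on the path space $C([0,1], \cPn{n-1}{\cM})$ concentrated on constant speed geodesics.

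The second step uses the induction hypothesis to assign to each $c \in \spt(\mathbf{Q})$ a unique optimal velocity plan $\gamma_c \in \Gamma_o(c(0), c(1))$ inducing $c$, and I would verify Borel-measurability of the map $c \mapsto \gamma_c$ via \Cref{cor:selection_map_opt_vel_plans} and a uniqueness-plus-closed-graph argument. Defining $\bGamma := (c \mapsto \gamma_c)_\# \mathbf{Q} \in \cPn{n}{T\cM}$, the identity $\bP_t = [\exp]^{(n)}(t\bGamma)$ is then immediate from $c(t) = [\exp]^{(n-1)}(t\gamma_c)$. Optimality follows from computing
\begin{equation}
    \|\bGamma\|^2 = \int \|\gamma_c\|^2 \diff\mathbf{Q}(c) = \int \W_2^2(c(0),c(1)) \diff\mathbf{Q}(c) = \W_2^2(\bP_0, \bP_1),
\end{equation}
where the last equality comes from the fact that $([\mathrm{ev}_0], [\mathrm{ev}_1])_\# \mathbf{Q}$ is an optimal transport plan in $\Pi_o(\bP_0, \bP_1)$ (which itself follows from the construction as a limit of optimal plans and the closedness stated in \Cref{rk:exists_opt_trans_plans_selection}).

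For the parallel-transport characterization and uniqueness at interior times, I would fix $t \in (0,1)$, $s \in [0,1]$ and show that any $\tilde\bGamma \in \Gamma_o(\bP_t, \bP_s)$ coincides with $(s-t)\PT_t^{(n)}(\bGamma)$. One direction, i.e.\ that $(s-t)\PT_t^{(n)}(\bGamma) \in \Gamma_o(\bP_t, \bP_s)$, is given directly by \Cref{prop:pt_of_opt_vel_plan_is_optimal}. For the converse, concatenate $\tilde\bGamma$ (or rather its ``reverse'' via $-\PT^{(n)}$) with $\bGamma$ to produce, via the gluing lemma and the triangle inequality in $\W_2$, a velocity plan from $\bP_0$ to $\bP_1$ of norm exactly $\W_2(\bP_0,\bP_1)$. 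This forces the concatenation to be concentrated base-support-wise on geodesics that extend smoothly through $\bP_t$, and by the induction hypothesis at level $n-1$ applied fiberwise (disintegrating along $\bP_0$), this rigidity forces $\tilde\bGamma = (s-t)\PT_t^{(n)}(\bGamma)$. Full determinism of this plan, when $t \in (0,1)$, is then immediate by the induction hypothesis: fiberwise, the disintegration of $\bGamma$ along $\bP_0$ yields for $\bP_0$-a.e.\ $\mu$ a velocity plan $\gamma_\mu$ whose parallel transport at time $t \in (0,1)$ is fully deterministic in $\cPndet{n-1}{T\cM}_{c(t)}$, and \Cref{def:fully_det_measure} lifts this fiberwise determinism to full determinism of $(s-t)\PT_t^{(n)}(\bGamma)$.

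The main obstacle is Step 1: constructing the lift $\mathbf{Q}$ to the path space $C([0,1], \cPn{n-1}{\cM})$ with proper control on moments and tightness. Unlike the case $\cM$ locally compact, the hierarchical spaces $\cPn{n-1}{\cM}$ are Polish but not locally compact, so tightness of the multi-marginal approximations $\mathbf{Q}^N$ requires careful bookkeeping of second moments using the isometric embedding of \Cref{prop:p2n_embeds_into_pn} and the lower semicontinuity properties of $\W$ provided by \Cref{prop:w_mu_is_lsc}. A secondary, more technical obstacle is the measurability of the selection $c \mapsto \gamma_c$ in Step 2, which relies on the compactness of $\Gamma_o(\mu,\nu)$ (\Cref{prop:opt_vel_plan_proj_map_is_proper}) combined with the uniqueness provided by the induction hypothesis on the subset of CS geodesics in $\spt(\mathbf{Q})$.
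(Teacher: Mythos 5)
Your architecture is genuinely different from the paper's. You lift the constant speed geodesic to a superposition measure $\mathbf{Q}$ on the path space of $\cPn{n-1}{\cM}$ (multi-marginal gluing of optimal plans along dyadic partitions, then a tightness and limit argument) and apply the induction hypothesis path by path; the paper never constructs a path-space measure. Instead it fixes $t_0\in(0,1)$, picks optimal velocity plans $\bGamma_{t_0,0}\in\Gamma_o(\bP_{t_0},\bP_0)$ and $\bGamma_{t_0,1}\in\Gamma_o(\bP_{t_0},\bP_1)$, couples them, and extracts from the equality case of the Minkowski inequality, \Cref{lemma:gluing_geodesics} and the induction hypothesis the proportionality $(1-t_0)\bGamma_{t_0,0}=-t_0\bGamma_{t_0,1}$; uniqueness, full determinism, and the fact that $t_0^{-1}\bGamma_{0,t_0}$ does not depend on $t_0$ all follow from this single rigidity identity. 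Your route is essentially the superposition approach of \citep{pinzi2025nested} and is viable in principle, at the cost of path-space tightness machinery (interpolation of the skeleta, measurable selection of interpolating geodesics) that the paper entirely avoids.

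There is, however, one genuine gap: the full-determinism claim for $\gamma_{t,s}$ with $t\in(0,1)$ is not ``immediate by the induction hypothesis.'' By \Cref{def:fully_det_measure}, full determinism of $\bGamma_{t,s}=(c\mapsto\gamma^c_{t,s})_\#\mathbf{Q}$ requires \emph{two} things: that each $\gamma^c_{t,s}$ be fully deterministic in $\cPndet{n-1}{T\cM}$ (which the induction hypothesis does give), \emph{and} that the disintegration of $\bGamma_{t,s}$ over its base $\bP_t$ be $\bP_t$-a.e.\ a Dirac mass, i.e.\ that there be no branching: two geodesics $c\neq c'$ in $\spt(\mathbf{Q})$ with $c(t)=c'(t)=\mu$ but $\gamma^c_{t,s}\neq\gamma^{c'}_{t,s}$ would destroy determinism at level $n$ even though each fibre plan is deterministic at level $n-1$. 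Ruling this out is precisely the essential-injectivity-at-interior-times statement, and it is where the paper spends a dedicated step of its proof (showing, via a product coupling $\bGamma_{t_0,0,\mu}\otimes\bGamma_{t_0,1,\mu}$ and the proportionality identity, that both disintegrations are a.e.\ Diracs). Your closely related rigidity/concatenation argument for the uniqueness of $\gamma_{t,s}$ is the right tool to close this, but as written the determinism step is asserted rather than derived, and it is the crux rather than a formality.
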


\begin{proof}
    We show this by induction. For $n = 0$, this is simply \Cref{prop:cs_geodesics_in_manifolds}. Now, let $n > 0$, and assume that the proposition holds for $n-1$. Let $(\bP_t)_{t \in [0,1]}$ be a constant speed geodesic in $\cPn{n}{\cM}$. For every $t,s \in [0,1]$, fix some $\bGamma_{t,s} \in \Gamma_o(\bP_t,\bP_s)$.
    \begin{enumerate}
        \item First, we notice that either $(\bP_t)_{t \in [0,1]}$ is constant or it is injective\footnote{This is a general property of constant speed geodesics on metric spaces. Indeed, if $c : [0,1] \mapsto (X,d)$ is a constant speed geodesic on a metric space, then  for every $t \neq s \in [0,1]$, $d(c(s),c(t)) = |t-s| d(c(0),c(1))$, so that $c(t) = c(s)$ if and only if $c(0) = c(1)$.}. If it is constant, then we can take $\bGamma := \bm{0}_\bP$. It is the only possible choice of $\bGamma \in \Gamma_o(\bP,\bQ)$ by point 11 of \Cref{prop:various_results_on_inner_prod} as we need $\|\bGamma\| = \W_2(\bP_0,\bP_1) = 0$. Thus, in the following, we will assume that $(\bP_t)_t$ is injective.
        \item \label{enum:l_2365:pt_2} Next, we show that for every $t \in (0,1)$, we have $(1-t)\bGamma_{t,0} = -t\bGamma_{t,1}$. Let indeed $t_0 \in (0,1)$ and $\bA \in \Gamma_{\bP_t}(\bGamma_{t,0},\bGamma_{t,1})$. Then $([\exp \circ \pi_1]^{(n-1)}, [\exp \circ \pi_2]^{(n-1)})_\#\bA$ is a (not necessarily optimal) transport plan between $\bP_0$ and $\bP_1$, so that
        \begin{align}
            \W_2^2(\bP_0,\bP_1) &\leq \int \W_2^2([\exp \circ \pi_1]^{(n-1)}(\alpha), [\exp \circ \pi_2]^{(n-1)}(\alpha)) \dd\bA(\alpha) \\
            &\leq \int \W_2^2(\nu_{1,\alpha}, \nu_{2,\alpha}) \dd\bA(\alpha)
        \end{align}
        where we used the shorthand notations
        \begin{align}
            \gamma_{i,\alpha} &:= [\pi_i]^{(n-1)}(\alpha), &i \in \{1,2\} \\
            \nu_{i,\alpha} &:= [\exp]^{(n-1)}(\gamma_{i,\alpha}), &i \in \{1,2\} \\
            \mu_\alpha &:= [\pi]^{(n-1)}(\gamma_{1,\alpha}) = [\pi]^{(n-1)}(\gamma_{2,\alpha}).
        \end{align}
        However, by the triangular inequality, we have 
        \begin{equation}
            \W_2(\nu_{1,\alpha},\nu_{2,\alpha}) \leq \W_2(\nu_{1,\alpha},\mu_\alpha) + \W_2(\mu_\alpha,\nu_{2,\alpha}),
        \end{equation}
        so that
        \begin{align}
            \W_2(\bP_0,\bP_1) &\leq \sqrt{\int \W_2^2(\nu_{1,\alpha}, \nu_{2,\alpha}) \dd\bA(\alpha)} \\
            &\leq \sqrt{\int (\W_2(\nu_{1,\alpha}, \mu_\alpha) + \W_2(\mu_\alpha, \nu_{2,\alpha}))^2 \dd\bA(\alpha)} \\
            &\leq \sqrt{\int \W_2^2(\nu_{1,\alpha},\mu_\alpha) \dd\bA(\alpha)} + \sqrt{\int \W_2^2(\nu_{2,\alpha},\mu_\alpha) \dd\bA(\alpha)} \label{eq:l_2383}\\
            &\leq \sqrt{\int \W_2^2([\exp]^{(n-1)}(\gamma),[\pi]^{(n-1)}(\gamma)) \dd\bGamma_{t_0,0}(\gamma)} \\
            &\quad + \sqrt{ \int \W_2^2([\exp]^{(n-1)}(\gamma),[\pi]^{(n-1)}(\gamma)) \dd\bGamma_{t_0,1}(\gamma)} \\
            &\leq \W_2(\bP_0,\bP_{t_0}) + \W_2(\bP_{t_0},\bP_1) \\
            &\leq \W_2(\bP_0,\bP_1)
        \end{align}
        where we used the Minkowski inequality to obtain the third inequality, the optimality of $\bGamma_{t_0,0}$ and $\bGamma_{t_0,1}$ to obtain the fourth, and the fact that $(\bP_t)_t$ is a constant speed geodesic to obtain the fifth. In particular, all these inequalities are equalities, and the equality case of the Minkowski inequality gives that:
        \begin{itemize}
            \item Either $\W_2(\nu_{2,\alpha},\mu_\alpha) = 0$ for $\bA$-a.e. $\alpha$. 
            \item Or there exists $\lambda \geq 0$ such that $\W_2(\nu_{1,\alpha},\mu_\alpha) = \lambda \W_2(\nu_{2,\alpha},\mu_\alpha)$ for $\bA$-a.e. $\alpha$.
        \end{itemize}
        The first possibility cannot hold as it would imply that
        \begin{equation}
            \W_2^2(\bP_t,\bP_1) = \int \W_2^2(\nu_{2,\alpha},\mu_\alpha) \dd\bA(\alpha) = 0
        \end{equation}
        so that $\bP_{t_0} = \bP_1$, which contradicts our assumption that $(\bP_t)_t$ is injective. Thus the second possibility holds, and there exists $\lambda \geq 0$ such that $\W_2(\nu_{1,\alpha},\mu_\alpha) = \lambda \W_2(\nu_{2,\alpha},\mu_\alpha)$ for $\bA$-a.e. $\alpha$. Therefore we have
        \begin{align}
            \W_2^2(\bP_0,\bP_{t_0}) &= \int \W_2^2(\nu_{1,\alpha},\mu_\alpha) \dd\bA(\alpha) = \lambda^2 \int \W_2^2(\nu_{2,\alpha},\mu_\alpha) \dd\bA(\alpha) = \lambda^2 \W_2^2(\bP_{t_0},\bP_1)
        \end{align}
        and since $\W_2(\bP_0,\bP_{t_0}) = t_0\W_2(\bP_0,\bP_1)$ and $\W_2(\bP_{t_0},\bP_1) = (1-t_0)\W_2(\bP_0,\bP_1)$, we conclude that $\lambda = t_0(1-t_0)^{-1}$. Moreover, since we have determined that \eqref{eq:l_2383} is in fact an equality, we have $\W_2(\nu_{1,\alpha},\nu_{2,\alpha}) = \W_2(\nu_{1,\alpha},\mu_\alpha) + \W_2(\nu_{2,\alpha},\mu_\alpha)$ for $\bA$-a.e. $\alpha$. Furthermore, since $\bGamma_{t_0,0}$ and $\bGamma_{t_0,1}$ are optimal, they are concentrated on $\Gamma_o^{(n-1)}(\cM)$ by \Cref{prop:opt_vel_plan_gives_opt_trans_plan}. From these considerations, we deduce that there exists a set $S \subseteq \cPn{n}{T^2\cM}$ such that $\bA(S) = 1$ and for every $\alpha \in S$,
        \begin{itemize}
            \item $\W_2(\nu_{1,\alpha},\nu_{2,\alpha}) = \W_2(\nu_{1,\alpha},\mu_\alpha) + \W_2(\nu_{2,\alpha},\mu_\alpha)$
            \item $\gamma_{1,\alpha} \in \Gamma_o(\mu_\alpha,\nu_{1,\alpha})$ and $\gamma_{2,\alpha} \in \Gamma_o(\mu_\alpha, \nu_{2,\alpha})$
            \item $\W_2(\nu_{1,\alpha},\mu_\alpha) = \frac{t_0}{1-t_0} \W_2(\nu_{2,\alpha},\mu_\alpha)$
        \end{itemize}
        Fix $\alpha \in S$. By \Cref{prop:opt_vel_plans_give_geodesics}, $c_1 : t \in [0,1] \mapsto [\exp]^{(n-1)}(t\gamma_{1,\alpha})$ is a constant speed geodesic from $\mu_\alpha$ to $\nu_{1,\alpha}$, and $c_2 : t \in [0,1] \mapsto [\exp]^{(n-1)}(t\gamma_{2,\alpha})$ is a constant speed geodesic from $\mu_\alpha$ to $\nu_{2,\alpha}$. Therefore, by \Cref{lemma:gluing_geodesics} below, the curve $c : [0,1] \mapsto \cPn{n}{\cM}$ defined by $c(t) = c_1((t_0-t)/t_0)$ for $t \in [0,t_0]$ and $c(t) = c_2((t-t_0)/(1-t_0))$ for $t \in [t_0,1]$ is a constant speed geodesic from $\nu_{1,\alpha}$ to $\nu_{2,\alpha}$ (indeed, we have $\W_2(\nu_{1,\alpha},\nu_{2,\alpha}) = (1 + (1-t_0)t_0^{-1})\W_2(\nu_{1,\alpha},\mu_\alpha) = t_0^{-1}\W_2(\nu_{1,\alpha},\mu_\alpha)$, so the lemma applies). Then, by the induction hypothesis, there exists a unique $\gamma_\alpha \in \Gamma_o(\nu_{1,\alpha},\nu_{2,\alpha})$ such that $c(t) = [\exp]^{(n-1)}(t\gamma_\alpha)$ for every $t \in [0,1]$ with $\gamma_{1,\alpha} = -t_0 \PT_{t_0}^{(n-1)}(\gamma_\alpha)$ and $\gamma_{2,\alpha} = (1-t_0)\PT_{t_0}^{(n-1)}(\gamma_\alpha)$ fully deterministic. From this, we conclude that $\gamma_{1,\alpha} = -t_0(1-t_0)^{-1}\gamma_{2,\alpha}$ for every $\alpha \in S$. Therefore, the maps $[\pi_1]^{(n-1)}$ and $[m_{-t_0(1-t_0)^{-1}}]^{(n-1)} \circ [\pi_2]^{(n-1)}$ coincide on $S$, on which $\bA$ is concentrated, so that
        \begin{equation}
            \bGamma_{t_0,0} = [\pi_1]^{(n)}(\bA) = [m_{-t_0(1-t_0)^{-1}}]^{(n)}([\pi_2]^{(n)}(\bA)) = -\frac{t_0}{1-t_0} \bGamma_{t_0,1}.
        \end{equation}
        \item \label{enum:l_2365:pt_3} Since the equality $\bGamma_{t_0,0} = -t_0(1-t_0)^{-1} \bGamma_{t_0,1}$ holds for any choice of $\bGamma_{t_0,0} \in \Gamma_o(\bP_{t_0},\bP_0)$ and $\bGamma_{t_0,1} \in \Gamma_o(\bP_{t_0},\bP_1)$, it implies that these plans are unique. Moreover, since, by \Cref{prop:pt_of_opt_vel_plan_is_optimal}, the map $\bGamma \mapsto -\PT^{(n-1)}(\bGamma)$ induces a bijection between $\Gamma_o(\bP,\bQ)$ and $\Gamma_o(\bQ,\bP)$ for any pair $\bP,\bQ \in \cPn{n}{\cM}$, this implies that the plans $\bGamma_{0,t_0}$ and $\bGamma_{1,t_0}$ are also unique.
        \item We have also seen that for any $\bA \in \Gamma_{\bP_{t_0}}(\bGamma_{t_0,0},\bGamma_{t_0,1})$, there is a set $S_\bA$ of couplings such that $\bA(S_\bA) = 1$ and for every $\alpha \in S_\bA$, $[\pi_1]^{(n-1)}(\alpha) = -t_0(1-t_0)^{-1}[\pi_2]^{(n-1)}(\alpha)$ with $[\pi_1]^{(n-1)}(\alpha)$ and  $[\pi_2]^{(n-1)}(\alpha)$ fully deterministic. Consider then the disintegrations $\dd\bGamma_{t_0,0}(\gamma) = \dd\bGamma_{t_0,0,\mu}(\gamma)\dd\bP_{t_0}(\mu)$ and $\dd\bGamma_{t_0,1}(\gamma) = \dd\bGamma_{t_0,1,\mu}(\gamma)\dd\bP_{t_0}(\mu)$, and consider $\bA \in \Gamma_{\bP_{t_0}}(\bGamma_{t_0,0},\bGamma_{t_0,1})$ having disintegration $\dd\bA(\alpha) = \dd\bA_{\gamma_1,\gamma_2}(\alpha) \dd(\bGamma_{t_0,0,\mu} \otimes \bGamma_{t_0,1,\mu})(\gamma_1,\gamma_2)\dd\bP_{t_0}(\mu)$ (the proof of \Cref{prop:opt_coupling_characterization} shows that such a $\bA$ exists). Thus, we have for $\bP_{t_0}$-a.e. $\mu$ and $\bGamma_{t_0,0,\mu} \otimes \bGamma_{t_0,1,\mu}$-a.e. $\gamma_1,\gamma_2$ that $\bA_{\gamma_1,\gamma_2}(S_\bA) = 1$, so that we have for $\bP_{t_0}$-a.e. $\mu$ and $\bGamma_{t_0,0,\mu} \otimes \bGamma_{t_0,1,\mu}$-a.e. $\gamma_1,\gamma_2$ that $\gamma_1 = -t_0(1-t_0)^{-1} \gamma_2$ with $\gamma_1,\gamma_2$ fully deterministic. In particular, this implies that for $\bP$-a.e. $\mu$, both $\bGamma_{t_0,0,\mu}$ and $\bGamma_{t_0,1,\mu}$ are Dirac masses on fully deterministic velocity plans. Thus, $\bGamma_{t_0,0}$ and $\bGamma_{t_0,1}$ are both fully deterministic.
        \item \label{enum:l_2365:pt_4} In fact, for any pair $t_1,t_2 \in [0,1]$, if either $t_1$ or $t_2$ is not in $\{0,1\}$, then $\bGamma_{t_1,t_2}$ is uniquely, with $\bGamma_{t_1,t_2} = -\PT^{(n)}(\bGamma_{t_2,t_1})$, and, when $t_1 \notin \{0,1\}$, it is fully deterministic. Indeed, this follows from the previous points: for instance, in the case $0 < t_1 < t_2$, the unicity of $\bGamma_{t_1,t_2}$ follows from point \ref{enum:l_2365:pt_2} applied to the rescaled constant speed geodesic $(\bP_{tt_2})_{t \in [0,1]}$ with $t_0 := t_1/t_2$. The other cases are proved similarly (and for $t_1 = t_2$, we have $\bGamma_{t_1,t_2} = \bm{0}_{\bP_{t_1}}$ as the only possible choice).
        \item \label{enum:l_2365:pt_5} For any $t_1,t_2,t_3 \in (0,1)$ with $t_1 < t_2 < t_3$, it holds $\bGamma_{t_2,t_1} = \frac{t_1-t_2}{t_3-t_2}\bGamma_{t_2,t_3}$. This again follows by applying point \ref{enum:l_2365:pt_2} to the rescaled geodesic $(\bP_{t_1 + t(t_3-t_1)})_{t \in [0,1]}$ and $t_0 := (t_2 - t_1)/(t_3-t_1)$.
        \item For $t_0 \in (0,1)$, the velocity plan $\bGamma_0 := \frac{1}{t_0}\bGamma_{0,t_0}$ does not depend on $t_0$. Indeed, assume that $t_0 < t_1$. Then:
        \begin{align}
            \frac{1}{t_0}\bGamma_{0,t_0} &= -\frac{1}{t_0} \PT^{(n)}(\bGamma_{t_0,0}) \\
            &= -\frac{1}{t_0} \PT^{(n)}\left(\frac{-t_0}{t_1-t_0} \bGamma_{t_0,t_1} \right) \\
            &= -\frac{1}{t_0} \PT^{(n)}\left(\frac{t_0}{t_1-t_0} \PT^{(n)}(\bGamma_{t_1,t_0}) \right) \\
            &= -\frac{1}{t_0} \PT^{(n)}\left(\frac{t_0}{t_1-t_0} \PT^{(n)}\left(\frac{t_0-t_1}{1-t_1}\bGamma_{t_1,1}\right) \right) \\
            &= -\frac{1}{t_0} \PT^{(n)}\left(\frac{t_0}{t_1-t_0} \PT^{(n)}\left(\frac{t_1-t_0}{t_1}\bGamma_{t_1,0}\right) \right) \\
            &= -\frac{1}{t_0} \PT^{(n)}\left(\frac{t_0}{t_1} \PT^{(n)}_{\frac{t_1-t_0}{t_1}}(\bGamma_{t_1,0}) \right) \\
            &= -\frac{1}{t_1} \PT^{(n)}(\bGamma_{t_1,0}) \\
            &= \frac{1}{t_1} \bGamma_{0,t_1}
        \end{align}
        where we used point \ref{enum:l_2365:pt_4} to obtain the first, third, and eighth lines, we used point \ref{enum:l_2365:pt_5} to obtain the second, fourth, and fifth lines, and we used \Cref{lemma:pt_group_and_commutator} to obtain the sixth and seventh lines.
        \item Now, we have $[\exp]^{(n)}(0 \cdot \bGamma_0) = [\exp]^{(n)}(\bm{0}_{\bP_0}) = \bP_0$, and for every $t_0 \in (0,1)$, we have $[\exp]^{(n)}(t_0\bGamma_0) = [\exp]^{(n)}(\bGamma_{0,t_0}) = \bP_{t_0}$. Moreover, since $(\bP_t)_{t \in [0,1]}$ is continuous and $t \mapsto [\exp]^{(n)}(t\bGamma_0)$ is $\|\bGamma_0\|$-Lipschitz continuous by \Cref{lemma:pt_of_plan_follows_curve}, we also have $[\exp]^{(n)}(\bGamma_0) = \bP_1$. Thus $\bGamma_0 \in \Gamma(\bP_0,\bP_1)$. Finally, $\bGamma_0$ is optimal: indeed, if we fix any $t_0 \in (0,1)$, we have
        \begin{equation}
            \|\bGamma_0\| = \frac{1}{t_0}\|\bGamma_{0,t_0}\| = \frac{1}{t_0}\W_2(\bP_0,\bP_{t_0}) = \W_2(\bP_0,\bP_1)
        \end{equation}
        using the optimality of $\bGamma_{0,t_0}$ and the fact that $(\bP_t)_t$ has constant speed. Thus we have found an optimal velocity plan $\bGamma_0 \in \Gamma_o(\bP_0,\bP_1)$ which induces the constant speed geodesic $(\bP_t)_{t \in [0,1]}$.
        \item Finally, we prove that there is an unique such $\bGamma_0$. Indeed, if $\bGamma \in \Gamma_o(\bP_0,\bP_1)$ is another optimal velocity plan such that $[\exp]^{(n)}(t\bGamma) = \bP_t$ for every $t \in [0,1]$, then, by \Cref{prop:pt_of_opt_vel_plan_is_optimal}, fixing any $t_0 \in (0,1)$, we must have $t_0 \bGamma \in \Gamma_o(\bP_0,\bP_{t_0})$. Thus, we have $t_0\bGamma = \bGamma_{0,t_0}$ by unicity of $\bGamma_{0,t_0}$, so that $\bGamma = t_0^{-1} \bGamma_{0,t_0} = \bGamma_0$.
        \item Clearly, for every $t,s \in [0,1]$, whenever either $t,s$ is not in $\{0,1\}$, by \Cref{prop:pt_of_opt_vel_plan_is_optimal} and by unicity of $\bGamma_{t,s}$, we have $\bGamma_{t,s} = (s-t)\PT_t^{(n)}(\bGamma_0)$.
    \end{enumerate}
    This finishes the proof.
\end{proof}

\begin{lemma} \label{lemma:gluing_geodesics}
    Let $(X,d)$ be a metric space. Let $x,y,z \in X$ be such that $d(x,z) = d(x,y) + d(y,z)$, and let $c_1 : [0,1] \mapsto X$ be a constant speed geodesic from $x$ to $y$ and $c_2 : [0,1] \mapsto X$ be a constant speed geodesic from $y$ to $z$. Let $t_0 \in [0,1]$ be such that $t_0 d(x,z) = d(x,z)$. If $t_0 \notin \{0,1\}$, then the curve $c : [0,1] \mapsto X$ defined by
    \begin{equation}
        c(t) := \begin{cases}
            c_1(t/t_0) & \hbox{ if } t \in [0,t_0] \\
            c_2((t-t_0)/(1-t_0)) & \hbox{ if } t \in [t_0,1]
        \end{cases}
    \end{equation}
    is a constant speed geodesic from $x$ to $z$.
\end{lemma}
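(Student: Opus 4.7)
The plan is to verify directly that the glued curve $c$ satisfies the constant speed condition $d(c(s),c(t)) = |t-s|\,d(x,z)$ for all $s,t \in [0,1]$, from which it immediately follows that $c$ is Lipschitz continuous (hence absolutely continuous) with length equal to $d(x,z)$, making it a constant speed geodesic from $x$ to $z$. Observe first that $c$ is well-defined and continuous at the gluing point, since $c_1(1) = y = c_2(0)$, and that $c(0) = x$, $c(1) = z$. The hypothesis $t_0\,d(x,z) = d(x,y)$ (the statement contains an apparent typo, as the condition $t_0 d(x,z) = d(x,z)$ as literally written would force $t_0 = 1$) combined with the assumption $d(x,z) = d(x,y) + d(y,z)$ yields $d(y,z) = (1-t_0)\,d(x,z)$, which is what makes the speeds on both halves match.

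I would then split into three cases. When both $s,t \in [0,t_0]$, the constant speed property of $c_1$ gives
\[
d(c(s),c(t)) = d(c_1(s/t_0), c_1(t/t_0)) = \frac{|t-s|}{t_0}\,d(x,y) = |t-s|\,d(x,z).
\]
The case $s,t \in [t_0,1]$ is symmetric, using $c_2$ and $d(y,z) = (1-t_0)\,d(x,z)$.

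The only non-trivial step, and the main obstacle, is the mixed case $s \in [0,t_0]$, $t \in [t_0,1]$. The upper bound follows from the triangle inequality through $c(t_0) = y$ together with the two same-half computations:
\[
d(c(s),c(t)) \leq d(c(s),y) + d(y,c(t)) = (t_0-s)\,d(x,z) + (t-t_0)\,d(x,z) = (t-s)\,d(x,z).
\]
For the lower bound, I would apply the triangle inequality globally, using again the same-half formulas to evaluate $d(x,c(s)) = s\,d(x,z)$ (with $0,s \in [0,t_0]$) and $d(c(t),z) = (1-t)\,d(x,z)$ (with $t,1 \in [t_0,1]$):
\[
d(x,z) \leq d(x,c(s)) + d(c(s),c(t)) + d(c(t),z) = s\,d(x,z) + d(c(s),c(t)) + (1-t)\,d(x,z),
\]
which rearranges to $d(c(s),c(t)) \geq (t-s)\,d(x,z)$. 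The key mechanism here is that the hypothesis $d(x,z) = d(x,y) + d(y,z)$ forces both this global triangle inequality and the one passing through $y$ to saturate simultaneously, which is precisely what allows the two half-geodesics to concatenate into a single constant speed geodesic of the correct speed $d(x,z)$.
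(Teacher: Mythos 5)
Your proof is correct and follows essentially the same route as the paper's: establish $d(c(s),c(t)) \leq |t-s|\,d(x,z)$ case by case and use the saturated global triangle inequality $d(x,z) \leq d(x,c(s)) + d(c(s),c(t)) + d(c(t),z)$ to force equality. You also correctly identified the typo in the statement (the hypothesis should read $t_0\,d(x,z) = d(x,y)$), which the paper's own proof implicitly relies on.
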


\begin{proof}
    It is enough to prove that for every $s,t \in [0,1]$, $d(c(t),c(s)) \leq |t-s| d(x,z)$. Indeed, in this case, by the triangular inequality, we have for every $0 \leq t < s \leq 1$ that
    \begin{equation}
        d(x,z) \leq d(c(0),c(t)) + d(c(t),c(s)) + d(c(s),c(1)) \leq (t + (s-t) + (1-s))d(x,z) = d(x,z),
    \end{equation}
    so all these inequalities are equalities and in particular $d(c(t),c(s)) = |t-s|d(x,z)$. \newline
    Let $t,s \in [0,1]$ with $t \leq s$. The possible cases are the following:
    \begin{itemize}
        \item If $t,s \in [0,t_0]$. Then, since $c_1$ is a constant speed geodesic from $x$ to $y$,
        \begin{equation}
            d(c(t),c(s)) = d(c_1(t/t_0),c_1(s/t_0)) = |t-s|t_0^{-1}d(x,y) = |t-s|d(x,z)
        \end{equation}
        by definition of $t_0$.
        \item Similarly if $t,s \in [t_0,1]$, we have
        \begin{align}
            d(c(t),c(s)) &= d(c_2((t-t_0)/(1-t_0)),c_2((s-t_0)/(1-t_0))) \\
            &= |t-s|(1-t_0)^{-1}d(y,z) = |t-s|d(x,z)
        \end{align}
        as indeed $d(y,z) = d(x,z) - d(x,y) = (1-t_0)d(x,z)$.
        \item Finally, if $t < t_0 < s$, then we have
        \begin{align}
            d(c(t),c(s)) &\leq d(c(t),c(t_0)) + d(c(t_0),c(s)) \\
            &\leq d(c_1(t/t_0),c_1(1)) + d(c_2(0),c_2((s-t_0)/(1-t_0))) \\
            &\leq \left(1 - \frac{t}{t_0}\right)d(x,y) + \frac{s-t_0}{1-t_0} d(y,z) \\
            &\leq (t_0 - t)d(x,z) + (s-t_0) d(x,z) = (s-t)d(x,z).
        \end{align}
    \end{itemize}
    This finishes the proof.
\end{proof}

Finally, we show that distinct constant speed geodesics in $\cPn{n}{\cM}$ with the same endpoints not cross at intermediate times, in the following sense:

\begin{proposition}
    Let $n \geq 0$, and let $(\mu_t)_{t \in [0,1]}$, $(\tilde{\mu}_t)_{t \in [0,1]}$ be two constant speed geodesics in $\cPn{n}{\cM}$ such that $\mu_0 = \tilde{\mu}_0 = \mu$ and $\mu_1 = \tilde{\mu}_1 = \nu$. If there exists $t_0 \in (0,1)$ such that $\mu_{t_0} = \tilde{\mu}_{t_0}$, then we have $\mu_t = \tilde{\mu}_t$ for every $t \in [0,1]$.
\end{proposition}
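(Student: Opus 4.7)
The plan is to reduce the claim directly to the uniqueness clause of Proposition \ref{prop:cs_geodesics_come_from_opt_vel_plans}. First, by that proposition applied to each of the two constant speed geodesics, one obtains optimal velocity plans $\gamma, \tilde\gamma \in \Gamma_o(\mu,\nu)$ such that $\mu_t = [\exp]^{(n)}(t\gamma)$ and $\tilde\mu_t = [\exp]^{(n)}(t\tilde\gamma)$ for every $t \in [0,1]$.

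Next, I would specialize the uniqueness part of that same proposition to the pair of parameters $(0,t_0)$. Because $t_0 \in (0,1)$ is not in $\{0,1\}$, the proposition asserts that $\Gamma_o(\mu_0, \mu_{t_0}) = \Gamma_o(\mu, \mu_{t_0})$ is a singleton whose unique element is $t_0 \PT_0^{(n)}(\gamma) = t_0 \gamma$. Applying the identical statement to the second geodesic, the singleton $\Gamma_o(\mu, \tilde\mu_{t_0})$ has $t_0 \tilde\gamma$ as its only element. The hypothesis $\mu_{t_0} = \tilde\mu_{t_0}$ now forces these two singletons to coincide, so $t_0\gamma = t_0\tilde\gamma$, that is $[m_{t_0}]^{(n)}(\gamma) = [m_{t_0}]^{(n)}(\tilde\gamma)$.

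Finally, since $t_0 \neq 0$, the map $m_{t_0} : (x,v) \mapsto (x, t_0 v)$ is a homeomorphism of $T\cM$ with inverse $m_{1/t_0}$; by functoriality, $[m_{t_0}]^{(n)}$ is therefore a bijection of $\cPn{n}{T\cM}$ with inverse $[m_{1/t_0}]^{(n)}$, hence injective. Consequently $\gamma = \tilde\gamma$, and applying $[\exp]^{(n)} \circ [m_t]^{(n)}$ on both sides yields $\mu_t = \tilde\mu_t$ for every $t \in [0,1]$. There is essentially no substantive obstacle here: the work has all been absorbed into Proposition \ref{prop:cs_geodesics_come_from_opt_vel_plans}, and the present statement is a short bookkeeping consequence of its uniqueness clause combined with the invertibility of scalar multiplication on velocity plans.
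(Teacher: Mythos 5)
Your proof is correct and follows essentially the same route as the paper's: both extract $\gamma,\tilde\gamma \in \Gamma_o(\mu,\nu)$ from \Cref{prop:cs_geodesics_come_from_opt_vel_plans}, invoke its uniqueness clause at the intermediate time $t_0$ to get $t_0\gamma = t_0\tilde\gamma$, and cancel $t_0$. Your explicit justification of the cancellation via the invertibility of $[m_{t_0}]^{(n)}$ is a detail the paper leaves implicit, but the argument is identical in substance.
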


\begin{proof}
    By \Cref{prop:cs_geodesics_come_from_opt_vel_plans}, there exists unique optimal velocity plans $\gamma, \tilde{\gamma} \in \Gamma_o(\mu,\nu)$ such that $\mu_t = [\exp]^{(n)}(t\gamma)$ and $\tilde{\mu}_t = [\exp]^{(n)}(t\tilde{\gamma})$ for every $t \in [0,1]$. Moreover, for every $t \in (0,1)$, the sets $\Gamma_o(\mu,\mu_t)$ and $\Gamma_o(\mu,\tilde{\mu}_t)$ are singletons and equal to respectively $\{t\gamma\}$ and $\{t\tilde{\gamma}\}$. However, since we have assumed that $\mu_{t_0} = \tilde{\mu}_{t_0}$, this implies that $t_0\gamma = t_0\tilde{\gamma}$, so that (since $t_0 \neq 0$) $\gamma = \tilde{\gamma}$. Therefore, for every $t \in [0,1]$, we have $\mu_t = [\exp]^{(n)}(t\gamma) = [\exp]^{(n)}(t\tilde{\gamma}) = \tilde{\mu}_t$. This finishes the proof.
\end{proof}

\section{Functionals on the hierarchical spaces} \label{sec:6_functionals}

\subsection{Subdifferentiability}

\begin{definition}
    Let $n > 0$, and fix two subsets $A, B \subseteq \cPn{n}{T\cM}$. Consider a proper functional $\cF : \cPn{n}{\cM} \mapsto \R \cup \{+\infty\}$, and $\mu \in \cPn{n}{\cM}$ such that $\cF(\mu) < +\infty$. We say that
    \begin{enumerate}
        \item $\gamma \in \cPn{n}{T\cM}_\mu$ is a \emph{regular $(A,B)$-subgradient of $\cF$ at $\mu$}, written $\gamma \in \hat{\partial}_{A,B}\cF(\mu)$, if $\gamma \in A$ and for every $\nu \in \cPn{n}{\cM}$ and every $\xi \in \Gamma(\mu,\nu) \cap B$, it holds $\cF(\nu) - \cF(\mu) \geq \sca{\xi}{\gamma}_\mu + o(\|\xi\|)$. We say that $\cF$ is regularly $(A,B)$-subdifferentiable at $\mu$.
        \item $\gamma \in \cPn{n}{T\cM}$ is a \emph{(general) $(A,B)$-subgradient of $\cF$ at $\mu$}, written $\gamma \in \partial_{A,B} \cF(\mu)$, if there are sequences $\mu_m \to \mu$ with $\cF(\mu_m) \to \cF(\mu)$ and $\gamma_m \in \hat{\partial}_{(A,B)}\cF(\mu_m)$ with $\gamma_m \to \gamma$. We say that $\cF$ is $(A,B)$-subdifferentiable at $\mu$.
    \end{enumerate}
    Likewise, we define regular $(A,B)$-supergradients and general $(A,B)$-supergradients as elements of respectively $-\hat{\partial}_{A,B}(-\cF)(\mu)$ and $-\partial_{A,B}(-\cF)(\mu)$. The functional $\cF$ is said to be \emph{$(A,B)$-differentiable} at $\mu$ there exists a velocity plan which is both a regular $(A,B)$-subgradient and supergradient, that is $\hat{\partial}_{A,B}\cF(\mu) \cap -\hat{\partial}_{A,B}(-\cF)(\mu) \neq \emptyset$. These elements are called $(A,B)$-gradients of $\cF$ at $\mu$.
\end{definition}

\begin{remark} (Landau notation) 
    Here $o(\|\xi\|)$ denotes a term whose exact value depends on any number of variables, but such that $|o(\|\xi\|)| \leq g(\|\xi\|)$, where $g : \R_+ \mapsto \R_+$ is some upper bound such that $g(\veps)/\veps \xrightarrow[\veps \to 0^+]{} 0$.
\end{remark}

\begin{remark}
    If $A \subseteq A'$ and $B' \subseteq B$, then any regular (resp. general) $(A,B)$-subgradient is a regular (resp. general) $(A',B')$-subgradient, and similarly for the supergradients.
\end{remark}

\begin{remark}
    In the case $n = 1$ and $\cM = \R^d$, and when $A = T\cP_2(\cM)$ and $B = \Gamma_o^{(1)}(\cM)$, regular and general $(A,B)$-subgradients correspond to the notion of regular and general subgradients defined in \citep[Definition 2.2]{lanzetti2024variational}, and likewise for supergradients.
\end{remark}

\begin{proposition}
    Consider a proper functional $\cF : \cPn{n}{\cM} \mapsto \R \cup \{+\infty\}$, and $\mu \in \cPn{n}{\cM}$ such that $\cF(\mu) < +\infty$. Then the following statements hold:
    \begin{enumerate}
        \item If $\cG : \cPn{n}{\cM} \mapsto \R \cup \{+\infty\}$ is such that $\cG(\nu) \leq \cF(\nu)$ for every $\nu \in \cPn{n}{\cM}$, and $\cG(\mu) = \cF(\mu)$, then $\hat{\partial}_{A,B}\cG(\mu) \subseteq \hat{\partial}_{A,B}\cF(\mu)$.
        \item An element $\gamma \in \cPn{n}{T\cM}_\mu \cap A$ is an $(A,B)$-gradient of $\cF$ at $\mu$ if and only if for every $\xi \in \cPn{n}{T\cM}_\mu \cap B$ and $\alpha \in \Gamma_\mu(\gamma,\xi)$,
        \begin{equation}
            \cF([\exp]^{(n)}(\xi)) - \cF(\mu) = \bE^{(n)}_\alpha[\sca{v_1}{v_2}_x] + o(\|\xi\|)
        \end{equation}
    \end{enumerate}
\end{proposition}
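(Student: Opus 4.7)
Part (1) is immediate from the definitions. Suppose $\gamma \in \hat{\partial}_{A,B}\cG(\mu)$; then $\gamma \in A$ and $\cG(\nu) - \cG(\mu) \geq \sca{\xi}{\gamma}_\mu + o(\|\xi\|)$ for every $\nu \in \cPn{n}{\cM}$ and every $\xi \in \Gamma(\mu,\nu) \cap B$. Since $\cF \geq \cG$ pointwise and $\cF(\mu) = \cG(\mu)$, we obtain $\cF(\nu) - \cF(\mu) \geq \cG(\nu) - \cG(\mu) \geq \sca{\xi}{\gamma}_\mu + o(\|\xi\|)$, so $\gamma \in \hat{\partial}_{A,B}\cF(\mu)$.

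For part (2), the central observation is that the regular supergradient condition can be expressed in terms of the \emph{infimum} rather than the supremum over couplings. The pushforward $\alpha \mapsto [(x,v_1,v_2) \mapsto (x,v_1,-v_2)]^{(n)}(\alpha)$ is a bijection $\Gamma_\mu(\xi,\gamma) \to \Gamma_\mu(\xi,-\gamma)$ under which the integrand $\sca{v_1}{v_2}_x$ is negated, yielding
\begin{equation*}
\sca{\xi}{-\gamma}_\mu \;=\; -\inf_{\alpha \in \Gamma_\mu(\xi,\gamma)} \bE^{(n)}_\alpha[\sca{v_1}{v_2}_x].
\end{equation*}
By \Cref{prop:dist_and_inner_prod_are_min_and_max}, this infimum and the supremum defining $\sca{\xi}{\gamma}_\mu$ are both attained. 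Unfolding $-\gamma \in \hat{\partial}_{A,B}(-\cF)(\mu)$ then shows that $\gamma$ is a regular supergradient of $\cF$ at $\mu$ iff $\cF([\exp]^{(n)}(\xi)) - \cF(\mu) \leq \inf_{\alpha \in \Gamma_\mu(\xi,\gamma)} \bE^{(n)}_\alpha[\sca{v_1}{v_2}_x] + o(\|\xi\|)$ for every $\xi \in \cPn{n}{T\cM}_\mu \cap B$.

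For the forward direction of (2), assume $\gamma$ is an $(A,B)$-gradient, fix admissible $\xi \in \cPn{n}{T\cM}_\mu \cap B$ and set $\nu := [\exp]^{(n)}(\xi)$. For any $\alpha \in \Gamma_\mu(\gamma,\xi)$, $\bE^{(n)}_\alpha[\sca{v_1}{v_2}_x]$ sits between the infimum and supremum of the same quantity over $\Gamma_\mu(\xi,\gamma)$. The regular subgradient inequality yields $\cF(\nu) - \cF(\mu) \geq \sup_{\alpha'} \bE^{(n)}_{\alpha'}[\sca{v_1}{v_2}_x] + o(\|\xi\|)$, while the reformulated supergradient inequality yields $\cF(\nu) - \cF(\mu) \leq \inf_{\alpha'} \bE^{(n)}_{\alpha'}[\sca{v_1}{v_2}_x] + o(\|\xi\|)$; sandwiching gives the desired equality $\cF(\nu) - \cF(\mu) = \bE^{(n)}_\alpha[\sca{v_1}{v_2}_x] + o(\|\xi\|)$.

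The converse is then a straightforward reversal: assuming the equality holds for every $\xi$ and every $\alpha \in \Gamma_\mu(\gamma,\xi)$, specializing $\alpha$ to an optimal coupling attaining $\sca{\xi}{\gamma}_\mu$ recovers the regular subgradient inequality, while specializing $\alpha$ to a coupling attaining the infimum (and invoking the identity above) recovers the regular supergradient inequality. The only delicate point across both directions is that the $o(\|\xi\|)$ bound must be uniform in $\alpha$ for fixed $\xi$; this is immediate from the Landau convention recalled in the remark after the definition, where the error bound is a function of $\|\xi\|$ alone.
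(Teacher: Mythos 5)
Your proof is correct and follows essentially the same route as the paper's: part (1) is the same one-line chain of inequalities, and for part (2) the key identity $\sca{\xi}{-\gamma}_\mu = -\inf_{\alpha}\bE^{(n)}_\alpha[\sca{v_1}{v_2}_x]$ (via the $(-1,1)$ sign flip on couplings) and the subsequent sandwiching between the sup and the inf are exactly the paper's argument.
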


\begin{proof}
    \begin{enumerate}
        \item Let $\gamma \in \hat{\partial}_{A,B}\cG(\mu)$, then for every $\xi \in \cPn{n}{T\cM}_\mu \cap B$, we have
        \begin{equation}
            \cF([\exp]^{(n)}(\xi)) - \cF(\mu) \geq \cG([\exp]^{(n)}(\xi)) - \cG(\mu) \geq \sca{\gamma}{\xi}_\mu + o(\|\xi\|).
        \end{equation}
        Therefore $\gamma \in \hat{\partial}_{A,B}\cF(\mu)$.
        \item Assume that $\gamma \in \cPn{n}{T\cM}_\mu \cap A$ is an $(A,B)$-gradient of $\cF$ at $\mu$. Then $\gamma$ is a regular $(A,B)$-subgradient of $\cF$ at $\mu$, so that for every $\xi \in \cPn{n}{T\cM}_\mu \cap B$ and $\alpha \in \Gamma_\mu(\gamma,\xi)$,
        \begin{equation}
            \cF([\exp]^{(n)}(\xi)) - \cF(\mu) \geq \sca{\gamma}{\xi}_\mu + o(\|\xi\|) \geq \bE^{(n)}_\alpha[\sca{v_1}{v_2}_x] + o(\|\xi\|).
        \end{equation}
        But $\gamma$ is also a regular $(A,B)$-supergradient of $\cF$ at $\mu$, that is, $-\gamma$ is a regular $(A,B)$-subgradient of $-\cF$ at $\mu$, so that 
        \begin{equation}
            -\cF([\exp]^{(n)}(\xi)) + \cF(\mu) \geq \sca{-\gamma}{\xi}_\mu + o(\|\xi\|),
        \end{equation}
        that is
        \begin{equation}
            \cF([\exp]^{(n)}(\xi)) - \cF(\mu) \leq -\sca{-\gamma}{\xi}_\mu + o(\|\xi\|) \leq -\bE^{(n)}_{\alpha'}[\sca{v_1}{v_2}_x] + o(\|\xi\|) = \bE^{(n)}_\alpha[\sca{v_1}{v_2}_x] + o(\|\xi\|)
        \end{equation}
        where $\alpha' := (-1,1) \cdot \alpha$. Therefore we have
        \begin{equation}
            \cF([\exp]^{(n)}(\xi)) - \cF(\mu) = \bE^{(n)}_\alpha[\sca{v_1}{v_2}_x] + o(\|\xi\|).
        \end{equation}
        Conversely, if this equation holds for any for every $\xi \in \cPn{n}{T\cM}_\mu \cap B$ and $\alpha \in \Gamma_\mu(\gamma,\xi)$, then taking the $\sup$ and the $\inf$ among $\alpha \in \Gamma_\mu(\gamma,\xi)$ (using the fact that $\inf_{\alpha \in \Gamma_\mu(\gamma,\xi)} \bE^{(n)}_\alpha[\sca{v_1}{v_2}_x] = -\sca{-\gamma}{\xi}_\mu$), we obtain that $\gamma$ is both a regular $(A,B)$-subgradient and a regular $(A,B)$-supergradient of $\cF$ at $\mu$.
    \end{enumerate}
\end{proof}

In the following, we will often consider classes $A$ and $B$ which satisfy the following assumption:

\begin{assumption} \label{assumption:a_b_stable}
    The sets $A, B \subseteq \cPn{n}{T\cM}$ satisfy the following conditions:
    \begin{enumerate}
        \item For every $\gamma \in A$, $t \gamma \in B$ for small enough $t \in \R$.
        \item $A$ is stable for the ``vector space structure" of $\cPn{n}{T\cM}$ : that is for every $\mu \in \cPn{n}{\cM}$, $\gamma_1, \gamma_2 \in \cPn{n}{T\cM}_\mu \cap A$, $\lambda \in \R$ and $\alpha \in \Gamma_\mu(\gamma_1, \gamma_2)$, we have $\lambda \gamma_1 \in B$ and $\gamma_1 +_\alpha \gamma_2 \in A$.
    \end{enumerate}
\end{assumption}

\begin{proposition} \label{prop:gradient_is_unique}
    Consider a proper functional $\cF : \cPn{n}{\cM} \mapsto \R \cup \{+\infty\}$, and $\mu \in \cPn{n}{\cM}$ such that $\cF(\mu) < +\infty$. Assume that $A$ and $B$ satisfy \Cref{assumption:a_b_stable}. Then either one of the two following assertions hold:
    \begin{enumerate}
        \item At least one between $\hat{\partial}_{A,B}\cF(\mu)$ or $\hat{\partial}_{A,B}(-\cF)(\mu)$ is empty.
        \item There is an unique $\gamma \in \cPn{n}{T\cM}_\mu$ such that $\hat{\partial}_{A,B}\cF(\mu) = -\hat{\partial}_{A,B}(-\cF)(\mu) = \{\gamma\}$.
    \end{enumerate}
    This proposition allows in particular to speak of ``the" $(A,B)$-gradient of $\cF$ at $\mu$ when it is $(A,B)$-differentiable.
\end{proposition}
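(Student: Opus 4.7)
The plan is to assume both $\hat{\partial}_{A,B}\cF(\mu)$ and $-\hat{\partial}_{A,B}(-\cF)(\mu)$ are non-empty, and to show that any $\gamma_1$ in the first equals any $\gamma_2$ in the second; this forces both sets to be a common singleton $\{\gamma\}$. Fix such $\gamma_1$ and $\gamma_2$; both lie in $A \cap \cPn{n}{T\cM}_\mu$. Writing $\nu_\xi := [\exp]^{(n)}(\xi)$, the regular subgradient inequalities for $\cF$ at $\gamma_1$ and for $-\cF$ at $-\gamma_2$ read, for every $\xi \in \cPn{n}{T\cM}_\mu \cap B$,
\begin{equation}
\cF(\nu_\xi) - \cF(\mu) \geq \sca{\xi}{\gamma_1}_\mu + o(\|\xi\|), \qquad \cF(\mu) - \cF(\nu_\xi) \geq \sca{\xi}{-\gamma_2}_\mu + o(\|\xi\|).
\end{equation}
Adding them, specialising to $\xi = t\eta$ for $\eta \in A \cap \cPn{n}{T\cM}_\mu$ and $t > 0$ small (permitted by \Cref{assumption:a_b_stable}(1)), invoking positive homogeneity (point~\ref{enum:tangent_struct:inner_prod_homogeneous} of \Cref{prop:various_results_on_inner_prod}) and letting $t \to 0^+$ gives the key directional inequality
\begin{equation}
\sca{\eta}{\gamma_1}_\mu + \sca{\eta}{-\gamma_2}_\mu \leq 0 \qquad \forall\, \eta \in A \cap \cPn{n}{T\cM}_\mu.
\end{equation}

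I then reinterpret the second term. The map $\alpha \mapsto (1,-1)\cdot \alpha$ is a bijection $\Gamma_\mu(\eta,\gamma_2) \to \Gamma_\mu(\eta,-\gamma_2)$ that negates the integrand $\sca{v_1}{v_2}_x$, so $\sca{\eta}{-\gamma_2}_\mu = -\inf_{\alpha \in \Gamma_\mu(\eta,\gamma_2)} \bE^{(n)}_\alpha[\sca{v_1}{v_2}_x]$. The displayed inequality therefore becomes the pointwise coupling comparison
\begin{equation}
\bE^{(n)}_{\alpha_1}[\sca{v_1}{v_2}_x] \;\leq\; \bE^{(n)}_{\alpha_2}[\sca{v_1}{v_2}_x] \qquad \forall\, \alpha_1 \in \Gamma_\mu(\eta,\gamma_1),\ \alpha_2 \in \Gamma_\mu(\eta,\gamma_2). \label{eq:plan_main_ineq_prop}
\end{equation}

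To close the argument I fix any $\beta \in \Gamma_\mu(\gamma_1,\gamma_2)$ (non-empty by \Cref{prop:couplings_exist}) and set $\eta := \gamma_1 -_\beta \gamma_2 = [\pi_1 - \pi_2]^{(n)}(\beta)$, which belongs to $A$ by the stability clause of \Cref{assumption:a_b_stable}. The two couplings
\begin{equation}
\alpha_i := [(x,v_1,v_2) \mapsto (x,\, v_1 - v_2,\, v_i)]^{(n)}(\beta), \qquad i = 1,2,
\end{equation}
lie in $\Gamma_\mu(\eta,\gamma_1)$ and $\Gamma_\mu(\eta,\gamma_2)$ respectively (by computing their projections). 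Plugging them into \eqref{eq:plan_main_ineq_prop} and using \eqref{eq:n_expectancy_of_pushforward} reduces the bound to $\bE^{(n)}_\beta[\sca{v_1 - v_2}{v_1}_x] \leq \bE^{(n)}_\beta[\sca{v_1 - v_2}{v_2}_x]$, i.e.\ $\bE^{(n)}_\beta[\|v_1 - v_2\|_x^2] \leq 0$; nonnegativity of the integrand forces equality. Since $\beta \in \Gamma_\mu(\gamma_1,\gamma_2)$ was arbitrary, $\W_\mu(\gamma_1,\gamma_2) = 0$, and as $\W_\mu$ is a distance (\Cref{prop:w_mu_is_distance}), $\gamma_1 = \gamma_2$. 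Both $\hat{\partial}_{A,B}\cF(\mu)$ and $-\hat{\partial}_{A,B}(-\cF)(\mu)$ are therefore the singleton $\{\gamma\}$.

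The main obstacle lies in the non-bilinearity of $\sca{\cdot}{\cdot}_\mu$: it is only positively homogeneous and subadditive, so in general $\sca{\eta}{-\gamma}_\mu \neq -\sca{\eta}{\gamma}_\mu$, and one cannot pretend to work in a linear space and just read off a unique ``derivative''. The decisive manoeuvre is to reformulate the combined bound as the pointwise coupling comparison \eqref{eq:plan_main_ineq_prop}, and then to construct $\alpha_1$ and $\alpha_2$ as pushforwards of one and the same $\beta \in \Gamma_\mu(\gamma_1,\gamma_2)$ — so that the comparison collapses into a statement about the integral of $\|v_1 - v_2\|_x^2$ against $\beta$, which, being nonnegative, must vanish.
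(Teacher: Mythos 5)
Your proof is correct and follows essentially the same route as the paper's: combine the sub- and supergradient inequalities, rewrite $\sca{\xi}{-\gamma_2}_\mu$ as $-\inf$ over couplings with $\gamma_2$, test against the direction $[\pi_1-\pi_2]^{(n)}(\beta)$ built from a coupling $\beta\in\Gamma_\mu(\gamma_1,\gamma_2)$, and use the two pushforward couplings of $\beta$ to collapse everything to $\bE^{(n)}_\beta[\|v_1-v_2\|^2_x]\le 0$. The only (cosmetic) differences are that you pass to the limit $t\to 0^+$ before choosing the test direction and work with an arbitrary rather than optimal $\beta$, whereas the paper scales the test plan by $\veps$ and divides at the end.
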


\begin{proof}
    Assume that there exists $\gamma_l \in \hat{\partial}_{A,B}\cF(\mu)$ and $-\gamma_u \in \hat{\partial}_{A,B}(-\cF)(\mu)$. We want to prove that $\gamma_l = \gamma_u$. By property of $(A,B)$-subgradients, for every $\xi \in \cPn{n}{T\cM}_\mu \cap B$,
    \begin{equation}
        \cF([\exp]^{(n)}(\xi)) - \cF(\mu) \geq \sca{\gamma_l}{\xi}_\mu + o(\|\xi\|) \label{eq:l_2167}
    \end{equation}
    and by property of $(A,B)$-subgradients,
    \begin{equation}
        -\cF([\exp]^{(n)}(\xi)) + \cF(\mu) \geq \sca{-\gamma_u}{\xi}_\mu + o(\|\xi\|)
    \end{equation}
    that is,
    \begin{align}
        \cF([\exp]^{(n)}(\xi)) - \cF(\mu) &\leq -\sca{-\gamma_u}{\xi}_\mu + o(\|\xi\|) \\
        &\leq - \max_{\alpha \in \Gamma_\mu(-\gamma_u,\xi)} \bE^{(n)}_\alpha[\sca{v_1}{v_2}_x] + o(\|\xi\|) \\
        &\leq \min_{\alpha \in \Gamma_\mu(-\gamma_u, \xi)} \bE^{(n)}_\alpha[\sca{-v_1}{v_2}_x] + o(\|\xi\|) \\
        &\leq \min_{\alpha \in \Gamma_\mu(\gamma_u, \xi)} \bE^{(n)}_\alpha[\sca{v_1}{v_2}_x] + o(\|\xi\|). \label{eq:l_2178}
    \end{align}
    where we used in the fourth line that the component-wise scalar multiplication by $(-1,1)$ defines a bijection between $\Gamma_\mu(\gamma_u,\xi)$ and $\Gamma_\mu(-\gamma_u,\xi)$. Thus, combining \eqref{eq:l_2167} and \eqref{eq:l_2178}, we find
    \begin{equation} \label{eq:l_2181}
        o(\|\xi\|) \geq \max_{\alpha \in \Gamma_\mu(\gamma_l,\xi)} \bE^{(n)}_\alpha[\sca{v_1}{v_2}_x] - \min_{\alpha \in \Gamma_\mu(\gamma_u,\xi)} \bE^{(n)}_\alpha[\sca{v_1}{v_2}_x].
    \end{equation}
    Now, let $\alpha^* \in \Gamma_\mu(\gamma_l,\gamma_u)$ be a optimal coupling, and for every $\veps > 0$, let $\xi_\veps := [\veps(\pi_1-\pi_2)]^{(n)}(\alpha^*)$. We have $\xi_\veps \in B$ for $\veps > 0$ small enough, indeed $\xi_\veps = \veps (\gamma_l -_{\alpha'} (- \gamma_u))$ with $\alpha' = (1,-1) \cdot \alpha$ and $\gamma_u, \gamma_l \in A$, and $A$ and $B$ are assumed to satisfy \Cref{assumption:a_b_stable}. We then have by construction $\|\xi_\veps\| = \veps \W_\mu(\gamma_l,\gamma_u)$, and we also have $\alpha_u = [(x,v_1,v_2) \mapsto (x,v_2,\veps(v_1-v_2))]^{(n)}(\alpha^*) \in \Gamma_\mu(\gamma_u, \xi_\veps)$ and $\alpha_l = [(x,v_1,v_2) \mapsto (x,v_1,\veps(v_1-v_2))]^{(n)}(\alpha^*) \in \Gamma_\mu(\gamma_l, \xi_\veps)$, so that plugging $\xi_\veps$ in \eqref{eq:l_2181}, we find
    \begin{align}
        o(\veps) &\geq \max_{\alpha \in \Gamma_\mu(\gamma_l,\xi_\veps)} \bE^{(n)}_\alpha[\sca{v_1}{v_2}_x] - \min_{\alpha \in \Gamma_\mu(\gamma_u,\xi_\veps)} \bE^{(n)}_\alpha[\sca{v_1}{v_2}_x] \\
        &\geq \bE^{(n)}_{\alpha_l}[\sca{v_1}{v_2}_x] - \bE^{(n)}_{\alpha_u}[\sca{v_1}{v_2}_x] \\
        &\geq \bE^{(n)}_{\alpha^*}[\sca{v_1}{\veps(v_1-v_2)}_x] - \bE^{(n)}_{\alpha^*}[\sca{v_2}{\veps(v_1-v_2)}_x] \\
        &\geq \veps \bE^{(n)}_{\alpha^*}[\|v_1-v_2\|_x^2] = \veps \W^2_\mu(\gamma_l,\gamma_u).
    \end{align}
    Dividing this inequality by $\veps$, and letting $\veps \to 0^+$, we thus find $\W_\mu(\gamma_l,\gamma_u) = 0$, so that $\gamma_l = \gamma_u$.
\end{proof}

\begin{remark} \label{rk:gradient_constant}
    If $A$ and $B$ satisfy \Cref{assumption:a_b_stable} and $\cF : \cPn{n}{\cM} \mapsto \R$ is constant, then $\hat{\partial}_{A,B}\cF(\mu) = \{\bm{0}_\mu\}$ at every $\mu \in \cPn{n}{\cM}$. \newline
    Indeed, $\gamma \in \cPn{n}{T\cM}_\mu \cap A$ is in $\hat{\partial}_{A,B}\cF(\mu)$ if and only if for every $\xi \in \cPn{n}{T\cM}_\mu \cap B$, $0 \geq \sca{\gamma}{\xi}_\mu + o(\|\xi\|)$. Clearly $\gamma = \bm{0}_\mu$ satisfies this condition. Moreover, if $\gamma$ satisfies this condition, since $A$, $B$ satisfy \Cref{assumption:a_b_stable}, we have $\veps \gamma \in B$ for $\veps > 0$ small enough so that, taking $\xi := \veps \gamma$, we find $0 \geq \sca{\gamma}{\veps\gamma}_\mu + o(\veps) = \veps \|\gamma\|^2 + o(\veps)$ using \Cref{prop:various_results_on_inner_prod}, so that dividing by $\veps$ and letting $\veps \to 0$ we find $\|\gamma\| = 0$ and thus $\gamma = \bm{0}_\mu$. \newline
    In particular (since $\|\cdot\|$ is continuous in the $\W_2$ topology), this implies that $\partial_{A,B}\cF(\mu) = \{\bm{0}_\mu\}$ at every $\mu$.
\end{remark}

\begin{proposition} \label{prop:subgradient_is_local}
    Let $\cF_1, \cF_2 : \cPn{n}{\cM} \mapsto \R \cup \{+\infty\}$ be two proper functionals, and $\mu \in \cPn{n}{\cM}$ such that $\cF_1(\mu), \cF_2(\mu) < +\infty$, and $\cF_1$ and $\cF_2$ coincide on a neighborhood of $\mu$. Then $\hat{\partial}_{A,B}\cF_1(\mu) = \hat{\partial}_{A,B}\cF_2(\mu)$.
\end{proposition}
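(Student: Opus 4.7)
The plan is to exploit the fact that the regular subgradient inequality is essentially a statement controlling the behavior of $\cF$ as $\xi$ shrinks to $\bm{0}_\mu$, combined with the crucial quantitative bound $\W_2(\mu, [\exp]^{(n)}(\xi)) \leq \|\xi\|$ provided by Proposition \ref{prop:pseudo_norm_larger_than_w2}. By the symmetry of the hypothesis, it suffices to prove the inclusion $\hat{\partial}_{A,B}\cF_1(\mu) \subseteq \hat{\partial}_{A,B}\cF_2(\mu)$ and then swap the roles of $\cF_1$ and $\cF_2$.

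So I would fix $\gamma \in \hat{\partial}_{A,B}\cF_1(\mu)$. The conditions $\gamma \in A$ and $\gamma \in \cPn{n}{T\cM}_\mu$ depend only on $\gamma$ and $\mu$, not on $\cF_1$, and they carry over to $\cF_2$ for free. Let $U$ be the $\W_2$-open neighborhood of $\mu$ on which $\cF_1 \equiv \cF_2$; pick $\veps_0 > 0$ such that the open ball $\{\nu \in \cPn{n}{\cM} : \W_2(\mu,\nu) < \veps_0\}$ is contained in $U$. In particular, $\cF_1(\mu) = \cF_2(\mu)$.

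The core step is then the following: given any $\nu \in \cPn{n}{\cM}$ and any $\xi \in \Gamma(\mu,\nu) \cap B$ with $\|\xi\| < \veps_0$, Proposition \ref{prop:pseudo_norm_larger_than_w2} guarantees $\W_2(\mu,\nu) \leq \|\xi\| < \veps_0$, hence $\nu \in U$ and $\cF_2(\nu) = \cF_1(\nu)$. Therefore
\begin{equation}
\cF_2(\nu) - \cF_2(\mu) \;=\; \cF_1(\nu) - \cF_1(\mu) \;\geq\; \sca{\xi}{\gamma}_\mu + o(\|\xi\|),
\end{equation}
using exactly the same error term that witnesses $\gamma \in \hat{\partial}_{A,B}\cF_1(\mu)$. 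This transfers the defining inequality of a regular subgradient from $\cF_1$ to $\cF_2$.

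The only mildly delicate point—and the one I expect to require most care—is the bookkeeping of the Landau symbol $o(\|\xi\|)$ for $\xi$ of norm $\geq \veps_0$. Since the constraint $g(\veps)/\veps \to 0$ is only imposed as $\veps \to 0^+$, there is full freedom to enlarge $g$ on $[\veps_0,+\infty)$. Concretely, if $g_1$ is the bound valid for $\cF_1$, I would define $g_2 := g_1$ on $[0,\veps_0)$ and extend $g_2$ on $[\veps_0,+\infty)$ by any finite majorant of $\sca{\xi}{\gamma}_\mu - (\cF_2(\nu) - \cF_2(\mu))$ over the relevant $\xi$ (using $|\sca{\xi}{\gamma}_\mu| \leq \|\xi\|\|\gamma\|$ from Proposition \ref{prop:various_results_on_inner_prod} and the fact that $\cF_2(\nu) = +\infty$ trivially satisfies the inequality). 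This produces a valid $o(\|\xi\|)$ term for $\cF_2$, establishing $\gamma \in \hat{\partial}_{A,B}\cF_2(\mu)$ and completing the proof by symmetry.
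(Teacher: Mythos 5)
Your proof is correct and follows essentially the same route as the paper's: use $\W_2(\mu,[\exp]^{(n)}(\xi)) \leq \|\xi\|$ to guarantee that $\nu$ stays in the neighborhood where $\cF_1 = \cF_2$ once $\|\xi\|$ is small enough, transfer the subgradient inequality verbatim, and conclude by symmetry. Your extra bookkeeping of the Landau term for $\|\xi\| \geq \veps_0$ is more careful than the paper's proof (which simply restricts to $\|\xi\| \leq \veps$ and stops there), though note that the ``finite majorant'' you invoke need not exist if $\cF_2$ is unbounded below on the set of measures reachable by plans of bounded norm --- an edge case that the paper's definition of $o(\|\xi\|)$ arguably leaves ambiguous and that its own proof ignores as well.
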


\begin{proof}
    Let $\veps > 0$ be such that for every $\nu \in \cPn{n}{\cM}$ such that $\W_2(\mu,\nu) \leq \veps$, $\cF_1(\nu) = \cF_2(\nu)$. Let  $\gamma \in \hat{\partial}_{A,B}\cF_1(\mu)$. Then, for every $\xi \in \cPn{n}{T\cM}_\mu \cap B$ and $\nu := [\exp]^{(n)}(\xi)$, whenever $\|\xi\| \leq \veps$, we have $\W_2(\nu,\mu) \leq \|\xi\| \leq \veps$, so that
    \begin{equation}
        \cF_2(\nu) = \cF_1(\nu) \geq \cF_1(\mu) + \sca{\gamma}{\xi}_\mu + o(\|\xi\|) = \cF_2(\mu) + \sca{\gamma}{\xi}_\mu + o(\|\xi\|)
    \end{equation}
    and thus $\gamma \in \hat{\partial}_{A,B}\cF_2(\mu)$. We thus have $\hat{\partial}_{A,B}\cF_1(\mu) \subseteq \hat{\partial}_{A,B}\cF_2(\mu)$, and by symmetry, this is an equality.
\end{proof}

\begin{proposition}
    Let $\cF_1, \cF_2 : \cPn{n}{\cM} \mapsto \R \cup \{+\infty\}$ be two proper functionals, $g : \R \cup \{+\infty\} \mapsto \R \cup \{+\infty\}$ a monotone map with $g(+\infty) = +\infty$, and $\mu \in \cPn{n}{\cM}$ such that $\cF_1(\mu), \cF_2(\mu) < +\infty$, $g(\cF_1(\mu)) < +\infty$ and $g$ is continuously differentiable at $\cF_1(\mu)$. Assume that $A$ and $B$ satisfy \Cref{assumption:a_b_stable}. Then the following rules hold:
    \begin{enumerate}
        \item Sum rule:
            \begin{align}
                \hat{\partial}_{A,B}(\cF_1+\cF_2)(\mu) &\supseteq \hat{\partial}_{A,B}\cF_1(\mu) + \hat{\partial}_{A,B}\cF_2(\mu) \\
                &:= \{\gamma_1 +_\alpha \gamma_2 \setcond \gamma_i \in \hat{\partial}_{A,B}\cF_i(\mu), \alpha \in \Gamma_\mu(\gamma_1,\gamma_2) \}.
            \end{align}
        \item For every $\lambda \geq 0$, 
            \begin{equation}
                \hat{\partial}_{A,B}(\lambda \cF_1)(\mu) = \lambda  \hat{\partial}_{A,B}\cF_1(\mu) := \{\lambda \gamma, \gamma \in \hat{\partial}_{A,B}\cF_1(\mu) \}
            \end{equation}
        \item Chain rule: if $g'(\cF_1(\mu)) > 0$ and $\cF_1$ is continuous in the Wasserstein topology at $\mu$, then
            \begin{equation}
                \hat{\partial}_{A,B}(g \circ \cF_1)(\mu) = g'(\cF_1(\mu)) \hat{\partial}_{A,B}\cF_1(\mu).
            \end{equation}
    \end{enumerate}
\end{proposition}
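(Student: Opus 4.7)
The plan is to treat each of the three rules separately, relying only on the algebraic properties of $\sca{\cdot}{\cdot}_\mu$ from \Cref{prop:various_results_on_inner_prod}, the stability of the pair $(A,B)$ from \Cref{assumption:a_b_stable}, and the locality of the subdifferential from \Cref{prop:subgradient_is_local}.

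For the sum rule, I will take $\gamma_i \in \hat\partial_{A,B}\cF_i(\mu)$ and any $\alpha \in \Gamma_\mu(\gamma_1,\gamma_2)$; \Cref{assumption:a_b_stable} places $\gamma_1 +_\alpha \gamma_2$ in $A$, and for every $\xi \in \Gamma(\mu,\nu)\cap B$, summing the two subgradient inequalities and invoking the sub-additivity of the inner product (point \ref{enum:tangent_struct:inner_prod_subadditive} of \Cref{prop:various_results_on_inner_prod}), namely $\sca{\gamma_1+_\alpha\gamma_2}{\xi}_\mu \leq \sca{\gamma_1}{\xi}_\mu + \sca{\gamma_2}{\xi}_\mu$, will produce the required inequality. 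For the scalar rule with $\lambda > 0$, both inclusions follow by multiplying the subgradient inequality by $\lambda$ (resp.\ by $1/\lambda$), using the positive homogeneity $\sca{\lambda\gamma}{\xi}_\mu = \lambda\sca{\gamma}{\xi}_\mu$ (point \ref{enum:tangent_struct:inner_prod_homogeneous}) together with the scalar stability of $A$ from \Cref{assumption:a_b_stable}; the edge case $\lambda = 0$ reduces to \Cref{rk:gradient_constant} applied to the zero functional.

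The heart of the argument is the chain rule. Set $\lambda := g'(\cF_1(\mu)) > 0$. For $\gamma \in \hat\partial_{A,B}\cF_1(\mu)$, the first-order expansion of $g$ at $\cF_1(\mu)$ combined with the continuity of $\cF_1$ at $\mu$ will give, for each $\veps > 0$ and every $\|\xi\|$ small enough (with $\nu := [\exp]^{(n)}(\xi)$),
\[
g(\cF_1(\nu)) - g(\cF_1(\mu)) \geq \lambda\bigl(\cF_1(\nu) - \cF_1(\mu)\bigr) - \veps\,|\cF_1(\nu) - \cF_1(\mu)|.
\]
Splitting according to the sign of $\cF_1(\nu) - \cF_1(\mu)$ produces coefficients $\lambda\mp\veps$ in front of the subgradient inequality $\cF_1(\nu) - \cF_1(\mu) \geq \sca{\gamma}{\xi}_\mu + o(\|\xi\|)$; in both regimes, expanding and using $|\sca{\gamma}{\xi}_\mu| \leq \|\gamma\|\|\xi\|$ yields $g(\cF_1(\nu)) - g(\cF_1(\mu)) \geq \sca{\lambda\gamma}{\xi}_\mu - \veps(\|\gamma\|+\lambda+\veps)\|\xi\| + o(\|\xi\|)$. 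Since $\veps$ is arbitrary this establishes $\lambda\gamma \in \hat\partial_{A,B}(g\circ\cF_1)(\mu)$ (with $\lambda\gamma \in A$ provided by \Cref{assumption:a_b_stable}). For the reverse inclusion, I will use that $g$ is locally strictly increasing near $\cF_1(\mu)$ (being monotone with $g'(\cF_1(\mu)) > 0$), hence admits a local inverse $g^{-1}$ differentiable at $g(\cF_1(\mu))$ with derivative $1/\lambda$; applying the already-proven forward chain rule to $g^{-1}$ and $g\circ\cF_1$ (which is continuous at $\mu$) and invoking \Cref{prop:subgradient_is_local} for the local identity $g^{-1}\circ g\circ\cF_1 = \cF_1$ around $\mu$ will yield $\hat\partial_{A,B}(g\circ\cF_1)(\mu) \subseteq \lambda\hat\partial_{A,B}\cF_1(\mu)$.

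The hard part will be the chain rule, and more precisely the fact that the subgradient inequality only controls $\cF_1(\nu) - \cF_1(\mu)$ from below, so that the Taylor remainder $o(|\cF_1(\nu) - \cF_1(\mu)|)$ for $g$ is a priori not $o(\|\xi\|)$. The case split above dissolves this cleanly: when $\cF_1(\nu) \geq \cF_1(\mu)$ the Taylor bound multiplies the subgradient inequality by the positive constant $\lambda - \veps$, so no upper bound on $|\cF_1(\nu) - \cF_1(\mu)|$ is needed; when $\cF_1(\nu) < \cF_1(\mu)$ the subgradient inequality itself pins $|\cF_1(\nu) - \cF_1(\mu)| \leq (\|\gamma\|+o(1))\|\xi\|$, so the remainder is absorbed automatically.
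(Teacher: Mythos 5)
Your proof is correct, and for the sum rule, the scalar rule, and the reverse inclusion of the chain rule (local inverse $h$ of $g$, forward rule applied to $h$ and $g\circ\cF_1$, then \Cref{prop:subgradient_is_local} to identify $h\circ g\circ\cF_1$ with $\cF_1$ near $\mu$) it coincides with the paper's argument. The one place where you genuinely diverge is the forward inclusion $g'(\cF_1(\mu))\,\hat{\partial}_{A,B}\cF_1(\mu)\subseteq\hat{\partial}_{A,B}(g\circ\cF_1)(\mu)$. The paper exploits the monotonicity of $g$ to write $g(\cF_1(\nu))\geq g\big(\cF_1(\mu)+\sca{\gamma}{\xi}_\mu+o(\|\xi\|)\big)$ and then Taylor-expands $g$ at $\cF_1(\mu)$ evaluated at that substituted point; since $|\sca{\gamma}{\xi}_\mu|\leq\|\gamma\|\|\xi\|$, the argument of $g$ is automatically within $O(\|\xi\|)$ of $\cF_1(\mu)$, so the remainder is $o(\|\xi\|)$ in one line and, notably, the forward inclusion is obtained \emph{without} using the continuity of $\cF_1$ at $\mu$ (continuity enters only in the reverse step). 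You instead Taylor-expand $g$ at $\cF_1(\mu)$ evaluated at $\cF_1(\nu)$ itself, which forces you to invoke continuity of $\cF_1$ already in the forward direction (to keep $\cF_1(\nu)$ in the Taylor neighborhood) and to split on the sign of $\cF_1(\nu)-\cF_1(\mu)$ to absorb the remainder $\veps|\cF_1(\nu)-\cF_1(\mu)|$ — your observation that the subgradient inequality pins $|\cF_1(\nu)-\cF_1(\mu)|$ from above precisely in the problematic case is the right fix, and the final $\veps$-diagonalization is standard. Both routes are valid under the stated hypotheses; the paper's is shorter and yields the forward inclusion under weaker assumptions, while yours trades the monotonicity trick for the continuity hypothesis that is in any case assumed.
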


\begin{proof}
    \begin{enumerate}
        \item Let $\gamma_1 \in \hat{\partial}_{A,B}\cF_1(\mu)$, $\gamma_2 \in \hat{\partial}_{A,B}\cF_2(\mu)$ and $\alpha \in \Gamma_\mu(\gamma_1,\gamma_2)$. Since $A$, $B$ satisfy \Cref{assumption:a_b_stable}, we indeed have $\gamma_1 +_\alpha \gamma_2 \in A$. Then, for every $\xi \in \cPn{n}{\cM}_\mu \cap B$, letting $\nu := [\exp]^{(n)}(\xi)$, it holds
        \begin{align}
            \cF_1(\nu) + \cF_2(\nu) &\geq \cF_1(\mu) + \sca{\gamma_1}{\xi}_\mu + \cF_2(\mu) + \sca{\gamma_2}{\xi}_\mu + o(\|\xi\|) \\
            &\geq \sca{\gamma_1 +_\alpha \gamma_2}{\xi}_\mu + o(\|\xi\|_\mu)
        \end{align}
        where we used the definition of the regular subgradient in the first line and point \ref{enum:tangent_struct:inner_prod_subadditive} of \Cref{prop:various_results_on_inner_prod} in the second line. Thus $\gamma_1 +_\alpha \gamma_2 \in \hat{\partial}_{A,B}(\cF_1+\cF_2)(\mu)$.
        \item If $\lambda = 0$, we have by \Cref{rk:gradient_constant} that 
        \begin{equation}
             \hat{\partial}_{A,B}(0\cF_1)(\mu) = 0 \hat{\partial}_{A,B}\cF_1(\mu) = \{\bm{0}_\mu\}.
        \end{equation}
        Thus, we assume $\lambda > 0$. Let $\gamma \in \hat{\partial}_{A,B}\cF_1(\mu)$. Since $A$, $B$ satisfy \Cref{assumption:a_b_stable}, we indeed have $\lambda \gamma \in A$. Then for every $\xi \in \cPn{n}{T\cM}_\mu \cap B$,
        \begin{align}
            \lambda_1 \cF_1([\exp]^{(n)}(\xi)) &\geq \lambda (\cF_1(\mu) + \sca{\gamma}{\xi}_\mu + o(\|\xi\|)) \\
            &\geq \lambda \cF_1(\mu) + \sca{\lambda \gamma}{\xi}_\mu + o(\|\xi\|)
        \end{align}
        where the second line is obtained using \ref{enum:tangent_struct:inner_prod_homogeneous} of \Cref{prop:various_results_on_inner_prod}. Thus $\lambda \gamma \in \hat{\partial}_{A,B}\cF_1(\mu)$, and $\lambda \hat{\partial}_{A,B}\cF_1(\mu) \subseteq \hat{\partial}_{A,B}(\lambda \cF_1(\mu))$. Similarly, we have $\lambda^{-1} \hat{\partial}_{A,B}(\lambda \cF_1)(\mu) \subseteq \hat{\partial}_{A,B}(\cF_1(\mu))$, so that $\lambda \hat{\partial}_{A,B}\cF_1(\mu) = \hat{\partial}_{A,B}(\lambda \cF_1(\mu))$.
        \item Let $\gamma \in \hat{\partial}_{A,B}\cF_1(\mu)$. Since $A$, $B$ satisfy \Cref{assumption:a_b_stable}, we indeed have $g'(\cF_1(\mu)) \gamma \in A$. Then for every $\xi \in \cPn{n}{T\cM}_\mu \cap B$,
        \begin{align}
            g(\cF_1([\exp]^{(n)}(\xi))) &\geq g(\cF_1(\mu) + \sca{\gamma}{\xi}_\mu + o(\|\xi\|)) \\
            &\geq g(\cF_1(\mu)) + g'(\cF_1(\mu))(\sca{\gamma}{\xi}_\mu + o(\|\xi\|)) + o(\sca{\gamma}{\xi}_\mu + o(\|\xi\|)) \\
            &\geq g(\cF_1(\mu)) + \sca{g'(\cF_1(\mu))\gamma}{\xi}_\mu + o(\|\xi\|)
        \end{align}
        where we used the monotonicity of $g$ in the first line, and we wrote the first order Taylor expansion of $g$ at $\cF_1(\mu)$ in the second line. To obtain the third line, we notice in particular that since by point \ref{enum:tangent_struct:inner_prod_cauchy_schwarz} of \Cref{prop:various_results_on_inner_prod}, $|\sca{\gamma}{\xi}_\mu| \leq \|\gamma\|\|\xi\|$, this implies $o(\sca{\gamma}{\xi}_\mu) = o(\|\xi\|)$. Therefore, $g'(\cF_1(\mu))\gamma \in \hat{\partial}_{A,B}(g \circ \cF_1)(\mu)$, so that 
        \begin{equation} \label{eq:l_1693}
            g'(\cF_1(\mu))\hat{\partial}_{A,B}\cF_1(\mu) \subseteq \hat{\partial}_{A,B}(g \circ \cF_1)(\mu).
        \end{equation}
        Since $g$ is continuously differentiable at $\cF_1(\mu)$ with $g'(\cF_1(\mu)) > 0$, there exists a monotone function $h : \R \mapsto \R$ which is a local inverse of $g$ in a neighborhood of $g(\cF_1(\mu))$. In particular, it is continuously differentiable at $g(\cF_1(\mu))$ and $h'(g(\cF_1(\mu))) = g'(\cF_1(\mu))^{-1}$. Applying \eqref{eq:l_1693} to $g \circ \cF_1$ and $h$, we find that 
        \begin{equation} \label{eq:l_1697}
            h'(g(\cF_1(\mu)))\hat{\partial}_{A,B}(g \circ \cF_1)(\mu) \subseteq \hat{\partial}_{A,B}(h \circ g \circ \cF_1)(\mu).
        \end{equation}
        However, since $\cF_1$ is assumed to be continuous at $\mu$, this implies that $h \circ g \circ \cF_1$ coincides with $\cF_1$ in a neighborhood of $\mu$, so that by \Cref{prop:subgradient_is_local}, they have the same $(A,B)$-regular subgradients at $\mu$. Therefore, \eqref{eq:l_1697} becomes
        \begin{equation}
            h'(g(\cF_1(\mu))) \hat{\partial}_{A,B}(g \circ \cF_1)(\mu) \subseteq \hat{\partial}_{A,B}\cF_1(\mu).
        \end{equation}
        Using the fact that $h'(g(\cF_1(\mu))) = g'(\cF_1(\mu))^{-1}$, and combining this with \eqref{eq:l_1693}, we obtain
        \begin{equation}
            g'(\cF_1(\mu))\hat{\partial}_{A,B}\cF_1(\mu) = \hat{\partial}_{A,B}(g \circ \cF_1)(\mu).
        \end{equation}
    \end{enumerate}
\end{proof}

Using our framework, we can establish the existence of gradients of some basic functionals defined on $\cPn{n}{\cM}$. In the following, we will \textbf{always} take $A = B = \cPn{n}{T\cM}$, which clearly satisfy \Cref{assumption:a_b_stable}.

\subsection{Potential functions} 
Consider first the case of potentials. Let $V : \cM \mapsto \R$ be a $C^2$ function whose Hessian is uniformly bounded by $L \geq 0$ in operator norm, and, for every $n > 0$, define the functional $\cV^{(n)} : \cPn{n}{\cM} \mapsto \R$ by $\cV^{(n)}(\mu) := \bE^{(n)}_\mu[V]$. We show that this functional is well-defined and continuous:

\begin{lemma} \label{lemma:taylor_ineq_bounded_hessian}
    Let $f : \cM \mapsto \R$ be a $C^2$ function with Hessian uniformly bounded by $L$ in operator norm. Then for every $(x,v) \in T\cM$,
    \begin{equation}
        |f(\exp_x(v)) - f(x) - \sca{\nabla_\cM f(x)}{v}_x| \leq \frac 12 L \|v\|^2_x
    \end{equation}
\end{lemma}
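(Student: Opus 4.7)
The plan is to reduce the inequality to the classical second-order Taylor inequality applied to the scalar function $g(t) := f(c(t))$, where $c(t) := \exp_x(tv)$ is the Riemannian geodesic emanating from $x$ with initial velocity $v$. Since $\cM$ is complete, $c$ is well-defined on all of $[0,1]$, and $c(0) = x$, $c(1) = \exp_x(v)$, so that $g$ is a $C^2$ function on $[0,1]$ with $g(0) = f(x)$ and $g(1) = f(\exp_x(v))$.

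The first step is to compute the derivatives of $g$. By the chain rule, $g'(t) = \sca{\nabla_\cM f(c(t))}{\dot c(t)}_{c(t)}$, so in particular $g'(0) = \sca{\nabla_\cM f(x)}{v}_x$. For the second derivative, I use that $c$ is a Riemannian geodesic, so $D_{\dot c}\dot c = 0$ along $c$; therefore the standard formula for the Hessian gives
\begin{equation}
    g''(t) = \Hess f(c(t))(\dot c(t), \dot c(t)),
\end{equation}
with no additional acceleration term. Since parallel transport is an isometry, $\|\dot c(t)\|_{c(t)} = \|\dot c(0)\|_x = \|v\|_x$ for every $t$, so by the uniform operator bound on the Hessian, $|g''(t)| \leq L \|\dot c(t)\|_{c(t)}^2 = L \|v\|_x^2$.

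The second step is a direct application of Taylor's theorem with integral remainder to $g$:
\begin{equation}
    g(1) = g(0) + g'(0) + \int_0^1 (1-t) g''(t) \dd t.
\end{equation}
Rearranging and bounding the remainder gives
\begin{equation}
    |f(\exp_x(v)) - f(x) - \sca{\nabla_\cM f(x)}{v}_x| \leq \int_0^1 (1-t) L \|v\|_x^2 \dd t = \frac{1}{2} L \|v\|_x^2,
\end{equation}
which is exactly the claimed inequality.

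There is no real obstacle here; the only point that requires any care is the identity $g''(t) = \Hess f(c(t))(\dot c(t), \dot c(t))$, which relies on the geodesic equation $D_{\dot c}\dot c = 0$ to eliminate the term $\sca{\nabla_\cM f(c(t))}{D_{\dot c}\dot c(t)}_{c(t)}$ that would otherwise appear. Everything else is elementary calculus on the real line.
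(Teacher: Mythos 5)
Your proof is correct and follows essentially the same route as the paper: both pull $f$ back along the geodesic $c(t) = \exp_x(tv)$, use the geodesic equation $D_{\dot c}\dot c = 0$ to identify the second derivative of the pullback with $\Hess f(c(t))(\dot c,\dot c)$, and conclude by a one-dimensional second-order Taylor estimate together with $\|\dot c(t)\|_{c(t)} = \|v\|_x$. The only cosmetic difference is that you use the integral form of the remainder while the paper invokes a Lagrange-type expansion at an intermediate point (via a cited exercise of Boumal); the bound obtained is identical.
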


\begin{proof}
    Fix $(x,v) \in T\cM$. Applying \citep[Exercise 5.40]{boumal2023introduction} to the geodesic $c : t \in [0,1] \mapsto \exp_x(tv)$, we have that there exists $t \in (0,1)$ such that
    \begin{equation}
        f(\exp_x(v)) = f(x) + \sca{\nabla_\cM f(x)}{v}_x + \frac 12 \sca{\Hess f[c(t)](c'(t))}{c'(t)}_{c(t)} + \frac 12 \sca{\nabla_\cM f(x)}{c"(t)}_{c(t)} 
    \end{equation}
    Since $c$ is a geodesic, $c"(t) = 0$ and $\|c'(t)\|_{c(t)} = \|v\|_x$, so that 
    \begin{equation}
        |f(\exp_x(v)) - f(x) - \sca{\nabla_\cM f(x)}{v}_x| \leq  \frac 12 \|\Hess f[c(t)](c'(t))\|_{c(t)}\|c'(t)|_{c(t)} \leq \frac 12 L \|c'(t)\|^2_{c(t)} = \frac 12 L \|v\|^2_x
    \end{equation}
\end{proof}

Fix some base point $x_0 \in \cM$. Then, for every $x \in \cM$, applying \Cref{lemma:taylor_ineq_bounded_hessian} to $V$ and $(x_0,v) = \log_{x_0}(x)$, we find that 
\begin{equation}
    V(x) \leq V(x_0) + \sca{\nabla_\cM V(x_0}{v}_{x_0} + \frac 12 L \|v\|^2_x \leq C_1 + C_2 \|v\|^2_{x_0} = C_1 + C_2 d^2(x,x_0)
\end{equation}
where $C_1, C_2$ are constants depending only on $V(x_0)$, $\nabla_\cM V(x_0)$, and $L$. We can derive a similar lower bound on $V$, so that by \Cref{lemma:n_expectancy_regularity}, $\cV^{(n)}$ is well-defined and continuous on $\cPn{n}{\cM}$ for every $n > 0$. We now show that $\cV^{(n)}$ has an $(A,B)$-gradient at every point:

\begin{proposition}
    For every $\mu \in \cPn{n}{\cM}$, the velocity plan $\gamma_0 := [\nabla_\cM V]^{(n)}(\mu)$ is the $(A,B)$-gradient of $\cV^{(n)}$ at $\mu$. In fact, for every $\gamma \in \cPn{n}{\cM}_\mu$, $\alpha \in \Gamma_\mu(\gamma,\gamma_0)$ and $\nu = [\exp]^{(n)}(\gamma)$, it holds
    \begin{equation}
        |\cV^{(n)}(\nu) - \cV^{(n)}(\mu) - \bE^{(n)}_\alpha[\sca{v_1}{v_2}_x]| \leq \frac 12 L \|\gamma\|^2
    \end{equation}
\end{proposition}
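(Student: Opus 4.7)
The plan is to exploit two facts. First, since $\nabla_\cM V$ is a continuous section of $\pi : T\cM \to \cM$ with linear growth (its norm is bounded by a constant plus a linear function of the distance, using that $V$ has bounded Hessian), the velocity plan $\gamma_0 := [\nabla_\cM V]^{(n)}(\mu)$ is fully deterministic in the sense of \Cref{def:fully_det_measure}. Hence by \Cref{prop:coupling_with_fully_det_is_unique}, for any $\gamma \in \cPn{n}{T\cM}_\mu$, the set $\Gamma_\mu(\gamma,\gamma_0)$ is the singleton consisting of $\alpha = [(x,v) \mapsto (x,v,\nabla_\cM V(x))]^{(n)}(\gamma)$ (as described in \Cref{ex:morphism_induces_fully_det}). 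Second, the pointwise Taylor bound from \Cref{lemma:taylor_ineq_bounded_hessian} reads, for every $(x,v) \in T\cM$:
\begin{equation}
    \bigl|V(\exp_x(v)) - V(x) - \sca{\nabla_\cM V(x)}{v}_x\bigr| \leq \tfrac{1}{2} L \|v\|^2_x.
\end{equation}

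The quantitative estimate then follows by ``integrating'' this pointwise bound against $\gamma$ via the $n$-expectancy. Using \eqref{eq:n_expectancy_of_pushforward} applied to $\exp : T\cM \to \cM$ and $\nu = [\exp]^{(n)}(\gamma)$, we have $\cV^{(n)}(\nu) = \bE^{(n)}_\gamma[V \circ \exp]$. Likewise, $\cV^{(n)}(\mu) = \bE^{(n)}_\gamma[V \circ \pi]$ because $[\pi]^{(n)}(\gamma) = \mu$. Finally, using the explicit form of $\alpha$ and again \eqref{eq:n_expectancy_of_pushforward},
\begin{equation}
    \bE^{(n)}_\alpha[\sca{v_1}{v_2}_x] = \bE^{(n)}_\gamma[\sca{v}{\nabla_\cM V(x)}_x].
\end{equation}
Subtracting and using the monotonicity of the $n$-expectancy together with the pointwise Taylor bound yields
\begin{align}
    \bigl|\cV^{(n)}(\nu) - \cV^{(n)}(\mu) - \bE^{(n)}_\alpha[\sca{v_1}{v_2}_x]\bigr|
    &= \bigl|\bE^{(n)}_\gamma[V(\exp_x(v)) - V(x) - \sca{\nabla_\cM V(x)}{v}_x]\bigr| \\
    &\leq \bE^{(n)}_\gamma\bigl[\bigl|V(\exp_x(v)) - V(x) - \sca{\nabla_\cM V(x)}{v}_x\bigr|\bigr] \\
    &\leq \tfrac{1}{2} L\, \bE^{(n)}_\gamma[\|v\|^2_x] = \tfrac{1}{2} L\, \|\gamma\|^2,
\end{align}
which is the claimed inequality.

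To conclude that $\gamma_0$ is the $(A,B)$-gradient of $\cV^{(n)}$ at $\mu$ with $A = B = \cPn{n}{T\cM}$, apply the estimate to any $\xi \in \Gamma(\mu,\nu)$: with $\alpha_\xi$ the unique coupling in $\Gamma_\mu(\xi,\gamma_0)$, we have $\sca{\xi}{\gamma_0}_\mu = \bE^{(n)}_{\alpha_\xi}[\sca{v_1}{v_2}_x]$ (because the unique coupling is automatically the optimal one in the $\sup$ defining $\sca{\cdot}{\cdot}_\mu$), so
\begin{equation}
    \bigl|\cV^{(n)}(\nu) - \cV^{(n)}(\mu) - \sca{\xi}{\gamma_0}_\mu\bigr| \leq \tfrac{1}{2} L\, \|\xi\|^2 = o(\|\xi\|).
\end{equation}
This simultaneously yields the subgradient and supergradient inequalities, so $\gamma_0$ is a regular $(A,B)$-gradient, and by \Cref{prop:gradient_is_unique} it is \emph{the} gradient. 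No step looks genuinely difficult; the only thing to be careful about is the identification of $\sca{\xi}{\gamma_0}_\mu$ with the expectation under the unique coupling, which is exactly what fully determinism buys us via \Cref{prop:coupling_with_fully_det_is_unique}.
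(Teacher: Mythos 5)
Your proof is correct, but it takes a genuinely different route from the paper's. The paper proves the quantitative estimate by induction on the hierarchy level $n$: it disintegrates the coupling $\bA$ one level at a time, applies the inductive hypothesis to each fibre $\alpha$, and re-integrates. You instead collapse all $n$ levels at once: you observe that $\gamma_0$ is fully deterministic (being the pushforward of $\mu$ by the section $\nabla_\cM V$ of $\pi$, which is continuous with linear growth since the Hessian bound makes $\nabla_\cM V$ Lipschitz in the parallel-transport sense), so by \Cref{prop:coupling_with_fully_det_is_unique} and \Cref{ex:morphism_induces_fully_det} the coupling $\alpha$ is unique and explicit, and then the naturality identity \eqref{eq:n_expectancy_of_pushforward} turns all three quantities $\cV^{(n)}(\nu)$, $\cV^{(n)}(\mu)$, $\bE^{(n)}_\alpha[\sca{v_1}{v_2}_x]$ into $n$-expectancies of pointwise functions against the single measure $\gamma$, after which the Taylor bound of \Cref{lemma:taylor_ineq_bounded_hessian} integrates directly. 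What your approach buys is brevity and a cleaner separation of the hierarchical bookkeeping (delegated entirely to the $\cE^{(n)}$ formalism and the uniqueness of couplings with fully deterministic plans) from the analysis (the one pointwise Hessian estimate); what the paper's induction buys is that the estimate is established for an \emph{arbitrary} coupling $\bA \in \Gamma_\bP(\bGamma,\bGamma_0)$ without invoking its uniqueness, at the cost of a somewhat delicate a.e.\ identification of the second marginal at each level. One point worth making explicit in your write-up: the statement quantifies over all $\alpha \in \Gamma_\mu(\gamma,\gamma_0)$, and you handle this by noting that this set is a singleton; you should also note the integrability of $V\circ\exp$, $V\circ\pi$ and $\sca{v}{\nabla_\cM V(x)}_x$ (which follows from the quadratic growth of $V$ and the finite second moment of $\cE^{(n)}_{T\cM}(\gamma)$) before splitting the expectancy into three terms, but this is routine.
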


\begin{proof}
    We prove this by induction. For $n = 0$, this is just \Cref{lemma:taylor_ineq_bounded_hessian}. Let $n > 0$, and assume the proposition holds for $n-1$. Let $\bP \in \cPn{n}{\cM}$, $\bGamma_0 := [\nabla_\cM V]^{(n)}(\bP)$, and let $\bGamma \in \cPn{n}{T\cM}_\bP$, $\bQ := [\exp]^{(n)}(\bGamma)$, and $\bA \in \Gamma_\bP(\bGamma, \bGamma_0)$. We have
    \begin{equation}
        \cV^{(n)}(\bP) = \int \bE^{(n-1)}_\mu[V] \dd\bP(\mu) = \int \cV^{(n-1)}(\mu)\dd\bP(\mu) = \int \cV^{(n-1)}([\pi \circ \pi_1]^{(n-1)}(\alpha)) \dd\bA(\alpha)
    \end{equation}
    and similarly
    \begin{equation}
        \cV^{(n)}(\bQ) = \int \cV^{(n-1)}([\exp \circ \pi_1]^{(n-1)}(\alpha)) \dd\bA(\alpha)
    \end{equation}
    \begin{equation}
        \bE_\bA^{(n)}[\sca{v_1}{v_2}_x] = \int \bE^{(n-1)}_\alpha[\sca{v_1}{v_2}_x] \dd\bA(\alpha)
    \end{equation}
    so that 
    \begin{equation}
        \cV^{(n)}(\bQ) - \cV^{(n)}(\bP) - \bE^{(n)}_\bA[\sca{v_1}{v_2}_x] = \int \cV^{(n-1)}(\nu_\alpha) - \cV^{(n-1)}(\mu_\alpha) - \bE^{(n-1)}_\alpha[\sca{v_1}{v_2}_x] \dd\bA(\alpha)
    \end{equation}
    with the notations $\gamma_\alpha := [\pi_1]^{(n-1)}(\alpha)$, $\nu_\alpha := [\exp]^{(n-1)}(\gamma_\alpha)$, $\mu_\alpha := [\pi]^{(n-1)}(\gamma_\alpha)$, $\eta_\alpha := [\pi_2]^{(n-1)}(\alpha)$. Since $\bGamma_0 = [\pi_2]^{(n)}(\bA) = [\nabla_\cM V]^{(n)}(\bP)$, this implies that for $\bA$-a.e. $\alpha$, $\eta_\alpha = [\nabla_\cM V]^{(n-1)}(\mu_\alpha)$. Thus, by the induction hypothesis, 
    \begin{align}
        |\cV^{(n)}(\bQ) - \cV^{(n)}(\bP) - \bE^{(n)}_\bA[\sca{v_1}{v_2}_x]| &\leq \int |\cV^{(n-1)}(\nu_\alpha) - \cV^{(n-1)}(\mu_\alpha) - \bE^{(n-1)}_\alpha[\sca{v_1}{v_2}_x]| \dd\bA(\alpha) \\
        &\leq \frac 12 L \int \|\gamma_\alpha\|^2 \dd\bA(\alpha) = \frac 12 L \int \|\gamma\|^2 \dd\bGamma(\gamma) = \frac 12 L \|\bGamma\|^2
    \end{align}
    This finishes the proof.
\end{proof}

Notice that the gradient of $\cV^{(n)}$ is actually \emph{Lipschitz continuous}, as it is given by $\mu \mapsto [\nabla_\cM V]^{(n)}(\mu)$.

\begin{remark}
    We can use a similar argument to show that if, for some fixed $k > 0$, $\cV : \cPn{k}{\cM} \mapsto \R$ is a continuous functional which admits a $L$-Lipschitz continuous $(A,B)$-gradient $\mu \mapsto \gW \cV(\mu)$, such that for every $\mu, \nu \in \cPn{k}{\cM}$, $\gamma \in \Gamma(\mu,\nu)$ and $\alpha \in \Gamma_\mu(\gamma, \gW \cV(\mu))$, the inequality
    \begin{equation}
        |\cV(\nu) - \cV(\mu) - \bE^{(k)}_\alpha[\sca{v_1}{v_2}_x]| \leq \frac 12 L \|\gamma\|^2
    \end{equation}
    holds, then for every $n > k$, the functional $\cV^{(n)}$ defined by $\cV^{(n)}(\mu) := \bE^{(n-k)}_\mu[\cV]$ is well-defined and continuous, has a $L$-Lipschitz continuous gradient given by $\mu \mapsto [\gW \cV]^{(n-k)}(\mu)$, and is such that for every $\mu, \nu \in \cPn{n}{\cM}$, $\gamma \in \Gamma(\mu,\nu)$ and $\alpha \in \Gamma_\mu(\gamma, [\gW \cV]^{(n-k)}(\mu))$, the inequality
    \begin{equation}
        |\cV^{(n)}(\nu) - \cV^{(n)}(\mu) - \bE^{(n)}_\alpha[\sca{v_1}{v_2}_x]| \leq \frac 12 L \|\gamma\|^2
    \end{equation}
    holds.
\end{remark}

Finally, we prove some results relating to the convexity of potential functionals.

\begin{definition}
    Let $(X,d)$ be a metric space, $\lambda \in \R$, and consider a function $\phi : X \mapsto \R \cap \{+\infty\}$ with proper effective domain $D(\phi) = \{x \in X \setcond \phi(x) < +\infty\} \neq \emptyset$. If $c : [0,1] \mapsto X$ is a curve in $X$, $\phi$ is said to be \emph{$\lambda$-convex} (or simply \emph{convex} when $\lambda = 0$) on $c$ if 
    \begin{equation} \label{eq:lambda_convexity}
        \phi(c(t)) \leq (1-t) \phi(x) + t \phi(y) - \frac 12 \lambda t(1-t) d^2(x,y), \quad \forall t \in [0,1]. 
    \end{equation}
    The function $\phi$ is said to be \emph{$\lambda$-geodesically convex} (or simply \emph{geodesically convex} when $\lambda = 0$) if for any $x,y \in D(\phi)$, there exists a constant speed geodesic $c : [0,1] \mapsto X$ from $x$ to $y$ on which $\phi$ is $\lambda$-convex ; and it is said to be \emph{totally $\lambda$-geodesically convex} (or \emph{totally geodesically convex} when $\lambda = 0$) if $\phi$ is $\lambda$-convex on \emph{every} constant speed geodesic from $x$ to $y$ for every $x,y \in D(\phi)$. 
\end{definition}

\begin{proposition}
    Let $n \geq 0$ and $\cV : \cPn{n}{\cM} \mapsto \R$. Assume that $\cV$ is lower semicontinuous, and that there exists $A,B > 0$ and $\mu_0 \in \cPn{n}{\cM}$ such that $\phi(\mu) \geq -A - B\W_2^2(\mu_0,\mu)$ for every $\mu \in \cPn{n}{\cM}$. Let $\cF : \cPn{n+1}{\cM} \mapsto \R \cup \{+\infty\}$ be the functional defined by $\cF(\mu) := \bE_\mu[\cV]$ for every $\mu \in \cPn{n+1}{\cM}$, then: 
    \begin{itemize}
        \item If $\cV$ is continuous and $\lambda$-geodesically convex, then $\cF$ is $\lambda$-geodesically convex.
        \item If $\cV$ is totally $\lambda$-geodesically convex, then $\cF$ is totally $\lambda$-geodesically convex.
    \end{itemize}
\end{proposition}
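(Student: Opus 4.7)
The plan is to pass to velocity plans via the geodesic characterization and integrate pointwise convexity. By Proposition \ref{prop:cs_geodesics_come_from_opt_vel_plans}, every constant speed geodesic $(\mu_t)_{t \in [0,1]}$ in $\cPn{n+1}{\cM}$ from $\mu_0$ to $\mu_1$ has the form $\mu_t = [\exp \circ m_t]^{(n)}_\# \bGamma$ for a unique $\bGamma \in \Gamma_o(\mu_0,\mu_1)$, and by Proposition \ref{prop:opt_vel_plan_gives_opt_trans_plan} this $\bGamma$ is concentrated on $\Gamma_o^{(n)}(\cM)$, so that for $\bGamma$-a.e.\ $\gamma$ the curve $t \mapsto [\exp]^{(n)}(t\gamma)$ is itself a constant speed geodesic in $\cPn{n}{\cM}$ between $[\pi]^{(n)}(\gamma)$ and $[\exp]^{(n)}(\gamma)$, with squared length $\|\gamma\|^2$. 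The pushforward identity then gives
\begin{equation*}
    \cF(\mu_t) = \int \cV\bigl([\exp]^{(n)}(t\gamma)\bigr) \dd\bGamma(\gamma),
\end{equation*}
where the integrand is Borel (lower semicontinuous composed with continuous) and bounded below by the integrable quadratic $-A - B\W_2^2(\mu_0,[\exp]^{(n)}(t\gamma))$, so the integral is well defined in $(-\infty,+\infty]$.

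For the totally $\lambda$-geodesically convex case, no choice of $\bGamma$ is needed. For any $\bGamma \in \Gamma_o(\mu_0,\mu_1)$ and $\bGamma$-a.e.\ $\gamma$, total $\lambda$-convexity of $\cV$ along the geodesic $t \mapsto [\exp]^{(n)}(t\gamma)$ yields the inequality
\begin{equation*}
    \cV\bigl([\exp]^{(n)}(t\gamma)\bigr) \leq (1-t)\cV\bigl([\pi]^{(n)}(\gamma)\bigr) + t\cV\bigl([\exp]^{(n)}(\gamma)\bigr) - \tfrac{1}{2}\lambda t(1-t)\|\gamma\|^2.
\end{equation*}
Integrating against $\bGamma$ and using $[\pi]^{(n)}_\#\bGamma = \mu_0$, $[\exp]^{(n)}_\#\bGamma = \mu_1$, together with $\int \|\gamma\|^2 \dd\bGamma(\gamma) = \|\bGamma\|^2 = \W_2^2(\mu_0,\mu_1)$ from optimality, delivers the $\lambda$-convexity inequality for $\cF$ along $(\mu_t)$.

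The nontrivial case is the $\lambda$-geodesically convex one with $\cV$ continuous, because one must select the right geodesic. Define
\begin{equation*}
    G := \bigcap_{t \in [0,1]} \bigl\{\gamma \in \Gamma_o^{(n)}(\cM) : h_t(\gamma) \leq 0\bigr\},
\end{equation*}
where $h_t(\gamma) := \cV([\exp]^{(n)}(t\gamma)) - (1-t)\cV([\pi]^{(n)}(\gamma)) - t\cV([\exp]^{(n)}(\gamma)) + \tfrac{1}{2}\lambda t(1-t)\|\gamma\|^2$. Continuity of $\cV$, continuity of $[\exp]^{(n)}$, $[\pi]^{(n)}$, $[m_t]^{(n)}$ in the $\W_2$ topology, and continuity of $\gamma \mapsto \|\gamma\|^2$ (by Lemma \ref{lemma:n_expectancy_regularity}) make each $h_t$ continuous, so $G$ is closed in the already closed set $\Gamma_o^{(n)}(\cM)$. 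The $\lambda$-geodesic convexity of $\cV$ combined with Proposition \ref{prop:cs_geodesics_come_from_opt_vel_plans} shows that every pair $(\mu,\nu) \in \cPn{n}{\cM} \times \cPn{n}{\cM}$ is realized as the pair of endpoints of some $\gamma \in G$, so $([\pi]^{(n)},[\exp]^{(n)})(G) = \cPn{n}{\cM} \times \cPn{n}{\cM}$. Applying Remark \ref{rk:opt_vel_plans_exist_restricted} at level $k=n$ then furnishes a $\bGamma \in \Gamma_o(\mu_0,\mu_1) \cap \cPn{1}{G}$, i.e.\ an optimal velocity plan concentrated on $G$, and the same integration argument as in the totally convex case concludes. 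The main obstacle is verifying closedness of $G$ (which fails without continuity of $\cV$, explaining the bifurcation of the two cases) and correctly invoking the extended surjectivity statement in Remark \ref{rk:opt_vel_plans_exist_restricted} at the appropriate level.
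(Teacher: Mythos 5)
Your proof is correct and follows essentially the same route as the paper: define the set of optimal velocity plans in $\Gamma_o^{(n)}(\cM)$ along whose induced geodesics $\cV$ is $\lambda$-convex (your $G$ is the paper's $F$, written as an intersection of closed sublevel sets of continuous functions), prove the convexity inequality for $\cF$ along any geodesic induced by a plan concentrated on that set by integrating the pointwise inequality, and handle the two cases by noting that $F = \Gamma_o^{(n)}(\cM)$ in the totally convex case while in the continuous case $F$ is closed with full endpoint projection, so Remark \ref{rk:opt_vel_plans_exist_restricted} supplies a suitable $\bGamma$. The only cosmetic difference is that the paper additionally records the lower semicontinuity and quadratic lower bound of $\cF$ via \Cref{lemma:n_expectancy_regularity}, which you address equivalently by noting integrability of the lower bound.
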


\begin{proof}
    By \Cref{lemma:n_expectancy_regularity}, we know that $\cF$ is lower semicontinuous, and that $\cF(\bP) \geq -A - B\W_2^2(\bP,\delta_{\mu_0})$ for every $\bP \in \cPn{n+1}{\cM}$. Let $F$ be the set of the $\gamma \in \Gamma_o^{(n)}(\cM)$ such that $\cV$ is $\lambda$-convex on the constant speed geodesic $t \mapsto [\exp]^{(n)}(t\gamma)$. We will show that for any $\bGamma \in \Gamma_o^{(n+1)}(\cM)$ concentrated on $F$, $\cF$ is $\lambda$-convex on the constant speed geodesic $t \mapsto [\exp]^{(n+1)}(t\bGamma)$. Indeed, letting $\bP = [\pi]^{(n+1)}(\bGamma)$ and $\bQ = [\exp]^{(n+1)}(\bGamma)$, we have
    \begin{align}
        \cF([\exp]^{(n+1)}(t\bGamma)) &= \int \cV([\exp]^{(n+1)}(t\gamma)) \dd\bGamma(\gamma) \\
        &\leq \int (1-t) \cV([\pi]^{(n)}(\gamma)) + t\cV([\exp]^{(n)}(\gamma)) \\
        &\quad - \frac 12 \lambda t(1-t) \W_2^2([\pi]^{(n)}(\gamma), [\exp]^{(n)}(\gamma)) \dd\bGamma(\gamma) \\
        &\leq (1-t) \int \cV(\mu) \dd\bP(\mu) + t \int \cV(\mu) \dd\bQ(\mu) \\
        &\quad - \frac 12 \lambda t(1-t)  \int \W_2^2([\pi]^{(n)}(\gamma), [\exp]^{(n)}(\gamma)) \dd\bGamma(\gamma) \\
        &\leq (1-t) \cF(\bP) + t \cF(\bQ) - \frac 12 \lambda t(1-t) \W_2^2(\bP, \bQ),
    \end{align}
    where the second inequality comes from the fact that $\bGamma$ is concentrated on the set $F$, and the fourth inequality comes from the optimality of $\bGamma$ and \Cref{prop:opt_vel_plan_gives_opt_trans_plan}.
    \medbreak
    Now, assume that $\cV$ is totally $\lambda$-geodesically convex. Then $F = \Gamma_o^{(n)}(\cM)$. Let $\bP, \bQ \in \cPn{n+1}{\cM}$ and let $c : [0,1] \mapsto \cPn{n+1}{\cM}$ be a constant speed geodesic from $\bP$ to $\bQ$. By \Cref{prop:cs_geodesics_come_from_opt_vel_plans}, there exists $\bGamma \in \Gamma_o(\bP,\bQ)$ such that $c(t) = [\exp]^{(n+1)}(t\bGamma)$ for every $t \in [0,1]$, and by \Cref{prop:opt_vel_plan_gives_opt_trans_plan}, $\bGamma$ is supported on $F = \Gamma_o^{(n+1)}(\cM)$. Therefore, by what we have just proved, $\cF$ is $\lambda$-convex on $c$. Thus, $\cF$ is totally $\lambda$-geodesically convex.
    \medbreak
    Finally, assume that $\cV$ is continuous and $\lambda$-geodesically convex. Then:
    \begin{itemize}
        \item It holds $([\pi]^{(n)},[\exp]^{(n)})(F) = \cPn{n}{\cM} \times \cPn{n}{\cM}$. Indeed, if $\mu, \nu \in \cPn{n}{\cM}$, there exists by $\lambda$-geodesical convexity of $\cV$ a constant speed geodesic $c: [0,1] \mapsto \cPn{n}{\cM}$ from $\mu$ to $\nu$ on which $\cV$ is $\lambda$-convex, and by \Cref{prop:cs_geodesics_come_from_opt_vel_plans} there exists $\gamma \in \Gamma_o(\mu,\nu)$ such that $c(t) = [\exp]^{(n)}(t\gamma)$ for every $t \in [0,1]$. Thus $\gamma \in F$ with $[\pi]^{(n)}(\gamma) = \mu$, $[\exp]^{(n)}(\gamma) = \nu$.
        \item The set $F$ is closed. Indeed, $F = \Gamma_o^{(n)}(\cM) \cap \bigcap_{t \in [0,1]} f_t^{-1}((-\infty,0])$ where $f_t : \cPn{n}{T\cM} \mapsto \R$ is the continuous function defined by $f_t(\gamma) = \cV([\exp]^{(n)}(t\gamma)) - (1-t)\cV([\pi]^{(n)}(\gamma)) - t \cV([\exp]^{(n)}(\gamma)) + \frac 12 \lambda t(1-t) \W_2^2([\pi]^{(n)}(\gamma), [\exp]^{(n)}(\gamma))$. 
    \end{itemize}
    Let then $\bP, \bQ \in \cPn{n+1}{\cM}$ be fixed. By \Cref{rk:opt_vel_plans_exist_restricted}, there exists $\bGamma \in \Gamma_o(\bP, \bQ)$ which is supported on $F$. Then, by what precedes, $\cF$ is $\lambda$-convex on the constant speed geodesic $t \mapsto [\exp]^{(n+1)}(t\bGamma)$. Thus $\cF$ is $\lambda$-geodesically convex.
\end{proof}

\subsection{The higher-order Wasserstein distances} 

Now, consider the case of the square of the Wasserstein distance, with respect to a fixed reference measure. We fix $n \geq 0$ and $\bar{\mu} \in \cPn{n}{\cM}$, and we consider the functional $\cF : \cPn{n}{\cM} \mapsto \R$ defined by $\cF(\mu) := \frac 12 \W_2^2(\mu,\bar{\mu})$. A first result is that, when $\cM$ has nonnegative curvature, supergradients of $\cF$ are given by optimal velocity plans.

\begin{lemma} \label{lemma:triangle_ineq_nonnegative_curvature}
    Let $\cM$ be a Riemannian manifold with nonnegative sectional curvature. Let $x \in \cM$ and $u, v \in T_x\cM$, then we have $d\big(\exp_x(u),\exp_x(v)\big) \leq \|u - v\|_x$.
\end{lemma}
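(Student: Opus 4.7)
The idea is to exhibit an explicit curve from $\exp_x(u)$ to $\exp_x(v)$ in $\cM$ whose length is at most $\|u - v\|_x$; the natural candidate is the image under $\exp_x$ of the Euclidean segment between $u$ and $v$ in $T_x\cM$. Define $c : [0,1] \to \cM$ by
\[
c(s) := \exp_x\big((1-s)u + sv\big).
\]
Since $\cM$ is complete, $c$ is well-defined and smooth, with $c(0) = \exp_x(u)$ and $c(1) = \exp_x(v)$. Writing $w_s := (1-s)u + sv$, the chain rule yields $c'(s) = d\exp_x|_{w_s}(v - u)$, so the length of $c$ is
\[
\mathrm{Length}(c) = \int_0^1 \big\|d\exp_x|_{w_s}(v-u)\big\|_{c(s)} \dd s.
\]

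The main technical step is to establish the Euclidean contraction estimate
\[
\big\|d\exp_x|_w(\xi)\big\|_{\exp_x(w)} \leq \|\xi\|_x, \quad \forall w,\xi \in T_x\cM,
\]
in any manifold of nonnegative sectional curvature. Decomposing $\xi = \xi^T + \xi^\perp$ orthogonally at $x$ with respect to the direction $w$, Gauss's lemma implies that $d\exp_x|_w(\xi^T)$ and $d\exp_x|_w(\xi^\perp)$ are orthogonal at $\exp_x(w)$ and that $\|d\exp_x|_w(\xi^T)\|_{\exp_x(w)} = \|\xi^T\|_x$. The vector $d\exp_x|_w(\xi^\perp)$ equals $J(1)$, where $J$ is the normal Jacobi field along the geodesic $t \mapsto \exp_x(tw)$ with $J(0) = 0$ and $J'(0) = \xi^\perp$. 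Rauch's comparison theorem, applied against Euclidean space as the upper curvature model, yields $\|J(1)\|_{\exp_x(w)} \leq \|\xi^\perp\|_x$. Combining this with the Gauss identity through the orthogonal decomposition gives
\[
\big\|d\exp_x|_w(\xi)\big\|^2_{\exp_x(w)} \leq \|\xi^T\|_x^2 + \|\xi^\perp\|_x^2 = \|\xi\|_x^2.
\]

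Applying this estimate at $\xi = v - u$ and $w = w_s$, we obtain $\|c'(s)\|_{c(s)} \leq \|v-u\|_x$ for every $s \in [0,1]$, so $\mathrm{Length}(c) \leq \|v-u\|_x$ and hence $d(\exp_x(u), \exp_x(v)) \leq \|u - v\|_x$ by the definition of the geodesic distance. The main technical subtlety is that Rauch's comparison is classically stated only up to the first conjugate point along $t \mapsto \exp_x(tw)$, and such conjugate points may genuinely occur in nonnegative curvature (for instance, antipodal points on a round sphere). This is resolved by the standard strengthening that in nonnegative sectional curvature the inequality $\|J(t)\|_{\gamma(t)} \leq t\|J'(0)\|_x$ for Jacobi fields with $J(0) = 0$ holds globally in $t$, via a Sturm-type comparison argument applied to $t \mapsto \|J(t)\|$ against its Euclidean counterpart.
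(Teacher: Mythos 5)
Your strategy---pull the Euclidean segment from $u$ to $v$ back through $\exp_x$ and bound the length of its image---is a classical route, and the Gauss-lemma decomposition plus Rauch is correct as far as it goes: it yields $\|d\exp_x|_w(\xi)\|_{\exp_x(w)} \leq \|\xi\|_x$ for $\|w\|_x$ smaller than the distance to the first conjugate point along $t \mapsto \exp_x(tw)$. The gap is exactly where you flag it, and it is not a removable technicality. Writing $f(t) = \|J(t)\|$, one has on each interval where $J \neq 0$
\begin{equation*}
    f'' \;=\; \frac{\|J'\|^2\|J\|^2 - \langle J, J'\rangle^2}{\|J\|^3} \;-\; \frac{\langle R(J,\gamma')\gamma', J\rangle}{\|J\|},
\end{equation*}
and a curvature \emph{lower} bound only signs the second term; the Cauchy--Schwarz term is nonnegative, so the scalar differential inequality goes the wrong way for an upper bound on $f$. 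Even on a surface, where the first term vanishes and $f$ is genuinely concave between consecutive zeros of $J$, the estimate does not globalize: after a conjugate point $t_0 > 0$ the field restarts from $0$ with slope $\|J'(t_0)\|$, and nonnegative curvature gives no control of $\|J'(t_0)\|$ by $\|J'(0)\|$ (a region of large curvature converts a large $\|J\|$ into a large $\|J'\|$). The index-form proof of the Rauch estimate you need really does use that Jacobi fields minimize the index form, which fails beyond the first conjugate point.

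Worse, the global estimate you invoke is false, and so is the lemma as stated for arbitrary $u,v$. Take $\cM$ a prolate ellipsoid with semi-axes $a \gg b$ and $x$ a point on its equator, and shoot two unit-speed geodesics from $x$ towards the nearby pole, making a small angle $\theta$ with each other. By Clairaut's relation each sweeps an azimuthal angle $\pi + c \int r^{-2}\,ds + o(1)$, where $c \approx b\theta$ is its Clairaut constant and the regularized integral over the round trip is $\approx \tfrac{2a}{b^2}\log\tfrac{a}{b}$; so after arc length $L \approx 2a$ the two endpoints sit on the far equator at mutual distance $\approx 2a\theta\log\tfrac{a}{b}$, whereas the corresponding tangent vectors $u, v$ (both of length $L$, at angle $\theta$) satisfy $\|u-v\|_x \approx 2a\theta$. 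For $a/b$ large this violates the claimed inequality; the same conclusion follows from an explicit Wronskian computation of the meridional Jacobi field, which gives $\|J(L)\| \approx 2a\log\tfrac{a}{b} > L\,\|J'(0)\|$. The statement becomes true (and the paper's intended proof via Toponogov's hinge comparison applies, since that theorem requires at least one side of the hinge to be a \emph{minimal} geodesic) under the additional hypothesis that, say, $t \mapsto \exp_x(tv)$ is minimizing on $[0,1]$. That hypothesis holds in the only place the lemma is used later (there $v = \log_x(x_0)$ with $\|v\|_x = d(x,x_0)$), so the downstream results survive; but neither your globalization step nor the unrestricted statement can be repaired without it.
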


\begin{proof}
    Let $x\in\cM$, $\xi,\eta\in T_x\cM$. Let also $y = \exp_x(\xi)$ and $z = \exp_x(\eta)$. Consider also vectors $\xi', \eta' \in \R^d$ such that $\|\xi\|_x = \|\xi'\|$, $\|\eta\|_x = \|\eta'\|$ and $\sca{\xi}{\eta}_x = \sca{\xi'}{\eta'}$, and let $x',y',z' \in \R^d$ be points such that $y' = x' + \xi'$ and $z' = x' + \eta'$. Then, applying Toponogov's theorem \citep[Theorem 79 (Hinge version)]{petersen2006riemannian}, considering the hinge in $\cM$ formed by the geodesics $t \mapsto \exp_x((1-t)\xi)$ and $t \mapsto \exp_x(t\eta)$, with vertices $y,x,z$, and the comparison hinge in $\R^d$ (the manifold of dimension $d$ with constant sectional curvature 0) formed by the geodesics $t \mapsto x' + (1-t)\xi'$, $t \mapsto x' + t\eta'$, with vertices $y',x',z'$, we find that $d(y,z) \leq \|y'-z'\|$, that is, $d\big(\exp_x(\xi),\exp_x(\eta)\big) \leq \|\xi'-\eta'\| = \|\xi-\eta\|_x$ (note that although the statement of the theorem requires the geodesics of the hinges to be minimal, its proof only requires that the geodesics of the comparison hinge are minimal, which is always the case in $\R^d$ - we refer to the discussion in \citep[Chapter 11.2]{petersen2006riemannian} just after the proof of the theorem).
\end{proof}

\begin{proposition} \label{prop:opt_vel_plans_give_w2_supergradient}
    Assume that $\cM$ has nonnegative sectional curvature. Let $\mu \in \cPn{n}{\cM}$, then for any $\bar{\gamma} \in \Gamma_o(\mu,\bar{\mu})$, $-\bar{\gamma}$ is a regular $(A,B)$-supergradient of $\cF$ at $\mu$. In fact, for every $\nu \in \cPn{n}{\cM}$, $\gamma \in \Gamma(\mu,\nu)$ and $\alpha \in \Gamma_\mu(\gamma,\bar{\gamma})$, it holds
    \begin{equation} \label{eq:hierarchical_w2_supergradient}
        \cF(\nu) \leq \cF(\mu) - \bE^{(n)}_\alpha[\sca{v_1}{v_2}_x] + \frac 12 \|\gamma\|^2.
    \end{equation}
\end{proposition}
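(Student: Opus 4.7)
The plan is to first establish the pointwise inequality \eqref{eq:hierarchical_w2_supergradient}, and then deduce the supergradient claim from it. For the latter, unfolding definitions, $-\bar\gamma$ is a regular $(A,B)$-supergradient of $\cF$ at $\mu$ iff $\bar\gamma \in \hat\partial_{A,B}(-\cF)(\mu)$, i.e.\ for every $\xi \in \Gamma(\mu,\nu) \cap B$, $\cF(\nu) \leq \cF(\mu) - \sca{\bar\gamma}{\xi}_\mu + o(\|\xi\|)$. Taking the supremum of \eqref{eq:hierarchical_w2_supergradient} over $\alpha \in \Gamma_\mu(\xi,\bar\gamma)$ (which agrees with the supremum over $\Gamma_\mu(\bar\gamma,\xi)$ via the coordinate-swap bijection, since the integrand $\sca{v_1}{v_2}_x$ is symmetric) and noting $\tfrac12\|\xi\|^2 = o(\|\xi\|)$ yields exactly this inequality.

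I will prove \eqref{eq:hierarchical_w2_supergradient} by induction on $n$. The base case $n=0$ reduces to the following claim: for $x,\bar x \in \cM$, $v,\bar v \in T_x\cM$ with $\exp_x(\bar v) = \bar x$ and $\|\bar v\|_x = d(x,\bar x)$, one has $\tfrac12 d^2(\exp_x(v),\bar x) \leq \tfrac12 d^2(x,\bar x) - \sca{v}{\bar v}_x + \tfrac12 \|v\|_x^2$. By \Cref{lemma:triangle_ineq_nonnegative_curvature} (this is the single point where nonnegative curvature is used), $d(\exp_x(v), \exp_x(\bar v)) \leq \|v-\bar v\|_x$; squaring, dividing by $2$, and expanding $\|v-\bar v\|_x^2 = \|v\|_x^2 - 2\sca{v}{\bar v}_x + d^2(x,\bar x)$ gives the claim, matching \eqref{eq:hierarchical_w2_supergradient} since $\bE^{(0)}_{(x,v,\bar v)}[\sca{v_1}{v_2}_x] = \sca{v}{\bar v}_x$.

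For the inductive step at level $n \geq 1$ with data $\bP,\bar\bP,\bQ,\bar\bGamma \in \Gamma_o(\bP,\bar\bP)$, $\bGamma \in \Gamma(\bP,\bQ)$, $\bA \in \Gamma_\bP(\bGamma,\bar\bGamma)$, I unpack $\bA$ fiberwise: for $\bA$-a.e.\ $\alpha \in \cPn{n-1}{T^2\cM}$, set $\gamma_\alpha := [\pi_1]^{(n-1)}(\alpha)$, $\bar\gamma_\alpha := [\pi_2]^{(n-1)}(\alpha)$, $\mu_\alpha := [\pi]^{(n-1)}(\gamma_\alpha)$, $\nu_\alpha := [\exp]^{(n-1)}(\gamma_\alpha)$, and $\bar\mu_\alpha := [\exp]^{(n-1)}(\bar\gamma_\alpha)$. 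Then $\alpha \in \Gamma_{\mu_\alpha}(\gamma_\alpha,\bar\gamma_\alpha)$, and because $\bar\bGamma$ is optimal, \Cref{prop:opt_vel_plan_gives_opt_trans_plan} ensures $\bar\gamma_\alpha \in \Gamma_o(\mu_\alpha,\bar\mu_\alpha)$ for $\bA$-a.e.\ $\alpha$. The inductive hypothesis applies fiberwise:
\begin{equation}
    \tfrac12 \W_2^2(\nu_\alpha, \bar\mu_\alpha) \leq \tfrac12 \W_2^2(\mu_\alpha, \bar\mu_\alpha) - \bE^{(n-1)}_\alpha[\sca{v_1}{v_2}_x] + \tfrac12 \|\gamma_\alpha\|^2.
\end{equation}
Integrating against $\bA$ and identifying each term: the integrated middle term equals $\bE^{(n)}_\bA[\sca{v_1}{v_2}_x]$ by the recurrence for $n$-expectancies; $\int \|\gamma_\alpha\|^2 \dd\bA(\alpha) = \|\bGamma\|^2$; and $\int \tfrac12 \W_2^2(\mu_\alpha,\bar\mu_\alpha) \dd\bA(\alpha) = \tfrac12\int \|\bar\gamma_\alpha\|^2 \dd\bA(\alpha) = \tfrac12 \|\bar\bGamma\|^2 = \cF(\bP)$, using fiberwise optimality of $\bar\gamma_\alpha$ and then global optimality of $\bar\bGamma$. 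On the left, $(\alpha \mapsto (\nu_\alpha,\bar\mu_\alpha))_\# \bA$ is a (not necessarily optimal) transport plan between $\bQ$ and $\bar\bP$, so $\W_2^2(\bQ,\bar\bP) \leq \int \W_2^2(\nu_\alpha,\bar\mu_\alpha) \dd\bA(\alpha)$. Combining gives \eqref{eq:hierarchical_w2_supergradient}.

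The main technical obstacle is the fiberwise bookkeeping: one must verify that optimality of $\bar\bGamma$ propagates to $\bar\gamma_\alpha$ for $\bA$-a.e.\ $\alpha$, which is exactly the content of \Cref{prop:opt_vel_plan_gives_opt_trans_plan} and is what allows both the induction hypothesis to apply fiberwise and the identity $\int \|\bar\gamma_\alpha\|^2 \dd\bA(\alpha) = \W_2^2(\bP,\bar\bP)$ to close the computation. Beyond this, the argument is essentially a transport-plan-construction and Cauchy--Schwarz-style manipulation lifted through the hierarchy by the integration machinery of Section 2.
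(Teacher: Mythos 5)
Your proposal is correct and follows essentially the same route as the paper's proof: the same reduction of the supergradient claim to the pointwise inequality \eqref{eq:hierarchical_w2_supergradient}, the same base case via \Cref{lemma:triangle_ineq_nonnegative_curvature}, and the same inductive step in which optimality of $\bar{\bGamma}$ is pushed down to the fibers via \Cref{prop:opt_vel_plan_gives_opt_trans_plan} and the pushforward $([\exp\circ\pi_1]^{(n-1)},[\exp\circ\pi_2]^{(n-1)})_\#\bA$ serves as a competitor plan between $\bQ$ and $\bar{\bP}$. The only cosmetic difference is that you evaluate $\int \W_2^2(\mu_\alpha,\bar\mu_\alpha)\dd\bA(\alpha)$ through $\|\bar\gamma_\alpha\|^2$ and global optimality of $\bar{\bGamma}$, whereas the paper bounds the same integral directly; both computations are valid.
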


\begin{proof}
    Taking $\alpha \in \Gamma_\mu(\gamma,\bar{\gamma})$ to be optimal, \eqref{eq:hierarchical_w2_supergradient} rewrites as
    \begin{equation}
        -\cF(\nu) \geq -\cF(\mu) + \sca{\gamma}{\bar{\gamma}}_\mu - \frac 12 \|\gamma\|^2
    \end{equation}
    for every $\nu \in \cPn{n}{\cM}$ and $\gamma \in \Gamma(\mu,\nu)$. In particular, this means that $\bar{\gamma}$ is a regular $(A,B)$-subgradient of $-\cF$ at $\mu$, that is $-\bar{\gamma}$ is a regular $(A,B)$-supergradient of $\cF$ at $\mu$. Thus the first part of the proposition is a consequence of \eqref{eq:hierarchical_w2_supergradient}, and we only need to prove that this inequality holds.
    \medbreak
    We prove this by induction. We first examine the case $n = 0$, in which the problem reformulates as such: let $x_0 \in \cM$ fixed, for every $x,y \in \cM$, and $v,v_0 \in T_x \cM$ such that $\exp_x(v) = y$, $\exp_x(v_0) = x_0$ and $d(x,x_0) = \|v_0\|_x$, we want to show that
    \begin{equation}
        \frac 12 d^2(y,x_0) \leq \frac 12 d^2(x,x_0) - \sca{v}{v_0}_x + \frac 12 \|v\|^2_x.
    \end{equation}
    This is the case since
    \begin{align}
        \frac 12 d^2(y,x_0) &= \frac 12 d^2(\exp_x(v), \exp_x(v_0)) \\
        &\leq \frac 12 \|v - v_0\|^2_x = \frac 12 \|v_0\|^2_x - \sca{v}{v_0}_x + \frac 12 \|v\|^2_x \\
        &\leq \frac 12 d^2(x,x_0) - \sca{v}{v_0}_x + \frac 12 \|v\|^2_x
    \end{align}
    where we used that $\cM$ has nonnegative sectional curvature and \Cref{lemma:triangle_ineq_nonnegative_curvature} to obtain the second line. This shows the Proposition for $n = 0$.
    \medbreak
    Now, let $n > 1$, and assume that the proposition holds for $n - 1$. Let $\bar{\bP} \in \cPn{n}{\cM}$, and define the functional $\cF : \cPn{n}{\cM} \mapsto \R$ by $\cF(\bP) := \frac 12 \W_2^2(\bP,\bar{\bP})$. Let $\bP, \bQ \in \cPn{n}{\cM}$, $\bar{\bGamma} \in \Gamma_o(\bP, \bar{\bP})$, $\bGamma \in \Gamma(\bP, \bQ)$ and $\bA \in \Gamma_\bP(\bGamma, \bar{\bGamma})$. Then $([\exp \circ \pi_1]^{(n-1)}, [\exp \circ \pi_2]^{(n-1)})_\#\bA$ is a (not necessarily optimal) transport plan between $\bQ$ and $\bar{\bP}$. Therefore,
    \begin{align}
        \cF(\bQ) &= \frac 12 \W_2^2(\bQ,\bar{\bP}) \leq \frac 12 \int \W^2_2([\exp \circ \pi_1]^{(n-1)}(\alpha), [\exp \circ \pi_2]^{(n-1)}(\alpha)) \dd\bA(\alpha) \\
        &\leq \frac 12 \int \W_2^2(\nu_\alpha, \bar{\mu}_\alpha) \dd\bA(\alpha)
    \end{align}
    with the notations $\gamma_\alpha = [\pi_1]^{(n-1)}(\alpha)$, $\bar{\gamma}_\alpha = [\pi_2]^{(n-1)}(\alpha)$, $\nu_\alpha = [\exp]^{(n-1)}(\gamma_\alpha)$, $\bar{\mu}_\alpha = [\exp]^{(n-1)}(\bar{\gamma}_\alpha)$ and $\mu_\alpha = [\pi]^{(n-1)}(\gamma_\alpha)$. Since $\bar{\bGamma}$ is optimal, it is concentrated on $\Gamma_o^{(n-1)}(\cM)$, so that $\bar{\gamma}_\alpha$ is optimal for $\bA$-a.e. $\alpha$. Thus, by the induction hypothesis, for $\bA$-a.e. $\alpha$, it holds
    \begin{equation}
        \frac 12 \W_2^2(\nu_\alpha, \bar{\mu}_\alpha) \leq \frac 12 \W_2^2(\mu_\alpha, \bar{\mu}_\alpha) - \bE^{(n-1)}_\alpha[\sca{v_1}{v_2}_x] + \frac 12 \|\gamma_\alpha\|^2
    \end{equation}
    so that
    \begin{align}
        \cF(\bQ) &\leq \int \frac 12 \W_2^2(\mu_\alpha, \bar{\mu}_\alpha) - \bE^{(n-1)}_\alpha[\sca{v_1}{v_2}_x] + \frac 12 \|\gamma_\alpha\|^2 \dd\bA(\alpha) \\
        &\leq \frac 12 \int \W_2^2([\pi]^{(n-1)}(\gamma), [\exp]^{(n-1)}) \dd\bar{\bGamma}(\gamma) - \bE^{(n)}_\bA[\sca{v_1}{v_2}_x] + \frac 12 \int \|\gamma\|^2 \dd\bGamma(\gamma) \\
        &\leq \frac 12 \W_2^2(\bP,\bar{\bP}) - \bE^{(n)}_\bA[\sca{v_1}{v_2}_x] + \frac 12 \|\bGamma\|^2 = \cF(\bP) - \bE^{(n)}_\bA[\sca{v_1}{v_2}_x] + \frac 12 \|\bGamma\|^2
    \end{align}
    where we used the optimality of $\bar{\bGamma}$ in the last line. This finishes the proof.
\end{proof}

A more difficult question concerns the existence of regular $(A,B)$-subgradients of $\cF$. Again assuming that $\cM$ has nonnegative curvature, since by \Cref{prop:opt_vel_plans_give_w2_supergradient} each optimal velocity plan $\bar{\gamma} \in \Gamma_o(\mu,\bar{\mu})$ gives a regular supergradient, by \Cref{prop:gradient_is_unique}, a necessary condition for the existence of regular subgradients at $\mu$ is the unicity of the optimal velocity plan $\bar{\gamma} \in \Gamma_o(\mu,\bar{\mu})$. On the other hand, a sufficient condition is given by the following proposition:

\begin{proposition}
    Assume that $\cM$ has nonnegative sectional curvature. Let $\mu \in \cPn{n}{\cM}$ such that there exists an unique optimal velocity plan $\bar{\gamma} \in \Gamma_o(\mu,\bar{\mu})$, and assume that $\bar{\gamma}$ is fully deterministic. Then $-\bar{\gamma}$ is a regular $(A,B)$-subgradient of $\cF$ at $\mu$.
\end{proposition}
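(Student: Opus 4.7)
The plan is to complement the already-established supergradient inequality with a matching lower bound obtained by applying the same inequality ``in reverse'' at base $\nu$, together with a stability argument for optimal velocity plans.

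Since $\bar\gamma$ is fully deterministic, by \Cref{prop:coupling_with_fully_det_is_unique} the set $\Gamma_\mu(\xi,\bar\gamma)$ is a singleton for every $\xi \in \Gamma(\mu,\nu)$, so $\sca{\xi}{\bar\gamma}_\mu = \bE^{(n)}_\alpha[\sca{v_1}{v_2}_x]$ for its unique element $\alpha$. Substituting this into \Cref{prop:opt_vel_plans_give_w2_supergradient} applied at $\mu$ gives
\[
\cF(\nu) - \cF(\mu) \leq -\sca{\bar\gamma}{\xi}_\mu + \tfrac{1}{2}\|\xi\|^2,
\]
with remainder $\tfrac{1}{2}\|\xi\|^2 = o(\|\xi\|)$ as $\|\xi\| \to 0$, which is the `$\leq$' half of differentiability in direction $-\bar\gamma$.

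For the matching `$\geq$' half, I would pick an optimal $\bar\gamma_\nu \in \Gamma_o(\nu,\bar\mu)$ (existing by \Cref{prop:opt_vel_plans_exist}) and apply \Cref{prop:opt_vel_plans_give_w2_supergradient} at base $\nu$ with target $\bar\mu$ and perturbation $\tilde\xi := -\PT^{(n)}(\xi) \in \Gamma(\nu,\mu)$. By \Cref{lemma:pt_of_plan_follows_curve}, $\|\tilde\xi\| = \|\xi\|$; taking the supremum over $\Gamma_\nu(\tilde\xi,\bar\gamma_\nu)$ yields
\[
\cF(\nu) - \cF(\mu) \geq \sca{\bar\gamma_\nu}{-\PT^{(n)}(\xi)}_\nu - \tfrac{1}{2}\|\xi\|^2,
\]
and it remains to show the asymptotic $\sca{\bar\gamma_\nu}{-\PT^{(n)}(\xi)}_\nu = -\sca{\bar\gamma}{\xi}_\mu + o(\|\xi\|)$ as $\|\xi\| \to 0$.

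This asymptotic identity is the heart of the argument, and the step I expect to be the most delicate. Since $\W_2(\mu,\nu) \leq \|\xi\|$ we have $\nu \to \mu$, and by \Cref{cor:limit_points_of_seq_of_opt_vel_plans} together with the uniqueness of $\bar\gamma$ in $\Gamma_o(\mu,\bar\mu)$, every sequence of $\bar\gamma_\nu$'s must converge to $\bar\gamma$ in the $\W_2$-topology on $\cPn{n}{T\cM}$. Exploiting the full determinism of $\bar\gamma$ (so that couplings involving it are pinned down uniquely) together with the gluing lemma \Cref{lemma:hierarchical_gluing_lemma}, one constructs from the unique $\alpha \in \Gamma_\mu(\xi,\bar\gamma)$ a coupling in $\Gamma_\nu(-\PT^{(n)}(\xi),\bar\gamma_\nu)$ whose integrand approaches the target value $-\sca{\bar\gamma}{\xi}_\mu$. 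The main obstacle is promoting the qualitative convergence $\bar\gamma_\nu \to \bar\gamma$ to a quantitative $O(\|\xi\|)$ rate, which is what feeds the $o(\|\xi\|)$ remainder; this should be obtained by combining the supergradient upper bound $\W_2^2(\nu,\bar\mu) \leq \W_2^2(\mu,\bar\mu) - 2\sca{\bar\gamma}{\xi}_\mu + \|\xi\|^2$ with the Toponogov-type estimate enabled by nonnegative curvature that already powers \Cref{prop:opt_vel_plans_give_w2_supergradient}.
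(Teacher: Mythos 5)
Your overall architecture is the same as the paper's: apply the supergradient inequality of \Cref{prop:opt_vel_plans_give_w2_supergradient} in reverse, at base $\nu$ with the parallel-transported perturbation $\tilde\xi = -\PT^{(n)}(\xi)$, and then compare $\bar\gamma_\nu$ with $\bar\gamma$ using the uniqueness of $\bar\gamma$, \Cref{cor:limit_points_of_seq_of_opt_vel_plans}, and full determinism. (The first half of your argument, the upper bound coming from the supergradient at $\mu$, is not needed: the proposition only claims the subgradient inequality.) You have also correctly located the delicate step. However, your plan for closing that step contains a genuine gap: you assert that one must promote the convergence $\bar\gamma_\nu \to \bar\gamma$ to a quantitative $O(\|\xi\|)$ rate, and you propose to extract such a rate from the supergradient bound plus Toponogov. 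No such rate is available from those ingredients — quantitative stability of optimal (velocity) plans with respect to their marginals is exactly the kind of statement that fails or requires heavy extra hypotheses — and, more importantly, no rate is needed.

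The missing idea is a Cauchy--Schwarz splitting. Let $\tilde\gamma$ denote the parallel transport of $\bar\gamma$ along $\xi$ to the base $\nu$ (so that, by isometry of parallel transport and uniqueness of the coupling $\alpha \in \Gamma_\mu(\xi,\bar\gamma)$, pairing $\tilde\xi$ against $\tilde\gamma$ reproduces exactly $-\sca{\xi}{\bar\gamma}_\mu = \sca{\xi}{-\bar\gamma}_\mu$ with \emph{no} error). Glue a triple coupling of $(\tilde\xi, \bar\gamma_\nu, \tilde\gamma)$ whose $(1,3)$-marginal is the transported $\alpha$ and whose $(2,3)$-marginal is optimal, and write $\sca{v_1}{v_2} = \sca{v_1}{v_3} + \sca{v_1}{v_2 - v_3}$. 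The first term gives the exact target $\sca{\xi}{-\bar\gamma}_\mu$; the second is bounded via \Cref{lemma:n_holder_ineq} by $\|\xi\| \cdot \W_\nu(\bar\gamma_\nu, \tilde\gamma)$. Since both $\bar\gamma_\nu$ and $\tilde\gamma$ converge to the fully deterministic plan $\bar\gamma$ as $\|\xi\| \to 0$, \Cref{prop:w_mu_is_lsc} gives $\W_\nu(\bar\gamma_\nu,\tilde\gamma) \to 0$ \emph{qualitatively}, and the product $\|\xi\|\,\W_\nu(\bar\gamma_\nu,\tilde\gamma)$ is already $o(\|\xi\|)$. Without this factorization, your plan as stated cannot be completed.
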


\begin{proof}
    It is sufficient to prove that, for every sequence $(\xi_m)_m \in \cPn{n}{T\cM}_\mu$ of velocity plans such that $\|\xi_m\| \to 0$, we have the inequality
    \begin{equation}
        \liminf_{m \to +\infty} \frac{\cF([\exp]^{(n)}(\xi_m)) - \cF(\mu) - \sca{\xi_m}{-\bar{\gamma}}_\mu}{\|\xi_m\|} \geq 0.
    \end{equation}
    (We may assume $\|\xi_m\| \neq 0$ because otherwise, $\xi_m = \bm{0}_\mu$ and $\cF([\exp]^{(n)}(\xi_m)) - \cF(\mu) - \sca{\xi_m}{-\bar{\gamma}}_\mu = \cF(\mu) - \cF(\mu) - 0 = 0$.) 
    \medbreak
    For every $m$, let $\mu_m := [\exp]^{(n)}(\xi_m)$, and fix $\bar{\gamma}_m \in \Gamma_o(\mu_m,\bar{\mu})$. Let also $\alpha_m \in \Gamma_\mu(\xi_m,\bar{\gamma})$. Since $\bar{\gamma}$ is fully deterministic, by \Cref{prop:coupling_with_fully_det_is_unique}, $\alpha_m$ is unique. In particular, $(1,-1)\alpha_m$ is also the unique element of $\Gamma_\mu(\xi_m,-\bar{\gamma})$ and is thus optimal. Define furthermore $\tilde{\alpha}_m := (-1,1) \cdot [\PT^2_{1,1}]^{(n)}(\alpha_m)$, that is, $\tilde{\alpha}_m$ is given by
    \begin{equation}
        \tilde{\alpha}_m = [(x,v_1,v_2) \mapsto (\exp_x(v_1), -\PT_1(x,v_1,v_1),\PT_1(x,v_1,v_2))]^{(n)}(\alpha_m).
    \end{equation}
    Letting $\tilde{\xi}_m := [\pi_1]^{(n)}(\tilde{\alpha}_m)$ and $\tilde{\gamma}_m := [\pi_2]^{(n)}(\tilde{\alpha}_m)$, we then have $\tilde{\xi}_m, \tilde{\gamma}_m \in \cPn{n}{T\cM}_{\mu_m}$, $\tilde{\xi}_m \in \Gamma(\mu_m,\mu)$, and, since the parallel transport operator is an isometry, $\|\tilde{\xi}_m\| = \|\xi_m\|$. Let then $\beta_m \in \Gamma_{\mu_m}(\tilde{\xi}_m,\bar{\gamma}_m,\tilde{\gamma}_m)$ be such that $[\pi^3_{1,3}]^{(n)}(\beta_m) = \tilde{\alpha}_m$ and such that $[\pi^3_{2,3}]^{(n)}(\beta_m)$ is a optimal coupling between $\bar{\gamma}_m$ and $\tilde{\gamma}_m$ (such a $\beta_m$ exists by \Cref{lemma:generalized_couplings_exist_3}). Since $\bar{\gamma}_m$ is optimal from $\mu_m$ to $\bar{\mu}$, applying \eqref{eq:hierarchical_w2_supergradient} in \Cref{prop:opt_vel_plans_give_w2_supergradient} to $\nu \leftarrow \mu$, $\mu \leftarrow \mu_n$, $\alpha \leftarrow [\pi_{1,2}^3]^{(n)}(\beta_m)$ and $\gamma \leftarrow \tilde{\xi}_m$, we find 
    \begin{equation} \label{eq:l_2505}
        \cF(\mu) \leq \cF(\mu_m) - \bE^{(n)}_{\beta_m}[\sca{v_1}{v_2}_x] + \frac 12 \|\tilde{\xi}_m\|^2 = \cF(\mu_m) - \bE^{(n)}_{\beta_m}[\sca{v_1}{v_2}_x] + \frac 12 \|\xi_m\|^2.
    \end{equation}
    Furthermore, we have 
    \begin{equation} \label{eq:l_2509}
        \bE^{(n)}_{\beta_m}[\sca{v_1}{v_2}_x] = \bE^{(n)}_{\beta_m}[\sca{v_1}{v_2-v_3}] + \bE^{(n)}_{\beta_m}[\sca{v_1}{v_3}_x],
    \end{equation}
    with
    \begin{align}
        \bE^{(n)}_{\beta_m}[\sca{v_1}{v_3}_x] &= \bE^{(n)}_{\tilde{\alpha}_m}[\sca{v_1}{v_2}_x] \\
        &= \bE^{(n)}_{\alpha_m}[\sca{-\PT_1(x,v_1,v_2)}{\PT_1(x,v_1,v_2)}_{\exp_x(v_1)}] \\
        &= \bE^{(n)}_{\alpha_m}[-\sca{v_1}{v_2}_x] \\
        &= \bE^{(n)}_{(1,-1)\alpha_m}[\sca{v_1}{v_2}_x] = \sca{\xi_m}{-\bar{\gamma}}_\mu \label{eq:l_2517},
    \end{align}
    where we obtained the third line by using the fact that the parallel transport operator is an isometry, and the fourth line by using the optimality of $(1,-1)\alpha_m$ ; and with
    \begin{align}
        |\bE^{(n)}_{\beta_m}[\sca{v_1}{v_2-v_2}_x]| &\leq  \bE^{(n)}_{\beta_m}[|\sca{v_1}{v_2-v_3}_x|] \\
        &\leq \bE^{(n)}_{\beta_m}[\|v_1\|_x\|v_2-v_3\|_x] \\
        &\leq \sqrt{\bE^{(n)}_{\beta_m}[\|v_1\|^2_x]} \sqrt{\bE^{(n)}_{\beta_m}[\|v_2-v_3\|_x|^2]} \\
        &\leq \|\tilde{\xi}_m\| \W_{\mu_m}(\bar{\gamma}_m,\tilde{\gamma}_m) \\
        &\leq \|\xi_m\| \W_{\mu_m}(\bar{\gamma}_m,\tilde{\gamma}_m) \label{eq:l_2525},
    \end{align}
    where we used the Cauchy-Schwarz inequality (\Cref{lemma:n_holder_ineq}) to obtain the third line, and the optimality of $[\pi^3_{2,3}]^{(n)}(\beta_m)$ to obtain the fourth line. Combining Equations \eqref{eq:l_2505}, \eqref{eq:l_2509}, \eqref{eq:l_2517} and \eqref{eq:l_2525}, we then obtain
    \begin{equation} \label{eq:l_2540}
        \cF(\mu_m) \geq \cF(\mu) + \sca{\xi_m}{-\bar{\gamma}}_\mu - \frac 12 \|\xi_m\|^2 - \|\xi_m\| \W_{\mu_m}(\bar{\gamma}_m,\tilde{\gamma}_m).
    \end{equation}
    Moreover, we have $\W_{\mu_m}(\bar{\gamma}_m, \tilde{\gamma}_m) \xrightarrow[m \to +\infty]{} 0$. Indeed:
    \begin{itemize}
        \item We have $\mu_m \to \mu$. Indeed, $\W_2(\mu_m,\mu) \leq \|\xi_m\|$ with $\|\xi_m\| \to 0$ by assumption.
        \item We have $\xi_m \to \bm{0}_\mu$. Indeed, since $\mu_m \to \mu$, we have $\bm{0}_{\mu_m} \to \bm{0}_\mu$, and since $\W_2(\bm{0}_{\mu_m},\xi_m) \leq \|\xi_m\|$ by \Cref{lemma:pseudo_norm_bounds_dist_to_zero} with $\|\xi_m\| \to 0$, we conclude that $\xi_m \to \bm{0}_\mu$.
        \item For every $m$, $\alpha_m \in \Gamma_\mu(\xi_m,\bar{\gamma})$, with $\xi_m \to \bm{0}_\mu$. Moreover there is an unique $\alpha \in \Gamma_\mu(\bm{0}_\mu,\bar{\gamma})$, as both plans are fully deterministic, and $\alpha$ is given by $\alpha = [(x,v) \mapsto (x,0,v)]^{(n)}(\bar{\gamma})$. Thus, by \Cref{cor:limit_points_of_seq_of_couplings}, $\alpha_m \to \alpha$.
        \item We have $\tilde{\gamma}_m = [\pi_2 \circ \PT_{1,1}^2]^{(n)}(\alpha_m)$ and $\alpha_m \to \alpha$, so that $\tilde{\gamma}_m$ converges to
        \begin{align}
            [\pi_2 \circ \PT_{1,1}^2]^{(n)}(\alpha) &= [\pi_2 \circ \PT_{1,1}^2]^{(n)} \circ [(x,v) \mapsto (x,0,v)]^{(n)}(\bar{\gamma}) \\
            &= [(x,v) \mapsto (\exp_x(0),\PT_{1,1}(x,0,v))]^{(n)}(\bar{\gamma}) \\
            &= [(x,v) \mapsto (x,v)]^{(n)}(\bar{\gamma}) = \bar{\gamma}.
        \end{align}
        Thus $\tilde{\gamma}_m \to \bar{\gamma}$.
        \item For every $m$, $\bar{\gamma}_m \in \Gamma_o(\mu_m,\bar{\mu})$, and $\mu_m \to \mu$. Since $\bar{\gamma}$ is by assumption the unique element of $\Gamma_o(\mu,\bar{\mu})$, by \Cref{cor:limit_points_of_seq_of_opt_vel_plans}, $\bar{\gamma}_m \to \bar{\gamma}$.
        \item We have thus proven that $\tilde{\gamma}_m$ and $\bar{\gamma}_m$ both converge to $\bar{\gamma}$. Since $\bar{\gamma}$ is by assumption fully deterministic, \Cref{prop:w_mu_is_lsc} implies that $\W_{\mu^m}(\bar{\gamma}_m,\tilde{\gamma}_m) \to \W_\mu(\bar{\gamma},\bar{\gamma}) = 0$.
    \end{itemize}
    Therefore, using \eqref{eq:l_2540}, we obtain
    \begin{equation}
        \liminf_{m \to +\infty} \frac{\cF(\mu_m) - \cF(\mu) - \sca{\xi_m}{-\bar{\gamma}}_\mu}{\|\xi_m\|} \geq \liminf_{m \to +\infty} -\frac 12 \|\xi_m\| - \W_{\mu^m}(\bar{\gamma}_m,\tilde{\gamma}_m) = 0.
    \end{equation}
    This finishes the proof.
\end{proof}

Another question relates to the convexity of the $\cF$ along constant speed geodesics. Unfortunately, we know that $\cF$ is in general not $\lambda$-geodesically convex for any $\lambda \in \R$: indeed, \citep[Example 9.1.5]{ambrosio2005gradient} gives an example of $\bar{\mu} \in \cP_2(\R^d)$ and of a constant speed geodesic $(\mu_t)_{t \in [0,1]}$ in $\cP_2(\R^d)$ for which $\W_2^2(\cdot,\bar{\mu})$ if not $\lambda$-convex along $(\mu_t)_t$ for any $\lambda \in \R$. By extension, for any $n > 0$, since $\W_2(\delta^{(n-1)}_\mu,\delta^{(n-1)}_\nu) = \W_2(\mu,\nu)$ for any $\mu, \nu \in \cP_2(\R^d)$, the curve $(\delta^{(n-1)}_{\mu_t})_t$ is a constant speed geodesic in $\cPn{n}{\R^d}$ on which $\W_2^2(\cdot,\delta^{(n-1)}_{\bar{\mu}})$ is not $\lambda$-convex for any $\lambda \in \R$. However, we can find another class of curve on which $\cF$ will be $1$-convex, which we will call ``generalized geodesics":

\begin{definition}
    Let $n \geq 0$ and $\mu_0,\mu_1,\bar{\mu} \in \cPn{n}{\cM}$. A \emph{``generalized geodesic" from $\mu$ to $\nu$ (with base $\bar{\mu}$)} is a curve $(\mu_t)_{t \in [0,1]}$ joining $\mu_0$ to $\mu_1$ in $\cPn{n}{\cM}$ of the form
    \begin{equation}
        \mu_t := [\exp \circ ((1-t)\pi_1 + t \pi_2)]^{(n)}(\alpha), \quad t \in [0,1]
    \end{equation}
    where $\gamma_0 \in \Gamma_o(\bar{\mu},\mu_0)$, $\gamma_1 \in \Gamma_o(\bar{\mu},\mu_1)$ and $\alpha \in \Gamma_{\bar{\mu}}(\gamma_0,\gamma_1)$.
\end{definition}

\begin{proposition}
    For every generalized geodesic $(\mu_t)_{t \in [0,1]}$ with base $\bar{\mu}$ induced by $\gamma_0 \in \Gamma_o(\bar{\mu},\mu_0)$, $\gamma_1 \in \Gamma_o(\bar{\mu},\mu_1)$ and $\alpha \in \Gamma_{\bar{\mu}}(\gamma_0,\gamma_1)$, it holds
    \begin{equation}
        \cF(\mu_t) \leq (1-t) \cF(\mu_0) + t \cF(\mu_1) - \frac 12 t(1-t) \bE^{(n)}_\alpha[\|v_0-v_1\|_x^2], \quad t \in [0,1].
    \end{equation}
    We say that $\cF$ is \emph{$1$-convex along generalized geodesics}.
\end{proposition}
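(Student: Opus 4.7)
The plan is to prove the statement by induction on $n \geq 0$, relying on a ``disintegration" of the generalized geodesic induced by the coupling $\alpha$ combined with the standard parallelogram identity in tangent spaces.

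For the base case $n = 0$: a generalized geodesic from $\mu_0$ to $\mu_1$ based at $\bar{x} \in \cM$ is parametrized by $\gamma_0 = v_0$ and $\gamma_1 = v_1$ in $T_{\bar{x}}\cM$ satisfying $\exp_{\bar{x}}(v_i) = \mu_i$ and $\|v_i\|_{\bar{x}} = d(\bar{x},\mu_i)$; the unique coupling is $\alpha = (\bar{x},v_0,v_1)$ and $\mu_t = \exp_{\bar{x}}((1-t)v_0 + tv_1)$. The result then follows from the two elementary facts that $d^2(\exp_{\bar{x}}(w), \bar{x}) \leq \|w\|^2_{\bar{x}}$ for any tangent vector $w$, and the standard identity
\begin{equation*}
\|(1-t)v_0 + tv_1\|^2_{\bar{x}} = (1-t)\|v_0\|^2_{\bar{x}} + t \|v_1\|^2_{\bar{x}} - t(1-t) \|v_0 - v_1\|^2_{\bar{x}}.
\end{equation*}
Note that this step does not require any assumption on the curvature of $\cM$.

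For the inductive step, assume the result holds for $n-1$, and let $(\bP_t)_{t \in [0,1]}$ be a generalized geodesic in $\cPn{n}{\cM}$ with base $\bar{\bP}$ induced by $\bGamma_0 \in \Gamma_o(\bar{\bP},\bP_0)$, $\bGamma_1 \in \Gamma_o(\bar{\bP},\bP_1)$ and $\bA \in \Gamma_{\bar{\bP}}(\bGamma_0,\bGamma_1)$. The key construction is the transport plan $\tilde{\bGamma}_t := ([\pi]^{(n-1)}, [\exp \circ ((1-t)\pi_1 + t\pi_2)]^{(n-1)})_{\#} \bA$ from $\bar{\bP}$ to $\bP_t$, which gives the bound
\begin{equation*}
\cF(\bP_t) = \tfrac{1}{2} \W_2^2(\bar{\bP}, \bP_t) \leq \int \tfrac{1}{2}\W_2^2\big([\pi]^{(n-1)}(\alpha), \mu_{t,\alpha}\big) \dd\bA(\alpha),
\end{equation*}
where $\mu_{t,\alpha} := [\exp \circ ((1-t)\pi_1 + t\pi_2)]^{(n-1)}(\alpha)$. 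Since $\bGamma_0$ and $\bGamma_1$ are optimal, by \Cref{prop:opt_vel_plan_gives_opt_trans_plan} they are concentrated on $\Gamma_o^{(n-1)}(\cM)$, so that for $\bA$-a.e. $\alpha$ the plans $[\pi_i]^{(n-1)}(\alpha) \in \Gamma_o([\pi]^{(n-1)}(\alpha), [\exp \circ \pi_i]^{(n-1)}(\alpha))$ are optimal, and therefore $(\mu_{t,\alpha})_{t \in [0,1]}$ is itself a generalized geodesic in $\cPn{n-1}{\cM}$ with base $[\pi]^{(n-1)}(\alpha)$, induced by $[\pi_1]^{(n-1)}(\alpha)$, $[\pi_2]^{(n-1)}(\alpha)$ and $\alpha$ itself.

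Applying the induction hypothesis pointwise and using the optimality $\tfrac{1}{2} \W_2^2([\pi]^{(n-1)}(\alpha), [\exp \circ \pi_i]^{(n-1)}(\alpha)) = \tfrac{1}{2}\|[\pi_i]^{(n-1)}(\alpha)\|^2 = \tfrac{1}{2}\bE^{(n-1)}_\alpha[\|v_i\|^2_x]$, then integrating against $\bA$ and using the recurrence $\bE^{(n)}_\bA[\cdot] = \int \bE^{(n-1)}_\alpha[\cdot]\dd\bA(\alpha)$ together with $\|\bGamma_i\|^2 = \W_2^2(\bar{\bP},\bP_i) = 2\cF(\bP_i)$, one collects the three contributions and obtains the announced inequality. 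The main work is really in setting up the right transport plan from $\bar{\bP}$ to $\bP_t$ so that the induction slices correctly through the coupling $\bA$; once this is observed, the rest is a direct bookkeeping calculation with the hierarchical expectancy.
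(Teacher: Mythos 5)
Your proof is correct and follows essentially the same route as the paper's: the same base case via $d^2(\exp_{\bar x}(w),\bar x)\leq\|w\|^2_{\bar x}$ and the quadratic identity, and the same inductive step built on the transport plan $([\pi]^{(n-1)},[\exp\circ((1-t)\pi_1+t\pi_2)]^{(n-1)})_\#\bA$ together with the observation (via \Cref{prop:opt_vel_plan_gives_opt_trans_plan}) that the integrand defines a generalized geodesic at level $n-1$ for $\bA$-a.e.\ $\alpha$. The final bookkeeping with the hierarchical expectancy and the optimality of $\bGamma_0,\bGamma_1$ matches the paper's computation.
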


\begin{proof}
    We prove this by induction. We first prove the case $n = 0$: let $y_0, y_1, x \in \cM$, $v_1, v_2 \in T_x\cM$ such that $\exp_x(v_i) = y_i$ and $\|v_i\|_x = d(x,y_i)$ for $i \in \{0,1\}$. For every $t \in (0,1)$, let $y_t := \exp_x((1-t)v_0 + tv_2)$. Then, we have for every $t \in [0,1]$,
    \begin{align}
        d^2(y_t,x) &= d^2(\exp_x((1-t)v_0 + tv_1), x) \\
        &\leq \|(1-t)v_0 + tv_1\|^2_x = (1-t)^2 \|v_0\|^2_x + t^2 \|v_1\|^2_x + 2t(1-t)\sca{v_0}{v_1}_x \\
        &\leq (1-t)^2 \|v_0\|^2_x + t^2 \|v_1\|^2_x + 2t(1-t)\frac{\|v_0\|^2_x + \|v_1\|^2_x - \|v_0-v_1\|^2_x}{2} \\
        &\leq (1-t) \|v_0\|^2_x + t \|v_1\|^2_x - t(1-t) \|v_0 - v_1\|^2_x
    \end{align}
    and this finishes the proof for $n = 0$. Now, let $n > 0$ and assume that the proposition holds for $n-1$. Let $\bar{\bP} \in \cPn{n}{\cM}$, and define the functional $\cF : \cPn{n}{\cM} \mapsto \R$ by $\cF(\bP) := \frac 12 \W_2^2(\bP,\bar{\bP})$. Let $(\bP_t)_{t \in [0,1]}$ be a generalized geodesic with base $\bar{\bP}$, induced by $\bGamma_0 \in \Gamma_o(\bar{\bP},\bP_0)$, $\bGamma_1 \in \Gamma_o(\bar{\bP},\bP_1)$ and $\bA \in \Gamma_{\bar{\bP}}(\bGamma_0,\bGamma_1)$. Then, for every $t \in [0,1]$, it holds
    \begin{align}
        \cF(\bP_t) &= \frac 12 \W_2^2(\bP_t, \bar{\bP}) \\
        &\leq \int \frac 12 \W_2^2([\exp \circ ((1-t)\pi_1+t\pi_2)]^{(n-1)}(\alpha), [\pi]^{(n-1)}(\alpha)) \dd\bA(\alpha) \\
        &\leq \int \frac 12 (1-t) \W_2^2([\exp \circ \pi_1]^{(n-1)}(\alpha), [\pi]^{(n-1)}(\alpha)) + \frac 12 t \W_2^2([\exp \circ \pi_2]^{(n-1)}(\alpha), [\pi]^{(n-1)}(\alpha)) \\
        &\quad - \frac 12 t(1-t) \bE^{(n-1)}_\alpha[\|v_0-v_1\|^2_x] \dd\bA(\alpha).
    \end{align}
    where the last inequality is obtained by applying the induction hypothesis to the integrand. Indeed, the optimality of $\bGamma_0$ and $\bGamma_1$ implies by \Cref{prop:opt_vel_plan_gives_opt_trans_plan} that for $\bA$-a.e. $\alpha$, the velocity plans $[\pi_1]^{(n-1)}(\alpha)$ and $[\pi_2]^{(n-1)}(\alpha)$ are optimal, so that $s \mapsto [\exp \circ ((1-s)\pi_1+s\pi_2)]^{(n-1)}(\alpha)$ defines a generalized geodesic in $\cPn{n-1}{\cM}$. Then, simplifying this inequality, we have
    \begin{align}
        \cF(\bP_t) &\leq \frac 12 (1-t) \int \W_2^2([\exp]^{(n-1)}(\gamma), [\pi]^{(n-1)}(\gamma)) \dd\bGamma_0(\gamma) \\
        &\quad + \frac 12 t \int \W_2^2([\exp]^{(n-1)}(\gamma), [\pi]^{(n-1)}(\gamma)) \dd\bGamma_1(\gamma) \\
        &\quad - \frac 12 t(1-t)\int \bE^{(n-1)}_\alpha[\|v_0-v_1\|^2_x] \dd\bA(\alpha) \\
        &\leq (1-t) \cF(\bP_0) + t\cF(\bP_1) - \frac 12 t(1-t) \bE^{(n)}_\bA[\|v_0-v_1\|^2_x]
    \end{align}
    where we used the optimality of $\bGamma_0$ and $\bGamma_1$. This finishes the proof.
\end{proof}

\bibliography{references}

\appendix
\newpage

\section{Category theory} \label{sec:appendix:category_theory}

We recall here some basic definitions from category theory.

\begin{definition}
    A \emph{category} $\cC$ is the given of:
    \begin{itemize}
        \item A set of \emph{objects} $\ob(\cC)$.
        \item For every $x, y \in \ob(\cC)$, a set of \emph{morphisms} between $x$ and $y$, denoted $\Hom_\cC(x,y)$. We will often write $f : x \mapsto y$ to denote an element of $\Hom_\cC(x,y)$.
        \item For every $x, y, z \in \ob(\cC)$, a \emph{composition} operation 
        \begin{equation}
            \begin{array}{ccc}
                 \Hom_\cC(x,y) \times \Hom_\cC(y,z) & \rightarrow & \Hom_\cC(x,z)  \\
                 (f, g) & \rightarrow & g \circ f
            \end{array}
        \end{equation}
    \end{itemize}
    satisfying the following axioms:
    \begin{itemize}
        \item Associativity: for every triple of morphisms $f : x \mapsto y$, $g : y \mapsto z$ and $h : z \mapsto w$ in $\cC$, it holds $(h \circ g) \circ f = h \circ (g \circ f)$.
        \item Identity: for every $x \in \ob(\cC)$, there exists an \emph{identity morphism} $\id_x : x \mapsto x$ such that, for every $f : x \mapsto y$ and $g : z \mapsto x$, it holds $f \circ \id_x = f$ and $\id_x \circ g = g$.
    \end{itemize}
\end{definition}

Given two objects $x,y$ of a category $\cC$, we say that they are \emph{isomorphic} if there exists a pair of morphisms $f : x \mapsto y$ and $g : y \mapsto x$ such that $f \circ g = \id_y$ and $g \circ f = \id_x$. These morphisms are called \emph{isomorphisms}.

\begin{example}
    Examples of categories include:
    \begin{enumerate}
        \item The category $\mathrm{Set}$ whose objects are sets and whose morphisms are maps between sets. Its isomorphisms are the bijective maps.
        \item The category $\mathrm{Vec}_\R$ whose objects are $\R$-vector spaces and whose morphisms are linear maps. Its isomorphisms are the linear isomorphisms.
        \item The category $\mathrm{Top}$ whose objects are topological spaces and whose morphisms are continuous maps. Its isomorphisms are the homeomorphisms.
        \item The category $\mathrm{Diff}$ whose objects are the differential manifolds and whose morphisms are smooth maps. Its isomorphisms are the smooth diffeomorphisms.
        \item For every partially ordered set $(X,\leq)$, the category $\cC = \mathrm{Pos}_X$ whose objects are the elements of $X$, and where for every $x,y \in X$, $\Hom_\cC(x,y) = \{*\}$ if $x \leq y$ and $\Hom_\cC(x,y) = \emptyset$ otherwise. 
    \end{enumerate}
\end{example}

Given a category $\cC$, we will often consider \emph{diagrams} in $\cC$. A diagram in $\cC$ is a directed graph whose nodes are objects in $\cC$ and where each directed edge corresponds to a morphism in $\cC$. For example, the following is a diagram in $\cC$:
\begin{equation}
    \begin{tikzcd}
        & A \arrow[dl, "f"'] \arrow[dr, "g"] & \\
        B \arrow[rr, "h"] & & C
    \end{tikzcd}
\end{equation}
where $A$, $B$ and $C$ are objects in $\cC$ and $f : A \mapsto B$, $g : A \mapsto C$ and $h : B \mapsto C$ are morphisms in $\cC$. We say that a diagram is \emph{commutative}, or that it \emph{commutes}, if, when we go from a node $x$ to a node $y$, if we compose the morphisms along the path we follow, we obtain the same morphism $x \mapsto y$ regardless of the path we choose. For example, consider the following diagram:
\begin{equation}
    \begin{tikzcd}
        A \arrow[r, "f_1"] \arrow[d, "f_2"] & B \arrow[d, "f_3"] \arrow[dr, bend left, "f_5"] & \\
        C \arrow[r, "f_4"] \arrow[dr, bend right, "f_8"] & D \arrow[r, "f_6"] \arrow[d, "f_7"] & E \arrow[d, "f_9"] \\
        & F \arrow[r, "f_{10}"] & G
    \end{tikzcd}
\end{equation}
Then, when we say that it is commutative, we mean that $f_3 \circ f_1 = f_4 \circ f_2$, $f_{10} \circ f_7 = f_9 \circ f_6$, $f_5 = f_6 \circ f_3$, $f_8 = f_7 \circ f_4$, and so on.
\medbreak
Given two categories $\cC$ and $\cD$, we can also define their \emph{product category} as the category $\cC \times \cD$ whose objects are pairs $(x,y)$ with $x \in \ob(\cC)$ and $y \in \ob(\cD)$, and whose morphisms $(x,x') \mapsto (y,y')$ are pairs $(f,g)$ with $f : x \mapsto x'$ and $g : y \mapsto y'$ (the composition is then the term-wise composition, and the identity maps the pairs $(\id_x,\id_y)$).

\begin{definition}
    Let $\cC$, $\cD$ be two categories. A \emph{functor} $F$ from $\cC$ to $\cD$, denoted $F : \cC \mapsto \cD$, is the given of:
    \begin{itemize}
        \item a map $\ob(\cC) \mapsto \ob(\cD)$ which to every object $x$ in $\cC$ associates an object $F(x)$ in $\cD$,
        \item for every pair $x,y \in \ob(\cC)$, a map $\Hom_\cC(x,y) \mapsto \Hom_\cD(F(x),F(y))$ which associates to every $f : x \mapsto y$ a morphism $F(f) : F(x) \mapsto F(y)$,
    \end{itemize}
    satisfying the following conditions:
    \begin{itemize}
        \item For every $x \in \ob(\cC)$, $F(\id_x) = \id_{F(x)}$.
        \item For every pair of morphisms $f : x \mapsto y$ and $g : y \mapsto z$ in $\cC$, it holds $F(g \circ f) = F(g) \circ F(f)$.
    \end{itemize}
\end{definition}

\begin{example}
    Examples of functors include:
    \begin{enumerate}
        \item Every category $\cC$ has an \emph{identity functor} $\Id_\cC$, such that $\Id_\cC(x) = x$ for every $x \in \ob(\cC)$ and $\Id_\cC(f) = f$ for every morphism $f : x \mapsto y$.
        \item We have a \emph{forgetful functor} $U : \mathrm{Vec}_\R \mapsto \mathrm{Set}$, such that, for every vector space $E$, $U(E)$ is its underlying set, and for every linear map $f : E \mapsto F$, $U(f)$ is the map $f$ seen as a map between sets.
        \item We similarly have forgetful functors $U : \mathrm{Top} \mapsto \mathrm{Set}$ and $U : \mathrm{Diff} \mapsto \mathrm{Set}$.
    \end{enumerate}
\end{example}

Given two functors $F : \cC_1 \mapsto \cC_2$ and $G : \cC_2 \mapsto \cC_3$, we may define their composition $G \circ F$ by $G \circ F(x) := G(F(x))$ and $G \circ F(f) = G(F(f))$ for every object $x$ and morphism $f : x \mapsto y$ in $\cC$. It is not difficult to check that $(H \circ G) \circ F = H \circ (G \circ F)$ for any triple of functors $F : \cC_1 \mapsto \cC_2$, $G : \cC_2 \mapsto \cC_3$, $H : \cC_3 \mapsto \cC_4$. Given a functor $F : \cC \mapsto \cC$, for every $n \geq 0$, we denote $F^{(n)} := F \circ \ldots \circ F$ the functor $\cC \mapsto \cC$ obtained by composing $F$ $n$ times with itself (with in particular $F^{(0)} = \Id_\cC$).

\begin{definition}
    Let $F, G : \cC \mapsto \cD$ be two functors between the same categories. A \emph{natural transformation} $\eta$ from $F$ to $G$, denoted $\eta : F \mapsto G$, is the given for every $x \in \ob(\cC)$ of a morphism $\eta_x : F(x) \mapsto G(x)$ in $\cD$, such that for every morphism $f : x \mapsto y$ in $\cD$, the following diagram commutes:
    \begin{equation}
        \begin{tikzcd}
            F(x) \arrow[r, "\eta_x"] \arrow[d, "F(f)"] & G(x) \arrow[d, "G(f)"] \\
            F(y) \arrow[r, "\eta_y"] & G(y)
        \end{tikzcd}
    \end{equation}
\end{definition}

\begin{definition}
    A \emph{monad} on a category $\cC$ is a triple $(T,\mu,\eta)$ where $T$ is a functor $\cC \mapsto \cC$, $\mu$ is a natural transformation $T \circ T \mapsto T$ and $\eta$ is a natural transformation $\Id_\cC \mapsto \cC$, such that for every object $x$ in $\cC$, the following diagrams commute:
    \begin{equation}
        \begin{array}{cc}
             \begin{tikzcd}
                 T(T(T(x))) \arrow[d, "\mu_{T(x)}"] \arrow[r, "T(\mu_x)"] & T(T(x)) \arrow[d, "\mu_x"] \\
                 T(T(x)) \arrow[r, "\mu_x"] & T(x)
             \end{tikzcd}
             & 
             \begin{tikzcd}
                 T(x) \arrow[r, "T(\eta_x)"] \arrow[d, "\eta_{T(x)}"] \arrow[dr, "\id_x"] & T(T(x)) \arrow[d, "\mu_x"] \\
                 T(T(x)) \arrow[r, "\mu_x"] & T(x)
             \end{tikzcd}
        \end{array}
    \end{equation}
\end{definition}

\section{Measure theory} \label{sec:appendix:measure_theory}

\subsection{The topology of weak convergence} \label{sec:appendix:weak_topology}

Let $X$ be a Polish space. Then we can endow the space of Borel probability measures $\cP(X)$ with the so-called \emph{weak topology}, which is defined as the weakest topology such that for every bounded continuous real valued function $\varphi \in C_b(X)$, the map
\begin{equation}
    \mu \mapsto \int \varphi(x) \dd\mu(x) 
\end{equation}
is continuous. We say that a sequence $(\mu_n)_n$ in $\cP(X)$ \emph{converges weakly} to $\mu \in \cP(X)$, and we note $\mu_n \rightharpoonup \mu$, if it converges to $\mu$ in the weak topology ; that is, for every bounded continuous function $\varphi \in C_b(X)$, 
\begin{equation}
    \int \varphi \dd\mu_n \xrightarrow[n \to +\infty]{} \int \varphi \dd\mu.
\end{equation}
If $f : X \mapsto Y$ is a continuous map between Polish spaces, its pushforward map $[f] : \cP(X) \mapsto \cP(Y)$ which associates to every $\mu \in \cP(X)$ its pushforward $[f](\mu) := f_\#\mu$ by $f$, is continuous with respect to the weak topologies of $\cP(X)$ and $\cP(Y)$.
\medbreak
The space $\cP(X)$ endowed with the weak topology is in fact a Polish space. Examples of distances metrizing the weak topology include the Lévy-Prokhorov distance, or the distance constructed in \citep[Remark 5.1.1]{ambrosio2005gradient}. For every Borel set $B \subseteq X$, the map $\mu \mapsto \mu(B)$ is Borel measurable. Furthermore, it is lower semicontinuous if $B$ is an open set, and it is upper semicontinuous if $B$ is a closed set. Moreover, if $f : X \mapsto \R$ is lower bounded and lower semicontinuous, the the map
\begin{equation}
    \mu \mapsto \int f(x) \dd\mu(x)
\end{equation}
is also lower semicontinuous, and it is instead upper semicontinuous if $f$ is upper bounded and upper semicontinuous (see \citep[Lemma 5.1.7]{ambrosio2005gradient}). Compactness in the weak topology is characterized by \emph{Prokhorov's theorem}:

\begin{theorem}{\citep[Theorem 5.1.3 (Prokhorov)]{ambrosio2005gradient}} \label{th:prokhorov}
    A set $\cK \subseteq \cP(X)$ is relatively compact for the topology of weak convergence if and only if it is \emph{tight}, that is,
    \begin{equation}
        \forall \veps > 0, \quad \exists K_\veps \subseteq X \hbox{ compact}, \quad \forall \mu \in \cK, \quad \mu(X \setminus K_\veps) \leq \veps.
    \end{equation}
\end{theorem}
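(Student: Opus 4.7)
The plan is to prove both directions by exploiting the fact that the weak topology on $\cP(X)$ is metrizable (for instance by the Lévy--Prokhorov distance), so that relative compactness is equivalent to sequential compactness.

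For the necessity direction (relatively compact $\Rightarrow$ tight), I would establish a uniform version of Ulam's single-measure tightness. Fix $\veps > 0$ and $k \geq 1$; by separability of $X$, cover $X$ by countably many balls $B_i^{(k)}$ of radius $1/k$ and set $G_N^{(k)} := \bigcup_{i=1}^N B_i^{(k)}$. I would first show that $\sup_{\mu \in \cK} \mu(X \setminus G_N^{(k)}) \to 0$ as $N \to \infty$: otherwise one could extract $\mu_N \in \cK$ with $\mu_N(X \setminus G_N^{(k)}) \geq \delta$, then extract a weakly convergent subsequence $\mu_{N_j} \rightharpoonup \mu$, and use the upper semicontinuity $\limsup_j \mu_{N_j}(F) \leq \mu(F)$ for closed $F$ together with the monotonicity of $X \setminus G_N^{(k)}$ in $N$ to obtain $\mu(X \setminus G_N^{(k)}) \geq \delta$ for every $N$, contradicting $G_N^{(k)} \uparrow X$. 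Then choose $N_k$ with the supremum at most $\veps \cdot 2^{-k}$, and set $K_\veps := \bigcap_{k \geq 1} \overline{G_{N_k}^{(k)}}$: this set is closed and totally bounded, hence compact because $X$ is complete, and satisfies $\mu(X \setminus K_\veps) \leq \veps$ for every $\mu \in \cK$ by a union bound.

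For the sufficiency direction (tight $\Rightarrow$ relatively compact), given a sequence $(\mu_n) \subseteq \cK$ and compact sets $K_k \subseteq X$ with $\mu_n(X \setminus K_k) \leq 1/k$ uniformly, I would embed $X$ homeomorphically into a compact metric space $\hat{X}$ (for instance the Hilbert cube $[0,1]^{\mathbb{N}}$, which is possible since Polish spaces are second-countable metrizable). Viewing the $\mu_n$ as measures on $\hat{X}$, the Riesz representation theorem together with Banach--Alaoglu gives a subsequence $\mu_{n_j} \rightharpoonup \hat{\mu}$ in $\cP(\hat{X})$. Since each $K_k$ is closed in $\hat{X}$, the inequality $\hat{\mu}(K_k) \geq \limsup_j \mu_{n_j}(K_k) \geq 1 - 1/k$ shows that $\hat{\mu}$ is concentrated on $\bigcup_k K_k \subseteq X$, so it induces a Borel probability $\mu$ on $X$. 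Finally, I would upgrade the convergence: for $\varphi \in C_b(X)$ and $\veps > 0$, pick $k$ with $1/k < \veps$, use Tietze's theorem to extend $\varphi|_{K_k}$ to $\hat{\varphi} \in C(\hat{X})$ with $\|\hat{\varphi}\|_\infty \leq \|\varphi\|_\infty$, and split each integral $\int \varphi \, d\mu_{n_j}$ as $\int \hat{\varphi}\, d\mu_{n_j} + O(\veps \|\varphi\|_\infty)$ (and similarly for $\mu$), concluding by the weak convergence on $\hat{X}$.

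The main obstacle is precisely this last transfer step in the sufficiency direction: the weak topologies on $\cP(X)$ and $\cP(\hat{X})$ are genuinely different because test functions in $C_b(X)$ need not extend to $\hat{X}$, and tightness is exactly what makes the Tietze extension trick work, converting the trivial compactness of $\cP(\hat{X})$ into compactness of a tight family inside $\cP(X)$.
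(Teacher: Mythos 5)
The paper does not actually prove this statement: it is recalled in the appendix and attributed directly to \citep[Theorem 5.1.3]{ambrosio2005gradient}, so there is no in-paper argument to compare against. Your proof is correct and is essentially the classical one (Ulam/Billingsley for necessity, compactification of $X$ for sufficiency), so it supplies a complete argument where the paper only gives a citation. A few points deserve to be made explicit. In the necessity direction, the balls $B_i^{(k)}$ must be taken open so that $X \setminus G_N^{(k)}$ is closed and the portmanteau inequality $\limsup_j \mu_{N_j}(F) \leq \mu(F)$ applies; the passage from relative compactness to the existence of the convergent subsequence $\mu_{N_j} \rightharpoonup \mu$ with $\mu \in \cP(X)$ uses metrizability of the weak topology, which you correctly flag at the outset; and the compactness of $K_\veps$ should be justified by noting that it is closed and, being contained for every $k$ in a finite union of sets of diameter at most $2/k$, totally bounded, so completeness of $X$ finishes it. In the sufficiency direction, the only point to spell out is why $\hat{\mu}$ induces a genuine Borel probability on $X$: the set $\bigcup_k K_k$ is $\sigma$-compact, hence Borel in $\hat{X}$ and contained in the image of $X$, it carries full $\hat{\mu}$-mass, and the embedding being a homeomorphism onto its image identifies the trace $\sigma$-algebra with $\cB(X)$. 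Your Tietze-extension transfer step, with the error controlled by $2\|\varphi\|_\infty \sup_j \mu_{n_j}(X \setminus K_k)$ on one side and $2\|\varphi\|_\infty\, \mu(X \setminus K_k)$ on the other, is exactly the right way to convert weak convergence in $\cP(\hat{X})$ into weak convergence in $\cP(X)$, and your closing remark correctly identifies tightness as the hypothesis that makes this transfer possible. No gaps.
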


The following tightness criterion is also useful:
\begin{lemma}{\citep[Lemma 5.2.2 (Tightness criterion)]{ambrosio2005gradient}} \label{lemma:tightness_criterion}
    Let $X,X_1,\ldots,X_n$ be separable metric spaces and let $r^i : X \mapsto X_i$ be continuous maps such that the product map $(r^1,\ldots,r^n) : X \mapsto \prod_{i=1}^n X_i$ is proper (i.e. the preimage of a compact set is compact). Let $\cK \subseteq \cP(X)$ be such that for every $i = 1,\ldots,n$, $\cK_i := r^i_\#(\cK)$ is tight in $\cP(X_i)$. Then $\cK$ is also tight in $\cP(X)$. In particular the map $(r^1_\#,\ldots,r^n_\#) : \cP(X) \mapsto \prod_{i=1}^n \cP(X_i)$ is proper for the weak topology.
\end{lemma}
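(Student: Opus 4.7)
The plan is to reduce tightness of $\cK$ to the hypothesized tightness of each $\cK_i$ by using the properness assumption to manufacture a single compact in $X$ from compacts in the $X_i$'s. First, given $\veps > 0$, I would invoke the tightness of each $\cK_i$ to choose compact sets $K_i^\veps \subseteq X_i$ such that $\nu(X_i \setminus K_i^\veps) \leq \veps/n$ for every $\nu \in \cK_i$; the key observation is that since the map $r := (r^1,\ldots,r^n) : X \mapsto \prod_{i=1}^n X_i$ is proper, the preimage
\begin{equation}
    K^\veps := r^{-1}\bigl(K_1^\veps \times \ldots \times K_n^\veps\bigr) = \bigcap_{i=1}^n (r^i)^{-1}(K_i^\veps)
\end{equation}
is a compact subset of $X$.

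Next, I would combine a union bound with the definition of the pushforward. For any $\mu \in \cK$, writing $\mu_i := r^i_\#\mu \in \cK_i$, we have
\begin{equation}
    \mu(X \setminus K^\veps) \leq \sum_{i=1}^n \mu\bigl(X \setminus (r^i)^{-1}(K_i^\veps)\bigr) = \sum_{i=1}^n \mu_i(X_i \setminus K_i^\veps) \leq \veps,
\end{equation}
which shows $\cK$ is tight. The fact that $K^\veps$ does not depend on $\mu$ is essential here: the tightness on each factor is uniform in $\mu_i \in \cK_i$, and the finite intersection preserves this uniformity.

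For the ``in particular'' statement, I would deduce properness of $r_\# := (r^1_\#,\ldots,r^n_\#)$ as follows. Let $\cL \subseteq \prod_i \cP(X_i)$ be compact and set $\cM := r_\#^{-1}(\cL) \subseteq \cP(X)$. Continuity of $r_\#$ gives that $\cM$ is closed in $\cP(X)$. For each $i$, the projection of $\cL$ onto $\cP(X_i)$ is compact, hence (by \Cref{th:prokhorov}) tight, and $r^i_\#(\cM)$ lies in this projection. Applying the first part of the lemma yields tightness of $\cM$, and Prokhorov's theorem then upgrades this to relative compactness; combined with closedness, $\cM$ is compact, so $r_\#$ is proper.

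There is no real obstacle here — the argument is essentially a bookkeeping exercise. The only subtlety worth flagging is that properness of $r$ is used crucially at the compactness step for $K^\veps$: without it, the intersection of preimages of compacts need not be compact, even if each preimage $(r^i)^{-1}(K_i^\veps)$ is closed. The fact that we work with the product map $(r^1,\ldots,r^n)$ rather than with each $r^i$ individually is exactly what makes the argument go through.
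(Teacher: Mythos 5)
This lemma is quoted from \citep[Lemma 5.2.2]{ambrosio2005gradient} and the paper supplies no proof of its own, so there is nothing internal to compare against; your argument is correct and is essentially the standard one from the cited reference: properness of the product map turns the intersection $\bigcap_i (r^i)^{-1}(K_i^\veps)$ into a single compact set, and a union bound gives tightness. The only point worth flagging is in the ``in particular'' step: deducing tightness of the projections of $\cL$ from their compactness uses the converse direction of Prokhorov's theorem, which requires the $X_i$ to be complete rather than merely separable metric as in the statement; this is harmless in context, since every space to which the paper applies the lemma is Polish.
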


If $X$ and $Y$ are Polish spaces, and $f \in C_b(X \times Y)$ is a continuous and bounded function, then the map
\begin{equation} \label{eq:partial_integral_weakly_continuous}
    (x,\mu) \in X \times \cP(Y) \mapsto \int f(x,y) \dd\mu(y)
\end{equation}
is continuous. Indeed, let $M$ be a bound on $|f|$, and let $x_n \to x$ in $X$ and $\mu_n \rightharpoonup \mu$ in $\cP(Y)$. Then, for every $n$, we have
\begin{equation}
    \int f(x_n,y) \dd\mu_n(y) = \int f(x,y) \dd\mu_n(y) + \int (f(x_n,y) -f(x,y)) \dd\mu_n(y) =: A_n + B_n
\end{equation}
and $\mu_n \rightharpoonup \mu$ implies that $A_n \mapsto \int f(x,y) \dd\mu(y)$. Moreover, since $(\mu_n)_n$ weakly converges, by Prokhorov's theorem, it is tight, so that for every $\veps > 0$, there exists a compact set $K_\veps \subseteq Y$ such that $\mu_n(Y \setminus K_\veps) \leq \veps$ for every $n$. Furthermore, by compactness of $K_\veps$ and continuity of $f$, we have, for $n$ large enough, $|f(x_n,y) - f(x,y)| \leq \veps$ for every $y \in K_\veps$, so that for $n$ large enough,
\begin{align}
    |B_n| &\leq \int_{K_\veps} |f(x_n,y)-f(x,y)|\dd\mu_n(y) + \int_{Y\setminus K_\veps} |f(x_n,y)-f(x,y)|\dd\mu_n(y) \\
    &\leq \veps + 2M \mu_n(Y \setminus K_\veps) \leq \veps + 2M\veps.
\end{align}
Since $\veps$ is arbitrary, we thus have $B_n \to 0$, and we indeed find
\begin{equation}
    \int f(x_n,y) \dd\mu_n(y) \xrightarrow[n \to +\infty]{} \int f(x,y) \dd\mu(y).
\end{equation}

\subsection{Disintegration of measures} \label{sec:appendix:disintegration}

Let $\Omega$ be a measurable space, and $X$ be a Polish space. A family $(\mu_\omega)_{\omega \in \Omega}$ of measures in $\cP(X)$ is said to be \emph{Borel} if either of the following equivalent conditions hold:
\begin{enumerate}
    \item The map $\omega \in \Omega \mapsto \mu_\omega \in \cP(X)$ is Borel measurable (with the topology of weak convergence on $\cP(X)$).
    \item The map $\omega \in \Omega \mapsto \mu_\omega(B)$ is Borel measurable for every Borel set $B \subseteq X$, 
    \item The map $\omega \in \Omega \mapsto \mu_\omega(B)$ is Borel measurable for every Borel set $B$ in a class of subsets of $X$ generating the Borel $\sigma$-algebra $\cB(X)$.
    \item The map $\omega \in \Omega \mapsto \int \varphi(x)\dd\mu_\omega(x)$ is Borel measurable for every bounded continuous function $\varphi \in C_b(X)$.
\end{enumerate}
In particular, it is readily verified that given a map $f : \Omega \mapsto X$, the family $(\delta_{f(\omega)})_{\omega \in \Omega}$ is Borel if and only if $f$ is Borel measurable.
\medbreak
Now, let $X$, $Y$ be two Polish spaces, and consider a Borel family $(\mu_x)_{x\in X}$ of measures in $\cP(Y)$. Then, there exists corresponding to every $\nu \in \cP(X)$ a unique measure $\mu \in \cP(X \times Y)$ such that for every nonnegative Borel measurable function $f : X \times Y \mapsto \R$, 
\begin{equation}
    \int f(x,y) \dd\mu(x,y) = \iint f(x,y) \dd\mu_x(y) \dd\nu(x).
\end{equation}

Conversely, we have the so-called \emph{disintegration theorem}, whose statement is the following:
\begin{theorem}{\citep[Theorem 5.3.1 (Disintegration)]{ambrosio2005gradient}} \label{th:disintegration}
    Let $Y$, $X$ be Polish spaces and $\pi : Y \mapsto X$ a Borel measurable map. Let $\mu \in \cP(Y)$ and define $\nu := \pi_\#\mu \in \cP(X)$. Then there is a $\nu$-a.e. uniquely defined Borel family $(\mu_x)_{x\in X}$ of probability measures in $\cP(Y)$ such that $\mu_x$ is concentrated on $\pi^{-1}(x)$ for $\nu$-a.e. $x \in X$, and $\mu$ is the unique probability measure such that for every nonnegative Borel measurable map $f : Y \mapsto \R$,
    \begin{equation}
        \int f(y) \dd\mu(y) = \int \int_{\pi^{-1}(x)} f(y) \dd\mu_x(y) \dd\nu(x).
    \end{equation}
    The family $(\mu_x)_x$ is called the \emph{disintegration of $\mu$ with respect to $\nu$}. When $Y = X \times Z$ is a product space, the $\mu_x$ can be seen as a family of measures in $\cP(Z)$, such that for every nonnegative Borel measurable $f : X \times Z \mapsto \R$,
    \begin{equation} \label{eq:disintegration_prod_space_definition}
        \int f(x,z) \dd\mu(x,z) = \iint f(x,z) \dd\mu_x(z) \dd\nu(x).
    \end{equation}
\end{theorem}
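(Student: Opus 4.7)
The plan is to reduce the general statement to the product case $Y = X \times Z$ by means of the graph embedding, and then to build the disintegration in the product case via a Radon--Nikodym argument against a countable, convergence-determining family of continuous test functions.

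\textbf{First, the reduction.} Consider the Borel map $\Phi : Y \to X \times Y$, $y \mapsto (\pi(y), y)$. Set $\tilde{\mu} := \Phi_\# \mu \in \cP(X \times Y)$, whose first marginal is $\nu$ and which is concentrated on the graph $\{(x,y) : \pi(y) = x\}$. Once we produce a Borel family $(\tilde{\mu}_x)_{x \in X}$ of probability measures on $Y$ disintegrating $\tilde{\mu}$ against $\nu$, each $\tilde{\mu}_x$ will automatically be concentrated on $\pi^{-1}(x)$ for $\nu$-a.e. $x$ (since $\tilde{\mu}$ lives on the graph), and the integral identity for $\mu$ follows from the corresponding identity for $\tilde{\mu}$ by pushforward through the second projection. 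This reduces everything to disintegrating a measure on a product $X \times Z$ against its first marginal.

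\textbf{Next, construction in the product case.} Fix a countable family $\{\varphi_n\}_n \subseteq C_b(Z)$ which is convergence-determining for $\cP(Z)$ (which exists because $\cP(Z)$ is Polish; such families metrize the weak topology via $\sum 2^{-n} \|\varphi_n\|_\infty^{-1}|\int \varphi_n \, d\eta - \int \varphi_n \, d\eta'|$). For each $\varphi \in C_b(Z)$, the signed Borel measure on $X$ defined by $\lambda_\varphi(A) := \int_{A \times Z} \varphi(z) \, d\mu(x,z)$ satisfies $|\lambda_\varphi| \leq \|\varphi\|_\infty \nu$, hence $\lambda_\varphi \ll \nu$; Radon--Nikodym yields a Borel density $h_\varphi : X \to \R$ with $|h_\varphi| \leq \|\varphi\|_\infty$. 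Applied to the countable rational-linear algebra $\cA$ generated by $\{\varphi_n\} \cup \{1\}$, we get a single $\nu$-null set $N$ outside of which $\varphi \mapsto h_\varphi(x)$ is a positive $\mathbb{Q}$-linear norm-one functional on $\cA$, extending by continuity to the closure $\overline{\cA} \subseteq C_b(Z)$.

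\textbf{Representation and verification; main obstacle.} The central difficulty is representing this functional as integration against a probability measure on $Z$, since $Z$ need not be locally compact and Riesz--Markov is not directly available. The cleanest route is to embed $Z$ homeomorphically (as a Borel subset) into a compact metric space $\hat{Z}$ such as $[0,1]^\N$, extend the functional to $C(\hat{Z})$, invoke Riesz--Markov to obtain $\hat{\mu}_x \in \cP(\hat{Z})$, and then show $\hat{\mu}_x(Z) = 1$ by a tightness check: applying Radon--Nikodym to indicator functions of a countable base of closed sets and using that $\mu$ itself is concentrated on $X \times Z$. Setting $\mu_x$ to be $\hat{\mu}_x$ restricted to $Z$ for $x \notin N$ and arbitrary otherwise, the map $x \mapsto \mu_x$ is Borel because each $x \mapsto \int \varphi_n \, d\mu_x = h_{\varphi_n}(x)$ is Borel and the $\varphi_n$ determine measures. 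The identity $\int f \, d\mu = \iint f(x,z) \, d\mu_x(z) \, d\nu(x)$ holds by construction for $f = \mathbf{1}_A \otimes \varphi$ with $A$ Borel and $\varphi \in \cA$, and extends to all nonnegative Borel $f$ by the monotone class theorem. Uniqueness is immediate: any other Borel family $(\mu'_x)$ satisfying the same identity must, for each $\varphi_n$, satisfy $\int \varphi_n \, d\mu'_x = h_{\varphi_n}(x)$ outside a $\nu$-null set; intersecting these countably many null sets and invoking that $\{\varphi_n\}$ determines measures, we conclude $\mu_x = \mu'_x$ for $\nu$-a.e. $x$.
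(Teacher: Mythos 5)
This statement is not proved in the paper: it is quoted verbatim as Theorem 5.3.1 of Ambrosio--Gigli--Savar\'e and used as a black box, so there is no internal proof to compare against. Your argument is the standard proof of that theorem and its overall structure is sound: the graph embedding $y \mapsto (\pi(y),y)$ correctly reduces the general case to the product case (the graph of a Borel map is Borel, so concentration of $\tilde{\mu}_x$ on $\pi^{-1}(x)$ follows by integrating $\mathbf{1}_G$), and the Radon--Nikodym construction against a countable algebra, followed by compactification and Riesz representation, is exactly how the theorem is proved in the references. Three points in your sketch deserve to be spelled out. First, for the extension to $C(\hat{Z})$ to make sense you should take $\hat{Z}$ to be the \emph{closure} of the embedded copy of $Z$ in $[0,1]^{\N}$, so that restriction $C(\hat{Z}) \to C_b(Z)$ is isometric and your positive norm-one functional genuinely defines a Radon probability measure $\hat{\mu}_x$ on $\hat{Z}$. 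Second, the verification that $\hat{\mu}_x(Z)=1$ for $\nu$-a.e.\ $x$ should use that $Z$ is a $G_\delta$ in $\hat{Z}$, say $Z = \bigcap_n U_n$: approximating $\mathbf{1}_{U_n}$ from below by continuous $\psi_k$, one gets $\int \hat{\mu}_x(U_n)\,\mathrm{d}\nu(x) \geq \sup_k \int \psi_k \,\mathrm{d}\mu = \mu(X\times U_n) = 1$, and one intersects over $n$; your phrase about indicators of closed sets is the dual version of this and works equally well, but as written it is only a gesture. Third, deducing that $x \mapsto \mu_x$ is Borel from the Borel measurability of the countably many maps $x \mapsto \int \varphi_n\,\mathrm{d}\mu_x$ is not one of the equivalent conditions listed in the paper's \Cref{sec:appendix:disintegration} (which require all of $C_b$ or a generating class of Borel sets); it does follow, either by noting that $\eta \mapsto (\int \varphi_n\,\mathrm{d}\eta)_n$ is a continuous injection between Polish spaces and invoking Lusin--Souslin, or by checking measurability of $x \mapsto \mu_x(U\cap Z)$ for $U$ open in $\hat{Z}$ via the same monotone approximation. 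None of these is a genuine gap, only places where the sketch relies on standard facts that a complete proof must state.
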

Given a measure $\mu \in \cP(Y)$ with marginal $\nu = f_\#\mu \in \cP(X)$, we will often write $\dd\mu(y) = \dd\mu_x(y) \dd\nu(x)$ to denote that $(\mu_x)_x$ is the family of probability measures associated to $\mu$ by the disintegration theorem. It is again readily verified that $\mu$ can be written in the form $\mu = g_\#\nu$, where $g : X \mapsto Y$ is a measurable function such that $f \circ g = \id_X$, if and only if its disintegration $\mu_x$ is a Dirac measure for $\nu$-almost every $x \in X$. We refer to \citep[Chapter 5.3]{ambrosio2005gradient} or \citep[Section 2]{bogachev2020kantorovich} for a more detailed discussion. Of note is also the following result:

\begin{theorem}{\citep[Corollary 1.6 (Universal disintegration)]{kallenberg2017random}} \label{th:universal_disintegration}
    If $X$, $Y$ are Polish spaces, there exists a Borel measurable map $\cK_{X,Y} : X \times \cP(X \times Y) \mapsto \cP(Y)$ such that for every $\mu \in \cP(X \times Y)$ with first marginal $\nu \in \cP(X)$,
    \begin{equation}
        \dd\mu(x,y) = \dd\cK_{X,Y}(x,\mu)(y) \dd \nu(x).
    \end{equation}
\end{theorem}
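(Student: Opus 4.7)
The plan is to construct $\cK_{X,Y}$ explicitly by a martingale approximation along increasingly fine partitions of $X$, and then to patch the joint measurability by appealing to Carathéodory extension on a Borel ``good set''.

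First I would fix a sequence of finite Borel partitions $(\cP_n)_n$ of $X$ such that $\cP_{n+1}$ refines $\cP_n$, the union $\bigcup_n \cP_n$ generates the Borel $\sigma$-algebra of $X$, and the maximum diameter of the cells of $\cP_n$ tends to $0$ (such a sequence exists since $X$, being Polish, is second countable). For each $x \in X$, let $A_n(x)$ denote the unique cell of $\cP_n$ containing $x$; then $(x,\mu) \mapsto \mathbb{1}_{A_n(x)}$ is Borel. Fix also a countable base $\{U_k\}_k$ of the topology of $Y$ that is closed under finite unions, and a point $y_0 \in Y$. Define the elementary conditional-probability approximations
\begin{equation}
    \cK_{n,k}(x,\mu) := \frac{\mu(A_n(x) \times U_k)}{\nu(A_n(x))},
\end{equation}
with the convention $0/0 := 0$, where $\nu = \pi_{1\#}\mu$. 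Each $\cK_{n,k}$ is jointly Borel in $(x,\mu)$, since the evaluations $\mu \mapsto \mu(B \times U_k)$ and $\nu \mapsto \nu(B)$ are Borel for every Borel $B \subseteq X$.

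The key step is then Doob's martingale convergence theorem: for each fixed $\mu$ with first marginal $\nu$, the sequence $(\cK_{n,k}(\cdot,\mu))_n$ is a $[0,1]$-valued martingale for the filtration $(\sigma(\cP_n))_n$, whose limit is the Radon--Nikodym derivative of $B \mapsto \mu(B \times U_k)$ with respect to $\nu$, i.e.\ the value $\mu_x(U_k)$ of the classical disintegration from \Cref{th:disintegration}. Setting
\begin{equation}
    \tilde{\cK}_k(x,\mu) := \limsup_{n \to +\infty} \cK_{n,k}(x,\mu),
\end{equation}
one obtains a jointly Borel function such that, for each $\mu$, $\tilde{\cK}_k(\cdot,\mu)$ coincides $\nu$-a.e.\ with $\mu_x(U_k)$.

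Next, I would upgrade the set functions $U_k \mapsto \tilde{\cK}_k(x,\mu)$ into genuine probability measures in a jointly measurable way. Let $\cR$ be the (countable) algebra generated by $\{U_k\}_k$. Define the Borel set
\begin{equation}
    G := \{(x,\mu) \in X \times \cP(X \times Y) \setcond \tilde{\cK}(x,\mu,\cdot) \text{ extends by additivity to a } \sigma\text{-additive probability premeasure on } \cR\},
\end{equation}
which is Borel, as it is a countable intersection of Borel conditions checking finite additivity and tightness via a monotone sequence of compact sets (these exist since $Y$ is Polish, hence every probability measure on $Y$ is tight). For each $\mu$, the section $\{x : (x,\mu) \in G\}$ has full $\nu$-measure, since the martingale limit already yields $\mu_x(U_k)$ off a $\nu$-null set. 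On $G$, Carathéodory's extension theorem produces a unique probability measure $\cK(x,\mu) \in \cP(Y)$ whose $U_k$-values equal $\tilde{\cK}_k(x,\mu)$; off $G$, set $\cK(x,\mu) := \delta_{y_0}$. Measurability of $\cK : X \times \cP(X \times Y) \to \cP(Y)$ for the weak topology reduces to Borel measurability of $(x,\mu) \mapsto \cK(x,\mu)(U_k)$ for every $k$, which holds by construction, and then to $\int f \dd\cK(x,\mu)$ for $f \in C_b(Y)$ by a monotone class argument.

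Finally, the disintegration identity $\dd\mu(x,y) = \dd\cK(x,\mu)(y)\dd\nu(x)$ holds because for each fixed $\mu$, $\cK(\cdot,\mu)$ is a version of the disintegration given by \Cref{th:disintegration}, hence both sides agree when integrated against $\mathbb{1}_{A \times U_k}$ for all $A$ Borel and all $k$, and these form a $\pi$-system generating the product $\sigma$-algebra. The hardest part will be the bookkeeping around the ``good set'' $G$: one must verify that the premeasure conditions (finite additivity, continuity at $\emptyset$, tightness on an exhausting sequence of compacts of $Y$) can all be phrased as countable Borel predicates on $(x,\mu)$ so that $G$ is genuinely Borel, and that the Carathéodory extension depends Borel measurably on $(x,\mu)$. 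The martingale convergence for fixed $\mu$ is standard, but the uniform measurable selection of extensions is precisely the obstacle that makes the disintegration ``universal'' in the sense of the statement.
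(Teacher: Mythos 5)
The paper offers no proof of this statement---it is quoted from \citet{kallenberg2017random}---so there is no in-house argument to compare against; your route (jointly Borel ratio approximations along refining partitions, martingale convergence for each fixed $\mu$, then a measurable Carath\'eodory extension on a Borel ``good set'') is the standard one underlying the cited result and is sound in outline. One auxiliary claim is false, though harmless: finite Borel partitions of $X$ with maximal cell diameter tending to $0$ do not exist when $X$ is not totally bounded (any finite partition of $\R$ has an unbounded cell). The diameter condition is also never used: it suffices to take $\cP_n$ to be the finite partition generated by the first $n$ elements of a countable base of $X$, so that $\sigma\bigl(\bigcup_n \cP_n\bigr) = \cB(X)$, and L\'evy's upward theorem then identifies the $\nu$-a.e.\ limit of $\cK_{n,k}(\cdot,\mu)$ with the Radon--Nikodym derivative of $B \mapsto \mu(B \times U_k)$ with respect to $\nu$, which is all you need.

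The step you yourself flag as ``bookkeeping'' is the actual crux, and as written it does not close. Your functions $\tilde{\cK}_k(x,\mu)$ are only defined on a countable family of open sets, so the conditions you want to impose in the definition of $G$---continuity at $\emptyset$ on the full algebra $\mathcal{R}$, or tightness along an exhausting sequence of compacts---are not expressible in terms of the data you have constructed: compact subsets of $Y$ need not lie in $\mathcal{R}$, and $\sigma$-additivity on $\mathcal{R}$ cannot be checked from values on basic open sets without further work, so the Borel-ness of $G$ and the measurable dependence of the Carath\'eodory extension on $(x,\mu)$ remain unproved. The standard repair is to first transfer the problem through a Borel isomorphism of $Y$ onto a Borel subset of a compact zero-dimensional Polish space such as $\{0,1\}^{\N}$. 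There the relevant countable algebra consists of clopen, hence compact, cylinder sets, so every finitely additive probability on it is automatically a premeasure (a decreasing sequence of compact sets with empty intersection is eventually empty), and $G$ becomes a countable intersection of manifestly Borel conditions: finite additivity on pairs of disjoint cylinders, normalization, and the requirement that the extension assign full mass to the Borel image of $Y$ (Borel because $\mu \mapsto \mu(B)$ is Borel on $\cP(Y)$ for every Borel $B$). With that reduction your argument goes through; without it, the key measurability assertion is asserted rather than proved.
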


It is then clear that given a measure $\mu \in \cP(X \times Y \times Z)$, the map $(x,y) \in X \times Y \mapsto \cK_{Y,Z}(y, \cK_{X,Y \times Z}(x, \mu)) \in \cP(Z)$ is Borel, and is also a disintegration of $\mu$ with respect to its marginal $\nu = (\pi_1,\pi_2)_\#\mu$ over the first two variables. Therefore, by unicity, it holds
\begin{equation} \label{eq:disintegration_composition}
    \cK_{Y,Z}(y, \cK_{X,Y \times Z}(x, \mu)) = \cK_{X \times Y, Z}((x,y),\mu), \quad \nu-\hbox{a.e.} (x,y) \in X \times Y.
\end{equation}
The universal disintegration theorem thus implies that we can disintegrate a measure over several variables by disintegrating it with respect to one, and then by successively ``disintegrating the disintegrations".

\begin{remark}
    If $\mu \in \cP(X \times Y)$ has disintegration $\dd\mu(x,y) = \dd\mu_x(y) \dd\nu(y)$, then, for every Borel set $A \subseteq X \times Y$, applying \eqref{eq:disintegration_prod_space_definition} to the indicator function $\bOne_A$, we find
    \begin{equation}
        \mu(A) = \int \mu_x(A_x) \dd\nu(x)
    \end{equation}
    where $A_x := \{y \in Y \setcond (x,y) \in A\}$ for every $x \in X$. In particular, if $P$ is some property on $X \times Y$ such that the set $\{(x,y) \in X \times Y \setcond P(x,y) \hbox{ is true}\}$ is Borel measurable, then the assertions ``$P$ is true for $\mu$-almost every $(x,y)$" and ``$P$ is true for $\nu$-almost every $x$, $\mu_x$-almost every $y$" are equivalent.
\end{remark}

\begin{remark} \label{rk:disintegration_given_by_map_of_two_variables}
    If $\mu \in \cP(X \times Y \times Z)$ has disintegration $\dd\mu(x) = \dd\mu_x(y,z) \dd\nu(x)$, then, since the set $\{\delta_z, z \in Z \}$ is weakly closed in $\cP(Z)$ (this is a consequence of \citep[Proposition 5.1.8]{ambrosio2005gradient}), using the previous remark along with \eqref{eq:disintegration_composition}, the assertions
    \begin{itemize}
        \item There exists a Borel measurable $f : X \times Y \mapsto Z$ such that $\mu = f_\#\pi_{1,2\#}\mu$
        \item For $\nu$-almost every $x$, there exists a Borel measurable $f_x : Y \mapsto Z$ such that $\mu_x = f_{x\#}\pi_{1\#}\mu_x$
    \end{itemize}
    are equivalent.
\end{remark}

\subsection{Measurable selection results}

The following measurable selection theorem was stated in \citep[Chapter 5, Bibliographical notes]{villani2009optimal} :

\begin{theorem} \label{th:selection}
    Let $f : X \mapsto Y$ be a Borel measurable map between Polish spaces which is surjective and such that for every $y \in Y$, the fiber $f^{-1}(y)$ is compact. Then $f$ admits a Borel right-inverse, that is, a Borel measurable function $g : Y \mapsto X$ such that $f \circ g = \id_Y$.
\end{theorem}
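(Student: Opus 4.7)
The plan is to apply the Arsenin--Kunugui uniformization theorem to the graph of $f$. First, I would establish that the graph
\begin{equation}
G_f := \{(x,y) \in X \times Y : f(x) = y\}
\end{equation}
is a Borel subset of $X \times Y$. This follows from the identity $G_f = (f \times \mathrm{id}_Y)^{-1}(\Delta_Y)$, where $\Delta_Y \subseteq Y \times Y$ is the closed diagonal and $f \times \mathrm{id}_Y : X \times Y \to Y \times Y$ is Borel measurable (as a product of a Borel map and a continuous one).

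Next, I would observe that for every $y \in Y$, the $y$-section of $G_f$ is exactly the fiber:
\begin{equation}
(G_f)^y := \{x \in X : (x,y) \in G_f\} = f^{-1}(y),
\end{equation}
which by hypothesis is compact, and hence a $K_\sigma$ set. The key step is then to invoke the Arsenin--Kunugui uniformization theorem \citep[Theorem 35.46]{kechris_classical_1995}: if $B$ is a Borel subset of a product $X \times Y$ of Polish spaces whose vertical sections $B^y$ are all $K_\sigma$, then $B$ admits a Borel uniformization, i.e.\ a Borel function $g : \pi_Y(B) \to X$ such that $(g(y),y) \in B$ for every $y \in \pi_Y(B)$. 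Applying this to $B = G_f$, and using the surjectivity of $f$ (which yields $\pi_Y(G_f) = Y$), we obtain the desired Borel function $g : Y \to X$ satisfying $(g(y),y) \in G_f$ for all $y \in Y$, that is, $f \circ g = \mathrm{id}_Y$.

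The main obstacle is the invocation of Arsenin--Kunugui itself, which is a nontrivial result from descriptive set theory relying on a fine analysis of Borel sets with $K_\sigma$ sections (typically via Lusin--Novikov type arguments and a Suslin scheme). A naive attempt to bypass it via the Kuratowski--Ryll-Nardzewski theorem applied to the multivalued map $F(y) := f^{-1}(y)$ runs into the difficulty of verifying that $\{y : F(y) \cap U \neq \emptyset\} = f(U)$ is Borel for every open $U \subseteq X$: for a merely Borel map $f$, the image $f(U)$ is only guaranteed to be analytic, and it is precisely the compactness of the fibers that upgrades this to Borel measurability, via the Arsenin--Kunugui projection theorem. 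Hence the compactness of fibers is used essentially, and the Arsenin--Kunugui theorem is the natural tool once the hypotheses are recognized.
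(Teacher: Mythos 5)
The paper does not actually prove this statement: it is quoted as a known result, with a pointer to the bibliographical notes of Chapter 5 of \citep{villani2009optimal}, so there is no in-paper argument to compare yours against. Your proof is correct and is the standard way to establish the result. The graph $G_f=(f\times\id_Y)^{-1}(\Delta_Y)$ is indeed Borel (the diagonal is closed and $f\times\id_Y$ is Borel since the Borel $\sigma$-algebra of a product of Polish spaces is the product $\sigma$-algebra), its $y$-sections are the fibers $f^{-1}(y)$, which are compact and a fortiori $K_\sigma$, and the Arsenin--Kunugui theorem \citep[Theorem 35.46]{kechris_classical_1995} then yields a Borel uniformization of $G_f$ over $\pi_Y(G_f)=Y$, which is exactly a Borel right-inverse of $f$. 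Your closing remark is also well taken: a Kuratowski--Ryll-Nardzewski route stalls because $f(U)$ is only analytic for Borel $f$, and the Jankov--von Neumann theorem (which the paper does use elsewhere, in its proof of \Cref{lemma:pushforward_surjective}) only delivers a universally measurable selection; the compactness (or $\sigma$-compactness) of the fibers is precisely the hypothesis that upgrades the selection to a genuinely Borel one. One could slightly weaken the hypothesis to $\sigma$-compact fibers at no extra cost, but as stated the argument is complete.
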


\section{Optimal transport theory} \label{sec:appendix:ot_theory}

\subsection{The Kantorovich problem} \label{sec:appendix:ot_solutions}

Let $X$ and $Y$ be two Polish spaces, and $c : X \times Y \mapsto [0,+\infty]$ be a nonnegative lower semicontinuous cost function. Given two probability measures $\mu \in \cP(X)$ and $\nu \in \cP(Y)$, we may define their \emph{optimal transport cost} $\W_c(\mu,\nu)$ associated to the cost $c$, which is defined by
\begin{equation} \label{eq:kantorovich_problem_c}
    \W_c(\mu, \nu) := \inf_{\gamma \in \Pi(\mu,\nu)} \int c(x,y) \dd\gamma(x,y),
\end{equation}
where $\Pi(\mu,\nu)$ is the set of transport plans between $\mu$ and $\nu$, that is, of measures $\gamma \in \cP(X \times Y)$ with respective first and second marginals $\mu$ and $\nu$. It is a general result of optimal transport theory (see for example \citep[Theorem 4.1]{villani2009optimal}) that there exists $\gamma \in \Pi(\mu,\nu)$ that minimize \eqref{eq:kantorovich_problem_c}. We denote $\Pi_c(\mu,\nu)$ the set of such minimizers, which we call \emph{optimal transport plans} between $\mu$ and $\nu$ for the cost $c$. An important result of optimal transport theory is the so-called \emph{Kantorovich's duality}:

\begin{theorem}{\citep[Theorem 5.10 (partial version)]{villani2009optimal}} \label{th:kantorovich_duality_c}
    Assume that there exists two real valued upper semicontinuous functions $a \in L^1(\mu)$ and $b \in L^1(\nu)$ such that $c(x,y) \geq a(x) + b(y)$ for every $(x,y) \in X \times Y$. Then there is duality:
    \begin{align}
        \min_{\gamma \in \Pi(\mu,\nu)} \int c(x,y) \dd\gamma(x,y) &= \sup_{\substack{(\phi,\psi) \in C_b(X) \times C_b(Y) \\ \phi + \psi \leq c}} \left(\int \phi(x) \dd\mu(x) + \int \psi(y) \dd\nu(y) \right) \label{eq:kantorovich_duality_c_continuous} \\
        &= \sup_{\substack{(\phi,\psi) \in L^1(\mu) \times L^1(\nu) \\ \phi + \psi \leq c}} \left(\int \phi(x) \dd\mu(x) + \int \psi(y) \dd\nu(y) \right).
    \end{align}
    Moreover, if $c$ is real valued, $\W_c(\mu,\nu) < +\infty$, and there exists $c_X \in L^1(\mu)$ and $c_Y \in L^1(\nu)$ such that $c(x,y) \leq c_X(x) + c_Y(y)$ for every $(x,y) \in X \times Y$, then the dual problem also has solutions:
    \begin{align}
        \min_{\gamma \in \Pi(\mu,\nu)} \int c(x,y) \dd\gamma(x,y) = \max_{\substack{(\phi,\psi) \in L^1(\mu) \times L^1(\nu) \\ \phi + \psi \leq c}} \left(\int \phi(x) \dd\mu(x) + \int \psi(y) \dd\nu(y) \right).
    \end{align}
\end{theorem}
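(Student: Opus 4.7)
The plan is to proceed in three stages: first establish weak duality, then upgrade to strong duality via a Fenchel--Rockafellar argument on bounded continuous costs, and finally handle attainment of the supremum.

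Weak duality is immediate. For any $\gamma \in \Pi(\mu,\nu)$ and any admissible pair $(\phi,\psi) \in L^1(\mu) \times L^1(\nu)$ with $\phi(x) + \psi(y) \leq c(x,y)$, the marginal conditions give
\begin{equation}
    \int \phi \, \dd\mu + \int \psi \, \dd\nu = \int \bigl(\phi(x)+\psi(y)\bigr) \, \dd\gamma(x,y) \leq \int c \, \dd\gamma,
\end{equation}
and optimizing over $\gamma$ and $(\phi,\psi)$ yields the ``$\geq$'' direction in both duality identities. This also shows that the $C_b \times C_b$ supremum is bounded above by the $L^1 \times L^1$ supremum, so for the full chain it suffices to sandwich the primal value between the infimum and the $C_b$-supremum.

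For the strong duality, I would first reduce to the case of a bounded continuous cost. Replacing $c$ by $c - a - b$ produces a nonnegative lower semicontinuous cost (one must check afterwards that the adjustment by an additive $L^1(\mu) \oplus L^1(\nu)$ function is absorbed by translating $\phi,\psi$), and such a cost is the pointwise monotone limit of a nondecreasing sequence $c_k \nearrow c$ of continuous bounded functions. The primal value passes to the limit by monotone convergence combined with the weak compactness of $\Pi(\mu,\nu)$ (standard via Prokhorov, since the marginals are fixed), while the dual value passes because any pair admissible for $c_k$ is a fortiori admissible for $c$. It thus suffices to treat $c \in C_b(X \times Y)$. Here I would apply Fenchel--Rockafellar duality on the Banach space $C_b(X \times Y)$ to the indicator of the convex set $\{h : h \leq c\}$ and the concave functional $h \mapsto \sup\{\int\phi\,\dd\mu + \int\psi\,\dd\nu : \phi \oplus \psi \leq h\}$; the qualification condition (continuity at an interior point) is supplied precisely by the boundedness of $c$.

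For attainment of the supremum under the additional upper bound $c \leq c_X + c_Y$, the approach is via the $c$-transform. Starting from a maximizing sequence $(\phi_n, \psi_n)$, replace it by the pair $(\phi_n^{cc}, \phi_n^{c})$, where $\phi^c(y) := \inf_x\{c(x,y) - \phi(x)\}$ and $\phi^{cc} := (\phi^c)^c$. This operation preserves admissibility and does not decrease the dual objective, while enforcing $c$-concavity. The two-sided integrability bounds $a(x) + b(y) \leq c(x,y) \leq c_X(x) + c_Y(y)$ then translate into pointwise bounds on the $c$-transforms of the form $-\|c_Y\|_{L^1(\nu)} + a(x) - C \leq \phi_n^{cc}(x) \leq c_X(x) - \int b \, \dd\nu$, modulo the trivial ambiguity $(\phi,\psi) \mapsto (\phi + t, \psi - t)$. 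After fixing this ambiguity (e.g.\ by imposing $\int \phi_n \, \dd\mu = 0$), one extracts an a.e.\ limit pair $(\phi_\infty, \psi_\infty)$ via Komlós' theorem; admissibility passes to the limit by Fatou applied to the nonnegative integrands $c(x,y) - \phi_n(x) - \psi_n(y)$, and upper semicontinuity of the dual objective yields optimality of the limit.

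The hard part will be the attainment argument: producing genuine compactness from the dual maximizing sequence requires a careful interplay between the lower bound $c \geq a + b$ (preventing $\phi_n, \psi_n$ from collapsing to $-\infty$ on non-negligible sets) and the upper bound $c \leq c_X + c_Y$ (controlling the $c$-transforms from above), and ensuring that the extracted limit is integrable rather than merely measurable. By contrast, the reduction from lower semicontinuous to continuous bounded cost and the Fenchel--Rockafellar step for $c \in C_b$ are comparatively routine, the latter because boundedness automatically supplies the qualification hypothesis.
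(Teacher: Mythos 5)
The paper does not actually prove this statement: it is imported verbatim as Theorem 5.10 of \citet{villani2009optimal} and used as a black box, so there is no internal proof to compare against. Your outline reproduces the standard proof architecture from that reference --- weak duality, reduction to bounded continuous costs by monotone approximation, a convex-duality argument for the bounded case, and $c$-transforms plus compactness for attainment --- and that architecture is sound.

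Two points would need repair if you carried this out in full. First, applying Fenchel--Rockafellar directly on the Banach space $C_b(X \times Y)$ for non-compact Polish $X$, $Y$ yields a dual optimizer in $C_b(X \times Y)^*$, which contains finitely additive set functions that are not Borel measures; boundedness of $c$ supplies the qualification condition but does not resolve this. One must either treat the compact case first and then extend to Polish spaces using tightness of $\Pi(\mu,\nu)$, or run a minimax argument as in the cited reference, and in either case additional work is needed to show the optimal dual element is $\sigma$-additive with the prescribed marginals. Second, the reduction $c \mapsto c - a \oplus b$ only translates the $L^1(\mu)\times L^1(\nu)$ dual value, not the $C_b(X)\times C_b(Y)$ one, since $a$ and $b$ are merely upper semicontinuous; to obtain the $C_b$ identity in the general case the monotone approximation has to be run on $c$ itself (where admissible pairs for $c_k$ remain admissible for $c$), not on the shifted nonnegative cost. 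You flag both issues in passing, but as written the proof is a correct skeleton rather than a complete argument; for the purposes of this paper the citation suffices.
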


A direct consequence of \Cref{th:kantorovich_duality_c} is that the optimal transport cost $\W_c : \cP(X) \times \cP(X) \mapsto [0, +\infty]$ is lower semicontinuous (with respect to the topology of weak convergence), as, by \eqref{eq:kantorovich_duality_c_continuous}, $\W_c$ writes as a supremum of continuous functions. We also have the following measurable selection result of optimal transport plans:

\begin{theorem}{\citep[Theorem 4.2]{bogachev2020kantorovich}} \label{th:fibered_opt_plan_selection}
    Let $X$, $Y$ and be Polish spaces, $Z$ a Borel subset of some larger Polish space $Z'$, and $h : Z \times X \times Y \mapsto [0,+\infty]$ a measurable function such that $h_z = h(z,\cdot,\cdot)$ is nonnegative lower semicontinuous for every $z \in Z$. Let $(\mu_z)_{z \in Z}$ and $(\nu_z)_{z \in Z}$ be two Borel families of measures, respectively in $\cP(X)$ and in $\cP(Y)$, such that for every $z \in Z$, $\W_{h_z}(\mu_z,\nu_z) < +\infty$. Then the map $z \mapsto \W_{h_z}(\mu_z,\nu_z)$ is measurable, and there exists a Borel family of measures $(\gamma_z)_{z \in Z}$ in $\cP(X \times Y)$, such that for every $z \in Z$, $\gamma_z \in \Pi_{h_z}(\mu_z,\nu_z)$.
\end{theorem}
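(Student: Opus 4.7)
The plan is to argue via joint measurability and a Borel selection theorem for the compact-valued multifunction of optimal plans. Introduce the multifunction $F : Z \to 2^{\cP(X \times Y)}$ defined by $F(z) := \Pi(\mu_z, \nu_z)$ and the joint functional $\Phi(z, \gamma) := \int h(z, x, y)\,\mathrm{d}\gamma(x, y)$. The minimizers of $\Phi(z, \cdot)$ over $F(z)$ are precisely $\Pi_{h_z}(\mu_z, \nu_z)$, so the task reduces to: (a) showing $W(z) := W_{h_z}(\mu_z, \nu_z)$ is Borel measurable in $z$, and (b) producing a Borel measurable selection $z \mapsto \gamma_z$ of the minimizer-valued multifunction $M(z) := \Pi_{h_z}(\mu_z, \nu_z)$.

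I would first establish joint Borel measurability of $\Phi$ on $Z \times \cP(X \times Y)$ by a standard monotone class argument: for a fixed Borel rectangle $B = A_1 \times A_2 \subseteq X \times Y$, the map $(z, \gamma) \mapsto \gamma(B)$ is Borel (constant in $z$, Borel in $\gamma$ by the discussion in \Cref{sec:appendix:weak_topology}); this extends by a Dynkin class argument to $(z, \gamma) \mapsto \gamma(B)$ for any Borel $B \subseteq X \times Y$, then via simple functions and monotone convergence to $(z, \gamma) \mapsto \int f\,\mathrm{d}\gamma$ for any nonnegative Borel $f$ on $Z \times X \times Y$, and in particular to $\Phi$. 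Parallel to this, the graph of $F$ is Borel: since $z \mapsto \mu_z$ and $z \mapsto \nu_z$ are Borel and the pushforwards $\gamma \mapsto \pi_{i\#}\gamma$ are continuous, the condition defining $F$ cuts out a Borel subset of $Z \times \cP(X \times Y)$. Moreover, for each $z$, $F(z)$ is weakly compact by Prokhorov (tightness of the marginals gives tightness of $\Pi(\mu_z, \nu_z)$), and $\Phi(z, \cdot)$ is lower semicontinuous on $\cP(X \times Y)$ by \Cref{sec:appendix:ot_solutions}, so $M(z)$ is a nonempty weakly compact subset of $\cP(X \times Y)$.

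Next, for the measurability of $W$ and the Borel selection, the natural tool is Sainte-Beuve's measurable maximum theorem (or its Himmelberg--Van Vleck version) applied to the Carathéodory-like pair $(\Phi, F)$: since $F$ is a Borel multifunction with compact values and $\Phi$ is Borel jointly and lsc on each fiber, both the value function $W(z) = \min_{\gamma \in F(z)} \Phi(z, \gamma)$ is Borel and the argmin multifunction $M$ admits a Borel selection. Concretely, I would argue that $\{z : W(z) < t\}$ is the projection onto $Z$ of the Borel set $\{(z, \gamma) \in \mathrm{graph}(F) : \Phi(z, \gamma) < t\}$ intersected with a countable family of compact slices (using weak compactness of $F(z)$ together with a base of the weak topology on $\cP(X \times Y)$, which is Polish, hence second countable), so that the projection remains Borel rather than merely analytic. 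A Borel selection of $M$ is then obtained by applying Kuratowski--Ryll-Nardzewski to the Effros-measurable compact-valued multifunction $M$.

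The main obstacle — and the point where care is required — is upgrading projection arguments from analytic to genuinely Borel measurability. The set $\{(z, \gamma) : \gamma \in M(z)\}$ is a priori only coanalytic (it is cut out by a universally quantified inequality over the fiber), and the naive projection from $\mathrm{graph}(F)$ to $Z$ of the sublevel sets of $\Phi$ is analytic. The compactness of $F(z)$ together with the second countability of $\cP(X \times Y)$ is what rescues Borel measurability: one can replace the universal quantifier by a countable one by exploiting a countable weak-dense family in the compact fiber, or alternatively, quote directly a version of the measurable selection theorem tailored to Borel multifunctions with lsc Carathéodory integrand and compact values. Once this technical step is in place, both statements of the theorem follow immediately.
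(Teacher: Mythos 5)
This statement is not proved in the paper at all: it is imported verbatim from the literature as \citep[Theorem 4.2]{bogachev2020kantorovich}, the only added content being the remark that the finiteness hypothesis on $h$ in the original can be relaxed to $\W_{h_z}(\mu_z,\nu_z)<+\infty$. So there is no internal proof to compare yours against; what follows assesses your sketch on its own terms.

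Your overall strategy --- joint Borel measurability of $\Phi(z,\gamma)=\int h_z\dd\gamma$ by a monotone class argument, Borel graph and weak compactness of $F(z)=\Pi(\mu_z,\nu_z)$, lower semicontinuity of $\Phi(z,\cdot)$, then a measurable-argmin theorem --- is sound and is essentially how results of this type are established. The one step that does not work as written is your proposed mechanism for upgrading the projection from analytic to Borel. Replacing the quantifier over the compact fiber $F(z)$ by a quantifier over a countable weakly dense subset fails when $\Phi(z,\cdot)$ is merely lower semicontinuous: the strict sublevel set $\{\gamma : \Phi(z,\gamma)<t\}$ is neither open nor closed, so membership of some $\gamma$ in it is not detected by a dense family. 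The correct tool is Novikov's projection theorem (or Arsenin--Kunugui): the set $\{(z,\gamma) : \gamma\in F(z),\ \Phi(z,\gamma)\leq t\}$ is Borel with compact sections, hence its projection $\{z : W(z)\leq t\}$ is Borel, which gives measurability of the value function $W$; the argmin graph $\{(z,\gamma) : \gamma\in F(z),\ \Phi(z,\gamma)\leq W(z)\}$ is then Borel with nonempty compact sections, and Kuratowski--Ryll-Nardzewski (or a variant of the paper's \Cref{th:selection}) yields the Borel selection. With that substitution your argument closes; alternatively one can, as you yourself suggest, directly invoke a measurable-argmin theorem for lower semicontinuous Carath\'eodory integrands over compact-valued Borel multifunctions, which is in substance exactly what the quoted reference supplies.
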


\begin{remark}
    Theorem 4.2 in \citep{bogachev2020kantorovich} technically requires $h$ to be valued in $[0,+\infty)$, but one can check that their proof in fact works even when $h$ can take infinite values, as long as $\W_{h_z}(\mu_z,\nu_z) < +\infty$ for every $z \in Z$. 
\end{remark}

\subsection{The Wasserstein space} \label{sec:appendix:w2_space}

Let $(X,d)$ be a Polish space, with associated metric $d$. We may then consider the cost function $c : X \times X \mapsto \R_+$ defined by $c(x,y) := d^2(x,y)$. For every pair $\mu, \nu \in \cP(X)$, we define $\W_2(\mu,\nu) := \sqrt{\W_c(\mu,\nu)}$, which may possibly be infinite. Note that for every $x_0 \in X$, the second order moment of $\mu$ with respect to $x_0$ is given by 
\begin{equation}
    \W_2^2(\delta_{x_0}, \mu) = \int d^2(x_0, x) \dd\mu(x).
\end{equation}
Moreover, for every pair of probability measures with finite second order moment $\mu, \nu \in \cP_2(X)$, it is known that $\W_2(\mu,\nu) < +\infty$. It can then be verified that $\cP_2(X)$ equipped with $\W_2$ is in fact a metric space (see \citep[Definition 6.1]{villani2009optimal}), and we call $\W_2(\mu,\nu)$ the \emph{second order Wasserstein distance} between $\mu$ and $\nu$.

\begin{theorem}{\citep[Theorem 6.18, Remark 6.19]{villani2009optimal}} \label{th:prob_space_is_polish}
    The space $(\cP_2(X), \W_2)$ is a Polish space, which is compact if and only if $X$ is compact. Moreover any probability measure can be approximated by a sequence of probability measures with finite support.
\end{theorem}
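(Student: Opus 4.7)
The plan is to prove the three assertions in turn, relying mostly on classical tools: Prokhorov's theorem, the lower semicontinuity of $\W_2$ (stated in the appendix), and the fact that $\W_2$-convergence is equivalent to weak convergence plus convergence of second moments (point \ref{enum:w2_convergence:1_2nd_moment_cvg} of Theorem \ref{th:convergence_in_w2_space}, which I would place before this result or invoke from the appendix).

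For separability and the ``finite support'' approximation, I would first fix a countable dense set $D \subseteq X$ and a base point $x_0 \in D$, and take $\cS$ to be the countable family of convex combinations $\sum_{i=1}^N q_i \delta_{x_i}$ with $x_i \in D$ and $q_i \in \mathbb{Q}_+$ summing to $1$. Given $\mu \in \cP_2(X)$ and $\veps > 0$, I would use tightness of the single measure $\mu$ together with uniform integrability of $d^2(x_0,\cdot)$ with respect to $\mu$ to choose a compact $K \subseteq X$ with $\int_{X \setminus K} (1+d^2(x_0,x))\dd\mu(x) < \veps$. Then I would cover $K$ by finitely many Borel pieces $B_1,\ldots,B_N$ of diameter less than $\veps$, choose $x_i \in D$ within distance $\veps$ of $B_i$, and set $\nu := \mu(X \setminus K)\delta_{x_0} + \sum_i \mu(B_i)\delta_{x_i}$. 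The transport plan sending the mass of $B_i$ to $x_i$ (and the rest to $x_0$) shows that $\W_2^2(\mu,\nu)$ is controlled by $\veps^2 + 4\int_{X\setminus K}(d^2(x_0,x)+d^2(x_0,x_i))\dd\mu \leq C\veps$, after which replacing the $\mu(B_i)$ by nearby rationals finishes the density argument. The same construction gives the finitely-supported approximation statement.

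For completeness I would take a Cauchy sequence $(\mu_n)_n$ in $(\cP_2(X),\W_2)$. Comparing with $\delta_{x_0}$ via the triangle inequality shows $\sup_n \W_2(\delta_{x_0},\mu_n) < +\infty$, i.e.\ uniformly bounded second moments; by Markov's inequality applied to $d^2(x_0,\cdot)$ one gets tightness of $\{\mu_n\}$ on large closed balls. Prokhorov (Theorem \ref{th:prokhorov}) then yields a weakly convergent subsequence $\mu_{n_k} \rightharpoonup \mu$, and Fatou applied to $d^2(x_0,\cdot)$ shows $\mu \in \cP_2(X)$. Lower semicontinuity of $\W_2$ with respect to weak convergence (a consequence of the Kantorovich duality formula, Theorem \ref{th:kantorovich_duality_c}) gives $\W_2(\mu_n,\mu) \leq \liminf_k \W_2(\mu_n,\mu_{n_k})$, which is arbitrarily small for $n$ large by the Cauchy property; so the whole sequence converges to $\mu$.

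For compactness, if $X$ is compact then $d$ is bounded, $\cP(X) = \cP_2(X)$, and $\W_2$ metrizes weak convergence (since $d^2$ is bounded continuous, so convergence of second moments follows from weak convergence), and $\cP(X)$ is weakly compact by Prokhorov (the whole space $X$ witnesses tightness). Conversely, if $\cP_2(X)$ is compact then $X$ embeds isometrically via $\delta_X : X \mapsto \cP_2(X)$, whose image is closed (being the isometric image of a complete space, once completeness has been established) inside a compact metric space, hence compact; therefore $X$ itself is compact. The main subtlety I anticipate is the upgrade from weak subsequential convergence to genuine $\W_2$-convergence in the completeness argument, for which the lower semicontinuity of $\W_2$ combined with the Cauchy hypothesis is the crucial lever; and the identification of $\W_2$-topology with weak topology in the compact case, which I would handle by invoking the ``quadratic growth'' criterion already quoted in the appendix.
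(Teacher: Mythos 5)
The paper does not actually prove this statement: it is quoted verbatim from \citet{villani2009optimal} (Theorem 6.18 and Remark 6.19) and used as a black box, so your argument has to be judged on its own merits against the standard proof. Most of it is the standard proof and is correct: the rational-Dirac density argument (which simultaneously yields the finite-support approximation), the use of joint lower semicontinuity of $\W_2$ under weak convergence to upgrade subsequential weak convergence of a Cauchy sequence to full $\W_2$-convergence, Fatou to place the limit in $\cP_2(X)$, and both directions of the compactness equivalence are all sound. The one genuine gap is in the completeness argument, at the step ``by Markov's inequality applied to $d^2(x_0,\cdot)$ one gets tightness of $\{\mu_n\}$ on large closed balls.'' Markov only gives $\sup_n \mu_n(\{x : d(x_0,x) > R\}) \leq M/R^2$, and in a general Polish space closed balls are \emph{not} compact, so this is not tightness in the sense required by Prokhorov's theorem (\Cref{th:prokhorov}). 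This is not a pedantic objection here: the theorem is invoked in this paper precisely for spaces such as $\cPn{n}{\cM}$ and $T\cM$ with $\cM$ noncompact, where bounded closed sets are far from compact; for $X = \ell^2$, say, your step produces no compact set at all and the weakly convergent subsequence never materializes.

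The repair is exactly the part of Villani's proof of Theorem 6.18 that does the real work. Either (a) prove tightness of a $\W_2$-Cauchy sequence directly: for each $\veps > 0$ use the Cauchy property to find finitely many indices such that every $\mu_n$ is within $\W_1$-distance $\veps^2$ of one of them (using $\W_1 \leq \W_2$); the finitely many reference measures are individually tight, giving a compact $K_\veps$ whose complement they charge by at most $\veps$; then apply Markov to the $1$-Lipschitz function $d(\cdot,K_\veps)$ (not to $d(x_0,\cdot)$) to bound $\mu_n(\{d(\cdot,K_\veps) > \veps\})$ uniformly in $n$; finally intersect the closed $\veps_k$-enlargements of the $K_{\veps_k}$ along $\veps_k \downarrow 0$ to obtain a single totally bounded closed, hence compact, set witnessing tightness. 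Or (b) bypass tightness altogether by noting that $d_{LP}^2 \lesssim \W_1 \leq \W_2$ for the L\'evy--Prokhorov metric, so a $\W_2$-Cauchy sequence is Cauchy for a complete metric inducing the weak topology on $\cP(X)$ and therefore converges weakly; the rest of your argument then goes through unchanged. With either fix the proof is complete.
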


We have several equivalent characterizations of convergence in the $\W_2$ topology:
\begin{theorem}{\citep[Theorem 6.9]{villani2009optimal}} \label{th:convergence_in_w2_space}
    Let $(\mu_n)_{n \in \N}$ be a sequence in $\cP_2(X)$ and let $\mu \in \cP(X)$. Then $(\mu_n)_n$ converges to  $\mu$ in the $\W_2$ topology if and only if one of the following equivalent conditions hold for some (and then any) $x_0 \in X$:
    \begin{enumerate}
        \item \label{enum:w2_convergence:1_2nd_moment_cvg} $\mu_n \rightharpoonup \mu$ and $\int d^2(x_0,x) \dd\mu_n(x) \xrightarrow[n \to +\infty]{} \int d^2(x_0,x) \dd\mu(x)$
        \item \label{enum:w2_convergence:2_2nd_moment_lsc} $\mu_n \rightharpoonup \mu$ and $\limsup_{n \to +\infty} \int d^2(x_0,x) \dd\mu_n(x) \leq \int d^2(x_0,x) \dd\mu(x)$
        \item \label{enum:w2_convergence:3_2nd_moment_unif_int} $\mu_n \rightharpoonup \mu$ and $\lim_{R \to +\infty} \limsup_{n \to +\infty} \int_{d(x_0,x) \geq R} d^2(x_0,x) \dd\mu_n(x) = 0$
        \item \label{enum:w2_convergence:4_quadratic_growth} For every continuous function $\varphi : X \mapsto \R$ with $\forall x \in X, |\varphi(x)| \leq C(1 + d^2(x_0,x))$ for some $C \geq 0$, $\int \varphi \dd\mu_n \xrightarrow[n \to +\infty]{} \int \varphi \dd\mu$ 
    \end{enumerate}
\end{theorem}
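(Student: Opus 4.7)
The plan is to establish the four-way equivalence by proving the cyclic chain
\[
\W_2(\mu_n,\mu) \to 0 \;\Longrightarrow\; (1) \;\Longrightarrow\; (2) \;\Longrightarrow\; (3) \;\Longrightarrow\; (4) \;\Longrightarrow\; \W_2(\mu_n,\mu) \to 0,
\]
so that every condition reduces to $\W_2$-convergence. The first arrow is immediate from the triangle inequality $|\W_2(\delta_{x_0},\mu_n) - \W_2(\delta_{x_0},\mu)| \le \W_2(\mu_n,\mu)$, which gives convergence of second moments, together with $\W_1 \le \W_2$ and Kantorovich--Rubinstein duality to deduce weak convergence (one approximates test functions in $C_b$ by truncated Lipschitz functions). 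The implication $(1)\Rightarrow(2)$ is trivial.

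For $(2)\Rightarrow(3)$, I would combine the hypothesis with the lower semicontinuity of $\mu \mapsto \int d^2(x_0,\cdot)\dd\mu$ under weak convergence to upgrade the $\limsup$ to a true limit, then apply weak convergence to the bounded continuous truncation $f_R(x) := d^2(x_0,x) \wedge R^2$. Subtracting $\int f_R\dd\mu_n \to \int f_R\dd\mu$ from $\int d^2\dd\mu_n \to \int d^2\dd\mu$ yields $\int (d^2 - f_R)\dd\mu_n \to \int (d^2 - f_R)\dd\mu$, and the right-hand side tends to $0$ as $R\to\infty$ by dominated convergence; this controls the tails uniformly in $n$. For $(3)\Rightarrow(4)$, I split $\int\varphi\dd\mu_n$ at a large radius $R$: the tail is controlled uniformly in $n$ using $|\varphi| \le C(1+d^2)$, tightness from weak convergence, and the uniform integrability of second moments from (3); the bulk converges by weak convergence after multiplying by a continuous cutoff.

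The main obstacle is $(4)\Rightarrow \W_2(\mu_n,\mu)\to 0$, since one must recover quantitative transport bounds from integration against test functions. I would use Skorokhod's representation theorem: since (4) applied to bounded continuous $\varphi$ gives $\mu_n \rightharpoonup \mu$, one can realize $X_n \sim \mu_n$, $X\sim \mu$ on a common probability space with $X_n \to X$ almost surely. Applying (4) to $\varphi(x) := d^2(x_0,x)$ gives $\mathbb{E}[d^2(x_0,X_n)] \to \mathbb{E}[d^2(x_0,X)]$, and combined with the a.s.\ convergence $d^2(x_0,X_n) \to d^2(x_0,X)$ this forces uniform integrability of $\{d^2(x_0,X_n)\}_n$ by Vitali's theorem. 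The pointwise bound $d^2(X_n,X) \le 2\bigl(d^2(x_0,X_n) + d^2(x_0,X)\bigr)$ then furnishes a uniformly integrable envelope for the a.s.\ convergent sequence $d^2(X_n,X) \to 0$, so $\mathbb{E}[d^2(X_n,X)] \to 0$. Since $(X_n,X)$ realizes a coupling of $\mu_n$ and $\mu$, one concludes $\W_2^2(\mu_n,\mu) \le \mathbb{E}[d^2(X_n,X)] \to 0$, closing the cycle.
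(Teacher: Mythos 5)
This statement is quoted verbatim from \citep[Theorem 6.9]{villani2009optimal}; the paper supplies no proof of it, so there is no internal argument to compare yours against, and I can only assess your proposal on its own terms. Your cyclic chain is correct, and each implication is a standard argument: the triangle inequality for second moments and $\W_1 \leq \W_2$ plus Kantorovich--Rubinstein for the first arrow, lower semicontinuity of $\nu \mapsto \int d^2(x_0,\cdot)\dd\nu$ plus truncation for $(2)\Rightarrow(3)$, and an $\varepsilon/3$ splitting for $(3)\Rightarrow(4)$. Where you genuinely depart from the textbook treatment is the closing implication $(4)\Rightarrow \W_2(\mu_n,\mu)\to 0$: Villani's proof stays inside optimal transport, working with couplings and a truncated cost $d^2\wedge R^2$ and using the uniform integrability of second moments to control the tails, whereas you import Skorokhod's representation theorem and push the same uniform-integrability information through Scheff\'e and Vitali. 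Both routes are valid on Polish spaces; yours is arguably the cleaner bookkeeping at the price of invoking a nontrivial representation theorem, while the coupling route is more self-contained for an optimal-transport audience.

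Two small points worth tightening, neither of which is a real gap. First, as transcribed the hypothesis reads $\mu \in \cP(X)$, and conditions (1)--(2) become vacuous when $\int d^2(x_0,\cdot)\dd\mu = +\infty$; Villani's original assumes $\mu \in \cP_2(X)$, and your steps $(2)\Rightarrow(3)$ (dominated convergence on $\int(d^2-f_R)\dd\mu$) and $(4)\Rightarrow\W_2\to 0$ (finiteness of $\mathbb{E}[d^2(x_0,X)]$ in Scheff\'e's lemma) silently use this finiteness, so you should either assume it or observe that your conditions force it. Second, in $(2)\Rightarrow(3)$ you should make explicit how control of $\int (d^2 - f_R)\dd\mu_n$ yields control of the tail integral in (3): for instance $d^2 - f_R \geq \tfrac12 d^2$ on the set $\{d(x_0,\cdot)\geq \sqrt{2}\,R\}$, which converts one bound into the other.
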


From this, the following relations hold between the respective topologies and Borel $\sigma$-algebras of $\cP(X)$ equipped with the topology of weak convergence and $\cP_2(X)$ equipped with the $\W_2$ topology: first, the $\W_2$ topology is stronger than the topology of weak convergence. In particular, the inclusion $\cP_2(X) \subseteq \cP(X)$ is continuous. Moreover, the Borel $\sigma$-algebra on $\cP_2(X)$ induced by the $\W_2$ topology is equal to the restriction to $\cP_2(X)$ of the Borel $\sigma$-algebra on $\cP(X)$ induced by the topology of weak convergence, that is, $\cB(\cP_2(X)) = \cB(\cP(X))_{|\cP_2(X)}$. In particular, a map $E \mapsto \cP_2(X)$ is $\cB(\cP_2(X))$-measurable if and only if it is $\cB(\cP(X))$-measurable when seen as a map valued in $\cP(X)$. The argument is the following: first, as the $\W_2$ topology is stronger than the topology of weak convergence, we have $\cB(\cP(X))_{|\cP_2(X)} \subseteq \cB(\cP_2(X))$. Conversely, since $\W_2 : \cP(X) \times \cP(X) \mapsto [0,+\infty]$ is lower semicontinuous for the topology of weak convergence (see the previous subsection), the open balls for $\W_2$ are measurable in $\cB(\cP(X))_{|\cP_2(X)}$, so that $\cB(\cP_2(X)) \subseteq \cB(\cP(X))_{|\cP_2(X)}$. 
\medbreak
Moreover, compactness in the Wasserstein space is characterized by the following result:
\begin{proposition}{\citep[Proposition 7.1.5]{ambrosio2005gradient}} \label{prop:rel_compact_sets_in_w2_topology}
    A set $\cK \subseteq \cP_2(X)$ is relatively compact for the $\W_2$ topology if and only if it is tight and uniformly $2$-integrable - that is, for some (any) $x_0 \in X$,
    \begin{equation}
        \lim_{R \to +\infty} \sup_{\mu \in \cK} \int_{d(x_0,x) \geq R} d^2(x_0,x) \dd\mu(x) = 0
    \end{equation}
\end{proposition}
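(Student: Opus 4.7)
The plan is to prove both directions of the equivalence using \Cref{th:convergence_in_w2_space} (the characterizations of $\W_2$-convergence) and Prokhorov's theorem (\Cref{th:prokhorov}). The key observation is that the $\W_2$-topology refines the weak topology, and the gap between the two is exactly controlled by convergence of the second moment, which under an $x_0$-tail-uniform condition can be made uniform in the sequence.

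For the necessity direction, I would first fix a base point $x_0 \in X$, and assume $\cK \subseteq \cP_2(X)$ is $\W_2$-relatively compact. Since $\W_2$-convergence implies weak convergence, $\cK$ is also relatively compact for the weak topology, hence tight by Prokhorov. The uniform $2$-integrability would be proved by contradiction: if it failed, there would exist $\varepsilon > 0$ and sequences $R_k \to +\infty$, $\mu_k \in \cK$ such that $\int_{d(x_0,x) \geq R_k} d^2(x_0,x) \dd \mu_k(x) \geq \varepsilon$. Extracting a $\W_2$-convergent subsequence $\mu_{k_j} \to \mu \in \cP_2(X)$ and applying point \ref{enum:w2_convergence:3_2nd_moment_unif_int} of \Cref{th:convergence_in_w2_space} to this subsequence would give $\lim_{R \to +\infty} \limsup_j \int_{d(x_0,x) \geq R} d^2(x_0,x)\dd \mu_{k_j}(x) = 0$, contradicting the construction.

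For the sufficiency direction, I would take a sequence $(\mu_n)_n$ in $\cK$ and first use tightness and Prokhorov to extract a subsequence (still denoted $(\mu_n)_n$) converging weakly to some $\mu \in \cP(X)$. The main technical point is to upgrade this weak convergence to $\W_2$-convergence. First I would show $\mu \in \cP_2(X)$: for each $R > 0$, the map $x \mapsto \min(d^2(x_0,x),R^2)$ is bounded and continuous, so $\int \min(d^2,R^2) \dd \mu_n \to \int \min(d^2,R^2) \dd \mu$, and by uniform $2$-integrability the left-hand sides are uniformly bounded in $n$ (taking $R$ large makes the truncation error uniformly small). Letting $R \to +\infty$ and using monotone convergence yields $\int d^2(x_0,x)\dd \mu(x) < +\infty$. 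Then to obtain $\W_2$-convergence I would verify condition \ref{enum:w2_convergence:3_2nd_moment_unif_int} of \Cref{th:convergence_in_w2_space}: we already have $\mu_n \rightharpoonup \mu$, and uniform $2$-integrability of $\cK$ immediately gives $\lim_{R \to +\infty} \limsup_n \int_{d(x_0,x) \geq R} d^2(x_0,x) \dd \mu_n(x) = 0$.

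The main obstacle is the step of transferring the uniform $2$-integrability from the sequence to the limit — one must be careful that the lower semicontinuity of $\mu \mapsto \int d^2(x_0,\cdot) \dd \mu$ for the weak topology provides an inequality only in one direction, so the cleanest route is to use truncations $\min(d^2,R^2)$ for which weak convergence gives equality of integrals, then pass $R \to +\infty$. Once $\mu$ is known to lie in $\cP_2(X)$, the characterization of $\W_2$-convergence through the uniform tail condition (point \ref{enum:w2_convergence:3_2nd_moment_unif_int} of \Cref{th:convergence_in_w2_space}) does all the remaining work without requiring separate control of $\int d^2 \dd \mu_n$.
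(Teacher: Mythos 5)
Your argument is correct: the necessity direction (tightness via Prokhorov plus the contradiction argument using point \ref{enum:w2_convergence:3_2nd_moment_unif_int} of \Cref{th:convergence_in_w2_space}) and the sufficiency direction (weak limit extraction, the truncation $\min(d^2,R^2)$ to place the limit in $\cP_2(X)$, then condition \ref{enum:w2_convergence:3_2nd_moment_unif_int} again) are both sound, and you correctly identify that uniform $2$-integrability yields a uniform bound on second moments, which is what makes the truncation step close. Note, however, that the paper does not prove this statement at all: it is imported verbatim by citation to \citep[Proposition 7.1.5]{ambrosio2005gradient}, so there is no in-paper proof to compare against; your argument is essentially the standard one found in that reference.
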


\section{The Sasaki metric} \label{sec:appendix:sasaki_metric} 

If $(\cM,g)$ is a ($n$-dimensional) Riemannian manifold, its tangent bundle $T\cM$ can be equipped with a natural metric $g_T$ called the \emph{Sasaki metric}, for which the projection $p : (T\cM,g_T) \mapsto (\cM,g)$ is a Riemannian submersion. It is defined the following way: let $D$ be the Levi-Civita connection on $\cM$, and let $(m,x_m) \in T\cM$. Then:
\begin{itemize}
    \item The subspace $\cV_{m,x_m}\cM \subseteq T_{m,x_m}T\cM$ of \emph{vertical vectors} is the kernel of the tangent map at $(m,x_m)$ of the projection $p$. Taking the tangent map at $x_m$ of the inclusion $T_m\cM \subseteq T\cM$ yields a natural isomorphism between $T_m \cM$ and $\cV_{m,x_m}\cM$. It is a $n$-dimensional subspace of $T_{m,x_m}T\cM$.
    \item The \emph{horizontal vectors} of $T_{m,x_m}\cM$ are the velocities of the curves $(c,X) : (-\veps,\veps) \mapsto T\cM$ with $c(0) = m$, $X(0) = x_m$, and such that the vector field $X$ is parallel along $c$ (that is $D_{c'}X = 0$). If, in a set of local coordinates, $(u_1,\ldots,u_n,X^1,\ldots,X^n)$ are the coordinates of $(m,x_m)$, and $(v^1,\ldots,v^n,\Xi^1,\ldots,\Xi^n)$ are the coordinates of $(c'(0),X'(0))$, then $(c'(0),X'(0))$ is horizontal if and only if 
    \begin{equation}
        \Xi_i + \sum_{j,k=1}^n \Gamma_{j,k}^i(u^1,\ldots,u^n)v^j X^k = 0, \quad \forall i \in \{1,\ldots,n\}
    \end{equation}
    (where the $\Gamma^i_{j,k}$ are the Christoffel symbols). The subspace of horizontal vectors is of dimension $n$, and is a complement of the subspace of vertical vectors.
\end{itemize}
The Sasaki metric $g_T$ is then defined the following way:
\begin{itemize}
    \item On $\cV_{m,x_m}\cM$, we identify $g_T$ and $g$ through the isomorphism $\cV_{m,x_m} \cong T_m \cM$. 
    \item If $\Xi, \Xi' \in T_{m,x_m}T\cM$ are horizontal, $g_T(\Xi,\Xi') = g(v,v')$ where $v,v'$ are the projections of $\Xi,\Xi'$ on $T_m\cM$. 
    \item If $\Xi, \Xi' \in T_{m,x_m}T\cM$ with $\Xi$ vertical and $\Xi'$ horizontal, $g_T(\Xi,\Xi') = 0$.
\end{itemize}

The geodesic distance induced by the Sasaki metric has the following expression: for every $(x,u), (y,v) \in T\cM$, one has
\begin{equation} \label{eq:geodesic_distance_sasaki}
    d^2((x,u),(y,v)) = \inf_\gamma \left\{ \mathrm{Length}^2(\gamma) + \|\PT_{x \to y}^\gamma(u) - v\|^2_y \right \}
\end{equation}
where the infimum is taken along piecewise smooth paths $\gamma : [0,1] \mapsto \cM$ from $x$ to $y$ whose derivative is never zero, $\mathrm{Length}(\gamma) := \int_0^1 \|\dot{\gamma}\|_{\gamma(t)} \dd t$ is the length of $\gamma$, and $\PT^\gamma_{x \to y}$ is the parallel transport operator along $\gamma$ from $x$ to $y$. In particular, for every $o \in \cM$ and $(x,v) \in T\cM$, it holds
\begin{equation} \label{eq:geodesic_distance_sasaki_2}
    d^2((o,0), (x,v)) = d^2(o,x) + \|v\|^2_x
\end{equation}
as the parallel transport operator is an isometry.
\medbreak
We refer to \citep[Chapter 2.B.6]{gallot2004riemannian} and to \citep[Appendix II.A.2.1]{Canary_Marden_Epstein_2006} for more details on the Sasaki metric.

\end{document}